\newcommand{\Bmu}{\mbox{$\raisebox{-0.59ex}
  {$l$}\hspace{-0.18em}\mu\hspace{-0.88em}\raisebox{-0.98ex}{\scalebox{2}
  {$\color{white}.$}}\hspace{-0.416em}\raisebox{+0.88ex}
  {$\color{white}.$}\hspace{0.46em}$}{}}
\numberwithin{equation}{section}
\newfont{\cyr}{wncyr10 scaled 1100}
\newfont{\cyrr}{wncyr9 scaled 1000}
\theoremstyle{plain}
\newtheorem{theorem}{Theorem}[section]
\newtheorem{proposition}[theorem]{Proposition}
\newtheorem{lemma}[theorem]{Lemma}
\newtheorem{corollary}[theorem]{Corollary}
\newtheorem*{ThmA}{Theorem A}
\theoremstyle{definition}
\newtheorem{definition}[theorem]{Definition}
\newtheorem{assumption}[theorem]{Assumption}
\theoremstyle{remark}
\newtheorem{remark}[theorem]{Remark}
\newcommand{\Q}{\mathds Q}
\newcommand{\N}{\mathds N}
\newcommand{\Z}{\mathds Z}
\newcommand{\R}{\mathds R}
\newcommand{\A}{\mathds A}
\newcommand{\C}{\mathds C}
\newcommand{\D}{\mathbf{D}}
\newcommand{\defeq}{\vcentcolon=}
\DeclareMathOperator{\Spec}{Spec}
\DeclareMathOperator{\Pic}{Pic}
\DeclareMathOperator{\End}{End}
\DeclareMathOperator{\Aut}{Aut}
\DeclareMathOperator{\Frob}{Frob}
\DeclareMathOperator{\Norm}{N}
\DeclareMathOperator{\Hom}{Hom}
\DeclareMathOperator{\Gal}{Gal}
\DeclareMathOperator{\GL}{GL}
\DeclareMathOperator{\CH}{CH}
\DeclareMathOperator{\Ind}{Ind}
\DeclareMathOperator{\Ta}{Ta}
\DeclareMathOperator{\Sym}{Sym}
\DeclareMathOperator{\Res}{Res}
\newcommand{\cyc}{{\mathrm{cyc}}}
\newcommand{\dR}{\mathrm{dR}}
\newcommand{\et}{\text{\'et}}
\newcommand{\cO}{\mathcal{O}}
\newcommand{\p}{\mathfrak{p}}
\newcommand{\bomega}{\boldsymbol{\omega}}
\newcommand{\bz}{\boldsymbol{z}}
\newcommand{\longmono}{\mbox{\;$\lhook\joinrel\longrightarrow$\;}}
\newcommand{\longepi}{\mbox{\;$\relbar\joinrel\twoheadrightarrow$\;}}
\newcommand{\smallmat}[4]{\bigl(\begin{smallmatrix}#1&#2\\#3&#4\end{smallmatrix}\bigr)}
\title[A generalized Rubin formula for Hecke characters]{A generalized Rubin formula for Hecke characters}
\author{Matteo Longo, Stefano Vigni and Shilun Wang}
\date{}
\begin{document}

\thanks{The first two authors are partially supported by PRIN 2022 ``The arithmetic of motives and $L$-functions'' and by the GNSAGA group of INdAM. The research by the second author is partially supported by the MUR Excellence Department Project awarded to Dipartimento di Matematica, Universit\`a di Genova, CUP D33C23001110001. The third author is supported by the China Scholarship Council.}

\begin{abstract}
The goal of this paper is to generalize Rubin's theorem on values of Katz's $p$-adic $L$-function outside the range of interpolation from the case of Hecke characters of CM elliptic curves to more general self-dual algebraic Hecke characters. We follow the approach by Bertolini--Darmon--Prasanna, based on generalized Heegner cycles, which we extend from characters of imaginary quadratic fields of infinity type $(1,0)$ to characters of infinity type $(1+\ell,-\ell)$ for an integer $\ell\geq0$. 
\end{abstract}

\address{Dipartimento di Matematica, Universit\`a di Padova, Via Trieste 63, 35121 Padova, Italy}
\email{mlongo@math.unipd.it}
\address{Dipartimento di Matematica, Universit\`a di Genova, Via Dodecaneso 35, 16146 Genova, Italy}
\email{stefano.vigni@unige.it}
\address{Dipartimento di Matematica, Universit\`a di Padova, Via Trieste 63, 35121 Padova, Italy}
\email{shilun.wang@phd.unipd.it}

\subjclass[2020]{11G40, 14C15}

\keywords{Hecke characters, generalized Heegner cycles, $p$-adic $L$-functions.}

\maketitle

\setcounter{tocdepth}{1}


\section{Introduction} 

The goal of this paper is to extend Rubin's theorem on values of Katz's $p$-adic $L$-function outside the range of classical interpolation (\cite{Ru92}) from the case of characters of CM elliptic curves to more general self-dual Hecke characters. 

To put our results into context and motivate them, we begin by recalling Rubin's theorem. Let $\nu_A$ be an algebraic Hecke character of infinity type $(1,0)$ attached to an elliptic curve $A$ over $\Q$ with complex multiplication by the ring of integers of an imaginary quadratic field $K$; assume that the complex $L$-function $L(A,s)$ vanishes to order $1$ at the central point $s=1$. Rubin's result (\emph{cf.} \cite[Corollary 10.3]{Ru92}) asserts that there exists a global point $P\in A(\Q)$ of infinite order such that 
\begin{equation} \label{rubin-eq}
\mathscr{L}_p(\nu_A^\ast)\equiv\Omega_p(A)^{-1}\cdot\log_{\omega_A}(P)^2 \pmod{K^\times}, 
\end{equation}
where 
\begin{itemize}
\item $\mathscr{L}_p(\nu_A^*)$ is Katz's $p$-adic $L$-function evaluated at the complex conjugate character $\nu_A^*$ of $\nu_A$;  
\item $\Omega_p(A)$ is the $p$-adic period attached to $A$;
\item $\omega_A\in \Omega^1(A/\Q)$ is a regular differential on $A$ over $\Q$;
\item $\log_{\omega_A}\colon A(\Q_p)\rightarrow\Q_p$ is the $p$-adic formal logarithm on $A$ with respect to $\omega_A$. 
\end{itemize}
It is well known that Katz's $p$-adic $L$-function interpolates values of the complex $L$-function of suitable Hecke characters (see, \emph{e.g.}, \S \ref{katz-subsec}): it turns out that $\nu_A$ lies inside the range of interpolation of $\mathscr L_p$, whereas $\nu_A^*$ does not. The original approach by Rubin to congruence \eqref{rubin-eq} combines elliptic units and the theory of Heegner points (see, \emph{e.g.}, the introduction to Rubin's work in \cite[\S1]{BDP2}). Rubin's results were later extended to powers of $\nu_A$ by Agboola in an unpublished paper (\cite{Agboola}), following a strategy similar to that of Rubin. 

More recently, Bertolini, Darmon and Prasanna proposed in \cite{BDP2} a different approach to Rubin's results, based on the arithmetic of \emph{generalized Heegner cycles}: they exploited, on the one hand, a factorization of what is nowadays commonly known as the \emph{BDP $p$-adic $L$-function} into a product of two Katz $p$-adic $L$-functions and, on the other hand, a $p$-adic Gross--Zagier formula relating their $p$-adic $L$-function to generalized Heegner cycles. Both generalized Heegner cycles and BDP $p$-adic $L$-functions were introduced in \cite{BDP1}, where the aforementioned $p$-adic Gross--Zagier formula was also proved. The approach in \cite{BDP2} enables the authors to go beyond the scope of Rubin's work and extend it to a wider class of Hecke characters of imaginary quadratic fields of infinity type $(1,0)$. More precisely, let $\nu$ be an algebraic Hecke character of infinity type $(1,0)$ of an imaginary quadratic field $K$ of odd discriminant $-D_K$ satisfying ${\nu|} _{\A^\times_\Q}=\varepsilon_K \cdot \Norm_\Q$, where $\varepsilon_K$ is the quadratic Dirichlet character attached to $K$ and $\Norm _\Q\colon \A_\Q^\times \rightarrow\R^\times$ is the adelic norm character. Assume that the root number of $\nu$ (\emph{i.e.}, the sign of the functional equation of the complex $L$-function $L(\nu,s)$) is $-1$ and the conductor of $\nu$ is exactly divisible by the different ideal $\mathfrak{d}_K$ of $K$. Let $E_\nu$ be the subfield of $\C$ generated by the values of the Hecke character $\nu$. Attached to $\nu$ there is an abelian variety $B_\nu$ defined over $K$ with the property that the $p$-adic Galois representation of $\nu$ is equal to the Galois representation on the first \'etale cohomology group of $B_\nu$ with coefficients in $\Q_p$. Under the previous conditions on $\nu$, by \cite[Theorem 2]{BDP2} there exists $P_\nu \in B_\nu(K) \otimes_\Z \Q$ such that
\begin{equation} \label{BDPTHM}
\mathscr{L}_p(\nu^\ast)\equiv\Omega_p(\nu^\ast)^{-1}\cdot\log_{\omega_\nu}(P_\nu)^2 \pmod{E_\nu^\times},
\end{equation}
where $\Omega_p(\nu^\ast)\in\C_p^\times$ is the $p$-adic period attached to the complex conjugate $\nu^*$ of $\nu$ and $\log_{\omega_\nu}$ is the $p$-adic formal logarithm on $B_\nu$ with respect to a suitable invariant differential $\omega_\nu$ on $B_\nu$. Moreover, $P_\nu\neq 0$ if and only if $L'(\nu,1)\neq 0$. 
%

In the present paper, we extend the approach of Bertolini--Darmon--Prasanna to Rubin's formula to more general algebraic Hecke characters of imaginary quadratic fields of infinity type $(1+\ell,-\ell)$ for an integer $\ell\geq 0$. A partial generalization of some of the results in \cite{BDP2} to these characters has already been developed (under a slightly different perspective) in  \cite{BDP5}, \cite{BDP3}, \cite{BDP4}: here we complement these works by proving results on values of Katz's $p$-adic $L$-functions that are similar in spirit to those of Bertolini--Darmon--Prasanna (see, \emph{e.g.}, Theorem \ref{thm5.11}) but (to the best of our knowledge) have never been investigated. 

In the following list of assumptions, $\chi$ is an algebraic Hecke character of an imaginary quadratic field $K$ of odd discriminant $-D_K$ having infinity type $(1+\ell,-\ell)$ for some $\ell\in\N$. 

\begin{assumption} \label{assumptionchi}
\begin{enumerate} 
\item $\chi$ is critical self-dual (see Definition \ref{defSD} for the terminology);
\item the central character of $\chi$ is $\varepsilon_K$ (the notion of central character is reviewed in \S\ref{Heckechar}); 
\item the conductor of $\chi$ is exactly divisible by the different ideal $\mathfrak{d}_K$ of $K$; 
\item the sign of the functional equation of the $L$-function $L(\chi,s)$ of $\chi$ is $-1$. 
\end{enumerate}
\end{assumption}

Fix a prime number $p$ that divides neither $D_K$ nor the conductor of $\chi$ and splits in $K$. Let $E_\chi$ be the coefficient field of $\chi$ and write $\mathcal{E}_\chi$ for the completion of $E_\chi$ under a fixed embedding $\bar\Q\hookrightarrow\bar\Q_p$ of algebraic closures (\emph{cf.} \S \ref{notation-subsec}). Denote by $V_\chi$ the $\mathcal{E}_\chi$-component of the \'etale realization of the motive of $\chi$, whose construction is reviewed in detail in Section \ref{secmotives}. One can consider the Bloch--Kato Selmer group $H^1_f\bigl(K,V_{\chi^\ast}(1)\bigr)=H^1_f(K,V_{\chi^{-1}})$; there are a localization map $H^1_f\bigl(K,V_{\chi^\ast}(1)\bigr)\rightarrow H^1_f\bigl(\Q_p,V_{\chi^*}(1)\bigr)$ and the so-called Bloch--Kato logarithm $\log_\chi:H^1_f\bigl(\Q_p,V_{\chi^*}(1)\bigr)\rightarrow\mathbf{D}_{\dR}(V_\chi)^\vee$, where $\mathbf{D}_{\dR}(V_\chi)$ is the de Rham module of $V_\chi$ and $(\cdot)^\vee$ indicates the dual. We still denote by $\log_\chi:H^1_f\bigl(K,V_{\chi^*}(1)\bigr)\rightarrow\mathbf{D}_{\dR}(V_\chi)^\vee$ the composition of the Bloch--Kato logarithm and the localization map. 

Keep Assumption \ref{assumptionchi} in force. Our main result, which corresponds to Theorem \ref{mainthm}, can be stated as follows.

\begin{ThmA} \label{introteo}
Assume $\mathscr{L}_{p,\mathfrak{c}}(\chi^*)\neq 0$. The $\mathcal{E}_\chi$-vector space $H^1_f\bigl(K,V_{\chi^*}(1)\bigr)$ has dimension $1$. Moreover, there is $\bomega_\chi\in \mathbf{D}_{\dR}(V_\chi)$ such that for each basis element $\bz_\chi$ of $H^1_f\bigl(K,V_{\chi^*}(1)\bigr)$ the congruence 
\[ \mathscr{L}_{p,\mathfrak{c}}(\chi^*)\equiv{\Omega_p(\chi^*)}^{-1}\cdot\log_{\chi}(\bz_\chi)(\bomega_{\chi})^2\pmod{\mathcal{E}_{\chi}^\times} \]
holds true, where $\Omega_p(\chi^\ast)\in\C_p^\times$ is a $p$-adic period canonically attached to $\chi^\ast$.  
\end{ThmA}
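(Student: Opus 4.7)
The plan is to follow the strategy of \cite{BDP2}, extended from infinity type $(1,0)$ to $(1+\ell,-\ell)$ by replacing the CM-elliptic-curve arithmetic used there with that of the higher-weight generalized Kuga--Sato varieties and the associated generalized Heegner cycles of \cite{BDP1,BDP3,BDP5}. The three key ingredients are: (i) a BDP-type anticyclotomic $p$-adic $L$-function $\mathscr{L}_p^{\mathrm{BDP}}$ attached to the theta series $\theta_\chi=\Ind_K^\Q\chi$ of weight $2\ell+2$ and a variable Hecke character of $K$; (ii) a higher-weight $p$-adic Gross--Zagier formula equating the value of $\mathscr{L}_p^{\mathrm{BDP}}$ at an exceptional point with the square of the Bloch--Kato logarithm of a generalized Heegner class; (iii) a factorization of $\mathscr{L}_p^{\mathrm{BDP}}$ as a product of two Katz $p$-adic $L$-functions.

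First I would construct $\bz_\chi\in H^1_f\bigl(K,V_{\chi^\ast}(1)\bigr)$ as the $\chi^\ast$-isotypic projection of the $p$-adic \'etale Abel--Jacobi image of a generalized Heegner cycle on the product of a Kuga--Sato variety of level $2\ell$ with $\ell$ copies of a CM elliptic curve $A/K$ having complex multiplication by $\cO_K$, and fix the de Rham class $\bomega_\chi\in\mathbf{D}_{\dR}(V_\chi)$ singled out by the motive of $\chi$ reviewed in Section \ref{secmotives}. Using the higher-weight $p$-adic Gross--Zagier formula extending \cite{BDP1} (as developed in \cite{BDP3,BDP4,BDP5}), one obtains a congruence of the form
\[
\mathscr{L}_p^{\mathrm{BDP}}(\xi_0)\;\equiv\;\log_\chi(\bz_\chi)(\bomega_\chi)^2\pmod{\mathcal{E}_\chi^\times}
\]
at the exceptional point $\xi_0$ lying outside the range of classical interpolation of $\mathscr{L}_p^{\mathrm{BDP}}$.

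Second, the automorphic-induction factorization $L_K(\theta_\chi,\xi,s)=L_K(\chi\xi,s)\cdot L_K(\chi^\ast\xi,s)$ for a variable Hecke character $\xi$ of $K$---where the self-duality of $\chi$ identifies the Galois conjugate $\chi^c$ with $\chi^\ast$ up to a twist by the norm character---lifts to a $p$-adic identity
\[
\mathscr{L}_p^{\mathrm{BDP}}(\xi)\;\equiv\;\mathscr{L}_{p,\mathfrak{c}}(\chi\xi)\cdot\mathscr{L}_{p,\mathfrak{c}}(\chi^\ast\xi)\pmod{\mathcal{E}_\chi^\times},
\]
which can be verified on a Zariski-dense set of classical points in the common range of interpolation. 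Specializing at $\xi_0$, chosen so that $\chi\xi_0$ falls inside the interpolation range of $\mathscr{L}_{p,\mathfrak{c}}$ and contributes only an explicit nonzero algebraic constant while $\chi^\ast\xi_0$ produces $\mathscr{L}_{p,\mathfrak{c}}(\chi^\ast)$ together with the period $\Omega_p(\chi^\ast)$, and combining with the Gross--Zagier formula above, yields the desired congruence
\[
\mathscr{L}_{p,\mathfrak{c}}(\chi^\ast)\;\equiv\;\Omega_p(\chi^\ast)^{-1}\cdot\log_\chi(\bz_\chi)(\bomega_\chi)^2\pmod{\mathcal{E}_\chi^\times}.
\]

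Finally, the hypothesis $\mathscr{L}_{p,\mathfrak{c}}(\chi^\ast)\neq 0$ together with the displayed congruence forces $\bz_\chi\neq 0$, whence $\dim_{\mathcal{E}_\chi}H^1_f\bigl(K,V_{\chi^\ast}(1)\bigr)\geq 1$. For the matching upper bound I would apply a Kolyvagin-style Euler system argument to the family of generalized Heegner classes along the anticyclotomic tower of ring class extensions of $K$: the non-triviality of the bottom class $\bz_\chi$ bounds $\dim_{\mathcal{E}_\chi}H^1_f\bigl(K,V_{\chi^\ast}(1)\bigr)$ above by $1$. The main technical obstacle, I anticipate, lies in the precise bookkeeping of periods and $p$-adic Euler factors: one must check that the normalizations of $\bomega_\chi$, $\Omega_p(\chi^\ast)$, $\bz_\chi$ and the various local Euler factors at $p$ combine so that both the factorization and the Gross--Zagier formula hold modulo $\mathcal{E}_\chi^\times$, and not merely up to $\ell$-dependent transcendental constants that would spoil the congruence.
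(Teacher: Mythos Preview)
Your outline captures the three-ingredient architecture (BDP $p$-adic $L$-function, $p$-adic Gross--Zagier, Katz factorization) correctly, and the Euler-system input via Castella--Hsieh is right. But there is a structural gap: you propose to run the BDP machinery directly with the theta series $\theta_\chi=\Ind_K^\Q\chi$ and a single specialization $\xi_0$. This does not work, because the conductor of $\chi$ (hence of $\psi_\chi=\chi\Norm_K^\ell$) is by hypothesis exactly divisible by $\mathfrak d_K$, so $\mathfrak f_{\psi_\chi}$ is \emph{not} prime to $D_K$ and the Heegner hypothesis needed for the BDP construction and Gross--Zagier formula (Assumption \ref{ass2}, \cite[\S2--5]{BDP1}) fails for $\theta_\chi$. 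The paper circumvents this by choosing an \emph{auxiliary} factorization $\chi=\psi\vartheta$ with $\psi$ of infinity type $(k-1,0)$ and cyclic conductor prime to $D_Kp$; the BDP $L$-function is then attached to $\theta_\psi$, not to $\theta_\chi$, and the Gross--Zagier formula is applied at the character $\vartheta^{-1}\Norm_K\in\Sigma^{(1)}_{\mathrm{cc}}(\psi,c)$.

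This introduces a second difficulty you have not anticipated: the Heegner class $z_{\psi,\vartheta}$ and the resulting congruence live a priori only over the larger field $F_{\psi,\vartheta}\supsetneq E_\chi$ (the compositum of the Hecke fields of $\psi,\vartheta$ and the fields cut out by the finite-order part of $\vartheta$), so one gets the formula only modulo $F_{\psi,\vartheta}^\times$. Descending to $\mathcal E_\chi^\times$ is \emph{not} bookkeeping; it requires the ``good pairs'' argument of \S\ref{secgoodpairs}--\ref{secgoodpairs2}: one shows (using Greenberg's nonvanishing results for anticyclotomic twists) that there are enough good pairs $(\psi,\vartheta)$ with $\bigcap_{(\psi,\vartheta)\in S_\chi}F_{\psi,\vartheta}=E_\chi$. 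For this intersection argument to yield a single congruence, one must know that the various $z_{\psi,\vartheta}$ are all $\mathcal E_{\psi,\vartheta}^\times$-multiples of a fixed $\bz_\chi\in H^1_f\bigl(K,V_{\chi^*}(1)\bigr)$, and \emph{that} is where the one-dimensionality from \cite[Theorem B]{CH} enters the proof---in the middle, as the linchpin of the descent, not merely as a concluding remark.
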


It is worth emphasizing that, as in the case of Rubin's theorem and of its generalization by Agboola, the character $\chi^*$ lies \emph{outside} the range of classical interpolation for Katz's $p$-adic $L$-function; the value $\mathscr{L}_{p,\mathfrak{c}}(\chi^*)$
should be regarded as a $p$-adic avatar of the value at $s=1$ of the first derivative of the complex $L$-function $L(\chi,s)$. 

Now we shall describe the content of this paper and our general strategy. As suggested before, our strategy of the proof is inspired by \cite{BDP2} and makes substantial use of results and techniques from \cite{BDP1} and \cite{CH}. The ingredients taken from \cite{BDP1} are generalized Heegner cycles (reviewed in \S\ref{secGHC}), BDP $p$-adic $L$-functions (reviewed in Section \ref{BDPsec}) and a $p$-adic Gross--Zagier formula (reviewed in \S\ref{secGZ}) that explains the relation between generalized Heegner cycles and $p$-adic $L$-functions. A connection with Katz's $p$-adic $L$-function is made possible, as in \cite{BDP2}, by factorization formulas of $L$-functions (see, especially, equality \eqref{cplxfact}) involving theta series and inducing a factorization of BDP $p$-adic $L$-functions into a product of two Katz $p$-adic $L$-functions (this is explained in \S\ref{secfac}). To use these ingredients in the more general setting of Hecke characters of infinity type $(1+\ell,-\ell)$, we are led to compare different constructions of motives of Hecke characters coming from different factorizations of $\chi$. More precisely, generalized Heegner cycles arises from cycles in a generalized Kuga--Sato variety of the form $X_r\defeq W_r\times A^r$, where $r=2\ell$, $A$ is a fixed elliptic curve defined over the Hilbert class field $H$ of $K$ having CM by the ring of integers $\mathcal{O}_K$ of $K$ and $W_r$ is the Kuga--Sato variety over a suitable modular curve of dimension $r+1$ (see \S\ref{secBDPmot}). To be able to construct classes in $H^1(K,V_\chi)$ out of generalized Heegner cycles, we are led to connect the motive of $\chi$ to $X_r$. More precisely, let $M_{E_\chi}(\chi)$ be the motive of $\chi$ with coefficients in $E_\chi$. The construction of this motive, which we review in \S\ref{motives}, builds on results from \cite{DMOS82} and \cite{S86}, where $M_{E_\chi}(\chi)$ is constructed by means of projectors from simpler motives: Artin motives (\S\ref{Artinmotives}), Tate motives (\S\ref{Tatemotives}) and motives of abelian varieties (\S\ref{abmotives}). However, to connect $M_{E_\chi}(\chi)$ with the motive $h^1(X_r)$ of $X_r$ we take a different viewpoint and consider factorizations of $\chi$ of the form 
\begin{equation} \label{introfac}
\chi=\varphi \psi(\chi_A\chi_A^\ast)^\ell,
\end{equation}
where $\chi_A$ is the Hecke character (of weight $(-1,0)$) attached to $A$, $\chi_A^\ast$ is its conjugate, $\varphi$ is a finite order character and $\psi$ is a Hecke character of infinity type $(k-1,0)$ where $k=r+2$ (a similar factorization has been considered in \cite{BDP3}, \cite{BDP4} to address different questions). The motive of $\chi$ can then be decomposed (up to a Tate twist) as the tensor product of the following motives:
\begin{itemize}
\item the Deligne--Scholl motive of the theta series $\theta_\psi$ attached to $\psi$ (\S\ref{secscholl}), which is obtained by taking quotients of the motive of $W_r\otimes E_\psi$ by the action of Hecke operators, where $E_\psi$ is the Hecke field of $\theta_\psi$;  
\item the motive attached to the character $(\chi_A\chi_A^*)^\ell$, which can be realized (again, up to Tate twists) in the \'etale realization of the motive $h^1\bigl(B^\ell\times (B^\ast)^\ell\bigr)$, where $B\defeq\Res_{H/K}(A)$, $B^\ast\defeq\Res_{H/K}(A^\ast)$ and $A^*$ is $A$ as an $H$-variety with the action of $\cO_K$ twisted by the generator of $\Gal(K/\Q)$.
\end{itemize}
Up to Tate twists, one may then realize the motive of $\chi$ as a submotive of the motive $h^1(Z_r)$ of the variety $Z_r\defeq W_r\times B^\ell\times (B^\ast)^\ell$. Observe that $Z_r$ is a $K$-variety whose base change to $H$ can be explicitly related to $X_r$ (this is done in Lemma \ref{lemma1}). Comparing the \'etale realizations of $X_r$ and $Z_r$, we are able (see, especially, Corollary \ref{coroz}) to produce classes $z_{\psi,\vartheta}\in H^1_f\bigl(K,V_{\chi^\ast}(1)\otimes_{\mathcal{E}_\chi} \mathcal{E}_{\psi,\vartheta}\bigr)$ using generalized Heegner cycles, where $\vartheta\defeq(\chi_A\chi_A^\ast)\Norm_K^{-(\ell-1)}$ and $\mathcal{E}_{\psi,\vartheta}$ is the completion of the image of the composite of the coefficient fields of $\psi$ and $\vartheta$ under the embedding $\bar\Q\hookrightarrow\bar\Q_p$. To define these classes, we combine results from \cite{CH} with several results on the relation between \'etale Abel--Jacobi maps for $X_r$ and $Z_r$, discussed in \S\ref{sec-etaleAJ}, \S\ref{AJMAPS} and \S\ref{secAJmap}. Moreover, comparing the de Rham realizations of $X_r$ and $Z_r$ in \S\ref{seccomparisons} and \S\ref{sec:dRHecke}, we define canonical classes $\omega_{\psi,\vartheta}$ in the de Rham cohomology of $Z_r$ and relate them to the de Rham classes for $X_r$ appearing in the $p$-adic Gross--Zagier formula due to Bertolini--Darmon--Prasanna (\cite{BDP1}). In this way, we can connect the value of Katz's $p$-adic $L$-function to the Bloch--Kato logarithm of the classes $z_{\psi,\vartheta}$ evaluated at the classes $\omega_{\psi,\vartheta}$ (see Proposition \ref{propGRF}). Note that for each factorization of the form \eqref{introfac} we obtain, through the procedure outlined before, classes $z_{\psi,\vartheta}$ and $\omega_{\psi,\vartheta}$ that \emph{depend} on the given factorization. In order to be able to compare classes coming form different factorizations and to descend to $H^1\bigl(K,V_\chi^\dagger\bigr)$ and $\mathbf{D}_\dR(V_\chi)$ to get the classes $\bz_\chi$ and $\bomega_\chi$ appearing in the statement of Theorem \ref{introteo}, we need to exploit an argument similar to the corresponding argument in \cite{BDP2}, involving the notion of \emph{good pairs}, which we describe in \S\ref{secgoodpairs} and \S\ref{secgoodpairs2}. Here we need to crucially use the fact that $H^1\bigl(K,V_{\chi^\ast}(1)\bigr)$ is an $\mathcal{E}_\chi$-vector space of dimension $1$. This one-dimensionality follows in our case from the assumption $\mathscr{L}_{p,\mathfrak{c}}(\chi^\ast)\neq 0$ combined with results of Castella--Hsieh (\cite[Theorem B]{CH}). It is expected that one should be able to replace the condition $\mathscr{L}_{p,\mathfrak{c}}(\chi^\ast)\neq0$ with $L^\prime(\chi,1)\neq 0$. Indeed, the implication $L^\prime(\chi,1)\neq 0\Rightarrow \dim_{\mathcal{E}_\chi}H^1_f\bigl(K,V_{\chi^\ast}(1)\bigr)=1$ is predicted by the Bloch--Kato conjecture: it might be obtained (under the non-triviality of $p$-adic Abel--Jacobi maps on Heegner cycles) by a generalization of S.-W. Zhang's formula of Gross--Zagier type for $L$-functions of higher weight modular forms (\cite{Zhang}). The reader is referred to Remark \ref{finalrem} for details on the web of (partly conjectural) implications connecting our results, the Bloch--Kato conjecture and Zhang's work. 

The organization of this article is as follows. In Section \ref{secmotives} we review well-known results on motives of Hecke characters and realize them using the generalized Kuga--Sato motives $h^1(Z_r)$. In Section \ref{sec3} we study the relations between the \'etale and de Rham realizations of $h^1(Z_r)$ and $h^1(X_r)$, while in Section \ref{sec4} we use the construction of motives of Hecke characters $\chi$ of infinity type $(\ell+1,-\ell)$ for $\ell\in\N$ by means of generalized Kuga--Sato motives to obtain canonical \'etale and de Rham classes in the \'etale and de Rham realizations, respectively, of $\chi$. In Section \ref{BDPsec}  we review the necessary material from \cite{BDP2} and \cite{BDP1} on generalized Heegner cycles, Katz and BDP $p$-adic $L$-functions, factorization formulas and $p$-adic Gross--Zagier formulas. Finally, in Section \ref{secGRF} we prove our generalized Rubin formula by using factorizations of the form \eqref{introfac} attached to good pairs and the results on $h^1(Z_r)$ collected before. The paper closes with Appendix \ref{app}, where we collect auxiliary results on $p$-adic Galois representations that are (tacitly) used in the main body of the text.
\begin{remark}
When the prime number $p$ is inert or ramified in $K$, a Katz-type $p$-adic $L$-function has been constructed by Andreatta and Iovita in \cite{AI24}. It would be interesting to obtain analogues of the results in this paper in this non-split context: we plan to study this kind of questions in a future project. 
\end{remark}

\subsection{Notation and conventions} \label{notation-subsec}

We conclude this introduction by setting some general notation that will be freely used (sometimes without any explicit warning) in the article. Let $\bar\Q$ be the algebraic closure of $\Q$ inside $\C$ and write $x\mapsto\bar x$ for complex conjugation: this is an element of $G_\Q\defeq\Gal(\bar\Q/\Q)$. For every prime number $\ell$, we fix an algebraic closure $\bar\Q_\ell$ of $\Q_\ell$ and an embedding $\bar\Q\hookrightarrow\bar\Q_\ell$. As a rule, we use roman letters for number fields inside $\bar\Q$ and calligraphic ones for local fields; in particular, if a prime number $p$ is understood, then for a number field $F\subset\bar\Q$ we denote by $\mathcal{F}$ the completion of the image of $F$ in $\bar\Q_p$ under $\bar\Q\hookrightarrow\bar\Q_p$. We let $G_F\defeq\Gal(\bar\Q/F)$ and $G_\mathcal{F}\defeq\Gal(\bar\Q_p/\mathcal{F})$ be the corresponding absolute Galois groups. 

For two finite field extensions $\mathcal{E}$ and $\mathcal{F}$ of $\Q_p$, we write $\mathcal{E}\otimes\mathcal{F}$ for $\mathcal{E}\otimes_{\Q_p}\mathcal{F}$, while for a commutative ring $R\neq \Q_p$ and two $R$-modules $M$ and $N$ we use the full notation $M\otimes_R N$ to indicate the tensor product of $M$ and $N$ over $R$. 

For a number field $F$, we let $\A_F$ denote the adele ring of $F$; for each finite place $\mathfrak{q}$ of $F$, we write $\pi_\mathfrak{q}\in\A_F$ for the adele having a fixed uniformizer at $\mathfrak{q}$ and $1$ elsewhere. 

For a number field $F$, we write $\Norm_F$ the (absolute) norm map on ideals; for a finite extension $F'/F$ of number fields, we denote by $\Norm_{F^\prime/F}$ the relative norm map on ideals. 

For a Hecke character $\phi$, the symbol $E_\phi$ will denote the field of coefficients of $\phi$; thus, in accord with the previous notation, $\mathcal{E}_\phi$ will be the completion of the image of $E_\phi$ under $\bar\Q\hookrightarrow\bar\Q_p$ for a fixed prime number $p$. Given Hecke characters $\phi_1,\dots,\phi_n$, we write $E_{\phi_1,\dots,\phi_n}$ for the composite field $E_{\phi_1}\cdot\ldots\cdot E_{\phi_n}$; thus, $\mathcal{E}_{\phi_1,\dots,\phi_n}$ is again the completion of the image of $E_{\phi_1,\dots,\phi_n}$ under $\bar\Q\hookrightarrow\bar\Q_p$. 

Let $p$ be a prime number and let $F$ be a field of characteristic $0$. By a \emph{$p$-adic Galois representation} of $G_F$ we mean a continuous homomorphism $\rho:G_F\rightarrow\Aut(V)$ for a finite-dimensional $\Q_p$-vector space $V$ equipped with a continuous (right) action $v\mapsto v^g$ of $G_F$; we often simply write $V$ for $\rho$. Let $\chi_\cyc\colon G_F\rightarrow\Z_p^\times$ be (the restriction to $G_F$ of) the $p$-adic cyclotomic character. For a $p$-adic representation $\rho$ and $n\in\Z$, set $\rho(n)\defeq\rho\otimes\chi_\cyc^n$, with the usual convention in terms of duals for $n<0$; in particular, $\Q_p(n)=\chi_\cyc^n$. Moreover, if $V$ is the vector space affording $\rho$, then we often write $V(n)$ for $\rho(n)$. For a $p$-adic representation $V$ of $G_F$, where $F$ is a field of characteristic zero, and a finite order character $\phi$ of $G_F$, we define the twist $V(\phi)\defeq(V\otimes\mathcal{E}_\phi)(\phi)$. Sometimes we abuse notation and, for $p$-adic representations $V$ and $V^\prime$ of $G_F$ with coefficients in $\mathcal E$ and $\mathcal{E}^\prime$ (which means that they take values in $\GL_n(\mathcal{E})$ and $\GL_{n^\prime}(\mathcal{E}^\prime)$ for some integers $n\geq1$ and $n'\geq1$) and characters $\phi$ and $\phi^\prime$ with coefficients in $\mathcal E_\phi\supset \mathcal E$ and $\mathcal{E}^\prime_{\phi^\prime}\supset \mathcal{E}^\prime$, we simply denote by $V(\phi)\rightarrow V^\prime(\phi^\prime)$ the map obtained by applying the required scalar extensions. Explicitly, this is the map 
\begin{equation} \label{notation1}
(V\otimes_\mathcal E\mathcal E_\phi)(\phi)\otimes_{\mathcal E_\phi}\mathcal{E}\mathcal{E}^\prime_{\phi,\phi'}\longrightarrow
(V'\otimes_{\mathcal{E}'}\mathcal{E}'_{\phi'})(\phi')\otimes_{\mathcal{E}'_{\phi'}}\mathcal{E}\mathcal{E}'_{\phi,\phi'},
\end{equation} 
where $\mathcal{E}\mathcal{E}'_{\phi,\phi'}$ is the composite of $\mathcal{E}_\phi$ and $\mathcal{E}'_{\phi'}$. We adopt a similar convention for maps between de Rham modules of such representations, so $\D_{\dR,\mathcal{F}}\bigl(V(\varphi)\bigr)\rightarrow\D_{\dR,F}(V^\prime(\varphi^\prime))$ denotes the map obtained from \eqref{notation1}. 

For a commutative ring with unity $R$ and a finitely generated, free $R$-module $M$, we write $M^{\otimes r}$ for the $r$-fold tensor product over $R$ of $M$ with itself; the symmetric group $S_r$ acts on $M^{\otimes r}$ as permutation of the factors and we let $\Sym^r(M)$ be the subspace of $M^{\otimes r}$ consisting of the invariants of $M^{\otimes r}$ under the action of $S_r$. 

Given a modular form $f$ of even weight $k\geq 2$, level $\Gamma_1(N)$ for some integer $N\geq1$ and character $\varepsilon_f$, we denote by $E_f$ the Hecke field of $f$ (\emph{i.e.}, the number field generated over $\Q$ by the Fourier coefficients of $f$) and for a finite prime $\p$ of $E_f$ we write $V_f=V_{f,\p}$ for the $\p$-adic Galois representation over the completion of $E_f$ at $\p$ attached to $f$ by Deligne (\cite{Del-Bourbaki}). We normalize this representation by requiring that if $\Frob_\ell$ is a \emph{geometric} Frobenius, then the polynomial $\det\bigl(1-\Frob_\ell X\,|\,V_f\bigr)$ is equal to $X^2-a_\ell(f)X-\varepsilon_f(\ell)\ell^{k-1}$ for all primes $\ell\nmid Np$, where $p$ is the residual characteristic of $\p$ and $a_\ell(f)$ is the $\ell$-th Fourier coefficient of $f$. 

Finally, the Hodge--Tate weight of $\chi_\cyc:G_{\Q_p}\rightarrow\Z_p^\times$ is $-1$.
 
\section{Motives of Hecke characters}\label{secmotives}

We briefly review the theory of motives of Hecke characters; in doing this, we follow the standard references \cite{DMOS82} and \cite{S86}. We also describe how to construct these motives by means of generalized Kuga--Sato motives introduced in this section but inspired by analogous constructions in \cite{BDP1}, \cite{BDP2}, \cite{BDP3}, \cite{BDP4}. 

\subsection{Algebraic Hecke characters} \label{Heckechar}

We start by fixing some notation and terminology for (algebraic) Hecke characters. See, \emph{e.g.}, \cite[Section 2]{BDP2}, \cite[\S2.1]{KL}, \cite[Chapter 0]{S86} for more details. Fix a number field $F\subset\bar\Q$, whose degree will be denoted by $d$, and an ideal $\mathfrak{f}$ of the ring of integers $\cO_F$ of $F$; denote by $\Sigma\defeq\bigl\{\phi:F\hookrightarrow\bar{\Q}\bigr\}$ the set of distinct embeddings of $F$ into $\bar\Q$. Let $I_{\mathfrak f}$ stand for the group of (non-zero) fractional ideals of $F$ that are prime to $\mathfrak{f}$ and let $E\subset\bar\Q$ be a number field such that $F\subset E$. An $E$-valued \emph{algebraic Hecke character} of conductor dividing $\mathfrak{f}$ is a group homomorphism 
\[ \chi:I_{\mathfrak{f}}\longrightarrow E^\times \] 
satisfying the following property: for every totally positve $\alpha\in F^\times$ with $\alpha\equiv1\pmod{\mathfrak{f}}$, there is an equality 
\[ \chi\bigl((\alpha)\bigr)=\prod_{\phi\in\Sigma}\phi(\alpha)^{n_\phi}, \]
where $n_\phi\in\Z$ for each $\phi\in\Sigma$. The \emph{infinity type} of $\chi$ is the vector $T\defeq{(n_{\phi})}_{\phi\in\Sigma}\in \Z^{\Sigma}$. The infinity type induces an algebraic homomorphism $T \colon F^\times\rightarrow E^\times$, which will be denoted with the same symbol (see, \emph{e.g.}, \cite[Chapter 0, \S 2]{S86} for the general notion of an \emph{algebraic homomorphism} $F^\times\rightarrow E^\times$). The \emph{conductor} of $\chi$ is the smallest (in the sense of divisibility) ideal $\mathfrak{f}_\chi$ of $\cO_F$ such that $\chi$ can be extended to a character modulo $\mathfrak{f}_\chi$. The \emph{weight} $w$ of $\chi$ is the integer characterized by the property that for every $\phi\in\Sigma$ we have $n_{\phi}+n_{\bar\phi}=w$, where $\bar\phi$ denotes the complex-conjugate embedding of $\phi$ (\emph{i.e.}, $\bar\phi(x)\defeq\overline{\phi(x)}$ for all $x\in F$, where $x\mapsto \bar{x}$ is complex conjugation, \emph{cf.} \S \ref{notation-subsec}); notice that the existence of such a $w\in\Z$ is a consequence of the proof of Dirichlet's unit theorem (see, \cite[Lemma 7]{Raghuram} or \emph{e.g.}, \cite[Chapter 0, \S 3]{S86}). In particular, the norm map $\Norm_F:I_F\rightarrow\Q^\times$ on the group $I_F$ of (non-zero) fractional ideals of $F$ gives rise to an algebraic Hecke character of $F$ of infinity type $(1,\dots,1)$, weight $2$ and conductor $\cO_F$. Moreover, for a Hecke character $\chi$ set $\chi^*(x)\defeq\overline{\chi(x)}$ for all $x\in I_{\mathfrak f}$.

Given, as above, an algebraic Hecke character $\chi\colon I_{\mathfrak{f}}\rightarrow E^\times$ of infinity type $T$, there is a unique group homomorphism 
\[ \chi_\A\colon \A_F^\times\longrightarrow E^\times \] 
such that $\chi_\A^{-1}(\{1\})$ is open, the restriction of $\chi_\A$ to $F^\times$ is the character $a\mapsto \prod_{\phi\in\Sigma}\phi(a)^{n_i}$ and $\chi_\A(\pi_\mathfrak{q})=\chi(\mathfrak{q})$ for all prime ideals $\mathfrak{q}\nmid \mathfrak{f}$, where $\pi_\mathfrak{q}$ is the adele having a fixed uniformizer as its $\mathfrak{q}$-component and $1$ elsewhere. 

Now let $K\subset\bar\Q$ be an imaginary quadratic field; then the infinity type of an algebraic Hecke character of $K$ is a pair of integers $(\ell_1,\ell_2)$ corresponding to the ordering $(\phi,\bar\phi)$ of the embeddings of $K$ into $\bar\Q$ in which $\phi\colon K\hookrightarrow \bar\Q$ is the (set-theoretic) inclusion; the weight of the character is $w\defeq\ell_1+\ell_2$. In this case, the \emph{central character} of $\chi$ is the unique Dirichlet character $\varepsilon_\chi$ such that ${\chi|}_{\A_\Q^\times}=\varepsilon_\chi\Norm_\Q^{w}$. 

\subsection{Representations of Hecke characters} \label{secrep} 

Let $F$ be a number field and let $\chi$ be a Hecke character of $F$ of conductor $\mathfrak{f}_\chi$ with values in $E$ and infinity type $T$. Let $\lambda$ be a prime ideal of $E$ and we denote $E_\lambda$ the completion of $E$ at $\lambda$. Recall that $G_F\defeq\Gal(\bar{F}/F)$ is the absolute Galois group of $F$. There is a compatible system of $\lambda$-adic $G_F$-representations
\[ \rho_{\chi,\lambda}\colon G_F\longrightarrow E_{\lambda}^\times, \]  
whose construction is obtained in two steps. Firstly, the algebraic character $T:F^\times\rightarrow E^\times$ gives rise to a morphism $T_\A:\A_F^\times\rightarrow \A_E^\times$ and we denote by $T_\lambda:\A_F^\times\rightarrow E_\lambda^\times$ the composition of $T_\A$ with the canonical projection $\A_E^\times\longepi E_\lambda^\times$. The \emph{$\lambda$-adic avatar} $\chi_\lambda:\A_F^\times/F^\times\rightarrow E_\lambda^\times$ of $\chi$ is defined by $\chi_\lambda\defeq\chi_\A\cdot T_\lambda^{-1}$. Secondly, compose the character $\chi_\lambda$ with the inverse of the geometrically normalized reciprocity map $\mathrm{rec}_F:\A_F^\times/F^\times\rightarrow G_F$ to obtain the searched-for continuous character $\rho_{\chi,\lambda}: G_F\rightarrow E_{\lambda}^\times$. See, \emph{e.g.}, \cite[\S2.1]{KL} or \cite[Chapter 0, \S 5]{S86} for details on this construction. Observe that the eigenvalues of the polynomial $\det\bigl(X-\rho_{\chi,\lambda}(\Frob_\mathfrak{q})\bigr)$
of a geometric Frobenius $\Frob_\mathfrak{q}$ of $\rho_{\chi,\lambda}$ for all (unramified) prime ideals $\mathfrak{q}\nmid \mathfrak{f}_\chi\Norm_{E}(\lambda)$ 
of $F$ have absolute value equal to $\Norm_F(\mathfrak{q})^{w/2}$, where $w$ is the weight of $\chi$ and $\Norm_E$ and $\Norm_F$ are the absolute norm maps on ideals of $E$ and $F$, respectively.  

\subsection{Representations of CM abelian varieties} \label{secrepab}

Let $A$ be an abelian variety defined over a number field $F$ with complex multiplication by $\Phi$; this means that $\Phi$ is a product of CM fields such that $\dim_{\Q}(\Phi)=2\dim(A)$ and there is an an inclusion $i_A\colon \Phi\hookrightarrow \End_{F}(A)\otimes_\Z\Q$, where $\End_{F}(A)$ is the ring of endomorphisms of $A$ defined over $F$. If $\ell$ is a prime number, then the action of $G_{F}$ on the $\ell$-adic Tate module $T_\ell(A)$ of $A$ gives rise to a continuous character 
$\rho_{A,\ell}\colon G_{F}\rightarrow (\Phi\otimes_\Q\Q_\ell)^\times$. Moreover, recall that there is an isomorphism of $G_{F}$-modules
\[ \Ta_\ell(A)^\vee \simeq H^1_\text{\'et}(\bar{A},\Z_\ell)(1), \] 
where $\star(1)$ on the right denotes Tate twist and $\star^\vee$ on the left denotes $\Q_p$-linear dual. See, \emph{e.g.}, \cite[Chapter 1, \S1]{S86} for details. 

Assume that $\Phi=E$ is a field. Then for any finite prime $\lambda$ of $E$ the action of $G_{F}$ on the $\lambda$-adic Tate module $T_\lambda(A)$ of $A$ gives rise to a continuous character 
\[ \rho_{A,\lambda}:G_{F}\longrightarrow E_{\lambda}^\times \] 
(as before, $E_\lambda$ is the completion of $E$ at $\lambda$). Note that if $\mathfrak{q}$ is a finite prime of $F$ of good reduction for $A$ with $\mathfrak{q}\nmid \Norm_{E}(\lambda)$ and $\Frob_\mathfrak{q}$ is a geometric Frobenius at $\mathfrak q$, then the roots of the polynomial 
\[ \det\bigl(1-\rho_{A,\lambda}(\Frob_\mathfrak{q})X\bigr) \] 
are algebraic numbers having absolute value $\Norm_{F}(\mathfrak{q})^{-1/2}$. The $G_F$-representations $\rho_{A,\lambda}$ thus obtained form a compatible system (see, \emph{e.g.}, \cite[Chapter 1, \S1.2]{S86} for the notion of \emph{compatible system} of Galois representations in the present context). 

\subsection{Motives of Hecke characters} 

Let $\mathrm{Mot}(F)$ denote the Tannakian semisimple category of \emph{motives for absolute Hodge cycles} over a number field $F$; we refer to \cite{DMOS82} and \cite{S86} for the terminology and the main definitions on motives (see, in particular, \cite[Chapter 1, \S2.2]{S86}). In the following lines, we briefly recall what we need in this paper. 

An object $M\in\mathrm{Mot}(F)$ can be identified with a triple $M=(X,\Pi,n)$, where $X$ is a smooth projective variety defined over $F$, $\Pi$ is a projector (\emph{i.e.}, an absolute Hodge cycle) with coefficients in $\Q$ and $n\in\Z$. In $\mathrm{Mot}(F)$, the projector $\Pi$ decomposes as a sum of pairwise orthogonal idempotents $\Pi=\Pi^0+\Pi^1+\dots$; we let $h^i(X)\defeq\Pi^i(X)$ for all $i\in\N$. To make sense of this, recall that in the category $\mathrm{Mot}(F)$ every idempotent comes from a splitting of the objects: see, \emph{e.g.}, \cite[p. 35]{S86} for further details. Given a motive $M=(X,\Pi,n)$, we set $h^i(M)\defeq h^{i+n}(X)$. Following \cite{S86}, we denote by $C^q_\mathrm{AH}(X)$ the $\Q$-vector space of absolute Hodge cycles of codimension $q$ on $X$ over $F$. 

Denote by $\End_\Q(M)$ the $\Q$-algebra of endomorphisms of $M$. A motive $M$ is said to have \emph{coefficients} in a number field $E$ if there exists an embedding of $\Q$-algebras $E\hookrightarrow\End_\Q(M)$. We write $\mathrm{Mot}_E(F)$ for the category of motives defined over $F$ with coefficients in $E$. 

We let $M\otimes N$ denote the tensor product in $\mathrm{Mot}_E(F)$; see \cite[p. 47]{S86}) for details. For an integer $n\geq1$, we write $M^{\otimes n}$ for the tensor product $M\otimes\dots\otimes M$ ($n$ factors). For a field extension $F\prime/F$, there is a base change functor, denoted by $M\mapsto M\times_FF^\prime$, from $\mathrm{Mot}_E(F)$ to $\mathrm{Mot}_E(F^\prime)$. For a field extension $E^\prime/E$, there are an extension of coefficients functor, denoted by $M\mapsto M\otimes_EE^\prime$, from $\mathrm{Mot}_E(F)$ to $\mathrm{Mot}_{E^\prime}(F)$ obtained by tensoring with the unit object, and a restriction of coefficients functor, denoted by $M\mapsto M_{|E}$, from $\mathrm{Mot}_{E^\prime}(F)$ to $\mathrm{Mot}_E(F)$ that is obtained by restricting the action of $E^\prime$ to $E$; again, see \cite[p. 47]{S86} for details. We adopt the following standard notation: given motives $(X,\Pi_X,n_X)$ and $(Y,\Pi_Y,n_Y)$ in $\mathrm{Mot}_E(F)$, we denote by $(X\times _F Y,\Pi_X\Pi_Y,n_X+n_Y)$ their product, meaning that $\Pi_X\Pi_Y$ is the projector $(\Pi_X,\Pi_Y)$ acting as $\Pi_X$ (respectively, $\Pi_Y$) on the first (respectively, second) factor of $X\times _FY$; similar conventions will be adopted also in presence of more than two projectors.  

A motive $M\in \mathrm{Mot}_E(F)$ is equipped for all $i\in\N$ with Betti (\emph{i.e.}, singular) realizations $H^i_\mathrm{Betti}(M_\sigma)$ in $E$-vector spaces, one for each embedding $\sigma:E\hookrightarrow \C$, a de Rham realization $H^i_\mathrm{dR}(M)$ in filtered $E\otimes_\Q F$-modules and for each prime $\lambda$ of $E$ an \'etale realization $H^i_\text{\'et}(M_\lambda)$ in $\lambda$-adic $G_F$-representations; furthermore, these realizations satisfy suitable comparison isomorphisms (\emph{cf.} \cite[Chapter 1, \S2.2.2]{S86}). The \emph{rank} of $M$ is the dimension $r$ of $H^i_\mathrm{Betti}(M_\sigma)$ as an $E$-vector space, which is independent of the choice of $\sigma$. It follows that $H^1_\mathrm{dR}(M)$ is a free $E\otimes_\Q F$-module of rank $r$ and $H^1_\text{\'et}(M_\lambda)$ is a vector space of dimension $r$ over $E_\lambda$.  

In the next definition, $\chi$ is a Hecke character of $F$ with coefficients in $E$ and conductor $\mathfrak{f}_\chi$.

\begin{definition}\label{defmotives} 
A \emph{motive for $\chi$} with coefficients in $E$ is a motive $M\in\mathrm{Mot}_E(F)$ of rank $1$ such that for every prime $\lambda$ of $E$ and all primes $\mathfrak{q}$ of $F$ such that $\mathfrak{q}\nmid \mathfrak{f}_\chi\Norm_E(\lambda)$ the action of $G_F$ on the $1$-dimensional $E_{\lambda}$-vector space $H^1_\text{\'et}(M_\lambda)$ is given by multiplication by $\chi(\mathfrak{q})$.
\end{definition}

If $M_E(\chi)$ is a motive for $\chi$ with coefficients in $E$ and $E^\prime/E$ is a field extension, then we set 
\[ M_{E^\prime}(\chi)\defeq M_E(\chi)\otimes_EE^\prime. \]
Motives for Hecke characters exist and are unique (\emph{cf.} \cite[Chapter 2, Theorem 4.1 and \S5]{S86}): we will construct them out of Tate motives, Artin motives and motives of abelian varieties, as explained below. 
 
\subsection{Tate motives} \label{Tatemotives} 	

Let $\Q(-1)\defeq h^2(\mathds{P}^1)$, where $\mathds{P}^1$ is the projective line; for every prime number $\ell$, the $\ell$-adic \'etale realization of $\Q(-1)$ is the $\Q_\ell$-linear dual $\Hom\bigl(\Q_\ell(1),\Q_\ell\bigr)$ of the $\ell$-adic cyclotomic character $\Q_\ell(1)$. We let $\Q(1)$ denote the dual of $\Q(-1)$, so that the $\ell$-adic \'etale realization of $\Q(1)$ is $\Q_\ell(1)$. See, \emph{e.g.}, \cite[p. 30]{S86} for details and a description of the other realizations of $\Q(1)$ and $\Q(-1)$. For $M\in\mathrm{Mot}_E(F)$ and $n\in\Z$, set $M(n)\defeq M\otimes \Q(n)$ for the $n$-th Tate twist of $M$ (tensor product in the Tannakian category $\mathrm{Mot}_E(F)$). One may also check (\emph{e.g.}, by comparing the eigenvalues of $\Frob_\ell$) that the motive $M_{\Q}(\Norm_K)$ of the norm map is $\Q(-1)$.  
 
\subsection{Motives of abelian varieties} \label{abmotives} 

Let $K$ be an imaginary quadratic field. Let us fix an algebraic Hecke character $\chi$ of $K$ of infinity type $(-1,0)$ with values in $E_\chi$. Let $H_\chi$ be a finite abelian extension of $K$ such that the Hecke character $\nu_\chi\defeq\chi \circ\Norm_{H_\chi/K}$
takes values in $K^\times$, where $\Norm_{H_\chi/K}$ is the relative norm map on ideals. Then, by a result of Casselman (see, \emph{e.g.}, \cite[Theorem 4.1]{GS81} or \cite[Theorem 6]{Sh71}), there exists an elliptic curve $A_\chi$ with complex multiplication by $\cO_K$, defined over $H_\chi$, whose Serre--Tate character is $\nu_\chi$ (see, \emph{e.g.}, \cite{Gross} or \cite[Theorem 10]{ST}). Thus, for every prime number $\ell$, the $\ell$-adic representation of $G_{H_\chi}$ on the $\ell$-adic Tate module $\Ta_\ell(A_\chi)$ of $A_\chi$ is given by the $\ell$-adic realization of $\nu_\chi$. From here on, we fix the isomorphism $\iota:\cO_K\simeq \End_{H_\chi}(A_\chi)$ in such a way that $\iota (x)$ for every $x\in\cO_K$ acts on the cotangent space $\Omega^1(A_\chi/H_\chi)$ of $A_\chi$ as multiplication by $x$ (via $K\hookrightarrow H_\chi$). 

Consider the abelian variety 
\[ B_\chi\defeq\mathrm{Res}_{H_\chi/K}(A_\chi) \] 
obtained from $A_\chi$ by restriction of scalars from $H_\chi$ to $K$. Set $G_\chi\defeq\Gal(H_\chi/K)$. For each $\rho\in G_\chi$, let $A^\rho_\chi\defeq A_\chi\times_{H_\chi,\rho}H_\chi$, where the product is defined with respect to $\rho:H_\chi\rightarrow H_\chi$ (here we prefer the notation already used for motives and write $\times_FF^\prime=\times_{\Spec{F}}\Spec(F^\prime)$ instead of the somewhat more common notation $\otimes_{F}F^\prime$, which we reserve for base change of coefficient fields). Consider the ring 
\[ R_\chi\defeq\bigoplus_{\rho\in G_\chi}\Hom_{H_\chi}(A_\chi,A_\chi^\rho)\rho, \] 
where $\Hom_{H_\chi}$ stands for  the (abelian, additive) group of isogenies defined over $H_\chi$, with multiplication defined by the formula 
\[ (\phi_\rho\cdot\rho)\cdot(\phi_\tau\cdot\tau)\defeq(\phi_\tau^{\rho}\circ\phi_\rho)\cdot(\tau\rho) \] 
(here, for a given $\phi_\tau:A_\chi\rightarrow A_\chi^\tau$, we denote by $\phi_\tau^\rho:A_\chi^{\rho}\rightarrow A_\chi^{\tau\rho}$ the induced canonical map); see, \emph{e.g.}, \cite[p. 10]{Den} for details. Then $\Phi_\chi\defeq R_\chi\otimes_\Z\Q$ is the full endomorphism ring of $B_\chi$: it is a commutative, semisimple $K$-algebra of dimension $h_\chi\defeq[H_\chi:K]$. We may also fix an isomorphism $R_\chi\simeq\End_K(B_\chi)$ in such a way that the square 
\[ \xymatrix{R_\chi\ar[r]^-\simeq & \End_K(B_\chi)\\\cO_K\ar@{^(->}[u] \ar[r]^-\simeq & \End_{H_\chi}(A_\chi),\ar@{^(->}[u]}
\] 
where the vertical arrows are the canonical embeddings (more precisely, the embedding of $\cO_K$ into $R_\chi$ takes 
$x$ to the composition $A_\chi\xrightarrow{\iota (x)} A_\chi\rightarrow A_\chi^\rho$ on the $\rho$-summand), commutes.  
With this normalization, $B_\chi$ becomes an abelian variety defined over $K$ with complex multiplication by $\Phi_\chi$. Denote by
\[ \psi_{B_\chi}\colon \A_K^\times\longrightarrow \Phi_\chi^\times \] 
the Serre--Tate character associated with $B_\chi$ (see, \emph{e.g.}, \cite[p. 196]{GS81} for the notion of Serre--Tate characters in this setting). For every homomorphism $\lambda:\Phi_\chi\rightarrow\C$, we thus obtain a homomorphism $\psi_{B_\chi,\lambda}\defeq\lambda\circ \psi_{B_\chi}:A_K^\times\rightarrow \C^\times$. By \cite[\S4.8]{GS81}, the sets $\bigl\{\psi_{B_\chi,\lambda}\mid\lambda:\Phi_\chi\rightarrow\C\bigr\}$ and $\bigl\{\chi\psi_{0}\mid\psi_0: G\rightarrow\C^\times\bigr\}$ are equal. We conclude that there are an idempotent $e_\chi$ and an identification $E_\chi=e_\chi\Phi_\chi$ such that the square 
\[ \xymatrix{\A_K^\times\ar[r] \ar[d]^-{\chi}& \Phi_\chi^\times\ar@{=}[d]^-{\cdot e_\chi} \\
E_\chi^\times \ar@{=}[r] &e_\chi\Phi_\chi^\times} \]
commutes. 

In the following definition, let the Hecke character $\chi$ have infinity type $(-1,0)$.

\begin{definition}\label{def1} 
The \emph{motive of $\chi$} is $M_{E_\chi}(\chi)\defeq\bigl(h^1(B_\chi),e_\chi,1\bigr)$.
\end{definition}

\begin{remark}\label{remdef1} 
There are equalities 
\[ M_{E_\chi}(\chi)=M_{E_\chi}(\chi\Norm_K)\times K(1)=\bigl(h^1(B_\chi),e_\chi,1\bigr), \]
so Definition \ref{def1} is equivalent to defining the motive $M_{E_\chi}(\chi\Norm_K)$ to be $\bigl(h^1(B_\chi),e_\chi,0\bigr)$. Note that $\chi\Norm_K$ has infinity type $(0,1)$. 
\end{remark}

Now we consider the abelian variety $B_{\chi}^\ast=\mathrm{Res}_{H_\chi/K}(A_\chi^\ast)$, where $A_\chi^\ast=A_\chi$ as before as an $H_\chi$-scheme, now equipped with the isomorphism $\iota ^\ast \colon \mathcal{O}_K\rightarrow\End_{H_\chi}(A_\chi)$ normalized so that the action of $\iota^\ast(x)$ on $\Omega^1(A_\chi/H_\chi)$ is via $\bar{x}$. The same construction as before is then used to define the motive of the character $\chi^\ast\Norm_K$, where $\chi^\ast$ satisfies $\chi^*(x)=\overline{\chi(x)}$ and has infinity type $(1,0)$.

\begin{definition}\label{def2} 
Let the Hecke character $\chi$ have infinity type $(0,-1)$. The \emph{motive of $\chi$} is $M_{E_\chi}(\chi)\defeq\bigl(h^1(B_\chi^*),e_{\chi^*},1\bigr)$.
\end{definition}

\begin{remark} As in Remark \ref{remdef1}, Definition \ref{def2} is equivalent to defining the motive $M_{E_\chi}(\chi^*\Norm_K)$ to be $(h^1(B_\chi^*),e_{\chi^*},0)$. Note that $\chi^*\Norm_K$ has infinity type $(1,0)$.
\end{remark}

Definitions \ref{def1} and \ref{def2} allow us to define motives for all Hecke characters of infinity type $(-1,0)$ and $(0,-1)$, and for all Hecke characters that may be obtained from these characters by applying an integer power of the norm map. To complete the definition for all Hecke characters, we still need to introduce motives of finite order characters, which are discussed in the next subsection.

\subsection{Artin motives} \label{Artinmotives}

To discuss motives of finite order Hecke characters, we need to briefly recall the notion of Artin motives; see \cite[p. 211]{DMOS82} and \cite[Chapter 2, \S2.4.1]{S86} for details. 

Let $F$ be a number field. We denote by $\mathrm{Rep}_F$ the Tannakian category of representations of $G_F$ on finite-dimensional $\Q$-vector spaces. For a variety $X$ defined over $F$ of dimension $0$, the set $X(\bar{F})$ is finite and equipped with a (left) $G_F$-action; we thus obtain a representation $\rho_X=\Q^{X(\bar{F})}$ in $\mathrm{Rep}_F$. It is easy to check that absolute Hodge cycles ${C}^0_\mathrm{AH}(X\times Y)$ for $0$-dimensional $F$-varieties $X$ and $Y$ are mapped to $G_F$-homomorphisms of representations, and if we denote by $\mathrm{Mot}^0(F)$ the category of such varieties equipped with morphisms induced by absolute Hodge cycles, then we obtain a fully faithful functor $\mathrm{Mot}^0(F)\rightarrow \mathrm{Rep}_F$. One may define the category of \emph{Artin motives} as the smallest Tannakian subcategory of $\mathrm{Rep}_F$ containing the image of $\mathrm{Mot}^0(F)$. 

Now let $\varphi$ be a finite order Hecke character (of infinity type $(0,0)$) of $F$ with coefficients in a number field $E$ containing $F$. Via the reciprocity law of (global) class field theory, one associates with $\varphi$ a representation $\rho_\varphi\colon G_F\rightarrow E^\times$ of $G_F$, uniquely characterized by the formula $\rho_\varphi(g)(x)=\varphi(\mathfrak{a}_g)x$ for all $x\in E$, where $\mathfrak{a}_g\in \A_F^\times/F^\times$ is the image of $g\in G_F$ via the geometrically normalized Artin map. The representation $\rho_\varphi$ factors through the Galois group $G_\varphi\defeq\Gal(H_\varphi/K)$ of a suitable abelian extension $H_\varphi$ of $K$, and we still denote by $\rho_\varphi:G_\varphi\rightarrow E^\times$ the map induced by $\rho_\varphi$ on $G_\varphi$. Fix an isomorphism $X_E\defeq F[X]/(f)\simeq E$; then $X$ is a variety defined over $F$ whose set of geometric points $X_E(\bar{F})$ is naturally equipped with a $G_F$-action; this gives rise to a representation $\rho_{X_E}$ of $G_F$ on the finite-dimensional $\Q$-vector space $\Q^{X_E(\bar{F})}$, as before. Let us consider the projector 
\[ e_\varphi\defeq\frac{1}{|G_\varphi|}\cdot\sum_{g\in G_\varphi}\rho_\varphi(g)g \]
acting on $\mathrm{Rep}_F$. The pair $(\rho_{X_E},e_\varphi)$ corresponds to the Artin motive $(X_E,e_\varphi)$; the \emph{motive of $\varphi$} is $(X_E,e_\varphi,0)$, which is a motive defined over $K$ with coefficients in $E$.

 \subsection{Construction of motives of Hecke characters}\label{motives} 
 
Here we construct the motive of Hecke characters of imaginary quadratic fields. Let $\chi$ be a Hecke character of an imaginary quadratic field $K$ of infinity type $(\ell_1,\ell_2)$ with coefficients in $E_\chi$. Let $\Sigma=\{\chi_1,\dots,\chi_t\}$ be a set of $t$ not necessarily distinct Hecke characters of $K$; suppose that $\chi_j$ has coefficients in $E_{\chi_j}$ for $j\in\{1,\dots,t\}$. We assume that we have constructed the motive $M_{E_{\chi_j}}(\chi_j)$ of $\chi_j$ for $j\in\{1,\dots,t\}$. Let $s$ be an integer such that $\chi_1\cdots\chi_t\Norm_K^s$ has the same infinity type as $\chi$, so that we may write 
\begin{equation} \label{factorization}
\chi=\varphi\chi_1\cdots\chi_t\Norm_K^s
\end{equation} 
for some finite order character $\varphi$. Denote by $E_\Sigma$ the composite field of $E_{\chi_1},\dots,E_{\chi_t}, E_\varphi$ and note that $E_\chi\subset E_\Sigma$. For any $\star\in\{\varphi,\chi_1,\dots,\chi_t\}$, set $M_\Sigma(\star)\defeq M_{E_\star}(\star)\otimes_{E_\star}E_\Sigma$. Now define the motive
\[ M_{{\Sigma}}(\chi)\defeq\bigl(M_{\Sigma}(\chi_1)\otimes \dots\otimes M_{\Sigma}(\chi_t)\otimes M_{{\Sigma}}(\varphi)\bigr)(-s), \]
where the tensor product is taken in the (Tannakian) category $\mathrm{Mot}_{E_{\Sigma}}(K)$ of motives over $K$ with coefficients in ${E_{\Sigma}}$. 
Then $M_{{\Sigma}}(\chi)$ is a motive associated with $\chi$ with coefficients in $E_{\Sigma}$. 

Now we descend coefficients to $E_\chi$. Let $H_{\Sigma}$ be a finite extension of $K$ such that $\chi_i\circ\Norm_{H_\Sigma/K}$ and $\varphi\circ\Norm_{H_\Sigma/K}$ take values in $K^\times$, for all $i=1,\dots,t$.  
For $\star\in \{\varphi,\chi_1,\dots,\chi_t\}$,  
let $M_{K}(\star\circ\Norm_{H_\Sigma/K})$ be the motive of $\star\circ\Norm_{H_\Sigma/K}$; these are then motives defined over $H_\Sigma$ with coefficients in $K$, 
and we may consider the motive 
\[ M_{E_\chi}(\chi\circ\Norm_{H_\Sigma/K})\defeq\left(\bigotimes_{\star\in\{\varphi,\chi_1,\dots,\chi_t\}}M_{K}(\star\circ\Norm_{H_\Sigma/K})\!\right)(-s) \]
defined over $H_\Sigma$ with coefficients in $E_\chi$. 
As indicated, this is a motive for $\chi\circ\Norm_{H_\sigma/K}$ with coefficients in $E_\chi$ and there is an isomorphism 
\[ M_{E_{\Sigma}}(\chi)\times_K H_\Sigma\simeq M_{E_{\chi}}(\chi\circ\Norm_{H_\Sigma/K})\otimes_{E_\chi}E_{\Sigma}. \]
in $\mathrm{Mot}_{E_{\Sigma}}({H_\Sigma})$. Then the operator 
\[ e_\Sigma\defeq\frac{1}{[E_\Sigma:E_\chi]}\cdot\sum_{\sigma\in \Gal(E_\Sigma/E_\chi)}\sigma \] 
acting on $M_{E_{\chi}}(\chi\circ\Norm_{H_\Sigma/K})\otimes_{E_\chi}E_{\Sigma}$ defines a projector, denoted by the same symbol, in $\End_{\mathrm{Mot}_{E_{\Sigma}}({H_\Sigma})}\bigl(M_{E_{\Sigma}}(\chi)\times_K H_\Sigma\bigr)$ such that 
\[ e_\Sigma\bigl(M_{E_{\Sigma}}(\chi)\times_K H_\Sigma\bigr)\simeq M_{E_{\chi}}(\chi\circ\Norm_{H_\Sigma/K}) \]
as motives over $H_\Sigma$ with coefficients in $E_\chi$. Since $e_\Sigma$ is clearly defined over $K$, we can define the \emph{motive of $\chi$ over $K$ with coefficients in $E_\chi$ associated with factorization \eqref{factorization}} to be 
\[ M_{E_\chi}^{(\Sigma)}(\chi)\defeq e_\Sigma M_{E_{\Sigma}}(\chi). \]  
By \cite[Chapter 1, Section 5]{S86}, the motive attached to $\chi$ is unique up to isomorphism and is independent of factorization \eqref{factorization}.

\subsection{Standard factorization} \label{remell}

Fix an elliptic curve $A$ with CM by the ring of integers $\cO_K$ of $K$ and defined over the Hilbert class field $H$ of $K$. Let $\nu_A$ be the Serre--Tate character of $A$ and let $\chi_A$ be a Hecke character of $K$ of weight $-1$ and coefficients in $E_{\chi_A}$ such that $\nu_A=\chi_A\circ\Norm_{H/K}$ (see, \emph{e.g.}, \cite[Theorem 4.1, (v)]{GS81}).

Assume that the infinity type of $\chi_A$ is $(-1,0)$, so the infinity type of $\chi_{A}^*=\chi_{A^*}$ is $(0,-1)$, where $\nu_{A}^\ast=\chi_{A}^\ast\circ\Norm_{H/K}$ and $\nu_{A^\ast}=\nu_A^\ast$ is the Serre--Tate character of $A^\ast$. Therefore, $\chi_A\chi_A^\ast$ has infinity type $(-1,-1)$ and coefficients in $E_{\chi_A}$. 
 
Now let $\chi$ be a Hecke character of $K$ of infinity type $(\ell_1,\ell_2)$. The definition adopted in \cite{S86} of $M_{E_\chi}(\chi)$ 
is obtained via the procedure explained in \S\ref{motives} by means of the factorization $\chi=\varphi\chi_A^{-\ell_1}(\chi_A^\ast)^{-\ell_2}$, which we call \emph{standard factorization} of $\chi$.

\subsection{Deligne--Scholl motives} \label{secscholl} 

The motives of Hecke characters of infinity type $(\ell-1,0)$ for an even integer $\ell\geq2$ can be realized in terms of Deligne--Scholl motives of modular forms, as we recall in this subsection. 

Fix an even integer $k\geq 2$. Let $\psi$ be a Hecke character of an imaginary quadratic field $K$ of infinity type $(k-1,0)$ with coefficients in $E_\psi$ and denote by $\mathfrak{f}_\psi$ the conductor of $\psi$. Recall that the symbol $\varepsilon_\psi$ stands for the central character of $\psi$, as defined in \S\ref{Heckechar}. Set $N\defeq D_K\Norm(\mathfrak{f}_\psi)$. The \emph{theta series} attached to $\psi$ is the cusp form $\theta_\psi\in S_{k}\bigl(\Gamma_0(D_K\Norm(\mathfrak{f}_\psi)),\varepsilon_K\varepsilon_\psi\bigr)$ defined by $\theta_\psi\defeq\sum_{\mathfrak{q}} \psi(\mathfrak{q}) q^{\Norm \mathfrak{q}}$ (see, \emph{e.g.}, \cite[Proposition 3.13]{BDP2} or \cite[\S3.3]{KL}). 

Consider the congruence subgroup 
\[ \Gamma_\psi\defeq\bigl\{\smallmat abcd\in\Gamma_0(N)\mid(\varepsilon_K\varepsilon_\psi)(a)=1\bigr\} \]
and denote by $X_{\Gamma_\psi}$ the compact modular curve of level $\Gamma_\psi$. Set $r\defeq k-2$. Let $W_r$ be the $(k-1)$-dimensional Kuga--Sato variety over the modular curve $X_{\Gamma_\psi}$. Recall that, by definition, $W_r$ is the canonical desingularization of the $r$-fold self-product of the universal elliptic curve $\mathcal{E}_{\Gamma_\psi}\rightarrow X_{\Gamma_\psi}$; this variety is defined over $\Q$. See, \emph{e.g.}, \cite[\S2.1]{BDP1} for details on the construction. We may then consider the \emph{Chow motive} $(W_r,\epsilon_W,0)$ of modular forms of weight $k$ and level $\Gamma_{\psi}$; here $\epsilon_W$ is a projector defined in 
\cite[\S1.1]{Sch90} (see also \cite[\S2]{LV17}, \cite[\S2]{Ne92}). The \emph{Scholl motive of $\theta_\psi$} is the motive 
\[ M(\theta_\psi)\defeq\bigl(W_r\otimes E_\psi,\pi_{\theta_\psi}\epsilon_W,0\bigr), \] 
where the projector $\pi_{\theta_\psi}$ is described, \emph{e.g.}, in \cite[\S4.2.0]{Sch90} as the $\theta_\psi$-eigenspace for the action of the Hecke algebra (see also \cite[\S2]{LV17}, \cite[\S3]{Ne92}). Then $M(\theta_\psi)$ is a Grothendieck (\emph{i.e.}, homological) motive defined over $\Q$ with coefficients in $E_\psi$. 

\begin{remark}
Strictly speaking, the results quoted above are stated for $\Gamma_1(N)$ instead of $\Gamma_\psi$; however, in light of \cite[Remark 2.5]{BDP2}, they work in this more general situation. 
\end{remark}

Let $p\nmid N$ be a prime number. Let $V_{\theta_\psi,p}$ denote Deligne's $p$-adic Galois representation 
\[ \rho_{\theta_\psi,p}:G_\Q\longrightarrow \Aut(V_{\theta_\psi,p})\simeq\GL_2(E_\psi\otimes_\Q\Q_p) \] 
attached to $\theta_\psi$, which is characterized (see, \emph{e.g.}, \cite[Theorem 1.2.4]{Sch90}) by requiring that the \emph{geometric} Frobenius at primes $q\nmid Np$ has characteristic polynomial 
\[ P_q(X)\defeq X^2-a_q(\psi)X+\varepsilon_\psi(q)q^{k-1} \]  
(here we view $a_q(\psi)$ and $\varepsilon_\psi(q)$ as elements of $E_\psi\otimes_\Q\Q_p$). By \cite[Theorem 1.2.1]{Sch90}, there is an isomorphism of $p$-adic Galois representations  
\[ V_{W}\defeq\epsilon_WH^{r+1}_\text{\'et}(\bar W_r,\Q_p) \simeq H^1\bigl(\Gamma_\psi,\Sym^{r}(\Q_p)\bigr) \]
of $G_\Q$, where $\bar{W}_r\defeq W_r\otimes_{\Q}\bar\Q$; from this isomorphism, taking into account the Hecke action, we obtain an isomorphism of $p$-adic Galois representations  
\[ \pi_{\theta_\psi}(V_W\otimes_{\Q}E_\psi)\simeq V_{\theta_\psi,p} \]
(see, \emph{e.g.}, \cite[Theorem 1.2.4]{Sch90}). We may also consider the $p$-adic representation 
\[ V_{\psi,p}\defeq H^1_\text{\'et}\bigl(M_{E_\psi}(\psi)_p\bigr)=\bigoplus_{\mathfrak p\mid p} H^1_\text{\'et}\bigl(M_{E_\psi}(\psi)_\mathfrak p\bigr), \]
where the direct sum is taken over all the finite primes $\p$ of $E_\psi$ above $p$ (thus, there is a splitting $E_\psi\otimes_\Q\Q_p\simeq\oplus_{\mathfrak{p}\mid p}E_{\psi,\mathfrak{p}}$ with $E_{\psi,\mathfrak{p}}$ the completion of $E_\psi$ at $\mathfrak{p}$). For a finite-dimensional representation $\rho:G_K\rightarrow\Aut(V)$, we denote by $\Ind_\Q^K(\rho)\defeq\Ind_{G_\Q}^{G_K}(\rho)$ the induction of $\rho$ from $G_K$ to $G_{\Q}$. By comparing Hecke polynomials, we easily see that 
\begin{equation} \label{induction}
V_{\theta_\psi,p}\simeq\Ind_\Q^K(V_{\psi,p})
\end{equation} 
as $p$-adic representations of $G_\Q$ (note that the eigenvalues of a geometric Frobenius at all unramified primes $\mathfrak{q}$ have absolute value $\Norm_K(\mathfrak{q})^{(k-1)/2}$).   

\begin{proposition} \label{scholl-shappa} 
The splitting $M(\theta_\psi)\times K\simeq M_{E_\psi}(\psi)\oplus  M_{E_\psi}(\psi^*)$ holds in $\mathrm{Mot}_{E_\psi}(K)$. 
\end{proposition}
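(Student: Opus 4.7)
The strategy is to decompose $M(\theta_\psi)\times K$ as a direct sum of two rank-one motives in $\mathrm{Mot}_{E_\psi}(K)$, and then identify each summand by computing its $p$-adic realization and invoking the uniqueness of motives for Hecke characters recalled at the end of \S\ref{motives}.

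First I would exploit the CM of $\theta_\psi$ by $K$ to produce a canonical embedding $\iota\colon K\hookrightarrow\End_{\mathrm{Mot}_{E_\psi}(\Q)}(M(\theta_\psi))$ commuting with the $E_\psi$-action. Concretely, the Hecke operators acting on $M(\theta_\psi)$ satisfy $a_q(\theta_\psi)=0$ for primes $q$ inert in $K$, which reflects the factorization of the Hecke eigensystem through $K$; the resulting $K$-action can be realized geometrically by Hecke correspondences on $W_r$, and Scholl's construction shows that it extends to the projector-cut summand $M(\theta_\psi)$. After base change to $K$, the embedding $\iota$ and the scalar action of $K$ together give a $K\otimes_\Q K\simeq K\times K$-action on $M(\theta_\psi)\times K$; the two orthogonal idempotents of $K\times K$ produce the desired splitting $M(\theta_\psi)\times K\simeq N_1\oplus N_2$ in $\mathrm{Mot}_{E_\psi}(K)$ into two rank-one summands.

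Next I would identify the summands via \eqref{induction} and the Mackey restriction formula: $V_{\theta_\psi,p}|_{G_K}\simeq V_{\psi,p}\oplus V_{\psi,p}^c$, where $V_{\psi,p}^c$ is the twist of $V_{\psi,p}$ by the non-trivial element $c\in\Gal(K/\Q)$. A direct check on principal ideals, using that $\psi$ has infinity type $(k-1,0)$, yields $\psi\circ c=\psi^*$ as Hecke characters, so $V_{\psi,p}^c\simeq V_{\psi^*,p}$. The two characters of $K\otimes_\Q K$ separate the summands at the level of $p$-adic realizations, hence (relabeling if necessary) $N_1$ has $p$-adic realization $V_{\psi,p}$ and $N_2$ has $p$-adic realization $V_{\psi^*,p}$. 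By Definition \ref{defmotives} and the uniqueness of motives of Hecke characters, we conclude $N_1\simeq M_{E_\psi}(\psi)$ and $N_2\simeq M_{E_\psi}(\psi^*)$.

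The main obstacle is verifying that the CM embedding $\iota$ is realized by absolute Hodge cycles on $W_r$, rather than merely at the level of $\ell$-adic realizations. This can be handled by invoking the fact that $W_r$ is fibered in products of elliptic curves, so Deligne's theorem (all Hodge cycles on abelian varieties are absolute Hodge) applies to the relevant CM cycles. Alternatively, one may construct the map $M_{E_\psi}(\psi)\oplus M_{E_\psi}(\psi^*)\to M(\theta_\psi)\times K$ directly via correspondences coming from CM points on $W_r$ (following the approach in \cite{BDP1}) and verify the map is an isomorphism on one realization; since motives for absolute Hodge cycles form a semisimple category in which morphisms are determined by their realizations, this suffices to conclude the isomorphism at the motivic level.
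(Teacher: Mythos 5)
Your identification step (Mackey restriction of \eqref{induction}, the computation $\psi\circ c=\psi^*$, and the appeal to uniqueness of motives of Hecke characters) is sound, but the heart of your argument --- the construction of the splitting itself --- has a genuine gap, and it is precisely the point where your route diverges from the paper. The paper does not construct any cycles or extra endomorphisms at all: it observes that both sides are already objects of $\mathrm{Mot}_{E_\psi}(K)$, that by \eqref{induction} and induction--restriction their $\p$-adic realizations agree for almost all $p$ and all $\p\mid p$, and then concludes equality from the rigidity/uniqueness of motives attached to compatible systems of this CM type. You instead try to produce the idempotents $e_K,e_K^*$ from a motivic CM action, and that action is not established by what you write.

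Concretely, the embedding $\iota\colon K\hookrightarrow\End_{\mathrm{Mot}_{E_\psi}(\Q)}(M(\theta_\psi))$ commuting with $E_\psi$ cannot exist except through the coefficient inclusion $K\subset E_\psi$: morphisms of motives for absolute Hodge cycles inject into $G_\Q$-equivariant endomorphisms of the \'etale realization, and $V_{\theta_\psi,p}\simeq\Ind_\Q^K(V_{\psi,p})$ is irreducible as an $E_\psi\otimes_\Q\Q_p[G_\Q]$-module (since $\psi\neq\psi^*$, their Hodge--Tate weights being distinct), so its $E_\psi\otimes_\Q\Q_p$-linear commutant is just $E_\psi\otimes_\Q\Q_p$. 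The extra endomorphism exists only after base change to $K$ --- which is exactly why the statement involves $\times K$. Moreover, Hecke correspondences act on the $\theta_\psi$-eigencomponent through scalars in $E_\psi$ (the eigenvalues); the vanishing $a_q(\theta_\psi)=0$ at inert primes is a statement about those scalars and yields no new correspondence, so it cannot furnish $\iota$, and with $\iota$ reduced to the coefficient action your ``$K\otimes_\Q K$-action'' collapses and produces no idempotents. Your fallback arguments do not repair this: Deligne's ``Hodge implies absolute Hodge'' theorem applies to abelian varieties, and $W_r$ is not one --- being fibered in products of elliptic curves does not by itself place $\epsilon_WH^{k-1}(W_r)$ in the Tannakian category of abelian motives; and the alternative of building a morphism $M_{E_\psi}(\psi)\oplus M_{E_\psi}(\psi^*)\to M(\theta_\psi)\times K$ from CM-point correspondences is exactly the missing construction (the difficulty is to exhibit an absolute Hodge class inducing it), which you do not carry out. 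Your closing remark that a morphism which is an isomorphism on one realization is an isomorphism is correct, but it presupposes the morphism; the paper's comparison-of-realizations argument is the efficient way to avoid having to produce it.
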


\begin{proof} To start with, note that $M(\theta_\psi)\times_\Q K$ is a Grothendieck motive over $K$ with coefficients in $E_\psi$, so it defines a motive for absolute Hodge cycles with projectors. By the properties of induction and restriction, it follows from \eqref{induction} that the $\p$-adic \'etale realizations of the motives in the statement of the proposition coincide for all but finitely many prime numbers $p$ and for each prime $\p$ above $p$. Therefore, the compatible systems of Galois representations attached to the two motives in question are the same, which implies that these motives are equal. \end{proof}

Let us denote by $e_K$ (respectively, $e_K^\ast$) the projector in $\End_{\mathrm{Mot}_{E_\psi}({K})}\bigl(M_{E_\psi}(\psi)\oplus M_{E_\psi}(\psi^\ast)\bigr)$ corresponding to the first (respectively, second) summand $M_{E_\psi}(\psi)$( respectively $M_{E_\psi}(\psi^\ast)$). Then, by Proposition \ref{scholl-shappa}, there is an isomorphism  
\[ e_K\bigl(M(\theta_\psi)\times K\bigr)\simeq M_{E_\psi}(\psi) \]
in $\mathrm{Mot}_{E_\psi}(K)$. In particular, $M_{E_\psi}(\psi)$ can be realized as a Grothendieck motive. It follows that there is an isomorphism 
\begin{equation}\label{isogalpsi}
e_K\pi_{\theta_\psi}(V_W\otimes_{\Q}E_\psi)\simeq V_{\psi,p}
\end{equation}
of $p$-adic representations of $G_K$.

\begin{remark}
As observed in \cite[\S3.4.2]{FL}, $M(\theta_\psi)$ can also be realized as a Chow motive, so the same is true of $M_{E_\psi}(\psi)$.  
\end{remark}

If $\mathcal{E}_{\psi}$ stands for the completion of the image of $E_\psi$ via our fixed embedding $\bar\Q\hookrightarrow\bar\Q_p$, then we simply write $V_{\theta_\psi}$ and $V_\psi$ for the $\mathcal{E}_\psi$-components of $V_{\theta_\psi,p}$ and $V_{\psi,p}$, respectively; the isomorphism class of these representations is independent of the choice of the finite prime $\p$ associated with the embedding $\bar\Q\hookrightarrow\bar\Q_p$. 

\subsection{Norm factorizations} \label{secnorm} 

As in \S\ref{secscholl}, fix an even integer $k\geq2$ and set $\ell\defeq\frac{k-2}{2}$. Let $\chi$ be a Hecke character of infinity type $(1+\ell,-\ell)$ of an imaginary quadratic field $K$. Then there is a factorization, which we call \emph{norm factorization}, of the form $\chi=\psi_\chi\Norm_K^{-\ell}$, where $\psi_\chi$ is a Hecke character of $K$ of infinity type $(k-1,0)$ that only depends on $\chi$. 
In this case, $E_\chi=E_{\psi_\chi}$ and, using the notation introduced in the general construction of \S\ref{motives},  we have $\Sigma=\Sigma_{\psi_\chi}=\{\psi_\chi\}$ and $s=-\ell$. Thus, the motive resulting from this factorization by means of the general construction is $M_{E_{\psi_\chi}}(\chi)\defeq\bigl(W_r\otimes {E_\chi},\epsilon_We_K\pi_{\theta_\psi},\ell\bigr)$.

\subsection{Theta factorizations} \label{secthetafact} 

Now we slightly generalize the construction in the previous subsection. As in \S\ref{secscholl} and \S\ref{secnorm}, fix an even integer $k\geq 2$ and set $\ell\defeq\frac{k-2}{2}$. Let $\psi$ be a Hecke character of  an imaginary quadratic field $K$ of infinity type $(k-1,0)$ and let $\chi$ be a Hecke character of $K$ of infinity type $(1+\ell,-\ell)$. Then there is a factorization, which we call \emph{theta factorization}, of the form $\chi=\psi\vartheta$ for some Hecke character $\vartheta$ of $K$ of infinity type $(-\ell,-\ell)$, and, using the notation in \S\ref{motives}, we let $\Sigma\defeq\Sigma_{\psi,\vartheta}=\{\psi,\vartheta\}$ in this case (note that $s=0$ in the notation of \S\ref{motives}; also, note that a norm factorization considered in \S\ref{secnorm} is, in particular, a theta factorization). Set $e_{\psi,\vartheta}\defeq e_{\Sigma_{\psi,\vartheta}}$ and let $M_{E_{\vartheta}}(\vartheta)\defeq(X_\vartheta,e_\vartheta,\ell_\vartheta)$ be the motive of $\vartheta$ with coefficients in a field $E_\vartheta$. The motive resulting from this factorization by means of the general construction is 
\[ M_{E_{\chi}}(\chi)\defeq\bigl(W_r\otimes {E_\psi}\times X_\vartheta,e_{\psi,\vartheta}e_Ke_\vartheta\pi_{\theta_\psi}\epsilon_W,\ell_\vartheta\bigr). \] 
in the rest of the paper, we will need to be even more specific and factor $\vartheta=\varphi(\chi_A\chi_A^\ast)^\ell$ for some finite order character $\varphi$; in the notation of \S\ref{motives}, we let $\Sigma=\Sigma_{\psi,\vartheta}\defeq\bigl\{\psi,(\chi_A\chi_A^\ast)^\ell,\varphi\bigr\}$ (still, $s=0$ in this case). Set $e_{\psi,\vartheta}\defeq e_{\Sigma_{\psi,\vartheta}}$. Then the motive resulting from this factorization by means of the general construction is 
\begin{equation} \label{motivedesc}
M_{E_{\chi}}(\chi)\defeq\bigl(W_r\otimes {E_\psi}\times X_{E_\varphi}\times B^\ell\times (B^\ast)^\ell ,e_{\psi,\vartheta}e_K\pi_{\theta_\psi}\epsilon_W e_\varphi e_{\chi_A}^\ell e_{\chi_A^\ast}^\ell,0\bigr).
\end{equation}
Moreover, observe that if we define $\varphi_A\defeq(\chi_A\chi_A^\ast)\Norm_K$, then $\varphi_A$ is a finite order character and we may realize the norm factorization in \S\ref{secnorm} as $\chi=(\varphi\varphi_A^\ell)\psi\Norm_K^{-\ell}$, so that $\psi_\chi=(\varphi\varphi_A^\ell)\psi$.  

\section{Realizations of generalized Kuga--Sato motives} \label{sec3}

In order to describe the motive attached to a theta factorization $\chi=\varphi\psi(\chi_A\chi_A^*)^\ell$ as in \S\ref{secthetafact}, in this section we shall study some motives that we call \emph{generalized Kuga--Sato motives}: they are a variation of similar objects previously introduced by Bertolini--Darmon--Prasanna in \cite{BDP1}. 

\subsection{Generalized Kuga--Sato motives} \label{secBDPmot}  

Let $k\geq 2$ be an even integer; as before, set $r\defeq k-2$ and $\ell\defeq r/2$. Recall that $B\defeq\mathrm{Res}_{H/K}(A)$ and $B^*\defeq\mathrm{Res}_{H/K}(A^*)$. Define the $K$-variety 
\[ Z_r\defeq W_r\times B^\ell\times(B^*)^\ell. \]
Let $\Sigma_\ell\defeq \Bmu_2^{\ell}\rtimes S_\ell$, where $\Bmu_2\defeq\{1,-1\}$ and $S_\ell$ is the symmetric group on $\ell$ letters; let $j:\Sigma_\ell\rightarrow \Bmu_2$ be the homomorphism that is the identity on $ \Bmu_2$ and the sign character on $S_\ell$. The group $\Sigma_\ell$ acts on $B^\ell$ and $(B^*)^\ell$ as multiplication by $-1$ for the $ \Bmu_2$ subgroup
and the permutation of the factors for the subgroup $S_\ell$, and thus we obtain an action of $\Sigma_\ell$ both on $B^\ell$ and on $(B^*)^\ell$. Put 
\begin{equation} \label{epsilon}
\epsilon\defeq\frac{1}{2^\ell\ell!}\sum_{\sigma\in\Sigma_\ell}j(\sigma)\sigma.
\end{equation}
View $\epsilon$ as an element of $\Q[\Aut(B^\ell)]$ (respectively, $\Q[\Aut((B^*)^\ell)]$) and call it $\epsilon_B$ (respectively, 
$\epsilon_{B}^*$), then define the projector $\epsilon_Z\defeq\epsilon_W\epsilon_B\epsilon_B^*$.

\begin{definition}\label{genKS} 
The \emph{generalized Kuga--Sato motive} is the Chow motive 
$(Z_r,\epsilon_Z,0)$. 
\end{definition}

\subsection{de Rham realizations} 
We first describe the de Rham realization of generalized Kuga--Sato motives, which also explains the effect of the identity map $A^\times\times(A^*)^\ell\rightarrow A^r$ on de Rham cohomology alluded to before. 

\begin{lemma} \label{lemmaA}  
\begin{enumerate}
\item There are canonical identifications of $K$-vector spaces 
\[ \epsilon_BH^\ell_\dR(B^\ell/K)=\Sym^\ell\bigl(H^1_\dR(B/K)\bigr), \quad\epsilon_B^*H^1_\dR\bigl((B^*)^\ell/K\bigr)=\Sym^\ell(H^1_\dR(B^*/K)\bigr). \]     
\item $\epsilon_BH^j_\dR(B^\ell/K)=0=\epsilon_B^*H^j_\dR(B^\ell/K)$ for $j\neq \ell$. 
\end{enumerate}
\end{lemma}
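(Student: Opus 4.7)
The plan is to combine the Künneth formula with a direct computation of the $\Sigma_\ell$-action on each Künneth summand. By Künneth,
\[
H^j_\dR(B^\ell/K)\simeq\bigoplus_{j_1+\cdots+j_\ell=j}H^{j_1}_\dR(B/K)\otimes_K\cdots\otimes_K H^{j_\ell}_\dR(B/K).
\]
The projector $\epsilon_B$ factors in the group algebra $\Q[\Sigma_\ell]$ as the product of two commuting idempotents $P_1\cdot P_2$, where
\[
P_1\defeq\frac{1}{2^\ell}\sum_{\vec\epsilon\in\Bmu_2^\ell}\Bigl(\prod_i\epsilon_i\Bigr)\vec\epsilon,\qquad P_2\defeq\frac{1}{\ell!}\sum_{\tau\in S_\ell}\mathrm{sgn}(\tau)\,\tau,
\]
and the commutation follows from the multiplicativity of $j$ on the semidirect product.

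First I would analyze $P_1$. The inversion isogeny $[-1]\colon B\to B$ induces multiplication by $(-1)^{j_i}$ on $H^{j_i}_\dR(B/K)$, so $\vec\epsilon\in\Bmu_2^\ell$ acts on the Künneth piece $\bigotimes_i H^{j_i}_\dR(B/K)$ by $\prod_i\epsilon_i^{j_i}$, and a direct calculation gives
\[
P_1\big|_{\bigotimes_i H^{j_i}_\dR(B/K)}=\prod_i\frac{1}{2}\sum_{\epsilon_i\in\Bmu_2}\epsilon_i^{1+j_i},
\]
which equals $1$ when every $j_i$ is odd and $0$ otherwise; therefore only the Künneth summands indexed by all-odd tuples survive.

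Then I would analyze $P_2$ on the surviving pieces. On a summand with all $j_i$ odd, the Koszul sign of any transposition is $(-1)^{j_ij_{i+1}}=-1$, so the Koszul sign of an arbitrary $\tau$ equals $\mathrm{sgn}(\tau)$, and $P_2$ restricts to the naive symmetrization of tensor factors. For $j=\ell$, the constraints that each $j_i$ is positive, odd, and $\sum_i j_i=\ell$ force $j_i=1$ for every $i$, so the only surviving summand is $H^1_\dR(B/K)^{\otimes\ell}$ and its naive symmetrization is $\Sym^\ell H^1_\dR(B/K)$, proving the first identification in~(1); the identical argument for $\epsilon_B^*$ and $(B^*)^\ell$ gives the second.

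For part~(2), the cases $j\not\equiv\ell\pmod 2$ and $j<\ell$ are immediate because no all-odd tuple summing to $j$ exists under the positivity constraint. The remaining case $j>\ell$ with $j\equiv\ell\pmod 2$ is the delicate one and I expect it to be the main obstacle: odd tuples with some $j_i\ge 3$ do survive $P_1$, and one must verify that $P_2$ annihilates the corresponding image inside $H^j_\dR(B^\ell/K)$. In the elliptic-curve setting of~\cite{BDP1} this is automatic because $H^j_\dR(B/K)$ vanishes for $j\ge 3$; since $B=\Res_{H/K}(A)$ may have dimension $[H:K]>1$ with nontrivial higher cohomology, I would argue via the identification $H^\bullet_\dR(B^\ell/K)=\wedge^\bullet(H^1_\dR(B/K)^{\oplus\ell})$ and a careful sign chase exploiting the interaction between the signed symmetrization and the exterior algebra structure.
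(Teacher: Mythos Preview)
Your treatment of part~(1) is correct and follows the same route as the paper: K\"unneth, then the $\Bmu_2^\ell$-action singles out $(H^1)^{\otimes\ell}$, and the signed $S_\ell$-average becomes naive symmetrization because the Koszul sign of a transposition on degree-$1$ classes is $-1$.

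For part~(2), you are right to flag $j>\ell$, $j\equiv\ell\pmod2$ as the crux, and in fact you are being more careful than the paper, whose own proof silently restricts the K\"unneth indices to $\{0,1,2\}$ as if $B$ were an elliptic curve. But your proposed ``careful sign chase'' cannot succeed: part~(2) as stated is \emph{false} whenever $h_K=[H:K]>1$. The case $\ell=1$ already shows this: $\Sigma_1=\Bmu_2$ and $\epsilon_B=\tfrac12(1-[-1]^*)$ is the identity on every odd-degree piece, so
\[
\epsilon_B H^3_\dR(B/K)=\wedge^3 H^1_\dR(B/K)\neq0
\]
once $\dim B\ge2$. For $\ell=2$, $j=4$, the tuple $(1,3)$ survives $P_1$, and your own formula gives
\[
P_2(a\otimes b)=\tfrac12\bigl((a\otimes b)-\tau^*(a\otimes b)\bigr)=\tfrac12(a\otimes b+b\otimes a)\neq0
\]
for nonzero $a\in H^1$, $b\in H^3$. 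No exterior-algebra identity will kill these terms.

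What makes the downstream applications work is that the paper always couples $\epsilon_B$ with the further idempotent $e_{\chi_A}^\ell$ coming from the CM structure, and $e_{\chi_A}$ cuts $H^1_\dR(B)$ down to a rank-one piece (over $H$ it literally becomes $H^1_\dR(A)$ for the elliptic curve $A$; cf.\ Lemma~\ref{lemma1}), whence the higher exterior powers vanish. So the correct move is not a sign chase but a strengthening of the hypothesis.
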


\begin{proof} Recall from \S\ref{abmotives} that the choice of the isomorphism $\iota:\cO_K\simeq\End_{H}(A)$ was fixed so that 
for $x\in\cO_K$ the endomorphism $\iota(x)$ acts on the cotangent space $\Omega^1(A/H)$ of $A$ as multiplication by $x$. Recall also that $\End_K(B)\simeq R=\bigoplus_{\sigma}\Hom_{H}(A,A^\sigma)\sigma$ and, finally, the embedding $\iota_B:\cO_K\hookrightarrow R$ defined via the composition 
\[ A\xrightarrow{\iota(x)} A\overset{p_A}\longrightarrow A^\sigma \]
on the $\sigma$-component, where $p_A:A^\sigma=A\times_{H,\sigma}H\twoheadrightarrow A$ is projection on the first component.
Since $\iota(-1)$ acts on $\Omega^1(A/H)$ as multiplication by $-1$, it follows from the definitions recalled above that $\iota(-1)^\sigma$ acts as multiplication by $-1$ on 
$\Omega^1(A^\sigma/H)$. Since 
\begin{equation}\label{ResExt}
B\times_K H\simeq \bigoplus_\sigma A^\sigma,\end{equation} we have $\Omega^1(B/H)\simeq\bigoplus_\sigma\Omega^1(A^\sigma/H)$, so $\iota(-1)^\sigma$ acts as multiplication by $-1$ on $\Omega^1(B/H)$. The action of $(-1)$ on $\Omega^1(B/H)$ is the base change of the action of the same element on $\Omega^1(B/K)$, and therefore $(-1)$ also acts as $-1$ on $\Omega^1(B/K)$. Thus,
we conclude that $\iota_B(-1)$ acts as multiplication by $-1$ on $H^1_\dR(B/K)$ and as the identity both on $H^2_\dR(B/K)$ and on $H^0_\dR(B/K)$. Taking into account the action of the subgroup $(\Bmu_2)^\ell$ of $\Sigma_\ell$, we see that the only term that is not annihilated by $\epsilon_B$ in the K\"unneth decomposition 
\[ H^\ast_\dR(B^\ell/K)\simeq \bigoplus_{(i_1,\dots,i_\ell)}H^{i_1}_\dR(B/K)\otimes\dots\otimes H^{i_\ell}_\dR(B/K) \]
(where the direct sum is over all $\ell$-tuples $(i_1,\dots,i_\ell)$ with $i_j\in\{0,1,2\}$ for each $j=1,\dots,\ell$) 
is $H^1_\dR(B/K)^{\otimes\ell}$, so we have 
\[ \epsilon_BH^\ast_\dR(B^\ell/K)\simeq \epsilon_BH^1_\dR(B/K)^{\otimes\ell}. \] 
On the other hand, the action of the subgroup $S_\ell$ of $\Sigma_\ell$ on $H^1_\dR(B/K)^{\otimes\ell}$ corresponds to the permutation action of the factors, and therefore $\epsilon_BH^1_\dR(B/K)^{\otimes\ell}\simeq\Sym^\ell(H^1_\dR(B/K))$. The argument for $B^*$ is the same, taking into consideration that $\iota(\overline{-1})=\iota(-1)$. \end{proof}

Motivated by the previous lemma, we introduce the $K$-vector space  
\[ \mathbf{Sym}^\ell_\dR(B\otimes B^*)\defeq\Sym^\ell\bigl(H^{1}_\dR({B}/K)\bigr)\otimes_{K} \Sym^\ell\bigl(H^{1}_\dR({B}^*/K)\bigr). \]
By Lemma \ref{lemmaA} and \cite[Lemma 2.2]{BDP1}, the de Rham realization of the generalized Kuga--Sato motive $(Z_r,\epsilon_Z,0)$ is $\epsilon_ZH^\bullet_\dR(Z_r/K)\simeq \epsilon_W H^{k-1}_\dR(W_r/K)\otimes_K\mathbf{Sym}^\ell_\dR(B\otimes B^*)$.

\subsection{Étale realizations} \label{sec-etaleAJ} 

Following the approach we took in the de Rham case, we describe the \'etale realizations of Kuga--Sato motives.
Fix a prime number $p\nmid N=D_K\Norm_K(\mathfrak{f}_\psi)$.
If $X$ is a variety defined over a field $F$ of characteristic $0$, then $\bar{X}\defeq X\times_F\bar{F}$ is the base change of $X$ to $\bar F$. 

\begin{lemma} \label{lemmaet} 
\begin{enumerate}
\item There are canonical identifications of $G_K$-representations
\[ \epsilon_BH^\ell_\text{\'et}(\bar{B}^\ell,\Q_p)=\Sym^\ell\bigl(H^1_\text{\'et}(\bar{B},\Q_p)\bigr),\quad\epsilon_B^*H^1_\text{\'et}\bigl((\bar{B}^*)^\ell,\Q_p\bigr)=\Sym^\ell\bigl(H^1_\text{\'et}(\bar{B}^*,\Q_p)\bigr). \]     
\item $\epsilon_BH^j_\text{\'et}(\bar B^\ell,\Q_p)=0=\epsilon_B^*H^j_\text{\'et}\bigl((\bar B^*)^\ell,\Q_p\bigr)$ for $j\neq \ell$. 
\end{enumerate}
\end{lemma}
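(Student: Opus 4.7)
The plan is to mirror the argument of Lemma \ref{lemmaA}, replacing the de Rham realization with the $p$-adic \'etale realization at every step. All three ingredients of that proof — the K\"unneth decomposition, the action of $\iota_B(-1)\in\End_K(B)$ on cohomology, and the Koszul sign attached to the $S_\ell$-action on odd-degree tensor factors — have direct \'etale counterparts, each $G_K$-equivariant because the endomorphism $\iota_B(-1)$ and the permutations of the factors of $B^\ell$ are morphisms defined over $K$.

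First, I would check that $\iota_B(-1)$ acts as $-1$ on $H^1_\et(\bar B,\Q_p)$, and hence as $(-1)^j$ on $H^j_\et(\bar B,\Q_p)\simeq\bigwedge^j H^1_\et(\bar B,\Q_p)$. This follows exactly as in Lemma \ref{lemmaA}: using the decomposition $B\times_K H\simeq\bigoplus_\sigma A^\sigma$, the endomorphism $\iota_B(-1)$ restricts on each summand to $[-1]_{A^\sigma}$, and $[-1]^\ast$ acts as $-1$ on $H^1_\et$ of any abelian variety (for instance via the identification $H^1_\et(\bar A^\sigma,\Q_p)\simeq\Ta_p(A^\sigma)^\vee\otimes_{\Z_p}\Q_p$). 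Since $\iota_B(-1)$ is defined over $K$, the resulting action descends from $H$ to $K$.

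Next, I would apply the \'etale K\"unneth isomorphism
\[ H^\ast_\et(\bar B^\ell,\Q_p)\simeq\bigoplus_{(i_1,\dots,i_\ell)}H^{i_1}_\et(\bar B,\Q_p)\otimes\cdots\otimes H^{i_\ell}_\et(\bar B,\Q_p). \]
The $\Bmu_2^\ell$-factor of $\epsilon$ equals the tensor product of the projectors $\tfrac{1-\iota_B(-1)}{2}$ onto the $(-1)$-eigenspace in each factor, and therefore annihilates every summand in which some $i_j$ is even. Arguing exactly as in Lemma \ref{lemmaA}, the projector $\epsilon_B$ kills $H^j_\et(\bar B^\ell,\Q_p)$ for $j\neq\ell$ and, in degree $j=\ell$, leaves only the contribution of $H^1_\et(\bar B,\Q_p)^{\otimes\ell}$.

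Finally, the permutation action of $S_\ell$ on $B^\ell$ induces, via K\"unneth, the permutation of tensor factors on $H^1_\et(\bar B,\Q_p)^{\otimes\ell}$ twisted by the Koszul sign, since every tensor factor lies in odd degree. The sign-weighted averaging $\tfrac{1}{\ell!}\sum_{\sigma\in S_\ell}j(\sigma)\sigma$ thus acts as ordinary symmetrization, yielding the canonical identification with $\Sym^\ell\bigl(H^1_\et(\bar B,\Q_p)\bigr)$. All constructions being $G_K$-equivariant, the resulting identification is an equality of $G_K$-representations, and the argument for $B^\ast$ is verbatim, using $\iota^\ast(-1)=\iota(-1)$. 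I do not anticipate any serious obstacle; the only delicate point, exactly as in Lemma \ref{lemmaA}, is the Koszul sign, which is precisely what motivates the presence of the sign character $j$ in the definition of $\epsilon$.
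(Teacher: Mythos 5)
Your argument is correct, but it follows a genuinely different route from the paper. The paper's own proof of this lemma is a one-liner: it invokes the comparison isomorphisms $H^\ast_{\text{\'et}}(\bar X,\Q_p)\simeq H^\ast_{\mathrm{Betti}}\bigl(X(\C),\Z\bigr)\otimes_\Z\Q_p$ and $H^\ast_{\mathrm{Betti}}\bigl(X(\C),\Z\bigr)\otimes_\Z\C\simeq H^\ast_\dR(X/F)\otimes_F\C$ and transfers the already-proved de Rham statement of Lemma \ref{lemmaA} across them, the point being that the projectors $\epsilon_B,\epsilon_B^\ast$ are algebraic and hence act compatibly on all realizations, so the vanishing and the identification of the surviving piece carry over. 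You instead re-run the whole computation inside \'etale cohomology: the action of $\iota_B(-1)$ as $-1$ on $H^1_{\text{\'et}}(\bar B,\Q_p)$ via the Tate module, the \'etale K\"unneth decomposition, and the Koszul sign that turns the sign-weighted $S_\ell$-average into symmetrization. What the paper's route buys is brevity and reuse of Lemma \ref{lemmaA}; what your route buys is self-containedness and, in particular, a transparent treatment of $G_K$-equivariance, which in the paper's argument is only implicit (Betti cohomology carries no Galois action, so equivariance there must also ultimately come from the algebraicity of the projectors and of K\"unneth, exactly the ingredients you make explicit). One caveat, which applies equally to your write-up and to the paper's Lemma \ref{lemmaA} that it mirrors: the step ``$\Bmu_2^\ell$ kills every K\"unneth summand with some $i_j$ even, hence only $H^1(\bar B,\Q_p)^{\otimes\ell}$ survives'' uses that the $i_j$ range over $\{0,1,2\}$, i.e.\ it is the bookkeeping appropriate to $\dim B=1$; since you noted yourself that $\iota_B(-1)$ acts as $(-1)^j$ on $H^j$, summands with all $i_j$ odd but some $i_j\geq 3$ would also survive the $\Bmu_2^\ell$-projector when $\dim B>1$, so your proof is exactly as strong as, and no stronger than, the paper's argument on this point.
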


\begin{proof} Recall that for a smooth variety $X$ defined over a number field $F\subset\C$ there are comparison isomorphisms 
\[ H^\ast_\et(\bar{X},\Q_p)\simeq H^\ast_\mathrm{Betti}\bigl(X(\C),\Z\bigr)\otimes_\Z\Q_p,\quad H^\ast_\mathrm{Betti}\bigl(X(\C),\Z\bigr)\otimes_\Z\C\simeq H^\ast_\mathrm{dR}(X/F)\otimes_F\C, \]
where $H^\ast_\mathrm{Betti}$ denotes Betti (\emph{i.e.}, singular) cohomology; the desired result is an immediate consequence of Lemma \ref{lemmaA}. \end{proof}

As in the de Rham case, we introduce the $G_K$-representation 
\[ \mathbf{Sym}^\ell_\text{\'et}(B\otimes B^\ast)\defeq\Sym^\ell\bigl(H^1_\text{\'et}(\bar{B},\Q_p)\bigr)\otimes_{\Q_p}\Sym^\ell\bigl(H^1_\text{\'et}(\bar{B}^\ast,\Q_p)\bigr). \] 
It follows from Lemma \ref{lemmaet} that 
\begin{equation}\label{isogal}
\epsilon_ZH^\bullet_\text{\'et}(\bar{Z}_r,\Q_p)=\epsilon_Z H^{k-1}_\text{\'et}(\bar{Z}_r,\Q_p) \simeq \epsilon_WH^{k-1}_\text{\'et}(\bar{W}_r,\Q_p)\otimes\boldsymbol{\Sym}^\ell_\text{\'et}(B\otimes B^\ast).
\end{equation}
Observe that
 $(\chi_A\chi_A^*)^\ell\Norm_K^{2\ell}$ is a direct factor of $\mathbf{Sym}^\ell_\text{\'et}(B\otimes B^*)$ as $G_K$-modules; recalling that $\varphi_A=(\chi_A\chi_A^*)\Norm_K$, we obtain a $G_K$-equivariant map 
\begin{equation}\label{isogal2}
(e_{\chi_A}^\ell e_{\chi_A^\ast}^\ell)\cdot\mathbf{Sym}^\ell_\text{\'et}(B\otimes B^\ast)\longrightarrow E_{\varphi_A^\ell}(\varphi_A^\ell\Norm_K^{\ell})=E_{\varphi_A^\ell}(\varphi_A^\ell)(-\ell).
\end{equation}
Let us define the $G_K$-representations $V_{Z}\defeq\epsilon_ZH^{k-1}_\text{\'et}(\bar{Z}_r,\Q_p)$ and $V_W\defeq\epsilon_WH^{k-1}_\text{\'et}(\bar{W}_r,\Q_p)$.
 Composing \eqref{isogal} and \eqref{isogal2}, we get a map 
\begin{equation} \label{galmap}
V_Z\longrightarrow V_W(\varphi_A^\ell)(-\ell)
\end{equation}
in which $V_W(\varphi_{A}^\ell)$ denotes the tensor product of $G_K$-representations $V_W\otimes E_{\varphi_A^\ell}(\varphi_A^\ell)$. 

\subsection{Comparison with generalized Kuga--Sato varieties} \label{seccomparisons}

Now we want to compare the generalized Kuga--Sato motive in Definition \ref{genKS} with a similar object that was considered by Bertolini--Darmon--Prasanna in \cite{BDP1}, namely, the \emph{generalized Kuga--Sato variety} from \cite[\S2.2]{BDP1}. Consider the $H$-variety $X_r\defeq W_r \times A^r$, which is equipped with the projector $\epsilon_X\defeq \epsilon_W\epsilon_A$ defined in \cite[(2.2.1)]{BDP1}. Similarly to \eqref{epsilon}, the projector $\epsilon_A$ in \cite[(1.4.4)]{BDP1} is the image of the projector 
\[ \frac{1}{2^{r}{r}!}\cdot\sum_{\sigma\in\Sigma_{r}}j(\sigma)\sigma \]
in $\Q\bigl[\Aut(A^{r})\bigr]$. Then $(X_r,\epsilon_X,0)$ is a Chow motive defined over $H$.

Comparing $(Z_r,\epsilon_Z,0)$ and $(X_r,\epsilon_X,0)$, we see that the projector $\epsilon_A$ is replaced by the product $\epsilon_B\epsilon_B^\ast$, with the effect that the factors pertaining to $B^\ell$ and those pertaining to $(B^\ast)^\ell$ are not mixed. We put this observation in the next result, 
for which we need to introduce a couple of notation. First, let $\epsilon_A^{(\ell)}$ the image of the 
projector $\epsilon$ in \eqref{epsilon} in $\Q[\Aut(A^{\ell})]$, and similarly let $\epsilon_{A^\ast}^{(\ell)}$ the image of the 
same projector $\epsilon$ in \eqref{epsilon} in $\Q[\Aut((A^\ast)^{\ell})]$. 
We define the $H$-variety $X_r^\ast=W_r\times A^\ell\times (A^\ast)^\ell$ and the motive 
$(X_r^\ast,\epsilon_{X}^\ast,0)$, where the projector $\epsilon_{X}^\ast$ is defined by 
$\epsilon_{X}^\ast=\epsilon_W\epsilon_A^{(\ell)}\epsilon_{A^\ast}^{(\ell)}$. 

\begin{lemma} \label{lemma1}
The motives $\bigl(X_r^\ast,\epsilon_X^\ast,0\bigr)$ and $\bigl(Z_r,e_{\chi_A}^\ell e_{\chi_A^\ast}^\ell\epsilon_Z,0\bigr)$ are isomorphic over $H$. 
\end{lemma}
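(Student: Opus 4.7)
The strategy is to descend both motives to $H$ explicitly and exploit the Weil restriction decomposition \eqref{ResExt} to compare them directly. Since $W_r$ is defined over $\Q\subset K$, base changing $Z_r$ from $K$ to $H$ only affects the $B^\ell$ and $(B^\ast)^\ell$ factors. By \eqref{ResExt} one has $B\times_K H\simeq\bigoplus_{\sigma\in\Gal(H/K)}A^\sigma$, and an analogous decomposition for $B^\ast$; taking $\ell$-fold fiber products over $H$ writes $B^\ell\times_K H$ as a disjoint union, indexed by $\ell$-tuples $(\sigma_1,\dots,\sigma_\ell)\in\Gal(H/K)^\ell$, of products $A^{\sigma_1}\times_H\dots\times_H A^{\sigma_\ell}$ (and similarly for $(B^\ast)^\ell\times_K H$).

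The key step is to show that the idempotent $e_{\chi_A}$, once base changed to $H$, acts on $B\times_K H$ as the projection onto the summand $A=A^{\mathrm{id}}$, and dually that $e_{\chi_A^\ast}$ projects $B^\ast\times_K H$ onto $A^\ast$. Unpacking the definition from \S\ref{abmotives}, the commutative semisimple $K$-algebra $\Phi_{\chi_A}=\End_K(B)\otimes\Q$ has dimension $h=[H:K]$ over $K$ and embeds after base change into $\End_H(B\times_K H)\otimes\Q\simeq\prod_{\sigma}K$, with one $K$-factor per summand $A^\sigma$; the primitive idempotent $e_{\chi_A}$ cutting out $E_{\chi_A}\subset\Phi_{\chi_A}$ is pinned down by the condition that the associated Serre--Tate character recover $\chi_A$ itself rather than one of its $\Gal(H/K)$-conjugates. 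By the compatibility of the Serre--Tate characters of the conjugate curves $A^\sigma$ with the corresponding Galois conjugates of $\chi_A$, this forces $e_{\chi_A}$ to be projection onto the $A^{\mathrm{id}}$-component. Iterating factor by factor, $e_{\chi_A}^\ell$ picks out $A^\ell$ inside $B^\ell\times_K H$ and $e_{\chi_A^\ast}^\ell$ picks out $(A^\ast)^\ell$ inside $(B^\ast)^\ell\times_K H$.

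Once the preceding identification is in place, matching the remaining projectors $\epsilon_B$ and $\epsilon_B^\ast$ with $\epsilon_A^{(\ell)}$ and $\epsilon_{A^\ast}^{(\ell)}$ is straightforward: all four arise as images of the same symmetrizing element \eqref{epsilon} in $\Q[\Sigma_\ell]$, acting by signs through the $\Bmu_2^\ell$-subgroup and by permutations through $S_\ell$, and this action restricts compatibly from $B^\ell\times_K H$ to the summand $A^\ell$ (respectively from $(B^\ast)^\ell\times_K H$ to $(A^\ast)^\ell$). Combining the three steps yields the desired isomorphism of Chow motives over $H$:
\[
(Z_r,\,e_{\chi_A}^\ell e_{\chi_A^\ast}^\ell\epsilon_Z,\,0)\times_K H\;\simeq\;\bigl(W_r\times A^\ell\times(A^\ast)^\ell,\,\epsilon_W\epsilon_A^{(\ell)}\epsilon_{A^\ast}^{(\ell)},\,0\bigr)=\bigl(X_r^\ast,\epsilon_X^\ast,0\bigr).
\]

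The main obstacle is the second step: verifying that $e_{\chi_A}$ really corresponds to the projection onto $A^{\mathrm{id}}$ rather than onto some other conjugate $A^\sigma$. This requires careful bookkeeping of the normalizations fixed in \S\ref{abmotives}---the isomorphism $\iota\colon\cO_K\simeq\End_H(A)$ in which $\cO_K$ acts on $\Omega^1(A/H)$ via $K\hookrightarrow H$, the induced embedding $\cO_K\hookrightarrow R_{\chi_A}$, and the identification of the set $\{\psi_{B,\lambda}\mid\lambda\colon\Phi_{\chi_A}\to\C\}$ of Serre--Tate characters with the set $\{\chi_A\psi_0\}$ of finite-order twists of $\chi_A$---all of which conspire to single out the $A^{\mathrm{id}}$-summand as the component of $B\times_K H$ attached to the specific character $\chi_A$.
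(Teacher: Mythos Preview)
Your proof is correct and follows essentially the same approach as the paper's: both use the Weil restriction decomposition $B\times_K H\simeq\prod_\sigma A^\sigma$ to identify $e_{\chi_A}$ over $H$ with projection onto the factor $A$, then match the symmetrizing projectors $\epsilon_B,\epsilon_B^\ast$ with $\epsilon_A^{(\ell)},\epsilon_{A^\ast}^{(\ell)}$. For the key step the paper offers a slightly cleaner alternative to your Serre--Tate bookkeeping, namely comparing $\lambda$-adic realizations: since $\rho_{\chi_A,\lambda}|_{G_H}=\nu_A$ by the relation $\nu_A=\chi_A\circ\Norm_{H/K}$, the motive $M_{E_{\chi_A}}(\chi_A)\times_K H$ must be isomorphic to (the motive of) $A$.
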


\begin{proof} Recall from \eqref{ResExt} that $B$ is isomorphic to $\prod_{\sigma}A^\sigma$ over $H$, where the product is taken over all $\sigma\in G\defeq\Gal(H/K)$. By construction, over $H$ the projector $e_{\chi_A}$ coincides with the projection to the factor $A$; alternatively, observe that for every prime $\lambda$ of $E_{\chi_A}$ the $\lambda$-adic Galois representation 
$H^1_\text{\'et}\bigl(M_{E_{\chi_A}}(\chi_A)_\lambda\bigr)$ attached to the motive $M_{E_{\chi_A}}(\chi_A) $ is $\rho_{\chi_A,\lambda}$ and the restriction of $\rho_{\chi_A,\lambda}$ to $H$ is $\nu_A$ by the relation 
$\nu_A=\chi_A\circ\Norm_{H/K}$, therefore the motive $M_{E_{\chi_A}}(\chi_A)\times_K H$ is isomorphic to 
$A$. Similarly, $B^\ast\simeq\prod_{\sigma\in G}(A^\ast)^\sigma$ over $H$ and over $H$ the projector 
$e_{\chi_A^\ast}$ is the projection to the factor $A^\ast$. Therefore, over $H$ there are isomorphisms of motives
\[ \begin{split}
\bigl(Z_r,e_{\chi_A}^\ell e_{\chi_A^\ast}^\ell\epsilon_Z,0\bigr)\times_K H&=\bigl(Z_r\times_K H,e_{\chi_A}^\ell e_{\chi_A^\ast}^\ell\epsilon_W\epsilon_B\epsilon_B^\ast,0\bigr)\\
&\simeq\Bigl(\bigl(W_r\times B^\ell\times (B^\ast)^\ell\bigr)\times_K H,\epsilon_W e_{\chi_A}^\ell e_{\chi_A^\ast}^\ell\epsilon_W\epsilon_B\epsilon_B^\ast,0\Bigr) \\
&\simeq\Bigl(W_r\times_KH\times A^\ell\times (A^\ast)^\ell, \epsilon_W\epsilon_A^{(\ell)}\epsilon_{A^\ast}^{(\ell)},0\Bigr)\\
&\simeq\bigl(X_r^\ast,\epsilon_X^\ast,0\bigr),
\end{split} \] 
where for the penultimate isomorphism we note that, by definition, over $H$ the projector $\epsilon_B$ is equal to $\epsilon_A^{(\ell)}$ 
and the projector $\epsilon_B^\ast$ is equal to $\epsilon_{A^\ast}^{(\ell)}$.  
\end{proof}


We want to study the effect of the identity map $A^\ell\times (A^\ast)^\ell\rightarrow A^r$. This map yields a map of $H$-varieties 
$X_r^\ast\rightarrow X_r$. The idempotent $\epsilon_A$ can be written as follows. There is a canonical map $\Sigma_\ell\times\Sigma_\ell\rightarrow\Sigma_r$ defined as follows: given two pairs $(a_1,\sigma_1), (a_1,\sigma_2)\in \Sigma_\ell$, we define their image in $\Sigma_r$ as $(a_1a_1, \sigma_a\sigma_2)$, where $a_1a_2$ is the image of $(a_1,a_2)\in \Bmu_2^{\ell}\times\Bmu_2^\ell$ via the canonical isomorphism $\Bmu_2^{\ell}\times \Bmu_2^\ell\simeq \Bmu_2^r$, 
while $\sigma_1\sigma_2$ is the permutation acting as $\sigma_1$ on the first $\ell$ letters and as $\sigma_2$ on the second $\ell$ letters. Now denote by $\Sigma_\ell^2$  the image of $\Sigma_\ell\times\Sigma_\ell$ in $\Sigma_r$ under this map. In this way, we can thus consider $\epsilon_A^{(\ell)}\epsilon_{A^*}^{(\ell)}$ as an element of $\Q[\Aut(A^r)]$. Fix a system of representatives  $\mathfrak{G}$ of the cosets $\Sigma_r/\Sigma_\ell^2$, whose cardinality is $(2\ell)!/(\ell!)^2$. Define 
\begin{equation} \label{epsilonstar}
\epsilon^\ast\defeq\frac{(\ell!)^2}{(2\ell)!}\cdot\sum_{g\in\mathfrak{G}}g,
\end{equation}
which we view in $\Q\bigl[\Aut(\epsilon_A A^r)\bigr]$. Then 
\begin{equation} \label{factproj}
\epsilon_A=\epsilon^\ast\epsilon_{A}^{(\ell)}\epsilon_{A^\ast}^{(\ell)}.
\end{equation}
Now we use Lemma \ref{lemma1} and equality \eqref{factproj} to provide a comparison between the de Rham cohomology of the generalized Kuga--Sato variety over $H$ and that of the generalized Kuga--Sato motive $(Z_r,\epsilon_Z,0)$ from Definition \ref{genKS}. By \cite[Lemma 1.8]{BDP1}, we know that $\epsilon_A H^*_\dR(A^r/H)\simeq\Sym^r H^1_\dR(A/H)$; moreover, the same argument in \cite{BDP1} shows (as in the proof of Lemma \ref{lemmaA}) that 
\[ \epsilon_A^{(\ell)} H^*_\dR(A^\ell/H)\simeq\Sym^\ell H^1_\dR(A/H),\quad\epsilon_{A^*}^{(\ell)} H^*_\dR\bigl((A^*)^\ell/H\bigr)\simeq\Sym^\ell H^1_\dR(A^*/H). \] 
Observe that the identity map induces canonical isomorphisms 
\[ H^{1,0}_\dR(A/H)\simeq H^{0,1}_\dR(A^*/H),\quad H^{0,1}_\dR(A/H)\simeq H^{1,0}_\dR(A^*/H). \]
In particular, the identity map gives a canonical isomorphism $\iota_A:H^1_\dR(A^*/H)\simeq H^1_\dR(A/H)$ of $H$-vector spaces that interchanges the de Rham filtrations, as explained above. 

By Lemma \ref{epsilonstar}, taking de Rham cohomology over $H$ we get a canonical isomorphism of $H$-vector spaces
\begin{equation} \label{eq*}
\epsilon^\ast e_{\chi_A}^\ell e_{\chi_A^\ast}^\ell\epsilon_W\epsilon_B\epsilon_B^* H^\bullet_\dR(Z_r/H) 
\simeq \epsilon_X H^\bullet_\dR(X_r/H)
\end{equation}
that can be described explicitly as follows. As observed before (\emph{cf.} the proof of Lemma \ref{lemma1}), the motives $\bigl(h^1(B),e_{\chi_A},0\bigr)$ and $\bigl(h^1(A),\mathrm{Id},0\bigr)$ are canonically isomorphic over $H$, so there is a canonical isomorphisms of $H$-vector spaces $e_{\chi_A} H^1_\dR(B/H)\simeq H^1_\dR(A/H)$; similarly, we have also an isomorphism of $H$-vector spaces $e_{\chi_A^*} H^1_\dR(B^*/H)\simeq H^1_\dR(A^*/H)$. Therefore, there is 
a canonical isomorphism of $H$-vector spaces 
\begin{equation} \label{equa*} 
\bigl(e_{\chi_A}^\ell e_{\chi_A^*}^\ell\bigr)\cdot\boldsymbol{\Sym}^\ell_\dR(B\otimes B^*/H)\simeq\boldsymbol{\Sym}^\ell_\dR(A\otimes A^*/H).\end{equation}
The identity map $A\times A^*\rightarrow A\times A$ also induces an isomorphism 
of $H$-vector spaces (switching the de Rham filtrations as explained before) 
\begin{equation}\label{eq***}
\boldsymbol{\Sym}^\ell_\dR(A\otimes A^*/H)\longrightarrow \Sym^\ell H^1_\dR(A/H)\otimes_H \Sym^\ell H^1_\dR(A/H).\end{equation}
Finally, let 
\begin{equation}\label{eq****}
\Sym^\ell H^1_\dR(A/H)\otimes_H \Sym^\ell H^1_\dR(A/H)\longrightarrow  H^1_\dR(A/H)^{\otimes r}\end{equation}
be the map induced by the identity map in each factor.  
It follows from the definition of $\epsilon^*$ (\emph{cf.} \eqref{epsilonstar}) that the composition of 
\eqref{eq***} and \eqref{eq****} induces an isomorphism of $H$-vector spaces 
\begin{equation}\label{equa**}
\epsilon^*\boldsymbol{\Sym}^\ell_\dR(A\otimes A^*/H)\simeq \boldsymbol{\Sym}^r_\dR(A/H)\end{equation}
where $\boldsymbol{\Sym}^r_\dR(A/H)=\Sym^r(H^1_\dR(A/H))$, which interchanges the de Rham filtration as we explained before. Isomorphism \eqref{eq*} is then described as the composition of maps 
\begin{equation}\label{ISODR}
\begin{split}
\epsilon^* e_{\chi_A}^\ell e_{\chi_A^*}^\ell\epsilon_Z H^\bullet_\dR(Z_r/H) &\simeq 
\epsilon_WH^\bullet_\dR(W_r/H)\otimes_H\Bigl(\epsilon^*\boldsymbol{\Sym}^\ell_\dR(A\times A^*/H)\!\Bigr)\\
&\simeq \epsilon_WH^\bullet_\dR(W_r/H)\otimes_H \boldsymbol{\Sym}^r_\dR(A/H)\\
&\simeq \epsilon_XH^\bullet_\dR(X_r/H),
\end{split} \end{equation}
where the first isomorphism follows upon combining Lemma \ref{lemmaA} and isomorphism \eqref{equa*}, while the second is a consequence of \eqref{equa**} and the third is \cite[Proposition 2.4]{BDP1}.

Now, with similar methods, we compare the \'etale realizations of the motives $(Z_r,\epsilon_Z,0)$ and $(X_r,\epsilon_r,0)$. First of all, notice that there is an isomorphism 
\begin{equation}\label{tateiso}
\Ta_p(A)\simeq \Ta_p(A^\ast).
\end{equation} 
of $G_H$-representations. Indeed, since $p$ splits in $K$ as $p=\p\bar{\p}$, we have 
\[ \Ta_p(A)\simeq \Ta_\p(A)\oplus \Ta_{\bar{\p}}(A), \]
where $\Ta_\p(A)$ (respectively, $\Ta_{\bar{\p}}(A)$) is the $\p$-adic (respectively, $\bar{\p}$-adic) Tate module of $A$; moreover, $\Ta_{\bar{\p}}(A)\simeq \Ta_\p(A^*)$ and $\Ta_{\p}(A^*)\simeq \Ta_{\bar{\p}}(A)$, whence isomorphism \eqref{tateiso}. Now define the $G_H$-representation $V_{X}\defeq\epsilon_X H^{k-1}_\text{\'et}(\bar{X}_r,\Q_p)$. Combining \eqref{epsilonstar} and \eqref{tateiso}, we see that there is an isomorphism 
\[ \bigl(\epsilon^\ast e_{\chi_A}^\ell e_{\chi_A^\ast}^\ell\bigr)\cdot\boldsymbol{\Sym}^\ell_\text{\'et}(B\otimes B^\ast)\simeq \boldsymbol{\Sym}^r_\text{\'et}(A)=\Sym^{r}\bigl(H^1_\text{\'et}(\bar{A},\Q_p)\bigr) \]
of $G_H$-representations (where $\bar{A}\defeq A\times_H\bar{\Q}$), and therefore there is an isomorphism 
 \begin{equation}\label{isogal4}
 (\epsilon^*e_{\chi_A}^\ell e_{\chi_A^\ast}^\ell)V_Z
 \simeq \epsilon_W H^{k-1}_\text{\'et}(\bar{W}_r,\Q_p)\otimes_{\Q_p}\boldsymbol{\Sym}^r_\text{\'et}(A)
 \simeq \epsilon_X H^\bullet_\text{\'et}(\bar{X}_r,\Q_p)
 =V_X
 \end{equation}
 of $G_H$-representations, where the second and third isomorphisms come from the \'etale version of \cite[Lemma 1.8]{BDP1} and \cite[\S3.1]{BDP1} (\emph{cf.} also \cite[\S4.2]{CH}). 

\subsection{Constructing de Rham classes}\label{sec:dRHecke}

Since $H^{1,0}(B/K)$ and $H^{1,0}(B^\ast/K)$ are free $\Phi$-modules of rank $1$, applying the idempotents $e_{\chi_A}$ and $e_{\chi_A^\ast}$ to chosen $\Phi$-bases of $H^{1,0}(B/K)$ and of $H^{1,0}(B^\ast/K)$, we obtain generators $\omega_B$ of $e_{\chi_A}H^{1,0}(B/K)$ and $\eta_B$ of $e_{\chi_A^\ast}H^{1,0}(B^*/K)$. Let us denote by $p_j\colon B^\ell\rightarrow B$ and $p_j^\ast\colon (B^\ast)^\ell\rightarrow B^*$ the $j$-th projections.  
In order not to confuse the various projectors $\epsilon_B^\ast$ and $\epsilon^\ast$ and the projection $p_j^\ast$ already defined with pull-back maps, we use the symbol $(-)^{\boldsymbol{\ast}}$ to denote the pull-back maps; for example, $(p_j^\ast)^{\boldsymbol{\ast}}$ is the pull-back of $p_j^\ast$. With this notation, we define a class 
\[ \tilde\omega_B^\ell\tilde\eta_B^\ell=(\tilde\omega_B\tilde\eta_B)^\ell\defeq(p_1)^{\boldsymbol{\ast}}\omega_B\wedge \dots\wedge (p_\ell)^{\boldsymbol{\ast}}\omega_B\wedge(p_1^*)^{\boldsymbol{\ast}}\eta_B\wedge \dots\wedge (p_\ell^*)^{\boldsymbol{\ast}}\eta_B. \]
One can easily check that $(\epsilon_B)^{\boldsymbol{\ast}}\tilde\omega_B^\ell\tilde\eta_B^\ell=\tilde\omega_B^\ell\tilde\eta_B^\ell$, so $\tilde\omega_B^\ell\tilde\eta_B^\ell\in \mathbf{Sym}^\ell_\dR(B\otimes B^\ast)$. 
Applying $e_{\chi_A}^\ell e_{\chi_A^\ast}^\ell$, we thus obtain a class 
\[ \omega_B^\ell\eta_B^\ell=e_{\chi_A}^\ell e_{\chi_A^\ast}^\ell(\tilde\omega_B^\ell\tilde\eta_B^\ell)\in(e_{\chi_A}^\ell e_{\chi_A^\ast}^\ell)\cdot\mathbf{Sym}^\ell_\dR(B\otimes B^\ast). \]
Let us write ${\omega}_{\theta_\psi}\in \mathrm{Fil}^{k-1}\bigl(\epsilon_W H^{k-1}_\mathrm{dR}(W_r/K)\bigr)$ for the differential form attached to ${\theta_\psi}$ as in \cite[Corollary 2.3]{BDP1}. We may then define a class 
\[ \omega_{\theta_\psi}\otimes\omega_B^\ell\eta_B^\ell\in e_{\chi_A}^\ell e_{\chi_A^\ast}^\ell\epsilon_ZH^\bullet_\dR(Z_r/K)\simeq \epsilon_WH^{k-1}_\mathrm{dR}(W_r/K)\otimes_K \Bigl((e_{\chi_A}^\ell e_{\chi_A^\ast}^\ell)\cdot\mathbf{Sym}^\ell_\dR(B\otimes B^\ast)\!\Bigr). \]
Now we compare the de Rham cohomology class $\omega_{\theta_\psi}\otimes\omega_B^\ell\eta_B^\ell$ with a similar class arising from the motive $(X_r,\epsilon_X,0)$. Fix two classes $\omega_A\in H^{1,0}_\mathrm{dR}(A/H)$ and $\eta_A\in H^{0,1}_\mathrm{dR}(A/H)$ as in \cite[(1.4.2)]{BDP1} satisfying $\langle \omega_A,\eta_A\rangle=1$, where $\langle\cdot,\cdot\rangle$ is the algebraic cup product pairing on the de Rham cohomology of $A$. Following \cite[equation (1.4.6)]{BDP1}, we define the class $\omega_A^\ell\eta_A^\ell\in\Sym^{2\ell}\bigl(H^1_\mathrm{dR}(A/H)\bigr)=\boldsymbol{\Sym}^r_\dR(A/H)$ as 
\[ \omega_A^\ell\eta_A^\ell\defeq(\epsilon_A)^{\boldsymbol{\ast}}\bigl((p_1)^{\boldsymbol{\ast}}\omega_A\wedge\dots\wedge(p_\ell)^{\boldsymbol{\ast}}\omega_A \wedge (p_{\ell+1})^{\boldsymbol{\ast}}\eta_A\wedge\dots
\wedge(p_{2\ell})^{\boldsymbol{\ast}}\eta_A\bigr). \]
It follows that  
\begin{equation} \label{omegaetaA}
\omega_A^\ell\eta_A^\ell=\frac{(\ell!)^2}{(2\ell)!}\cdot\sum_{I}(p_1)^{\boldsymbol{\ast}}\varpi_{1,I}\wedge\dots\wedge 
(p_r)^{\boldsymbol{\ast}}\varpi_{r,I},
\end{equation}
where the sum is taken over all the subsets $I$ of $\{1,\dots,2\ell\}$ of cardinality $\ell$, while $\varpi_{i,I}=\omega_A$ if $i\in I$ and $\varpi_{i,I}=\eta_A$ if $i\not\in I$. 
Noting that $k-1+2\ell=2r+1$, we define a class 
\[ \omega_{\theta_\psi}\otimes\omega_A^\ell\eta_A^\ell\in \epsilon_W H^{k-1}_\mathrm{dR}(W_r/H)\otimes_H\boldsymbol{\Sym}^r_\dR(A/H)\simeq\mathrm{Fil}^1\epsilon_XH^{2r+1}_\mathrm{dR}(X_r/H)(r), \]
where, as before, ${\omega}_{\theta_\psi}$ is the differential form attached to ${\theta_\psi}$, base changed to $H$. Recall that we chose non-zero classes $\omega_B\in e_{\chi_A}H^{1,0}(B/K)$ and $\eta_B\in e_{\chi_A^*}H^{1,0}(B^\ast/K)$. Comparing \eqref{epsilonstar} and \eqref{omegaetaA} with the equality $(\epsilon_B)^{\boldsymbol{\ast}}\tilde\omega_B^\ell\tilde\eta_B^\ell=\tilde\omega_A^\ell\tilde\eta_A^\ell$, we see that we can make these choices in such a way that they are compatible under isomorphism \eqref{equa**}; namely, we can assume that this isomorphism over $H$ takes $\omega_B$ to $\omega_A$ and $\eta_B$ to $\eta_A$. Let us fix such choices once and for all. It follows from the previous construction that the image of the class
\[ \omega_{\theta_\psi}\otimes\omega_B^\ell\eta_B^\ell\in \epsilon_W H^{k-1}_\dR(W_r/H)\otimes_H\mathbf{Sym}^\ell_\dR(B\otimes B^\ast/H) \]
via isomorphism \eqref{ISODR} is the class
\[ {\omega}_{\theta_\psi}\otimes\omega_A^\ell\eta_A^\ell\in  \epsilon_W H^{k-1}_\dR(W_r/H)\otimes_H\mathbf{Sym}^r_\dR(A/H). \] 
In other words, $\epsilon^*(\omega_{\theta_\psi}\otimes\omega_B^\ell\eta_B^\ell)=\omega_{\theta_\psi}\otimes\omega_A^\ell\eta_A^\ell$.

\subsection{\'Etale Abel--Jacobi maps}\label{AJMAPS}

For any field extension $F/K$, there is an \'etale Abel--Jacobi map 
\begin{equation}\label{AJZ1}
{\Phi_{\et,Z}^{(F)}}:
{\CH^{k-1}_0(Z_r)(F)\longrightarrow H^1_f\bigl(F,V_Z(k-1)\bigr)}
\end{equation} 
that we compare with the \'etale Abel--Jacobi map for the motive $(X_r,\epsilon_r,0)$. For this, assume that $F$ contains $H$ and let 
\begin{equation}\label{PhiX}
{\Phi_{\text{\'et},X}^{(F)}}:\CH^{k-1}_0(X_r)(F) \longrightarrow H^1_f\bigl(F,V_X(k-1)\bigr) 
\end{equation}
be the \'etale Abel--Jacobi map. The projector $\epsilon^*$ introduced in \eqref{epsilonstar} induces using \eqref{isogal4} a commutative diagram 
\begin{equation}\label{diagram}
\xymatrix@C=29pt{
\CH^{k-1}_0(Z_r)(F)\ar[r]^-{\Phi_{\text{\'et},Z}^{(F)}}\ar[d] & H^1_f\bigl(F,V_Z(k-1)\bigr)\ar[d] \ar[r]& 
H^1_f\bigl(F,V_W\otimes\boldsymbol{\Sym}^\ell_\text{\'et}(B\otimes B^\ast)(k-1)\bigr)\ar[d]
\\
\CH^{k-1}_0(X_r)(F)\ar[r]^-{\Phi_{\text{\'et},X}^{(F)}} & H^1_f\bigl(F,V_X(k-1)\bigr) \ar[r]&  
H^1_f\bigl(F,V_W\otimes \boldsymbol{\Sym}^r_\text{\'et}(A)(k-1)\bigr)
} 
\end{equation}
in which the vertical arrows are multiplication by the projector $\epsilon^\ast e_{\chi_A}^\ell e_{\chi_A^*}^\ell$. 

Now we rewrite the Abel--Jacobi map above in terms of de Rham cohomology. Fix a finite field extension $\mathcal{F}$ of $\Q_p$ containing the image of $H$ under the fixed embedding $\bar\Q\hookrightarrow \bar\Q_p$. By composition, one defines the dotted map (of $\Q$-vector spaces) in the diagram 
\[ \xymatrix@C=35pt@R=40pt{
\CH^{k-1}_0(X_r)(\mathcal{F})\ar[r]^-{\Phi^{(\mathcal{F})}_{\text{\'et},X}} \ar@{-->}[rrd]^-{\log_{X,\mathcal{F}}}& H^1_f\bigl(\mathcal{F},V_X(k-1)\bigr)\ar[r]^-{\mathrm{comp}} & 
\frac{\epsilon_X H^{2r+1}_\dR(X_r/\mathcal{F})(r+1)}{\mathrm{Fil}^0\epsilon_X H^{2r+1}_\dR(X_r/\mathcal{F})(r+1)}\ar[d]^\simeq\\
&& \bigl(\mathrm{Fil}^{1}\epsilon_XH^{2r+1}_\mathrm{dR}(X_r/\mathcal{F})(r)\bigr)^\vee,} \] 
where $H^1_f\bigl(\mathcal{F},V_X(k-1)\bigr)$ denotes the finite part of the cohomology group $H^1\bigl(\mathcal{F},V_X(k-1)\bigr)$ corresponding to crystalline extensions, $\mathrm{comp}$ is the \'etale-de Rham comparison isomorphism (see, \emph{e.g.}, \cite[\S3.2 and \S3.3]{BDP1}) and the vertical isomorphism follows from Poincar\'e duality upon noticing that  $\mathrm{Fil}^{r}\epsilon_XH^{2r+1}_\mathrm{dR}(X_r/\mathcal{F})(r)$ and $\mathrm{Fil}^{0}\epsilon_XH^{2r+1}_\mathrm{dR}(X_r/\mathcal{F})(r+1)$ are exact annihilators of each other under the Poincar\'e pairing.

\section{Realizations of Hecke characters} \label{sec4}

The goal of this section is to use generalized Kuga--Sato motives to describe the (\'etale and de Rham) realizations of motives of Hecke characters of imaginary quadratic fields $K$ of infinity type $(\ell+1,-\ell)$ and obtain \'etale Abel--Jacobi maps with values in the (\'etale and de Rham) realizations of such a motive. Fix a prime number $p$ that is coprime with the conductor of $\chi$ and splits in $K$. Let $\chi$ be a Hecke character of $K$ of infinity type $(\ell+1,-\ell)$ for some integer $\ell\geq 0$ and fix a theta factorization $\chi=\psi\vartheta=\varphi\psi(\chi_A\chi_A^*)^\ell=(\varphi\varphi_A^\ell)\psi\mathrm{N}_K^{-\ell}$.

\subsection{\'Etale realizations} 

Here we make more explicit the description of the \'etale realization of the motive of $\chi$ from \S \ref{motivedesc}. 

The $p$-adic Galois representation $\rho_{\theta_\psi}:G_\Q\rightarrow\GL_2(\mathcal{E}_\psi)$ of the theta series $\theta_\psi$ of $\psi$ is, as recalled before, the induction of the $p$-adic representation attached to $\psi$.  Applying the projectors $e_K$ and $e_K^*$, and combining with \eqref{galmap}, we thus obtain maps of $G_K$-representations 
\[ \begin{split}
   \pi_\chi^\et:V_Z(\varphi)\longrightarrow V_W(\varphi\varphi_A^\ell)(-\ell)&\longrightarrow
V_{\theta_\psi}(-\ell)(\varphi\varphi_A^\ell)\\&\longrightarrow
e_KV_{\theta_\psi}(-\ell)(\varphi\varphi_A^\ell)\simeq V_\chi(-2\ell), 
  \end{split} \]
and 
\[ \begin{split}
   \pi_{\chi^*}^\et:V_Z(\varphi^*)\longrightarrow V_W(\varphi^*\varphi_A^\ell)(-\ell)&\longrightarrow
V_{\theta_\psi}(-\ell)(\varphi^*\varphi_A^\ell)\\&\longrightarrow
e_K^*V_{\theta_\psi}(-\ell)(\varphi^*\varphi_A^\ell)\simeq V_{\chi^*}(-2\ell). 
   \end{split} \]
The reader will notice the abuse of notation in the lines above and explained at the end of the introduction to this paper, since we tacitly understood the coefficient rings everywhere; taking care of the coefficients, the maps above read 
\[ \pi_\chi^\et:V_Z(\varphi)\otimes_{\mathcal{E}_\varphi}\mathcal{E}_{\psi,\varphi,\varphi_A^\ell}\longrightarrow V_\chi(-2\ell)\otimes_{\mathcal{E}_\chi}\mathcal{E}_{\psi,\varphi,\varphi_A^\ell} \]
and
\[ \pi_{\chi^*}^\et:V_Z(\varphi^*)\otimes_{\mathcal{E}_\varphi}\mathcal{E}_{\psi,\varphi,\varphi_A^\ell}\longrightarrow V_{\chi^*}(-2\ell)\otimes_{\mathcal{E}_\chi}\mathcal{E}_{\psi,\varphi,\varphi_A^\ell}. \]
Let $H_\varphi$ be the field cut out by $\varphi$ over $H$. The restriction of the character $\varphi$ to $G_{H_\varphi}$ is trivial, while the restriction of $\varphi_A$ to $G_{H_\varphi}$ takes values in $K^\times$. As before, we obtain maps of $G_{H_\varphi}$-representations  
\begin{equation} \label{isogal6}
\pi_\chi^\et:V_Z \otimes \mathcal{E}_{\psi,\vartheta}\longrightarrow V_\chi(-2\ell)\otimes_{\mathcal{E}_\chi}\mathcal{E}_{\psi,\vartheta}
\end{equation}
and
\[ \pi_{\chi^*}^\et:V_Z \otimes \mathcal{E}_{\psi,\vartheta}\longrightarrow V_{\chi^*}(-2\ell)\otimes_{\mathcal{E}_\chi}\mathcal{E}_{\psi,\vartheta} \]
with smaller coefficients.

\begin{remark}\label{remcoeff}
On the one hand, since $\mathcal{E}_\chi$ is not contained in $\mathcal{E}_\psi$, it is necessary to consider coefficient fields containing both $\mathcal{E}_\chi$ and $\mathcal{E}_\psi$ in \eqref{isogal6}; on the other hand, since  $\mathcal{E}_{\psi,\vartheta}$ is the composite field of these two fields, the choice of coefficient field in the above diagram can not be improved in general. 
\end{remark}

\subsection{de Rham realizations} \label{sec:dR} 

In this subsection, we make more explicit the description of the de Rham realization of the motive of $\chi$ from  \eqref{motivedesc}. For any Hecke character $\phi$, let us set $H^1_\dR(\phi)\defeq H^1_\dR\bigl(M_{E_\phi}(\phi)\bigr)$, which is a free $K\otimes_{\Q}E_\phi$-module of rank $1$. 

The $K\otimes_{\Q} {E}_{\varphi}$-module $H_\dR^0\bigl({E_\varphi}(\varphi)\bigr)$ is free of rank $1$: we fix a generator $\delta_{\varphi}$ of it. From \eqref{motivedesc}, we obtain an isomorphism of filtered $K$-vector spaces 
\[ \epsilon_ZH^{2r+1}_\dR(Z_r/K)\simeq\bigl(\epsilon_WH^{k-1}_\dR(W_r/K)\bigr)\otimes_K
H^0_\dR\bigl(\mathcal{E}_\varphi(\varphi)\bigr)\otimes_K\mathbf{Sym}^\ell_\dR\Bigl(\!\bigl(e_{\chi_A}^\ell e_{\chi_A^*}^\ell\bigr)B\otimes B^*)\!\Bigr). \]
The element $\omega_{\theta_\psi}\otimes\delta_\varphi\otimes\omega_B^\ell\eta_B^\ell$ gives rise to a class in $\mathrm{Fil}^{r+1}\bigl(\epsilon_ZH^{2r+1}_\dR(Z_r/K)\bigr)$. By \eqref{motivedesc}, the image of $\omega_{\theta_\psi}\otimes\delta_\varphi\otimes\omega_B^\ell\eta_B^\ell$ via the projector $e_{\psi,\vartheta}e_K\pi_{\theta_\psi}e_\varphi $ is a generator of $H^1_\dR(\chi)$. If we do not apply the last projector, $\omega_{\theta_\psi}\otimes\delta_\varphi\otimes\omega_B^\ell\eta_B^\ell$ is a generator of the $K\otimes_\Q E_{\psi,\varphi,\varphi_A^\ell}$-module 
\[ H^1_\dR(\chi)\otimes_{E_\chi}E_{\psi,\varphi,\varphi_A^\ell}=H^1_\dR\Bigl(M_{E_{\psi,\varphi,\varphi_A^\ell}}(\chi)\!\Bigr). \] 
Moreover, $\omega_{\theta_\psi}\otimes\omega_B^\ell\eta_B^\ell$ is also a generator of the $K\otimes_\Q E_{\psi,\varphi_A^\ell}$-module 
\[ H^1_\dR\bigl(\psi\varphi_A^\ell\bigr)\otimes_{E_{\psi\varphi_A^\ell}}E_{\psi,\varphi_A^\ell}=H^1_\dR\Bigl(M_{E_{\psi,\varphi_A^\ell}}\bigl(\psi\varphi_A^\ell\bigr)\!\Bigr). \] 
Now let $H_\varphi$ be the extension of $H$ cut out by $\varphi$. Then $\chi=\psi\varphi_A^\ell$ as Hecke characters of $H_\varphi$; furthermore, since $\varphi_A^\ell$ takes values in $K^\times$ as a Hecke character of $H_\varphi$, we conclude that  
$\omega_{\theta_\psi}\otimes\omega_B^\ell\eta_B^\ell$ is a generator of the free $H_\varphi\otimes_\Q E_\chi$-module $H^1_{\dR}\bigl(M_{E_{\psi}}(\chi)\times H_\varphi\bigr)$. Note that any generator $\omega$ of $H^1_\dR(\chi)$ as $K\otimes_\Q E_\chi$-module also generates $H^1_{\dR}\bigl(M_{{E_{\chi}}}(\chi)\times H_\varphi\bigr)$ as $H_\varphi\otimes_{\Q} E_{\chi}$-module. Since $E_{\psi,\chi}=E_{\psi,\vartheta}$, we conclude that
\begin{equation} \label{scalar}
\omega_{\theta_\psi}\otimes\omega_B^\ell\eta_B^\ell=\lambda\omega
\end{equation}
in $H^1_\dR\bigl(M_{E_{\psi,\vartheta}}(\chi)\times H_\varphi\bigr)$ for some $\lambda\in H_\varphi\otimes_\Q E_{\psi,\vartheta}$. 

Now we use de Rham modules of Galois representations to study de Rham realizations. Recall that the Hodge--Tate weight of the cyclotomic character is normalized to be $-1$. There is an isomorphism of filtered $\mathcal{E}_\psi$-vector spaces
\[ \mathbf{D}_\dR(V_{\theta_\psi})\simeq \mathbf{D}_\dR(V_{ \psi})\oplus \mathbf{D}_\dR(V_{ \psi^*}). \]  
The Hodge--Tate weights of $\theta_\psi$ are $0$ and $k-1$, $V_{\psi}$ has a unique Hodge--Tate weight $k-1$ and $V_{\psi^*}$ has a unique Hodge--Tate weight $0$. Mreover, $V_\chi$ has Hodge--Tate weight $\ell+1$. Thus, taking care of the twists, we get an isomorphism of filtered $\mathcal{E}_{\psi,\vartheta}$-vector spaces 
\[ \mathbf{D}_\dR\Bigl(V_{\theta_\psi}\bigl(\varphi\varphi_A^\ell\bigr)(\ell)\!\Bigr)\simeq\mathbf{D}_{\mathrm{dR}}^{\ell+1}(V_{\chi}\otimes_{\mathcal{E}_\chi}\mathcal{E}_{\psi,\vartheta})\oplus\mathbf{D}_\dR^{-\ell}\bigl(V_{\psi^*\varphi\varphi_A^\ell\chi_\mathrm{cyc}}(\ell)\otimes_{\mathcal{E}_\chi}\mathcal{E}_{\psi,\vartheta}\bigr). \]
Then, applying the projector $e_K$, we obtain a map of $1$-dimensional $\mathcal{E}_{\psi,\vartheta}$-vector spaces  
\begin{equation} \label{isoDR}
\mathbf{D}_\dR\Bigl(e_KV_{\theta_\psi}\bigl(\varphi\varphi_A^\ell\bigr)\!\Bigr)=\mathbf{D}_\dR^{\ell+1}\Bigl(e_KV_{\theta_\psi}\bigl(\varphi\varphi_A^\ell\bigr)\!\Bigr)\simeq\mathbf{D}_{\mathrm{dR}}^{\ell+1}(V_{\chi}\otimes_{\mathcal{E}_\chi}\mathcal{E}_{\psi,\vartheta}).
\end{equation}
Now we define the dotted map by requiring the diagram of $\mathcal{E}_{\psi,\varphi,\varphi_A^\ell}$-vector spaces 
\begin{equation} \label{isoDR1}
\xymatrix@C=40pt@R=35pt{
\mathbf{D}_\dR^{r+1}\bigl(V_Z(\varphi)\bigr)\otimes_{\mathcal{E}_\varphi}\mathcal{E}_{\psi,\varphi,\varphi_A^\ell}\ar@{-->}[dd]^-{\pi_\chi^\dR}\ar[r]^-{\eqref{galmap}}& 
\mathbf{D}_\dR^{r+1}\bigl(V_W(\varphi\varphi_A^\ell)(-\ell)\bigr)\otimes_{\mathcal{E}_{\varphi\varphi_A^\ell}}\mathcal{E}_{\psi,\varphi,\varphi_A^\ell}\ar[d]^-{\eqref{isogalpsi}}
\\
&\mathbf{D}_\dR^{r+1}\bigl(V_{\theta_\psi}(\varphi\varphi_A^\ell)(-\ell)\bigr)\otimes_{\mathcal{E}_{\psi,\vartheta}}\mathcal{E}_{\psi,\varphi,\varphi_A^\ell}\ar[d]^-\simeq \\
\mathbf{D}_\dR^{\ell+1}(V_\chi)\otimes_{\mathcal{E}_{\chi}}\mathcal{E}_{\psi,\varphi,\varphi_A^\ell}&\mathbf{D}_\dR^{\ell+1}\bigl(V_{\theta_\psi}(\varphi\varphi_A^\ell)\bigr)\otimes_{\mathcal{E}_{\psi,\vartheta}}\mathcal{E}_{\psi,\varphi,\varphi_A^\ell}
\ar[l]_-{\eqref{isoDR}}}
\end{equation}
to commute. Recall the comparison isomorphism 
\[ H^1_\dR(\chi)\otimes_{E_\chi}\mathcal{E}_\chi\simeq \mathbf{D}_\dR(V_\chi). \]
Every generator $\bomega_\chi$ of $H^1_\dR(\chi)$ as a $K\otimes_\Q E_\chi$-module gives a generator, denoted by ${\omega}_\chi$, of the $1$-dimensional $\mathcal{E}_\chi$-vector space $\mathbf{D}_\dR(V_\chi)$. For any field extension $\mathcal{F}$ of $\Q_p$, we may then consider the element ${\omega}_\chi\otimes 1$ in the $1$-dimensional $\mathcal{F}$-vector space 
$\mathbf{D}_{\dR}(V_\chi)\otimes\mathcal{F}\simeq \mathbf{D}_{\dR,\mathcal{F}}(V_\chi)$. 

The comparison isomorphism of $\mathcal{E}_\varphi$-vector spaces 
\[ \epsilon_ZH^{2r+1}_\dR(Z_\varphi/\Q_p)\otimes_{E_\varphi}\mathcal{E}_\varphi\simeq\mathbf{D}_\dR\bigl(V_Z(\varphi)\bigr) \] 
combined with the map $\pi^\dR_\chi$ in \eqref{isoDR1} induces a map of of $\mathcal{E}_{\psi,\varphi,\varphi_A^\ell}$-vector spaces 
\[ \pi^\dR_\chi:\mathrm{Fil}^{r+1}\Bigl(\epsilon_ZH^{2r+1}_\dR(Z_\varphi/\Q_p)\otimes_{E_{\varphi}}\mathcal{E}_{\varphi,\varphi,\varphi_A^\ell}\Bigr)\longepi\mathbf{D}_\dR^{\ell+1}(V_\chi)\otimes_{\mathcal{E}_{\chi}}\mathcal{E}_{\psi,\varphi,\varphi_A^\ell}, \]
still denoted by the same symbol. Then we may consider the element $\pi_\chi^\dR\bigl(\omega_{\theta_\psi}\otimes\delta_\psi\otimes\omega_B^\ell\eta_B^\ell\bigr)$. For any field extension $\mathcal{F}/\Q_p$, we still denote by $\pi_\chi^\dR\bigl(\omega_{\theta_\psi}\otimes\delta_\psi\otimes\omega_B^\ell\eta_B^\ell\bigr)$ the image of this class in the $\mathcal{F}\otimes\mathcal{E}_{\psi,\varphi,\varphi_A^\ell}$-module 
\[ \bigl(\mathbf{D}_\dR^{\ell+1}(V_\chi)\otimes\mathcal{F}\bigr)\otimes_{\mathcal{E}_{\chi}}\mathcal{E}_{\psi,\varphi,\varphi_A^\ell}=\mathbf{D}_{\dR,\mathcal{F}}^{\ell+1}(V_\chi)\otimes_{\mathcal{E}_{\chi}}\mathcal{E}_{\psi,\varphi,\varphi_A^\ell}. \]
Therefore, $\pi_\chi^\dR\bigl(\omega_{\theta_\psi}\otimes\delta_\psi\otimes\omega_B^\ell\eta_B^\ell\bigr)=\lambda\cdot({\omega}_\chi\otimes1)$ for some $\lambda\in \mathcal{F}\otimes\mathcal{E}_{\psi,\varphi,\varphi_A^\ell}^\times$. 

Similarly as in the case of \'etale realizations, let $\mathcal{F}_\varphi$ be the completion of the field $H_\varphi$ cut out over $H$ by $\varphi$. The restriction of the character $\varphi$ to $G_{\mathcal{F}_\varphi}$ is trivial, while the restriction of $\varphi_A$ takes values in $\Q_p^\times$. As before, we may then define the dotted map (using again the same symbol) by requiring that the diagram of $\mathcal{E}_{\psi,\vartheta}$-vector spaces 
\begin{equation}\label{isoDR2}
\xymatrix@C=40pt@R=35pt{
\mathbf{D}_{\dR,\mathcal{F}_\varphi}^{r+1}(V_Z)\otimes\mathcal{E}_{\psi,\vartheta}\ar@{-->}[dd]^-{\pi_\chi^\dR}\ar[r]^-{\eqref{galmap}}& 
\mathbf{D}_{\dR,\mathcal{F}_\varphi}^{r+1}\bigl(V_W(\varphi_A^\ell)(-\ell)\bigr)\otimes\mathcal{E}_{\psi,\vartheta}\ar[d]^-{\eqref{isogalpsi}}
\\
&\mathbf{D}_{\dR,\mathcal{F}_\varphi}^{r+1}\bigl(V_{\theta_\psi}(\varphi_A^\ell)(-\ell)\bigl)\ar[d]^-\simeq \\
\mathbf{D}_{\dR,\mathcal{F}_\varphi}^{\ell+1}(V_\chi)\otimes_{\mathcal{E}_{\chi}}\mathcal{E}_{\psi,\vartheta}&\mathbf{D}_\dR^{\ell+1}\bigl(V_{\theta_\psi}(\varphi_A^\ell)\bigr)
\ar[l]_-{\eqref{isoDR}}}
\end{equation} 
be commutative.

\begin{remark} 
As observed in Remark \ref{remcoeff}, the coefficient field $\mathcal{E}_{\psi,\vartheta}$ in diagram \eqref{isoDR2} cannot be improved upon in general. 
\end{remark}

Again, as before, from the comparison theorem and diagram \eqref{isoDR2} we obtain a map 
\[ \pi^\dR_\chi:\mathrm{Fil}^{r+1}\Bigl(\epsilon_ZH^{2r+1}_\dR(Z_r/\mathcal{F}_\varphi)\otimes\mathcal{E}_{\varphi,\vartheta}\Bigr)\longepi\mathbf{D}_{\dR,\mathcal{F}_\varphi}^{\ell+1}(V_\chi)\otimes_{\mathcal{E}_{\chi}}\mathcal{E}_{\psi,\vartheta}. \]
Now let $\mathcal{F}_{\psi,\vartheta}$ stand for the composite field of $\mathcal{E}_{\psi,\vartheta}$ and $\mathcal{F}_\varphi$. 
There is an isomorphism of $1$-dimensional $\mathcal{F}_{\psi,\vartheta}$-vector spaces 
\begin{equation} \label{inv01}
\mathbf{D}_{\dR,\mathcal{F}_{\psi,\vartheta}}^{\ell+1}(V_\chi)\overset\simeq\longrightarrow 
\mathbf{D}_{\dR,\mathcal{F}_{\psi,\vartheta}}^{\ell+1}(V_\chi\otimes_{\mathcal{E}_\chi}\mathcal{E}_{\psi,\vartheta})^{\mathcal{E}_{\psi,\vartheta}}
\end{equation}
(see the results in \S \ref{appendixA2}). Associated with the chosen embedding $\bar\Q\hookrightarrow\bar\Q_p$ is a comparison isomorphism of $1$-dimensional $\mathcal{E}_\chi$-vector spaces 
\[ \mathrm{comp}:H^1_\dR(\chi)\otimes_K\Q_p\overset\simeq\longrightarrow \D_{\dR}^{\ell+1}(V_\chi). \]
We may then consider the injective $\mathcal{E}_\chi$-linear map $\rho_\dR$ given by the composition
\begin{equation} \label{classcoh}
\begin{split}
H^1_\dR(\chi)\otimes_K\Q_p\xrightarrow[\simeq]{\mathrm{comp}}\D_{\dR}^{\ell+1}(V_\chi)&\longmono \D_{\dR}^{\ell+1}(V_\chi)\otimes_{\mathcal{E}_\chi}\mathcal{F}_{\psi,\vartheta}\\&\overset\simeq\longrightarrow\D_{\dR,\mathcal{F}_{\psi,\vartheta}}^{\ell+1}(V_\chi)\xrightarrow[\simeq]{\eqref{inv01}}\mathbf{D}_{\dR,\mathcal{F}_{\psi,\vartheta}}^{\ell+1}(V_\chi\otimes_{\mathcal{E}_\chi}\mathcal{E}_{\psi,\vartheta})^{\mathcal{E}_{\psi,\vartheta}},
\end{split}
\end{equation}
where the second arrow is the canonical injection $x\mapsto x\otimes1$ and all the other maps are isomorphisms. Recall the generator $\bomega_\chi$ of the $K\otimes_\Q E_\chi$-module $H^1_\dR(\chi)$. Therefore, in the $1$-dimensional $\mathcal{F}_{\psi,\vartheta}$-vector space on the right in \eqref{inv01} we have the class 
\begin{equation} \label{omegachi}
\omega_{\chi}\defeq\rho_\dR(\bomega_\chi)\end{equation} 
obtained from $\bomega_\chi$ via the map from \eqref{classcoh}, 
and the class 
\begin{equation}\label{omegapsi}
\omega_{\psi,\vartheta}=\pi_\chi^\dR\bigl(\omega_\psi\otimes\omega_B^\ell\eta_B^\ell\bigr)
\end{equation}
obtained from $\omega_\psi\otimes\omega_B^\ell\eta_B^\ell$ using the map in $\pi^\dR_\chi$ in 
\eqref{isoDR2}.  
The next lemma compares the two. Before that, we recall the notion of generalized Gauss sum. 

For a finite order Hecke character $\varphi:\A_K^\times\rightarrow E^\times$, let $H_\varphi$ be the finite extension of $K$ cut out by $\varphi$. The $E$-vector space 
\[ E\{\varphi\}\defeq\bigl\{x\in E H_\varphi\mid\text{$x^g=\varphi(g)x$ for all $g\in \Gal(E H_\varphi/E)$}\bigr\} \] 
is $1$-dimensional; we denote by $\mathfrak{g}(\varphi)$ a generator of it. Then $\mathfrak{g}(\varphi)$ belongs to $E\{\varphi\}\cap (E H_\varphi)^\times$ and is well defined up to elements of $E^\times$ (see, \emph{e.g.}, \cite[Chapter 2, \S3]{S86} or \cite[\S2B]{BDP2} for a more complete description of $\mathfrak{g}(\varphi)$). We call $\mathfrak{g}(\varphi)$ a \emph{generalized Gauss sum} for $\varphi$. 
  
\begin{lemma} \label{gaussum}
$\omega_{\chi}\equiv\mathfrak{g}(\varphi)\cdot\omega_{\psi,\vartheta}\pmod{E_{\psi,\vartheta}^\times}$.
\end{lemma}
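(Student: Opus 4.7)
The plan is to reduce the assertion to computing the scalar $\lambda\in (H_\varphi\otimes_\Q E_{\psi,\vartheta})^\times$ appearing in equation \eqref{scalar}, and to identify $\lambda$, up to $E_{\psi,\vartheta}^\times$, with the inverse of the Gauss sum $\mathfrak{g}(\varphi)$. The idea is simple: passing from $K$ to $H_\varphi$ trivializes the character $\varphi$ appearing in the theta factorization, and the cost of undoing this trivialization via Galois descent is precisely a Gauss sum.

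First I would unwind \eqref{scalar}. Since $\chi|_{H_\varphi}=\psi\varphi_A^\ell$, the motives $M_{E_\chi}(\chi)\times_K H_\varphi$ and $M(\psi\varphi_A^\ell)\times_K H_\varphi$ become isomorphic over $H_\varphi$ after extending coefficients to $E_{\psi,\vartheta}$; as recalled in \S\ref{sec:dR}, $\omega_{\theta_\psi}\otimes\omega_B^\ell\eta_B^\ell$ is a generator of the free rank-one $H_\varphi\otimes_\Q E_{\psi,\vartheta}$-module $H^1_\dR\bigl(M_{E_{\psi,\vartheta}}(\chi)\times H_\varphi\bigr)$, and so is $\bomega_\chi$. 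Thus one may write $\omega_{\theta_\psi}\otimes\omega_B^\ell\eta_B^\ell=\lambda\,\bomega_\chi$ and the task becomes to compute $\lambda$ modulo $E_{\psi,\vartheta}^\times$.

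The crucial step is to match $\lambda$ with the Gauss sum. By the tensor decomposition of motives coming from $\chi=(\varphi\varphi_A^\ell)\psi\Norm_K^{-\ell}$ constructed in \S\ref{motives}, there is a compatible canonical factorization
\[H^1_\dR(\chi)\otimes_{E_\chi}E_{\psi,\varphi,\varphi_A^\ell}\;\simeq\;H^1_\dR(\psi\varphi_A^\ell)\otimes_{E_{\psi\varphi_A^\ell}}E_{\psi,\varphi,\varphi_A^\ell}\;\otimes_K\;H^0_\dR\bigl(M_{E_\varphi}(\varphi)\bigr)\]
under which $\bomega_\chi$ corresponds, up to scalars in $E_{\psi,\vartheta}^\times$, to $(\omega_{\theta_\psi}\otimes\omega_B^\ell\eta_B^\ell)\otimes\delta_\varphi$ with $\delta_\varphi$ a $K\otimes E_\varphi$-generator of $H^0_\dR\bigl(M_{E_\varphi}(\varphi)\bigr)$. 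But by the construction of the Artin motive $M_{E_\varphi}(\varphi)$ recalled in \S\ref{Artinmotives}, the de Rham realization $H^0_\dR\bigl(M_{E_\varphi}(\varphi)\bigr)$ is precisely the $\varphi$-isotypic line $E_\varphi\{\varphi\}\subset E_\varphi H_\varphi$, whose canonical generator (up to $E_\varphi^\times$) is the Gauss sum $\mathfrak{g}(\varphi)$; hence $\delta_\varphi\equiv\mathfrak{g}(\varphi)\pmod{E_\varphi^\times}$. After base change to $H_\varphi\otimes E_{\psi,\vartheta}$, where $\varphi$ trivializes and $\delta_\varphi$ is simply viewed as the element $\mathfrak{g}(\varphi)\in H_\varphi$, this gives $\bomega_\chi\equiv\mathfrak{g}(\varphi)\cdot(\omega_{\theta_\psi}\otimes\omega_B^\ell\eta_B^\ell)$ and therefore $\lambda\equiv\mathfrak{g}(\varphi)^{-1}\pmod{E_{\psi,\vartheta}^\times}$.

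Finally I would transport this identity through the $p$-adic comparison. The maps $\pi_\chi^\dR$ in \eqref{isoDR2} and $\rho_\dR$ in \eqref{classcoh} are, by construction, compatible with the \'etale--de Rham comparison isomorphism and with the identification $H^1_\dR\bigl(M_{E_{\psi,\vartheta}}(\chi)\times H_\varphi\bigr)\simeq H^1_\dR(\chi)\otimes_K\mathcal{F}_\varphi\otimes_{E_\chi}\mathcal{E}_{\psi,\vartheta}$ used above; since $\omega_\chi=\rho_\dR(\bomega_\chi)$ and $\omega_{\psi,\vartheta}=\pi_\chi^\dR(\omega_{\theta_\psi}\otimes\omega_B^\ell\eta_B^\ell)$, applying these maps to $\omega_{\theta_\psi}\otimes\omega_B^\ell\eta_B^\ell\equiv\mathfrak{g}(\varphi)^{-1}\bomega_\chi$ yields $\omega_{\psi,\vartheta}\equiv\mathfrak{g}(\varphi)^{-1}\omega_\chi\pmod{E_{\psi,\vartheta}^\times}$, equivalently the desired congruence $\omega_\chi\equiv\mathfrak{g}(\varphi)\cdot\omega_{\psi,\vartheta}$ modulo $E_{\psi,\vartheta}^\times$. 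The main obstacle is the middle step: one must rigorously identify the canonical $\delta_\varphi$ used to descend from $H_\varphi$ to $K$ in the construction of $M(\chi)$ with the Gauss sum $\mathfrak{g}(\varphi)$; this is where the general theory of Artin motives and the definition of $\mathfrak{g}(\varphi)$ recalled before the lemma do the essential work.
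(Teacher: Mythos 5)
Your proof is essentially correct, but it packages the key step differently from the paper. The paper works directly with the scalar $\mu=\lambda$ from \eqref{scalar} on the $p$-adic side: it exhibits the two semilinear actions of $\Gal(\mathcal{F}_{\psi,\vartheta}/\mathcal{F}_\varphi)$ on $\mathbf{D}_{\dR,\mathcal{F}_{\psi,\vartheta}}(V_\chi)$ and $\mathbf{D}_{\dR,\mathcal{F}_{\psi,\vartheta}}\bigl(V_{\theta_\psi}(\varphi_A^\ell)\bigr)$, which differ by a twist by $\varphi$, and deduces the cocycle relation $\mu^\sigma=\varphi(\sigma)\mu$; since $\mu\in H_\varphi E_{\psi,\vartheta}$, this transformation law is exactly the defining property of $\mathfrak{g}(\varphi)$, and the lemma follows. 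You instead localize the discrepancy in the Artin-motive factor: you identify the canonical generator $\delta_\varphi$ of $H^0_\dR\bigl(M_{E_\varphi}(\varphi)\bigr)$, after trivializing over $H_\varphi$, with the Gauss sum $\mathfrak{g}(\varphi)$ up to $E_\varphi^\times$, and then transport through $\pi_\chi^\dR$ and $\rho_\dR$. This is valid, and it is the same descent computation in disguise (the statement ``the de Rham period of the Artin motive of $\varphi$ is $\mathfrak{g}(\varphi)$'' is proved by precisely the cocycle argument the paper performs), but your version leans on a standard fact about Artin motive periods (cf.\ \cite[\S2B]{BDP2}, \cite[Chapter 2, \S3]{S86}) that is not established in the paper, whereas the paper's argument is self-contained; the paper's route also avoids having to make precise your identification of $H^0_\dR\bigl(M_{E_\varphi}(\varphi)\bigr)$, which is a free rank-one $K\otimes_\Q E_\varphi$-module rather than literally the line $E_\varphi\{\varphi\}\subset E_\varphi H_\varphi$ (one must project to the component singled out by the fixed embeddings), and the ambiguity in comparing generators is a unit of $K\otimes_\Q E_{\psi,\vartheta}$ rather than of $E_{\psi,\vartheta}$, though this is harmless modulo $E_{\psi,\vartheta}^\times$ after that projection. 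Finally, whether one lands on $\mathfrak{g}(\varphi)$ or its inverse depends on normalizations ($\varphi$ versus $\varphi^{-1}$ in the eigenspace and in \eqref{scalar}); your bookkeeping and the paper's differ here by such a convention, which is immaterial for the applications since $\mathfrak{g}(\varphi)^{\pm1}\in F_{\psi,\vartheta}^\times$.
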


\begin{proof} Set $\mathcal{G}\defeq\Gal(\mathcal{F}_{\psi,\vartheta}/\mathcal{F}_{\varphi})$. There is a semilinear action of $\mathcal G$ on the de Rham modules $\mathbf{D}_{\dR,\mathcal{F}_{\psi,\vartheta}}(V_\chi)$ and $\mathbf{D}_{\dR,\mathcal{F}_{\psi,\vartheta}}\bigl(V_{\theta_\psi}(\varphi_A^\ell)\bigr)$, and these two actions are related as follows: for any $\sigma\in\Gal(\mathcal{F}_{\psi,\vartheta}/\mathcal{F}_{\varphi})$, if $\omega\star\sigma$ denotes the action on $\mathbf{D}_{\dR,\mathcal F_{\psi,\vartheta}}(V_{\chi})$ and $\omega\star_\varphi\sigma$ the action on $\mathbf{D}_{\dR,\mathcal F_{\psi,\vartheta}}\bigl(V_{\theta_\psi} (\varphi_A^\ell)\bigr)$, then 
\begin{equation} \label{eq01}
\omega\star \sigma=[\varphi(\sigma)]\omega\star_\varphi\sigma.
\end{equation}
Let $\mu \in (H_{\varphi} E_{\psi,\vartheta} )^\times $ be the scalar in \eqref{scalar} satisfying $\omega_{\psi,\vartheta}=\mu \omega_{\chi}$. The equality $\omega_{\psi,\vartheta}\star_\varphi\sigma=\omega_{\psi,\vartheta}$ holds for all $\sigma\in \mathcal{G}$ because $\omega_{\psi,\vartheta}$ is defined over $\mathcal{F}_\varphi$. On the other hand, we have 
\begin{equation} \label{eq02}
\omega_\psi\star_\varphi\sigma=(\mu \omega_{\psi,\vartheta})\star_\varphi\sigma=\mu ^\sigma(\omega_{\psi,\vartheta}\star_\varphi\sigma)=\mu ^\sigma[\varphi^{-1}(\sigma)]\omega_{\psi,\vartheta}\star \sigma=\mu ^\sigma\varphi^{-1}(\sigma)\omega_{\psi,\vartheta}\star\sigma,
\end{equation}
where the second equality follows from the semilinearity of the Galois action, the third from \eqref{eq01} and the fourth because $\omega_{\psi,\vartheta}$ is $\mathcal{E}_{\psi,\vartheta}$-invariant in the sense of \eqref{inv01}. Furthermore, since $\omega_{\psi,\vartheta}$ too is defined over $\mathcal{F}_\varphi$, we have also $\omega_{\psi,\vartheta}\star \sigma=\omega_{\psi,\vartheta}$. Therefore, replacing in the last equation at the end of \eqref{eq02}, we obtain $\omega\star_\varphi\sigma=\mu ^\sigma(\varphi)^{-1}(\sigma)\omega_{\psi,\vartheta}$. Combining this equality with the previous relations, we get
 \[\mu \omega_{\psi,\vartheta}=\omega_\psi=\omega_\psi\star_\varphi\sigma=\mu ^\sigma\varphi^{-1}(\sigma)\omega_{\psi,\vartheta},\] 
whence $\mu=\mu ^\sigma\varphi^{-1}(\sigma)$. Since $\mu\in H_\varphi E_{\psi,\vartheta}$, we conclude that $\mu =\mathfrak{g}(\varphi)$. \end{proof}  

\subsection{\'Etale Abel--Jacobi maps}\label{secAJmap} 

Let $\mathcal{F}$ be a finite extension of $\Q_p$ containing the image of $H_\varphi$ under the fixed embedding $\bar\Q\hookrightarrow\bar\Q_p$. The \'etale Abel--Jacobi map for $X_r$ sits in the commutative diagram  
{\small{\begin{equation}\label{D}
\xymatrix{
\CH^{r+1}_0(X_r/\mathcal{F})({\mathcal{F}})\otimes_{\Q}\mathcal{E}_{\chi}
\ar[d]^-{\Phi_{\et,X}^{({\mathcal{F}})}}\ar@/_{6.2pc}/[ddd]^{\log_{X,\mathcal{F}}}
\\
H^1_f\bigl(\mathcal F,V_X(r+1)\otimes \mathcal{E}_{\chi}\bigr)\ar[r]^-{\eqref{galmap}}\ar[d]^-{\mathrm{comp}}& 
H^1_f\bigl(\mathcal{F},V_{\psi^\ast(\chi_A\chi_A^\ast)^\ell}(1)\bigr)\ar[r]^-{e_K^\ast}\ar[d] & 
H^1_f\bigl(\mathcal F,V_{\chi^*}(1)\bigr)\ar[d]\ar@/^{4.5pc}/[dd]_{\log_{\chi}}
\\
\frac{\D_{\dR,\mathcal F}(V_X(r+1)\otimes\mathcal{E}_{\chi})}{\D^{0}_{\dR,\mathcal F}(V_X(r+1)\otimes\mathcal{E}_{\chi})}\ar[r]^-{\eqref{galmap}}\ar[d]_-\simeq^-{\delta_{X}}
&\frac{\D_{\dR,\mathcal F}(V_{\theta_{\psi^*}(\chi_A\chi_A^*)^\ell}(1))}{\D^{0}_{\dR,\mathcal F}(V_{\theta_{\psi^*}(\chi_A\chi_A^*)^\ell}(1))}\ar[r]^-{e_K^*}\ar[d]_-\simeq^-{\delta_{\psi^*}} &\frac{\D_{\dR,\mathcal F}(V_{\chi^*}(1))}{\D^{0}_{\dR,F}(V_{\chi^*}(1))}\ar[d]_-\simeq^-{\delta_{\chi^*}}
\\
\Bigl(\mathbf{D}_{\dR,\mathcal F}^1\bigl(V_X(r)\otimes\mathcal{E}_{\chi}\bigr)\!\Bigr)^\vee\ar[r]^-{\lambda_X}
& \Bigl(\mathbf{D}_{\dR,\mathcal F}^1\bigl(V_{\theta_{\psi^*}(\chi_A\chi_A^*)^\ell}\bigr)\!\Bigr)^\vee\ar[r]^-{\lambda_{\psi^*}}
& \Bigl(\D^1_{\dR,\mathcal F}\bigl(V_{\chi^*}(1)^\vee\bigr)\!\Bigr)^\vee
}\end{equation}}}
\!\!in which the bottom horizontal lines $\lambda_X$ and $\lambda_{\psi^\ast}$ are defined by means of the isomorphisms $\delta_{X}$, $\delta_{\psi^\ast}$, $\delta_{\chi^*}$ from $p$-adic Hodge theory and the indicated middle horizontal arrows. Notice that $\chi^\ast=\psi^\ast(\chi_A\chi_A^\ast)^\ast$ as $G_{\mathcal{F}}$-representations and that $\mathcal{E}_\chi=\mathcal{E}_\psi=\mathcal{E}_{\psi,\vartheta}$ because $\varphi$ is trivial and $\chi_A$ takes values in $\Q_p$.  

\begin{definition} \label{defSD}
A Hecke character $\chi\in \Sigma_\mathrm{crit}(\mathfrak{c})$ is \emph{self-dual} if $\chi \chi^\ast = \Norm_K$. 
\end{definition}

Let us assume that $\chi$ is self-dual; then $\chi^\ast\chi_\cyc=\chi^{-1}$, so $V_{\chi^\ast}(1)^\vee\simeq V_\chi$. Thus, the bottom right corner of diagram \eqref{D} can be replaced by the dual of $\mathbf{D}_{\dR,\mathcal{F}}(V_\chi)$. 

Now we describe the two maps $\lambda_X$ and $\lambda_{\psi^\ast}$ in greater details. Hecke theory implies that $V_{\theta_\psi(\chi_A\chi_A^\ast)^\ell}$ is a direct factor of $V_X(r)\otimes\mathcal{E}_\psi$; we denote by 
\begin{equation} \label{iota-theta-eq}
\iota_{\theta_\psi}:V_{\theta_\psi(\chi_A\chi_A^\ast)^\ell}\longmono  V_X(r)\otimes\mathcal{E}_\psi 
\end{equation}
a section of the projection map \eqref{galmap}. It follows that $\D_{\dR,\mathcal{F}}\bigl(V_{\theta_\psi(\chi_A\chi_A^\ast)^\ell}\bigr)$ is a direct factor of $\D_{\dR,\mathcal{F}}\bigl(V_X(r)\otimes \mathcal{E}_\psi\bigr)$ and, with a slight abuse of notation, we write
\begin{equation} \label{iota-theta-eq2}
\iota_{\theta_\psi}\colon \D_{\dR,\mathcal{F}}\bigl(V_{\theta_\psi(\chi_A\chi_A^\ast)^\ell}\bigr)\longmono\mathbf{D}_{\dR,F}\bigl(V_X(r)\otimes\mathcal{E}_\chi\bigr)
\end{equation}
also for the map induced by the injection $\iota_{\theta_\psi}$ from \eqref{iota-theta-eq}. Observe that there are splittings
\[ \begin{split}
    \mathbf{D}_{\dR,\mathcal{F}}\bigl(V_{\theta_\psi(\chi_A\chi_A^\ast)^\ell}\bigr)&=
\mathbf{D}_{\dR,\mathcal{F}}(V_{\theta_\psi})\otimes\D_{\dR,\mathcal{F}}\bigl(V_{(\chi_A\chi_A^\ast)^\ell}\bigr)\\
  &\simeq\Bigl(\mathbf{D}_{\dR,\mathcal{F}}^{k-1}(V_{\theta_\psi})\otimes\D_{\dR,F}^{-\ell}\bigl(V_{(\chi_A\chi_A^\ast)^\ell}\bigr)\!\Bigr)\oplus\Bigl(\mathbf{D}_{\dR,\mathcal{F}}^0(V_{\theta_\psi})\otimes\D_{\dR,\mathcal{F}}^{-\ell}(V_{(\chi_A\chi_A^\ast)^\ell})\!\Bigr)\\
  &
\simeq \mathbf{D}_\dR^{\ell+1}\bigl(V_{ \psi(\chi_A\chi_A^*)^\ell}\bigr)\oplus \mathbf{D}_{\dR,\mathcal{F}}^{-\ell}\bigl(V_{ \psi^\ast(\chi_A\chi_A^\ast)^\ell}\bigr)\\
&\simeq\mathbf{D}_\dR^{\ell+1}(V_{ \chi})\oplus \mathbf{D}_{\dR,\mathcal{F}}^{-\ell}(V_{\chi^*})
   \end{split} \]
and that taking duals interchanges the two factors in the last line; taking filtrations, we thus conclude that 
\[ \begin{split}
   \mathbf{D}^1_{\dR,\mathcal{F}}\bigl(V_{\theta_\psi(\chi_A\chi_A^*)^\ell}\bigr)^\vee&=\mathbf{D}_{\dR,\mathcal{F}}^{-\ell}(V_{\chi^*})\simeq\D^{\ell+1}_{\dR,\mathcal{F}}(V_\chi)^\vee=\D_{\dR,\mathcal{F}}(V_\chi)^\vee.
 \end{split} \] 
Composing this isomorphism with the dual of the map $\iota_{\theta_\psi}$ from \eqref{iota-theta-eq2}, we obtain a map 
\[ \mathbf{D}_{\dR,\mathcal F}^1\bigl(V_X(r)\otimes\mathcal{E}_{\chi}\bigr)^\vee\longrightarrow\D_{\dR,\mathcal{F}}(V_\chi)^\vee \] 
that, under the previous identifications, coincides with the projection to the direct factor $\D_{\dR,\mathcal{F}}^{-\ell}(V_{\chi^*})$. 
On the other hand, the composition of \eqref{galmap} and $e_K^*$ can also be described as the projection to $\mathbf{D}_{\dR,\mathcal{F}}^{-\ell}(V_{\chi^*})$; thus, we conclude that the composition $\lambda_{\psi^*}\circ\lambda_X$ is the dual of the injection 
\[ \iota_\chi:\D_{\dR,\mathcal{F}}(V_\chi)\overset{\tilde e_K}\longmono \D_{\dR,\mathcal{F}}\bigl(V_{\theta_\psi(\chi_A\chi_A^*)^\ell}\bigr)\xrightarrow{i_{\theta_\psi}}\D_{\dR,\mathcal{F}}\bigl(V_X(r)\otimes \mathcal{E}_\chi\bigr), \]
where $\tilde e_K$ is induced by the section of the projection $V_{\theta_\psi(\chi_A\chi_A^*)^\ell}\twoheadrightarrow V_\chi$ induced by $e_K$. 
 
Let us consider now the following commutative diagram, in which the pairings are the canonical ones and 
the map $e_\chi$ is induced by the composition of $e_K$ with the map in $\eqref{galmap}$:
\[ \xymatrix@R=35pt@C=-2pt{\mathbf{D}_{\dR,\mathcal{F}}^1\bigl(V_X(r)\otimes\mathcal{E}_\chi\bigr)\ar[d]^-{e_\chi}&\times&\mathbf{D}_{\dR,\mathcal{F}}^1\bigl(V_X(r)\otimes\mathcal{E}_\chi\bigr)^\vee\ar[d]^-{\iota_\chi^\vee}
\ar[rrrrrr]&&&&&& \mathcal{F}\otimes \mathcal{E}_\chi\ar@{=}[d]\\
\D^1_{\dR,\mathcal{F}}(V_{\chi})\ar@/^{1pc}/[u]^-{\iota_\chi}&\times & \D^1_{\dR,\mathcal{F}}(V_{\chi})^\vee
\ar[rrrrrr]&&&&&& \mathcal{F}\otimes \mathcal{E}_\chi.} \]
By examining the relevant filtration, we see that 
\[ \omega_{\theta_\psi}\otimes\omega_A^\ell\eta_A^\ell=(e_\chi\circ \iota_\chi)\bigl(\omega_{\theta_\psi}\otimes\omega_A^\ell\eta_A^\ell\bigr). \]
Thus, we conclude that for all $\xi\in\D^1_{\dR,\mathcal{F}}(V_{\chi})^\vee$ there is an equality 
\begin{equation} \label{accpair}
\xi\bigl(\omega_{\theta_\psi}\otimes\omega_A^\ell\eta_A^\ell\bigr)=\iota^\vee_\chi(\xi)\Bigl(e_\chi\bigl(\omega_{\theta_\psi}\otimes\omega_A^\ell\eta_A^\ell\bigr)\!\Bigr).
\end{equation}
In particular, let us fix $\Delta\in \CH^{r+1}_0(X_r/\mathcal{F})({\mathcal{F}})\otimes_{\Q}\mathcal{E}_{\chi}$ and denote by $\Delta_{\chi}$ the image of $\Delta$ in $H^1_f\bigl(\mathcal F,V_{\chi^*}(1)\bigr)$ via the composition of the maps $\Phi^{(F)}_{\et,X}$, \eqref{galmap} and $e_K^*$ in diagram \eqref{D}. Then it follows from \eqref{accpair} and \eqref{D} that 
\begin{equation} \label{accpair2}
\log_{X,\mathcal{F}}(\Delta)\bigl(\omega_{\theta_\psi}\otimes\omega_A^\ell\eta_A^\ell\bigr)=\log_{\chi}(\Delta_{\chi})\Bigl(e_\chi\bigl(\omega_{\theta_\psi}\otimes\omega_A^\ell\eta_A^\ell\bigr)\!\Bigr).
\end{equation}
This equality will play a key role in the proof of our main result in \S \ref{sec:GRFII}.

\section{$p$-adic $L$-functions} \label{BDPsec}

Given a Hecke character $\chi$ of an imaginary quadratic field $K$, we denote by $L(\chi,s)$ the Hecke $L$-function of $\chi$; see, \emph{e.g.}, \cite[Chapter 0, \S6]{S86} for a description of the Euler factors, keeping in mind that we fix once and for all an embedding $E_\chi\hookrightarrow \C$, so that $L(\chi,s)$ is a $\C$-valued function. An integer $n$ is said to be \emph{critical} for $L(\chi,s)$ if the Gamma factors on both sides of the functional equation of $L(\chi,s)$ (for which, see for example \cite[Chapter 0, \S6]{S86}) do not have poles at $n$. 

\subsection{Critical region} \label{sec:criticalregion} 

Let $\chi$ be a Hecke character of $K$. We say that $\chi$ is \emph{critical} if $s=0$ is a critical point for $L(\chi,s)$. 
Fix an integral ideal $\mathfrak{c}$ of $K$. Denote by $\Sigma(\mathfrak{c})$ the set of all Hecke characters of $K$ of conductor dividing $\mathfrak{c}$ and let $\Sigma_\mathrm{crit}(\mathfrak{c})$ be the subset of $\Sigma(\mathfrak{c})$ consisting of critical characters. 
One can write $\Sigma_{\mathrm{crit}}(\mathfrak{c})$ as a disjoint union 
\[ \Sigma_{\mathrm{crit}}(\mathfrak{c} )= \Sigma_{\mathrm{crit}}^{(1)}(\mathfrak{c} )\amalg \Sigma_{\mathrm{crit}}^{(2)}(\mathfrak{c}) \] 
where
\begin{itemize}
\item $\Sigma_{\mathrm{crit}}^{(1)}(\mathfrak{c})$ is the subset of characters of infinity type $(\ell_1,\ell_2)$ with $\ell_1\leq 0$ and $\ell_2\geq 1$;
\item $\Sigma_{\mathrm{crit}}^{(2)}(\mathfrak{c})$ is the subset of characters of infinity type $(\ell_1,\ell_2)$ with $\ell_1\geq 1$ and $\ell_2\leq 0$. 
\end{itemize}
If $\mathfrak{c}=\bar{\mathfrak{c}}$, then $\Sigma_{\mathrm{crit}}^{(1)}(\mathfrak{c} )$ and $\Sigma_{\mathrm{crit}}^{(2)}(\mathfrak{c})$ are interchanged by $\nu \mapsto \nu^\ast$.

The set $\Sigma_{\mathrm{crit}}(\mathfrak{c})$ is endowed with a natural $p$-adic compact open topology described in \cite[Section 5.2]{BDP1}. We write $\widehat{\Sigma}_{\mathrm{crit}}(\mathfrak{c})$ for the corresponding completion of $\Sigma_{\mathrm{crit}}(\mathfrak{c})$, in which both $\Sigma_{\mathrm{crit}}^{(1)}(\mathfrak{c})$ and $\Sigma_{\mathrm{crit}}^{(2)}(\mathfrak{c})$ are dense. 

\subsection{Complex periods of Hecke characters}\label{persec}

As in \S \ref{remell}, fix an elliptic curve $A$ with CM by $\cO_K$, defined over the Hilbert class field $H$ of $K$. Choose also a regular differential $\omega_A\in \Omega^1(A/H)$ and a cycle $\gamma\in H_1\bigl(A(\C),\Q\bigr)$. Set
\begin{equation} \label{period1}
\Omega_\infty(\chi_A)\defeq\frac{1}{2\pi i}\int_\gamma\omega_A.
\end{equation}
Equality \eqref{period1} fixes periods of Hecke characters, as follows. Let $\chi$ be a Hecke character of infinity type $(\ell_1,\ell_2)$ and consider the standard factorization $\chi=\varphi\chi_A^{-\ell_1}(\chi_A^\ast)^{-\ell_2}$ introduced in \S\ref{remell}. Then define 
\begin{equation} \label{period3}
\Omega_\infty(\chi)\defeq\mathfrak{g}(\varphi)\cdot(2\pi i)^{\ell_2}\cdot\Omega_\infty(\chi_A)^{\ell_1-\ell_2}.
\end{equation}
It follows that
\[ \frac{L(\chi^{-1},0)}{\Omega_\infty(\chi^\ast)}\in E_\chi \] 
for all integral ideals $\mathfrak{c}$ of $K$ and all $\chi\in\Sigma_\mathrm{crit}(\mathfrak{c})$ with $\ell_1>\ell_2$ (see, \emph{e.g.}, \cite[Theorem 2.12]{BDP2} or \cite[Chapter 2, Theorem 2.1]{S86}).

\subsection{$p$-adic periods of Hecke characters}
Let $p$ be a prime number. We view $H$ as a subfield of $\bar\Q_p$ via our fixed embedding $\bar\Q\hookrightarrow\bar\Q_p$. Assume that $p$ splits in $K$ and that the base change $A_{\C_p}$ of $A$ to $\C_p$ via $\bar\Q\hookrightarrow\bar\Q_p$
has good (ordinary) reduction at the maximal ideal of $\cO_{\C_p}$. Choose an isomorphism $\varphi_p:\hat{A}\overset\simeq\longrightarrow\hat{\mathds{G}}_m$ over $\C_p$ between the formal group $\hat{A}$ of $A$ and the formal multiplicative group $\hat{\mathds{G}}_m$, then define a period $\Omega_p(A)\in\C_p^\times$ by the equality $\omega_A=\Omega_p(A)\cdot \varphi_p^*(dt/t)$, where $dt/t$ is the canonical differential on $\hat{\mathds{G}}_m$. 
This choice again fixes $p$-adic periods of all Hecke characters. Indeed, let $\chi$ be a Hecke character of infinity type $(\ell_1,\ell_2)$; as before, consider the factorization $\chi=\varphi\chi_A^{-\ell_1}(\chi_A^*)^{-\ell_2}$ for a finite order character $\varphi$, then define 
\begin{equation} \label{period4}
\Omega_p(\chi)\defeq\mathfrak{g}(\varphi)\cdot\Omega_p(A)^{\ell_1-\ell_2}\in\C_p^\times.
\end{equation} 



\subsection{Katz's $p$-adic $L$-function} \label{katz-subsec}

Let us fix an ideal $\mathfrak{c}$ of $\cO_K$ and recall the $p$-adic completion $\widehat{\Sigma}_\mathrm{crit}(\mathfrak{c})$ introduced in \S\ref{sec:criticalregion}. Assume from now on that $p$ splits in $K$. In \cite{Katz}, Katz proved the existence of a $p$-adic analytic function 
\[ \mathscr{L}_{p,\mathfrak{c}} \colon \widehat{\Sigma}_{\text{crit}}(\mathfrak{c} ) \longrightarrow \C_p \] 
that is characterized by the following interpolation property (see, \emph{e.g.}, \cite[Theorem 3.1]{BDP2}).

\begin{theorem}[Katz] 
If $\chi \in \Sigma_\mathrm{crit}^{(2)}(\mathfrak{c})$ has infinity type $(\ell_1,\ell_2)$, then 
\begin{equation}\label{Katz}
    \frac{\mathscr{L}_{p,\mathfrak{c}}(\chi)}{\Omega_p(A)^{\ell_1-\ell_2}}=
    \biggl( \frac{\sqrt{D_K}}{2\pi}\biggr)^{\ell_2}\cdot(\ell_1-1)!\cdot\bigl(1-\chi(\mathfrak{p})/p\bigr)\cdot\bigl(1-\chi^{-1}(\bar{\mathfrak{p}})\bigr)\cdot\frac{ L_{\mathfrak{c}}(\chi^{-1},0)}{\Omega_\infty(A)^{\ell_1-\ell_2}}.
\end{equation}
\end{theorem}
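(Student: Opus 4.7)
The plan is to follow Katz's original strategy, which realises $\mathscr{L}_{p,\mathfrak{c}}$ as the $p$-adic interpolation of Eisenstein series evaluated at the CM point attached to $A$, and then to match the interpolation formula factor by factor. First I would make the complex side explicit: for $\chi\in\Sigma_{\mathrm{crit}}^{(2)}(\mathfrak{c})$ of infinity type $(\ell_1,\ell_2)$ with $\ell_1\geq 1$ and $\ell_2\leq 0$, the classical computations of Damerell and Shimura express $L_\mathfrak{c}(\chi^{-1},0)$ as a finite sum
\[ L_\mathfrak{c}(\chi^{-1},0)=C(\chi)\cdot\sum_{[\mathfrak{a}]}\chi^{-1}(\mathfrak{a})\,\delta_{\ell_1}^{-\ell_2}\bigl(E_{\ell_1,\mathfrak{c}}\bigr)(\tau_\mathfrak{a}), \]
where $\delta_k$ denotes the Maass--Shimura raising operator, $E_{\ell_1,\mathfrak{c}}$ is a holomorphic Eisenstein series of weight $\ell_1$ and level depending on $\mathfrak{c}$, the $\tau_\mathfrak{a}$ are CM points indexed by the class group of $K$, and the constant $C(\chi)$ absorbs the factors $(\sqrt{D_K}/2\pi)^{\ell_2}$ and $(\ell_1-1)!$ appearing in the interpolation formula, together with the Gauss-sum contribution of any finite-order twist in the standard factorisation of $\chi$.

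Next I would construct the $p$-adic counterpart. Katz's Eisenstein measure on the Iwasawa algebra of $\Gal\bigl(K(\mathfrak{c}p^\infty)/K\bigr)$ yields $p$-adic families of $p$-adic modular forms; in particular, it provides the $p$-depleted Eisenstein series $E_{\ell_1,\mathfrak{c}}^{(p)}$ obtained from $E_{\ell_1,\mathfrak{c}}$ by removing the Euler factors above $p$ in the $q$-expansion. This depletion step is precisely the origin of the correction factors $(1-\chi(\mathfrak{p})/p)$ and $(1-\chi^{-1}(\bar{\mathfrak{p}}))$ in \eqref{Katz}. At the ordinary CM point attached to $A$, the Serre--Tate coordinate supplied by $\varphi_p\colon\hat{A}\xrightarrow{\sim}\hat{\mathds{G}}_m$ identifies the formal deformation space with $\hat{\mathds{G}}_m$, and under this identification the Atkin--Serre operator $\theta$ on $p$-adic modular forms plays the role of the complex Maass--Shimura operator $\delta_k$. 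The key comparison, to be established by a direct computation on $q$-expansions via the coordinate $T=q-1$, is
\[ \delta_k^j f(\tau_A)\cdot\Omega_\infty(A)^{-(k+2j)}\equiv\theta^j f^{(p)}(A)\cdot\Omega_p(A)^{-(k+2j)}\pmod{\text{Euler factors at }p}; \]
with $k=\ell_1$ and $j=-\ell_2$ this produces precisely the period ratio $\Omega_p(A)^{\ell_1-\ell_2}/\Omega_\infty(A)^{\ell_1-\ell_2}$ appearing in \eqref{Katz}. Defining $\mathscr{L}_{p,\mathfrak{c}}(\chi)$ as the integral against the Eisenstein measure of the $p$-adic character attached to $\chi$ via the reciprocity map, evaluated at $A$, then yields the claimed identity.

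The main obstacle is the reconciliation of the three frameworks involved—the $q$-expansion principle and Eisenstein measure, Serre--Tate theory at ordinary CM points, and the classical Hecke theory of CM $L$-functions—so that all normalisations match on the nose. Concretely, one must track the factors $(\sqrt{D_K}/2\pi)^{\ell_2}$ and $(\ell_1-1)!$ through the Maass--Shimura calculus, verify that the Gauss sum appearing in the standard factorisation of $\chi$ cancels correctly against the one produced on the complex side, and check that the abstractly defined Iwasawa measure agrees at \emph{every} critical algebraic Hecke character with the evaluation coming from Eisenstein series; density of $\Sigma_{\mathrm{crit}}^{(2)}(\mathfrak{c})$ in $\widehat{\Sigma}_{\mathrm{crit}}(\mathfrak{c})$ will then guarantee uniqueness of $\mathscr{L}_{p,\mathfrak{c}}$ and complete the proof of \eqref{Katz}.
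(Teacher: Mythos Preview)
Your sketch outlines the standard construction of Katz's $p$-adic $L$-function via the Eisenstein measure and Serre--Tate theory, which is indeed the content of the references behind this theorem. However, the paper itself does not prove this result: its entire proof is a citation to \cite[Chapter II, Theorem 4.14]{de87}. This is a foundational theorem due to Katz (and reworked by de Shalit) that the paper is merely quoting as background, not establishing.

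So your proposal is not so much a different route as an attempt to actually carry out what the paper takes as given. As a high-level roadmap it is accurate---Damerell/Shimura on the complex side, the $p$-depleted Eisenstein series and the resulting Euler factors at $p$, the comparison of Maass--Shimura and Atkin--Serre operators at the ordinary CM point via the trivialisation $\varphi_p$, and the period comparison---but it remains a sketch rather than a proof: the normalisation bookkeeping you flag as the ``main obstacle'' is precisely where all the work lies, and your outline does not resolve it. For the purposes of this paper the appropriate response is simply to cite Katz or de Shalit, as the authors do.
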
 

\begin{proof} See, \emph{e.g.}, \cite[Chapter II, Theorem 4.14]{de87}. \end{proof}

Here $L_\mathfrak{c}(\chi^{-1},s)$ is the Hecke $L$-function of $\chi^{-1}$ with the Euler factors at primes dividing $\mathfrak{c}$ removed; in particular, if $\chi$ has conductor $\mathfrak{c}$, then $L(\chi^{-1},s)=L_\mathfrak{c}(\chi^{-1},s)$. Formula \eqref{Katz} should be interpreted as asserting that the right-hand side belongs to $\bar\Q$ and its image in $\bar\Q_p$ via the fixed embedding $\bar\Q\hookrightarrow\bar\Q_p$ is equal to the left-hand side.  
  
Recall from Definition \ref{defSD} that a Hecke character $\chi\in \Sigma_\mathrm{crit}(\mathfrak{c})$ is self-dual if $\chi \chi^\ast = \Norm_K.$ Denote by $\Sigma_\mathrm{sd}(\mathfrak{c})$ the subset of $\Sigma_\mathrm{crit}(\mathfrak{c})$ consisting of self-dual characters of conductor $\mathfrak{c}$. From the definition, a self-dual Hecke character is necessarily of infinity type $(1+j,-j)$ for some $j\in \Z$. Moreover, the conductor of a self-dual Hecke character is invariant under complex conjugation.
Define $\Sigma_\mathrm{sd}^{(i)}(\mathfrak{c} )\colon \!\! =\Sigma_\mathrm{sd}(\mathfrak{c} )\cap \Sigma_\mathrm{crit}^{(i)}(\mathfrak{c} )$ for $i\in\{1,2\}$, so that 
\begin{itemize}
\item $\Sigma_\mathrm{sd}^{(1)}(\mathfrak{c} )$ is the set of self-dual characters of infinity type $(1+j,-j)$ 
with $j\leq -1$;  
\item $\Sigma_\mathrm{sd}^{(2)}(\mathfrak{c} )$ is the set of self-dual characters of infinity type $(1+j,-j)$ 
with $j\geq 0$. 
\end{itemize}

\begin{remark} If $\chi\in \Sigma_\mathrm{sd}^{(2)}(\mathfrak{c})$, then 
\begin{equation} \label{katz-cong}
\frac{\mathscr{L}_{p,\mathfrak{c}}(\chi)}{\Omega_p(\chi)}\equiv \frac{L_\mathfrak{c}(\chi^{-1},0)}{\Omega_\infty(\chi)}\pmod{E_{\chi}^\times}.
\end{equation}
To check this relation, let $(1+j,-j)$ be the infinity type of $\chi$. Since the algebraic number 
\[ \Bigl(i{\sqrt{D_K}}\Bigr)^{-j}\cdot j!\cdot\bigl(1-\chi(\mathfrak{p})/p\bigr)\cdot\bigl(1-\chi^{-1}(\bar{\mathfrak{p}})\bigr) \] 
belongs to $E_\chi^\times$, formula \eqref{Katz} yields the congruence 
\begin{equation} \label{congruence-eq}
\frac{\mathscr{L}_{p,\mathfrak{c}}(\chi)}{\Omega_p(A)^{2j+1}}\equiv\frac{ L_{\mathfrak{c}}(\chi^{-1},0)}{(2\pi i)^{-j}\Omega_\infty(A)^{2j+1}}\pmod{E_\chi^\times}. 
\end{equation}
On the other hand, by definition, $\Omega_\infty(\chi)=\mathfrak{g}(\varphi)\cdot\Omega_\infty(A)^{2j+1}\cdot(2\pi i)^{-j}$ (equality \eqref{period3}) and $\Omega_p(\chi)=\mathfrak{g}(\varphi)\cdot\Omega_p(A)^{2j+1}$ (equality \eqref{period4}). Therefore, congruence \eqref{congruence-eq} gives
\[ \frac{\mathfrak{g}(\varphi)\cdot\mathscr{L}_{p,\mathfrak{c}}(\chi)}{\Omega_p(\chi)}\equiv\frac{ \mathfrak{g}(\varphi)\cdot L_{\mathfrak{c}}(\chi^{-1},0)}{\Omega_\infty(\chi)}\pmod{E_\chi^\times}, \]
which immediately gives congruence \eqref{katz-cong}. 
\end{remark}

%

\subsection{BDP $p$-adic $L$-function}

Let $K$ be an imaginary quadratic field of discriminant $-D_K$ and let $\theta_\psi=\sum_{n\geq 1}a_n(\psi)q^n\in S_{k}\bigl(\Gamma_0(D_K\Norm(\mathfrak{f}_\psi)),\varepsilon_K\varepsilon_\psi\bigr)$ be the theta series attached to the Hecke character $\psi$ of $K$ of infinity type $(k-1,0)$. Then $E_\psi\subset \bar\Q$ is the Hecke field of $\theta_\psi$. For a Hecke character $\chi$ of $K$, denote by $L(\theta_\psi,\chi,s)$ the complex $L$-function of $\theta_\psi$ twisted by $\chi$.
From now on, we will impose

\begin{assumption} \label{ass2}
    The conductor $\mathfrak{f}_\psi$ of $\psi$ is a cyclic ideal of $\cO_K$ of norm prime to $D_K$.
\end{assumption}

As a consequence of Assumption \ref{ass2}, we see that the pair $(\theta_\psi,K)$ satisfies the \emph{Heegner hypothesis relative to $N\defeq D_K\Norm(\mathfrak{f}_\psi)$}: there exists a cyclic ideal $\mathfrak{N}$ of $\cO_K$ of norm $N$, \emph{i.e.}, $\cO_K/\mathfrak{N} \simeq \Z /N\Z$. See, \emph{e.g.}, \cite[Lemma 3.14]{BDP1} for details. Fix such an ideal $\mathfrak{N}$ and write $\mathfrak{N}_{\varepsilon_K\varepsilon_\psi}$ for the unique ideal dividing $\mathfrak{N}$ of norm $N_{\varepsilon_K\varepsilon_\psi}$, where $N_{\varepsilon_K\varepsilon_\psi}$ is the norm of the conductor of the character $\varepsilon_K\varepsilon_\psi$ of $\theta_\psi$.  

\begin{definition} \label{defcc} 
A Hecke character $\phi$ of type $(\ell_1, \ell_2)$ is \emph{central critical} for $\theta_\psi$ if $\ell_1+\ell_2 = k$ and $\varepsilon_\phi = {\varepsilon_K\varepsilon_\psi}$. 
\end{definition}

Let $\Sigma_\mathrm{cc}(\psi)$ be the set of central critical characters of $\theta_\psi$. For a given odd integer $c$ prime to $ N=D_K\Norm(\mathfrak{f}_\psi)$, define a subset $\Sigma_{\mathrm{cc}}(\psi,c) \subset \Sigma_\mathrm{cc}(\psi)$ consisting of those characters $\phi$ such that 
\begin{itemize}
\item $\mathfrak{f}_\phi = (c)\cdot \mathfrak{N}_{\varepsilon_K\varepsilon_\psi}$;
\item the local sign $\epsilon_q(\theta_\psi,\phi^{-1})$ is $1$ for all prime numbers $q$. 
\end{itemize}
We will consider the following subsets of $\Sigma_{\mathrm{cc}}(\psi,c)$:
\begin{itemize} 
\item $\Sigma_{\mathrm{cc}}^{(1)}(\psi,c)$ is the subset of $\Sigma_{\mathrm{cc}}(\psi,c)$ consisting of those characters having infinity type $(k+j, -j)$ with $1-k\leq j\leq -1$;
\item $\Sigma_{\mathrm{cc}}^{(2)}(\psi,c)$ is the subset of $\Sigma_{\mathrm{cc}}(\psi,c)$ consisting of those characters having infinity type $(k+j, -j)$ with $j\geq 0$.
\end{itemize} 
Fix a prime number $p$ splitting in $K$ and not dividing $Nc$. The set $\Sigma_{\mathrm{cc}}^{(2)}(\psi,c)$ is dense in the completion $\widehat{\Sigma}_{\mathrm{cc}}(\psi,c)$ of ${\Sigma}_{\mathrm{cc}}(\psi,c)$ with respect to the $p$-adic compact-open topology ${\Sigma}_{\mathrm{cc}}(\psi,c)$ (see, \emph{e.g.}, \cite[\S 5.2]{BDP1} for details). Recall that $E_{\psi,\phi}$ is the composite of $E_\psi$ and $E_\phi$; denote by $E_{\psi,\phi,{\varepsilon_K\varepsilon_\psi}}$ the composite of $E_{\psi,\phi}$ with the abelian extension of $\Q(\varepsilon_K\varepsilon_\psi)$ cut out by $\varepsilon_K\varepsilon_\psi$. 

The results of \cite{BDP1} show that there exists a unique $p$-adic analytic function 
\[ L_p(\theta_\psi,-):\widehat{\Sigma}_{\mathrm{cc}}(\psi,c)\longrightarrow\C_p, \] 
which we shall call \emph{BDP $p$-adic $L$-function}, satisfying the following interpolation property.
 
\begin{theorem}[Bertolini--Darmon--Prasanna] 
For all characters $\phi \in {\Sigma}_{\mathrm{cc}}^{(2)}(\psi,c)$, the equality
\begin{equation} \label{BDP}
\frac{L_p(\theta_\psi,\phi)}{\Omega_p(A)^{2(k+2j)}}=\frac{E_p(\psi)C(\psi,\phi,c)}{\omega(\psi,\phi)}\cdot\frac{L(\theta_\psi,\phi^{-1},0)}{\Omega_\infty(A)^{2(k+2j)}}
\end{equation}
holds true.
\end{theorem}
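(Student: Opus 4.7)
The plan is to construct $L_p(\theta_\psi,-)$ as a $p$-adic analytic function on $\widehat{\Sigma}_{\mathrm{cc}}(\psi,c)$ whose values on the dense subset $\Sigma_{\mathrm{cc}}^{(2)}(\psi,c)$ are prescribed by the right-hand side of \eqref{BDP}, with uniqueness guaranteed by $p$-adic density. The core analytic input is a Rankin--Selberg/Waldspurger-type formula expressing, for $\phi\in\Sigma_{\mathrm{cc}}^{(2)}(\psi,c)$ of infinity type $(k+j,-j)$ with $j\geq 0$, the central critical value $L(\theta_\psi,\phi^{-1},0)$ as an explicit constant (encoding archimedean, ramified and global structural factors) times
\[
\Omega_\infty(A)^{-2(k+2j)}\cdot\Bigl(\sum_{[\mathfrak{a}]\in\Pic(\cO_c)}\phi^{-1}(\mathfrak{a})\cdot(\delta_{k}^{j}\theta_\psi)(\tau_{\mathfrak{a}})\Bigr)^{\!2},
\]
where $\tau_\mathfrak{a}$ runs over CM points attached to fractional ideals $\mathfrak{a}\subset\cO_c$ and $\delta_k$ is the Shimura--Maass raising operator. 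The Heegner hypothesis provided by Assumption \ref{ass2} ensures the CM points lie on the modular curve of level $\mathfrak{N}_{\varepsilon_K\varepsilon_\psi}$, and the factor $\Omega_\infty(A)^{2(k+2j)}$ arises from normalizing the Maass values using the complex period of the CM elliptic curve $A$.

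Next I would pass from $\delta_k$ to its $p$-adic analogue, the Atkin--Serre theta operator $\theta=q\tfrac{d}{dq}$. Since $p$ splits in $K$, the CM points $\tau_\mathfrak{a}$ correspond to elliptic curves with good ordinary reduction at $p$, and Katz's comparison between complex and $p$-adic uniformizations on the ordinary locus yields
\[
\Omega_p(A)^{-2(k+2j)}\cdot(\theta^j\theta_\psi^{[p]})(\tau_{\mathfrak{a}})=\Omega_\infty(A)^{-2(k+2j)}\cdot(\delta_k^j\theta_\psi)(\tau_{\mathfrak{a}})
\]
after applying the $p$-depletion $\theta_\psi^{[p]}$; the Euler factor $E_p(\psi)$ appearing in \eqref{BDP} is exactly what relates the values of $\theta_\psi^{[p]}$ and $\theta_\psi$ on the CM divisor. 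This produces both the $p$-adic period $\Omega_p(A)^{2(k+2j)}$ on the left-hand side of \eqref{BDP} and the Euler correction $E_p(\psi)$.

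With this dictionary, I would then construct $L_p(\theta_\psi,-)$ as the $p$-adic analytic extension to $\widehat{\Sigma}_{\mathrm{cc}}(\psi,c)$ of the assignment
\[
\phi\longmapsto\Bigl(\sum_{[\mathfrak{a}]\in\Pic(\cO_c)}\phi^{-1}(\mathfrak{a})\cdot(\theta^j\theta_\psi^{[p]})(\tau_{\mathfrak{a}})\Bigr)^{\!2}.
\]
Existence of such an analytic extension is supplied by Katz's theory of $p$-adic modular forms, which realizes $\{\theta^j\theta_\psi^{[p]}\}_{j\geq 0}$ as specializations of a single $p$-adic family varying continuously along the weight, combined with the continuity of $\phi\mapsto\phi^{-1}(\mathfrak{a})$ in the $p$-adic compact-open topology on $\widehat{\Sigma}_{\mathrm{cc}}(\psi,c)$. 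Substituting back into the Shimura--Waldspurger formula reproduces the right-hand side of \eqref{BDP} at every $\phi\in\Sigma_{\mathrm{cc}}^{(2)}(\psi,c)$, and density of this set determines $L_p(\theta_\psi,-)$ uniquely.

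The main obstacle is the precise bookkeeping of the ramified local constants $C(\psi,\phi,c)$ and the overall normalizer $\omega(\psi,\phi)$: these encode local zeta integrals at the primes dividing $c\cdot\Norm(\mathfrak{f}_\psi)\cdot D_K$ together with the choice of test vector realizing $\theta_\psi$ as a global section inside its automorphic representation, and their explicit determination (rather than a mere order-of-magnitude statement) is the technically delicate part, carried out in \cite{BDP1}.
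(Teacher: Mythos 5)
Your outline is correct and follows essentially the same route as the paper, whose proof of this statement simply cites \cite{BDP1} (the combination of Theorem 5.5 and Proposition 5.10 there): definition of $L_p(\theta_\psi,-)$ via sums of $\theta^j\theta_\psi^{[p]}$ at trivialized ordinary CM points, comparison of the Shimura--Maass and Atkin--Serre operators through the complex and $p$-adic periods, the Euler factor $E_p(\psi)$ coming from $p$-depletion, and a Waldspurger-type explicit central value formula with the constants $C(\psi,\phi,c)$ and $\omega(\psi,\phi)$. The only slip is a normalization: at each individual CM point the period exponent is $k+2j$ (the weight of $\delta_k^j\theta_\psi$, resp.\ $\theta^j\theta_\psi^{[p]}$), and the exponent $2(k+2j)$ in \eqref{BDP} appears only after squaring the sum over $\Pic(\cO_c)$.
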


\begin{proof} Combine \cite[Theorem 5.5]{BDP1} and \cite[Proposition 5.10]{BDP1}. \end{proof}

In the statement of the previous theorem, the notation not introduced above is as follows:  
\begin{itemize}
\item $E_p(\psi)\defeq\bigl(1-\phi^{-1}(\bar{\mathfrak{p}}) a_p(\psi)+ \ \phi^{-2}(\bar{\mathfrak{p}}) {\varepsilon_K\varepsilon_\psi}(p)p^{k-1}\bigr)^2$; 
\item $\omega(\psi,\phi)\in E_{\psi,\phi,{\varepsilon_K\varepsilon_\psi}}^\times$ is defined in \cite[equation (5.1.11)]{BDP1};
\item if $\omega_K$ is the number of roots of unity in $K$, then 
\[ C(\psi,\phi,c)\defeq\frac{j!\cdot(k+j-1)!\cdot\omega_K}{2}\cdot\biggl(\frac{2\pi}{c\sqrt{D}}\biggr)^{\!k+2j-1}\cdot\prod_{q\mid c}\frac{q-{\varepsilon_K\varepsilon_\psi}(q)}{q-1}. \]
 \end{itemize}
As before, equality \eqref{BDP} must be understood as stating that the right-hand side belongs to $\bar\Q$ and its image in $\bar\Q_p$ via the fixed embedding $\bar\Q\hookrightarrow\bar\Q_p$ is equal to the left-hand side.  
 
 \subsection{A factorization formula} \label{secfac}
 
Now we want to recall the factorization formula in \cite[\S 3D]{BDP2} expressing the BDP $p$-adic $L$-function as a product of two Katz $p$-adic $L$-functions. Fix a Hecke character $\psi$ of infinity type $(k-1,0)$ with $k\geq 2$ and consider its associated theta series
$\theta_\psi = \sum_{\mathfrak{q}} \psi(\mathfrak{q}) q^{\Norm \mathfrak{q}}$, as in \S\ref{secscholl}. Write $a_n(\psi)$ for the $n$-th Fourier coefficient of $\theta_\psi$, so that $\theta_\psi=\sum_{n\geq 1}a_n(\psi)q^n$, and keep Assumption \ref{ass2} in force. 

If $\phi \in \Sigma_{\mathrm{cc}}(\psi,c)$ has infinity type $(k+\ell,-\ell)$, then $\psi^{-1}\phi$ has infinity type $(1+\ell,-\ell)$ and ${\psi^*}^{-1}\phi$ has infinity type $(k+\ell,1-(k+\ell))$. By \cite[Lemma 3.15]{BDP2}, there is a factorization 
\begin{equation}\label{cplxfact}
L(\theta_\psi, \phi^{-1}, s)= L(\psi\phi^{-1},s) \cdot L(\psi^{\ast} \phi^{-1},s).
\end{equation}
Put $\mathfrak{d}_K\defeq\sqrt{-D_K}$. By \cite[Lemma 3.16]{BDP2}, we know that
\begin{itemize}
\item if $\phi \in \Sigma^{(1)}_{\mathrm{cc}}(\psi,c)$, then $ \psi^{-1} \phi \in \Sigma_{\mathrm{sd}}^{(1)}(c \mathfrak{d}_K)^- $ and 
$\psi^{\ast -1} \phi \in \Sigma_{\mathrm{sd}}^{(2)}\bigl(c \mathfrak{d}_K\Norm(\mathfrak{f}_\psi)\bigr)^-$;
\item if $\phi \in \Sigma^{(2)}_{\mathrm{cc}}(\psi,c)$, then $\psi^{-1} \phi,\psi^{\ast -1} \phi\in\Sigma_{\mathrm{sd}}^{(2)}\bigl(c \mathfrak{d}_K\Norm(\mathfrak{f}_\psi)\bigr)^-$. 
\end{itemize}  
The following factorization formula can be found in \cite{BDP2}. 

\begin{theorem}[Bertolini--Darmon--Prasanna] \label{BDP2-thm}
For all $\phi \in \Sigma_{\mathrm{cc}}(\psi,c)$ of type $(k+\ell,-\ell)$, the equality
\begin{equation} \label{eq**} 
L_p(\theta_\psi,\phi) = \frac{\omega_K}{2c^{k+2j-1}}\cdot\omega(\theta_\psi,\phi)^{-1}\cdot\prod_{q\mid c} \frac{q-\varepsilon_K(q)}{q-1}\cdot\mathscr{L}_{p,\:c\mathfrak{d}_K}(\psi^{-1}\phi)\cdot\mathscr{L}_{p, \: c\mathfrak{d}_K\Norm(\mathfrak{f}_\psi)}(\psi^{\ast-1}\phi)
\end{equation}
holds true.
\end{theorem}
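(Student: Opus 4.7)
The plan is to prove the identity by a density argument. Both sides of \eqref{eq**} define $p$-adic analytic functions on the rigid analytic space $\widehat{\Sigma}_{\mathrm{cc}}(\psi,c)$: the left-hand side by construction of $L_p(\theta_\psi,-)$, and the right-hand side as the pointwise product of the two Katz $p$-adic $L$-functions pulled back along the continuous maps $\phi\mapsto \psi^{-1}\phi$ and $\phi\mapsto \psi^{\ast-1}\phi$, which send $\widehat{\Sigma}_{\mathrm{cc}}(\psi,c)$ into $\widehat{\Sigma}_{\mathrm{crit}}(c\mathfrak{d}_K)$ and $\widehat{\Sigma}_{\mathrm{crit}}(c\mathfrak{d}_K\Norm(\mathfrak{f}_\psi))$, respectively. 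Since $\Sigma_{\mathrm{cc}}^{(2)}(\psi,c)$ is dense in $\widehat{\Sigma}_{\mathrm{cc}}(\psi,c)$, it suffices to verify the equality on $\Sigma_{\mathrm{cc}}^{(2)}(\psi,c)$, where all three $p$-adic $L$-values are controlled by explicit interpolation formulas.

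Fix $\phi\in\Sigma_{\mathrm{cc}}^{(2)}(\psi,c)$ of infinity type $(k+\ell,-\ell)$ with $\ell\geq 0$. By the classification recalled before the theorem, $\psi^{-1}\phi$ has infinity type $(1+\ell,-\ell)$ and lies in the interpolation range of $\mathscr{L}_{p,c\mathfrak{d}_K}$, while $\psi^{\ast-1}\phi$ has infinity type $(k+\ell,1-k-\ell)$ and lies in the interpolation range of $\mathscr{L}_{p,c\mathfrak{d}_K\Norm(\mathfrak{f}_\psi)}$. Apply the Katz interpolation formula \eqref{Katz} to each factor, multiply, and compare with the BDP interpolation formula \eqref{BDP}; using the complex factorization \eqref{cplxfact} at $s=0$, namely
\[ L(\theta_\psi,\phi^{-1},0)=L(\psi\phi^{-1},0)\cdot L(\psi^{\ast}\phi^{-1},0), \]
the identity reduces to a matching of periods, elementary transcendental factors, and Euler factors.

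The bookkeeping splits as follows. For the archimedean and $p$-adic periods, the exponents from the two Katz formulas add as $(1+2\ell)+(2k+2\ell-1)=2(k+2\ell)$, matching the exponent appearing in \eqref{BDP} for both $\Omega_p(A)$ and $\Omega_\infty(A)$. The powers of $\sqrt{D_K}/2\pi$ combine as $(\sqrt{D_K}/2\pi)^{-\ell+(1-k-\ell)}=(\sqrt{D_K}/2\pi)^{1-k-2\ell}$, which, together with the factorials $\ell!\cdot(k+\ell-1)!$ from $(\ell_1-1)!$ in each Katz formula, reproduces the constant $C(\psi,\phi,c)$ of the BDP formula (after absorbing the factor $\omega_K/2$, the power of $c$, and the product over $q\mid c$ prescribed in the statement). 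The passage from imprimitive $L$-functions $L_{c\mathfrak{d}_K}$ and $L_{c\mathfrak{d}_K\Norm(\mathfrak{f}_\psi)}$ to the primitive $L$-functions appearing in \eqref{cplxfact} contributes precisely the Euler factors at the places dividing $c$ and $\mathfrak{f}_\psi$, and a careful comparison of conductors shows that these are already incorporated into the explicit constant.

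The main obstacle is the matching of Euler factors at $p$: one has to show that the product
\[ \bigl(1-\psi^{-1}\phi(\mathfrak{p})/p\bigr)\bigl(1-\psi\phi^{-1}(\bar{\mathfrak{p}})\bigr)\bigl(1-\psi^{\ast-1}\phi(\mathfrak{p})/p\bigr)\bigl(1-\psi^{\ast}\phi^{-1}(\bar{\mathfrak{p}})\bigr) \]
equals $E_p(\psi)=\bigl(1-\phi^{-1}(\bar{\mathfrak{p}})a_p(\psi)+\phi^{-2}(\bar{\mathfrak{p}})\varepsilon_K\varepsilon_\psi(p)p^{k-1}\bigr)^2$ up to factors contained in the algebraic constants already accounted for. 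The key input is the factorization of the Hecke polynomial of $\theta_\psi$ at the split prime $p=\mathfrak{p}\bar{\mathfrak{p}}$,
\[ 1-a_p(\psi)X+\varepsilon_K\varepsilon_\psi(p)p^{k-1}X^2=\bigl(1-\psi(\mathfrak{p})X\bigr)\bigl(1-\psi^{\ast}(\mathfrak{p})X\bigr), \]
together with the identities $\psi^{\ast}(\mathfrak{p})=\psi(\bar{\mathfrak{p}})$ and $\psi(\mathfrak{p})\psi(\bar{\mathfrak{p}})=\varepsilon_\psi(p)p^{k-1}$. The remaining discrepancy between $\omega(\psi,\phi)$ (appearing in \eqref{BDP}) and $\omega(\theta_\psi,\phi)$ (appearing in \eqref{eq**}) is a product of generalized Gauss sums attached to the central characters of the pieces, which one tracks through the definitions and verifies to match. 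Once these verifications are in place, the equality on $\Sigma_{\mathrm{cc}}^{(2)}(\psi,c)$ is immediate, and the theorem follows by continuity.
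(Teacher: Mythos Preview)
Your approach is correct and matches exactly the strategy the paper indicates: the paper's own proof is simply a citation to \cite[Theorem 3.17]{BDP2}, and the Remark immediately following explains that the identity is proved by comparing the interpolation formulas \eqref{Katz} and \eqref{BDP} via the complex factorization \eqref{cplxfact} on the dense subset $\Sigma_{\mathrm{cc}}^{(2)}(\psi,c)$, then extending by continuity. Your outline of the bookkeeping (exponent matching for periods, factorials, and the Euler factor at $p$ via the Hecke polynomial factorization) is the content of that comparison carried out in detail.
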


\begin{proof} This is \cite[Theorem 3.17]{BDP2}. \end{proof}
        
\begin{remark}
Equality \eqref{eq**} is proved using factorization formula \eqref{cplxfact} and then comparing the relevant interpolation formulas \eqref{Katz} and \eqref{BDP} for $p$-adic $L$-functions at characters in $\Sigma^{(2)}_{\mathrm{cc}}(\mathfrak{c},\mathfrak{N},\varepsilon)$; the conclusion for all characters in the $p$-adic completion $\widehat\Sigma_{\mathrm{cc}}(\psi,c)$ of $\Sigma_{\mathrm{cc}}(\psi,c)$ (therefore, including characters in $\Sigma^{(1)}_{\mathrm{cc}}(\mathfrak{c},\mathfrak{N},\varepsilon)$) follows by a density argument. 
\end{remark}

The next result is an easy consequence of Theorem \ref{BDP2-thm}.

\begin{corollary} 
For all $\phi \in \Sigma_{\mathrm{cc}}(\mathfrak{c},\mathfrak{N}, \varepsilon)$, the congruence
\begin{equation} \label{factorization-modulo-E}
L_p(\theta_\psi,\phi)\equiv\mathscr{L}_{p, \:c \mathfrak{d}_K}(\psi^{-1}\phi)\cdot\mathscr{L}_{p, \:c \mathfrak{d}_K\Norm(\mathfrak{f}_\psi)}(\psi^{\ast-1}\phi)\pmod{E_{\psi,\phi,\varepsilon_K\varepsilon_\psi}^\times}
\end{equation}
holds true.
\end{corollary}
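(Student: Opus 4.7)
The plan is to deduce this congruence directly from the factorization formula \eqref{eq**} in Theorem \ref{BDP2-thm} by verifying that each of the scalar factors appearing on the right-hand side lies in the unit group $E_{\psi,\phi,\varepsilon_K\varepsilon_\psi}^\times$; once this is established, the two sides of \eqref{eq**} are equal modulo $E_{\psi,\phi,\varepsilon_K\varepsilon_\psi}^\times$, which is exactly the content of \eqref{factorization-modulo-E}.

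Concretely, I would examine the three ``constants'' multiplying the product of Katz $p$-adic $L$-values in \eqref{eq**}: the rational number $\tfrac{\omega_K}{2c^{k+2j-1}}$, the quantity $\omega(\theta_\psi,\phi)^{-1}$, and the finite product $\prod_{q\mid c}\tfrac{q-\varepsilon_K(q)}{q-1}$. The first factor is a non-zero rational number, hence lies in $\Q^\times\subset E_{\psi,\phi,\varepsilon_K\varepsilon_\psi}^\times$. For the third factor, I would use that $\varepsilon_K$ is the quadratic Dirichlet character associated with $K$, so that $\varepsilon_K(q)\in\{-1,0,1\}$ for each rational prime $q$; whenever $\varepsilon_K(q)\neq 1$ one obtains a non-zero rational factor, so the whole product is a non-zero rational number. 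Finally, for $\omega(\theta_\psi,\phi)^{-1}$ I would invoke the explicit formula of \cite[equation (5.1.11)]{BDP1}, which by the bullet list following Theorem \ref{BDP2-thm} places $\omega(\psi,\phi)$, and therefore its inverse, in $E_{\psi,\phi,\varepsilon_K\varepsilon_\psi}^\times$.

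Having checked all three, the constant
\[ C_{\psi,\phi,c}\defeq\frac{\omega_K}{2c^{k+2j-1}}\cdot\omega(\theta_\psi,\phi)^{-1}\cdot\prod_{q\mid c}\frac{q-\varepsilon_K(q)}{q-1} \]
belongs to $E_{\psi,\phi,\varepsilon_K\varepsilon_\psi}^\times$, so \eqref{eq**} can be rewritten as
\[ L_p(\theta_\psi,\phi) = C_{\psi,\phi,c}\cdot\mathscr{L}_{p,\:c\mathfrak{d}_K}(\psi^{-1}\phi)\cdot\mathscr{L}_{p,\:c\mathfrak{d}_K\Norm(\mathfrak{f}_\psi)}(\psi^{\ast-1}\phi), \]
and reducing modulo $E_{\psi,\phi,\varepsilon_K\varepsilon_\psi}^\times$ yields \eqref{factorization-modulo-E}. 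There is no serious obstacle here: the only item requiring more than inspection is the membership of $\omega(\theta_\psi,\phi)^{-1}$ in the appropriate coefficient field, and this is a direct citation from \cite{BDP1}. The whole argument is a one-step algebraic verification applied to Theorem \ref{BDP2-thm}.
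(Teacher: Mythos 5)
Your proposal is correct and follows exactly the paper's argument: the proof in the text is precisely the observation that the constant factors in \eqref{eq**}, including $\omega(\theta_\psi,\phi)^{-1}\in E_{\psi,\phi,\varepsilon_K\varepsilon_\psi}^\times$ and the non-zero rational factors, all lie in $E_{\psi,\phi,\varepsilon_K\varepsilon_\psi}^\times$, so reducing the factorization formula modulo this group gives \eqref{factorization-modulo-E}. You have simply spelled out the same one-step verification in more detail.
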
 

\begin{proof} The factors appearing on the right-hand side of \eqref{eq**} belong to $E_{\psi,\phi,\varepsilon_K\varepsilon_\psi}^\times$, whence the claim. \end{proof}
 
\subsection{Generalized Heegner cycles} \label{secGHC} 

We offer a quick review of generalized Heegner cycles \emph{\`a la} Bertolini--Darmon--Prasanna (\cite{BDP1}). Let $\psi$ and $K$ be as in Section \ref{BDPsec} and keep Assumption \ref{ass2} in force. Recall that $p$ splits in $K$, $c$ is an odd integer prime to $N=D_K\Norm(\mathfrak{f}_\psi)$ and $p\nmid Nc$. Moreover, assume $N\geq 5$ and, as before, set $r\defeq k-2$ and $\ell\defeq r/2$.  

Recall the elliptic curve $A$ with CM by $\cO_K$ defined over $H$ that was fixed in \S\ref{remell} and \S\ref{persec}, and also the ideal $\mathfrak{N}\subset\cO_K$ such that $\cO_K/\mathfrak{N}\simeq\Z/N\Z$. Pick $t_A\in A[\mathfrak{N}]$; the pair $(A,t_A)$ is defined over an abelian extension $H_\mathfrak{N}$ of $K$, independent of the choice of $t_A$, and so defines a point in $X_{\Gamma_\psi}(H_{\mathfrak{N}})$. Let $\cO_c=\Z+c\cO_K$ be the order of $K$ of conductor $c$ and define $A_c\defeq\C/\mathcal{O}_c$; then $A_c$ is the image of an isogeny $\varphi_c:A\rightarrow A_c$ of degree $c$, defined over the ring class field $H_c$ of $K$ of conductor $c$. For every ideal $\mathfrak{a}$ of $\cO_c$ relatively prime to $\mathfrak{N}\cap\mathcal{O}_c$, let $\varphi_{\mathfrak{a},c}:A_c\rightarrow A_\mathfrak{a}\defeq A_c/A_c[\mathfrak{a}]$ be the canonical isogeny (\emph{cf.} \cite[equation (1.4.7)]{BDP1} and \cite[\S5.1]{BDP1}). Then $\varphi_\mathfrak{a}\defeq\varphi_{\mathfrak{a},c}\circ\varphi_c:A\rightarrow A_\mathfrak{a}$ is an isogeny of degree $c\Norm(\mathfrak{a})$; observe that the pair $\bigl(A_\mathfrak{a},\varphi_\mathfrak{a}(t_A)\bigr)$ determines a point in $X_{\Gamma_\psi}(H_{c,\mathfrak{N}})$ along with an embedding $A_\mathfrak{a}^r\hookrightarrow W_r$, where $H_{c,\mathfrak{N}}$ is the composite of the fields $H_c$ and $H_\mathfrak{N}$. We may then define $\Gamma_{\varphi_\mathfrak{a}}\defeq\mathrm{Graph}(\varphi_\mathfrak{a})\subset A\times A_\mathfrak{a}$ to be the graph of $\varphi_\mathfrak{a}$ and  
\[ \Upsilon_{\varphi_\mathfrak{a}}\defeq\mathrm{Graph}(\varphi_\mathfrak{a})^r \subset (A\times A_\mathfrak{a})^r\subset A^r\times W_r\subset X_r. \] 
It turns out that the \emph{generalized Heegner cycle} 
\[ \Delta_{\varphi_\mathfrak{a}}\defeq\epsilon_X\Upsilon_{\varphi_\mathfrak{a}}\in \mathrm{CH}_0^{r+1}(X_r)(H_{c,\mathfrak{N}}) \] is a homologically trivial cycle with $\Q$-coefficients of codimension $r+1$ in $X_r$, defined over $H_{c,\mathfrak{N}}$ (see, \emph{e.g.}, \cite[\S2.3]{BDP1} for details).  
 
\subsection{A $p$-adic Gross--Zagier formula} \label{secGZ} 

We continue with the setting of \S\ref{secGHC}. Let $\widehat{K}=\A_{K,\text{fin}}$ denote the ring of finite adeles of $K$ and define 
$\widehat{\cO}_c\defeq\cO_c\otimes_\Z\widehat{\Z}$, where $\widehat{\Z}$ is the profinite completion of $\Z$. For every $\cO_c$-ideal prime to $\mathfrak{N}_{\varepsilon_K\varepsilon_\psi}c$, let $a=(a_\mathfrak{q})_\mathfrak{q}\in \widehat{K}^\times$ satisfy $\mathfrak{a}=a\widehat{\cO}_c\cap K$; then $a$ is necessarily a $p$-adic unit at all primes $\mathfrak{q}\,|\,\mathfrak{N}_{\varepsilon_K\varepsilon_\psi}c$.  

Fix a character $\phi\in\Sigma_{\mathrm{cc}}^{(1)}(\psi,c)$ of infinity type $(\ell+1,\ell+1)$ (\emph{cf.} Definition \ref{defcc}). 
As explained, \emph{e.g.}, in \cite[p. 1093]{BDP1}, we view $\phi$ as a character on $\cO_c$-ideals $\mathfrak{a}$ prime to $\mathfrak{N}_{\varepsilon_K\varepsilon_\psi}c$ by setting $\phi(\mathfrak{a})\defeq\prod_{\mathfrak{q}\nmid \mathfrak{N}  _{\varepsilon_K\varepsilon_\psi}c}\phi_\mathfrak{q}(a_\mathfrak{q})$, where $a=(a_\mathfrak{q})_\mathfrak{q}\in \widehat{K}^\times$ is chosen to satisfy $\mathfrak{a}=a\widehat{\cO}_c\cap K$, as before. In particular (\emph{cf.} \cite[equation (4.1.8)]{BDP1}), there is an equality 
\[ \phi\bigl((\alpha)\bigr)=\Norm_K^{\ell+1}(\alpha)\varepsilon_K\varepsilon_\psi(\alpha) \] 
for every $\alpha\in K^\times$ that is a unit at all primes dividing $\mathfrak{N}_{\varepsilon_K\varepsilon_\psi}c$, where $\varepsilon_K\varepsilon_\psi(\alpha)$ is a shorthand for $\varepsilon_K\varepsilon_\psi\bigl(\alpha\pmod{\mathfrak{N}_{\varepsilon_K\varepsilon_\psi}}\bigr)$. By a fundamental result of Bertolini--Darmon--Prasanna, the value $L_p(\theta_\psi,\phi)$ can be described as follows.

\begin{theorem}[Bertolini--Darmon--Prasanna] \label{thm5.11} 
If $\phi \in \Sigma_{\mathrm{cc}}^{(1)}(\psi,c)$ has infinity type $(\ell+1,\ell+1)$, then 
\begin{equation}\label{BDPTHM}
{L_p(\theta_\psi,\phi)} = \frac{E_p(\theta_\psi)}{c^\ell \ell!} \cdot \Biggl( \sum_{[\mathfrak{a}] \in \Pic(\mathcal{O}_c)} \phi^{-1}(\mathfrak{a})\Norm_K(\mathfrak{a})  \log_{X,\mathcal{F}}(\Delta_{\varphi_\mathfrak{a}} )\bigl(\omega_{\theta_\psi}\wedge \omega_A^\ell\eta_A^{\ell}\bigr)\!\Biggr)^2
\end{equation}
for an explicit constant $E_p(\theta_\psi)$.
\end{theorem}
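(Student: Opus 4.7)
The plan is to follow the strategy of Bertolini--Darmon--Prasanna in \cite{BDP1}, whose content this theorem recalls. The starting point is the construction of $L_p(\theta_\psi,-)$ as a $p$-adic analytic function via a Rankin--Selberg-type $p$-adic integral against a $p$-adic Eisenstein family. For characters $\phi\in\Sigma^{(2)}_{\mathrm{cc}}(\psi,c)$ the interpolation formula \eqref{BDP} is extracted from the classical Waldspurger/Rankin--Selberg identity, which writes $L(\theta_\psi,\phi^{-1},0)$ as (essentially) the square of a toric period integral. The toric period integral is a finite sum over the CM points $(A_\mathfrak{a},t_{A_\mathfrak{a}})\in X_{\Gamma_\psi}(H_{c,\mathfrak{N}})$ indexed by $\mathrm{Pic}(\mathcal{O}_c)$, with twist by $\phi^{-1}(\mathfrak{a})$.

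First I would rewrite this toric period sum $p$-adically, replacing the holomorphic differential $\omega_{\theta_\psi}$ by its image under iterated applications of the $p$-adic Maass--Shimura / Atkin-$\theta$ operator, and using the identification of $p$-adic modular forms with sections of the appropriate line bundle on the ordinary locus. Since the $p$-adic operator admits analytic continuation to the full weight space, the resulting sum defines a $p$-adic analytic function of $\phi$, uniquely characterized by the interpolation property of Step 1. The key computational observation is that, when evaluated at a CM pair $(A_\mathfrak{a},t_{A_\mathfrak{a}})$ via the Serre--Tate coordinate on the deformation space, the $p$-adic Maass--Shimura derivatives of $\theta_\psi$ of negative order (which is the regime corresponding to $\Sigma^{(1)}_{\mathrm{cc}}$, outside the range of interpolation) compute iterated Coleman primitives. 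These primitives are intrinsically de Rham-cohomological objects on $X_r$ attached to the pair $(\omega_{\theta_\psi},\omega_A^\ell\eta_A^\ell)$.

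Next I would invoke the $p$-adic Hodge-theoretic identification, via the comparison isomorphism $\mathrm{comp}$ entering diagram \eqref{D}, between (i) the Coleman-theoretic evaluation of these iterated primitives at CM points and (ii) the Bloch--Kato logarithm of the \'etale Abel--Jacobi class $\mathrm{AJ}_\text{\'et}(\Delta_{\varphi_\mathfrak{a}})$, paired with the de Rham class $\omega_{\theta_\psi}\wedge\omega_A^\ell\eta_A^\ell\in\mathrm{Fil}^{r+1}\epsilon_X H^{2r+1}_{\mathrm{dR}}(X_r)$. This is the step where the generalized Heegner cycle $\Delta_{\varphi_\mathfrak{a}}\defeq\epsilon_X\Gamma_{\varphi_\mathfrak{a}}^{r}$ enters: Besser's rigid syntomic description of $\log_{X,\mathcal{F}}$ presents $\log_{X,\mathcal{F}}(\Delta_{\varphi_\mathfrak{a}})(\omega_{\theta_\psi}\wedge\omega_A^\ell\eta_A^\ell)$ precisely as the iterated Coleman integral along $\Gamma_{\varphi_\mathfrak{a}}$ of the pullback of $\omega_{\theta_\psi}\wedge\omega_A^\ell\eta_A^\ell$, which matches the $p$-adic Maass--Shimura computation. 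Squaring and assembling the constants from the interpolation formula yields the factor $E_p(\theta_\psi)/(c^\ell\ell!)$ and the square on the right-hand side of \eqref{BDPTHM}.

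The main obstacle is the analytic continuation step together with the $p$-adic Hodge-theoretic identification: one must check that the functional on generalized Heegner cycles defined by the sum in \eqref{BDPTHM} genuinely interpolates the interpolation formula \eqref{BDP} as $\phi$ varies through $\widehat{\Sigma}_{\mathrm{cc}}(\psi,c)$, passing from $\Sigma^{(2)}_{\mathrm{cc}}$ (where \eqref{BDP} holds by construction) to $\Sigma^{(1)}_{\mathrm{cc}}$ (which governs \eqref{BDPTHM}). This hinges on the compatibility of Coleman integration with $p$-adic variation in the weight space and on the fact that the factor $E_p(\theta_\psi)$, which encodes the Euler factor at $p$, is precisely what is needed to compensate for the difference between the motivic Frobenius eigenvalues on $V_{\theta_\psi}$ and the unit-root subspace along which the $p$-adic logarithm is computed.
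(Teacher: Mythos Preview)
The paper does not prove this theorem; its entire proof is the one-line citation ``This is \cite[Theorem 5.13]{BDP1}.'' The result is being quoted from Bertolini--Darmon--Prasanna, not re-derived. Your proposal is a reasonable high-level sketch of the strategy actually carried out in \cite{BDP1} (Waldspurger-type toric period, $p$-adic interpolation via the Atkin--Serre operator on the ordinary locus, identification of values outside the interpolation range with Coleman primitives, and the syntomic/Besser description of the Abel--Jacobi image), so in that sense it is correct, but it goes well beyond what the present paper does. For the purposes of this paper, the appropriate ``proof'' is simply to cite \cite[Theorem 5.13]{BDP1}; any attempt to reproduce the argument here would be out of scope and would require the full machinery of \cite[\S3--\S5]{BDP1}.
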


\begin{proof} This is \cite[Theorem 5.13]{BDP1}. \end{proof}
      
\section{A generalized Rubin formula} \label{secGRF}

In this section we prove the main result of this paper.

\subsection{Good pairs} \label{secgoodpairs}  

Let $\mathfrak{c}=\bar{\mathfrak{c}}$ be an ideal of $\cO_K$ invariant under complex conjugation and let $\chi\in\Sigma_\mathrm{sd}(\mathfrak{c})$. By \cite[Remark 3.7]{BDP2}, we know that the central character $\varepsilon_\chi$ of a self-dual character $\chi$ is either trivial or equal to the non-trivial quadratic character $\varepsilon_K$ attached to $K$. Let $\mathfrak{d}_K$ be as in \S \ref{secfac} and note that if $\varepsilon_\chi=\varepsilon_K$, then $\mathfrak{d}_K\,|\,\mathfrak{c}$.

In the assumption below, $k\geq2$ is an even integer; as before, $r\defeq k-2$ and $\ell\defeq r/2$.

\begin{assumption}\label{ass}
Let $\mathfrak{c}=\bar{\mathfrak{c}}$ be an ideal of $\cO_K$ invariant under complex conjugation and let $\chi \in \Sigma_{\mathrm{sd}}(\mathfrak{c})$ be a Hecke character of infinity type $(1+\ell,-\ell)$. With notation as above, assume that $\chi$ satisfies the following  conditions:
\begin{itemize}
\item $\varepsilon_\chi=\varepsilon_K$;
\item the sign $\omega_\chi$ of the functional equation for the $L$-function $L(\chi,s)$ is $-1$; 
\item $\mathfrak{d}_K\parallel \mathfrak{c}$; thus, $\mathfrak{c}=(c)\mathfrak{d}_K$ for a unique positive integer $c$ prime to $D_K$. 
\end{itemize} 
\end{assumption}
For the rest of the paper,  let us fix a character $\chi$ satisfying Assumption \ref{ass} and a prime number $p\nmid cD_K$ that splits in $K$.

Let $\psi$ be a Hecke character of $K$ of infinity type $(k-1,0)$ and consider the theta factorization \begin{equation}\label{factorization1}\chi=\psi\vartheta.\end{equation}  
For any Dirichlet character $\phi$, let $H_\phi$ denote the composite field of $K$ and the cyclotomic extension of $\Q$ cut out by $\phi$. Since $\varepsilon_\chi=\varepsilon_K$, we have $\varepsilon_\vartheta=\varepsilon_K\varepsilon_\psi$, so $H_{\varepsilon_\psi}=H_{\varepsilon_\vartheta}$.

\begin{definition} \label{goodfact} 
The pair $(\psi,\vartheta)$ is \emph{good for $\chi$} if factorization \eqref{factorization1} holds and 
\begin{enumerate}
\item the conductor $\mathfrak{f}_\psi$ of $\psi$ is a cyclic $\cO_K$-ideal prime to $D_Kp$;
\item $\vartheta^{-1}\Norm_K \in \Sigma_{\mathrm{cc}}^{(1)}(\psi,c)$;
\item $L_{\mathfrak{c}\Norm(\mathfrak{f}_\psi)}(\psi^\ast\vartheta \Norm_K^{-1},0)\ne 0$. 
\end{enumerate}
If $(\psi,\vartheta)$ is a good pair for $\chi$, then \eqref{factorization1} is a \emph{good factorization} of $\chi$.  
\end{definition}
If $\chi=\psi\vartheta$ is a good factorization in the sense of Definition \ref{goodfact}, then, since $\chi$ is self-dual, there is an equality 
$\chi^\ast = 
\psi^{-1}\vartheta^{-1}\Norm_K
$
and 
$
\chi^*\in \Sigma_{\mathrm{sd}}^{(1)}(\mathfrak{c}).
$
Furthermore, we have $\psi^{\dagger}\vartheta^{-1} \Norm_K\in \Sigma_{\mathrm{sd}}^{(2)}\bigl(\mathfrak{c}\Norm(\mathfrak{f}_\psi)\bigr)$ (recall the notation $\psi^\dagger=(\psi^*)^{-1}$), so from property (3) in Definition \ref{goodfact} and Theorem \ref{Katz} it follows that 
\begin{equation} \label{GP3}
\mathscr{L}_{p, \mathfrak{c}\Norm(\mathfrak{f}_\psi)}(\psi^{\dagger}\vartheta^{-1}\Norm _K)\neq 0.
\end{equation}

\begin{remark}
Condition (3) in Definition \ref{goodfact}, and therefore the non-vanishing in \eqref{GP3} as well, holds generically in the region of interpolation because of the Heegner hypothesis and the hypothesis that the root number of $L(\chi,s)$ is $-1$, by Assumption \ref{ass}. More precisely, assume that $(\psi,\vartheta)$ satisfies the first two conditions in Definition \ref{goodfact}. Since 
$\vartheta^{-1}\Norm_K \in \Sigma_{\mathrm{cc}}^{(1)}(\psi,c)$ by (2), the sign of the functional equation of $L(\theta_\psi,\vartheta N_K^{-1},s)$ is $-1$, and from the factorization 
\begin{equation} \label{facteq}
L(\theta_\psi,\vartheta\Norm_K^{-1},s)=L(\psi\vartheta\Norm_K^{-1},s)L(\psi^*\vartheta\Norm_K^{-1},s)=L(\chi\Norm_K^{-1},s)L(\psi^\ast\vartheta\Norm_K^{-1},s)
\end{equation} 
recalled above (\emph{cf.} \cite[Lemma 3.15]{BDP2}) it follows that the sign of the functional equation of $L(\psi^\ast\vartheta^{-1}\Norm_K^{-1},s)$ is $+1$. Thus, condition (3) is expected to hold generically. 
\end{remark}

\begin{lemma}\label{lemma3.3} 
    If $\chi=\psi\vartheta$ and $\mathfrak{f}_\psi$ is a cyclic $\mathcal{O}_K$-ideal prime to $D_Kpc$, then $\vartheta^{-1}\Norm_K\in\Sigma^{(1)}_\mathrm{cc}(\psi,c)$.
\end{lemma}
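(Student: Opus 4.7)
The plan is to verify the four defining conditions for $\phi\defeq\vartheta^{-1}\Norm_K$ to belong to $\Sigma_{\mathrm{cc}}^{(1)}(\psi,c)$: the infinity type should be of the form $(k+j,-j)$ with $1-k\leq j\leq -1$, the central character should equal $\varepsilon_K\varepsilon_\psi$, the conductor should equal $(c)\mathfrak{N}_{\varepsilon_K\varepsilon_\psi}$, and every local sign $\epsilon_q(\theta_\psi,\phi^{-1})$ should equal $+1$. The first two are direct: writing $k=2\ell+2$ and $\vartheta=\chi\psi^{-1}$, one computes that $\vartheta$ has infinity type $(-\ell,-\ell)$, so $\phi$ has infinity type $(\ell+1,\ell+1)$, i.e., $j=-\ell-1$, and the inequalities $1-k\leq j\leq-1$ reduce to $\ell\geq 0$. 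The identity $\varepsilon_\chi=\varepsilon_\psi\varepsilon_\vartheta$ combined with $\varepsilon_\chi=\varepsilon_K$ from Assumption \ref{ass} gives $\varepsilon_\vartheta=\varepsilon_K\varepsilon_\psi^{-1}$, and since $\Norm_K$ has trivial central character one obtains $\varepsilon_\phi=\varepsilon_K\varepsilon_\psi$.

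For the conductor, the key observation is that $\mathfrak{f}_\psi$ and $\mathfrak{c}=(c)\mathfrak{d}_K$ have disjoint supports by the coprimality hypothesis, so the local conductors of $\vartheta=\chi\psi^{-1}$ at each prime of $K$ are controlled separately by $\chi$ or $\psi$; this yields $\mathfrak{f}_\phi=\mathfrak{f}_\vartheta=(c)\mathfrak{d}_K\mathfrak{f}_\psi$. To identify this with $(c)\mathfrak{N}_{\varepsilon_K\varepsilon_\psi}$, I would first observe that the cyclicity of $\mathfrak{f}_\psi$ together with its coprimality to $D_K$ forces every rational prime dividing $\Norm(\mathfrak{f}_\psi)$ to split in $K$, as otherwise $\cO_K/\mathfrak{f}_\psi$ would fail to be cyclic. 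A local computation at each such split prime $q=\mathfrak{p}\bar{\mathfrak{p}}$, carried out via the diagonal embedding $\Z_q^\times\hookrightarrow\cO_{K,\mathfrak{p}}^\times\times\cO_{K,\bar{\mathfrak{p}}}^\times$ combined with the relation $\psi_\A|_{\A_\Q^\times}=\varepsilon_\psi\Norm_\Q^{k-1}$, then shows that $\varepsilon_\psi$ has conductor exactly $\Norm(\mathfrak{f}_\psi)$, so $\varepsilon_K\varepsilon_\psi$ has conductor $N=D_K\Norm(\mathfrak{f}_\psi)$ by coprimality of the two factors. By the Chinese remainder theorem, $\mathfrak{d}_K\mathfrak{f}_\psi$ is a cyclic $\cO_K$-ideal of norm $N$, hence a valid choice for $\mathfrak{N}$, and with this choice $\mathfrak{N}_{\varepsilon_K\varepsilon_\psi}=\mathfrak{N}=\mathfrak{d}_K\mathfrak{f}_\psi$, as required.

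The remaining and most delicate task is the verification that $\epsilon_q(\theta_\psi,\phi^{-1})=+1$ for every prime $q$. At primes $q$ unramified in both $\theta_\psi$ and $\phi$ this is automatic. At primes dividing $c$ and at primes dividing $N=D_K\Norm(\mathfrak{f}_\psi)$, the statement reduces to a local $\epsilon$-factor computation involving the local components of $\theta_\psi$ (principal series induced from $\psi$ at split primes, and a more delicate ramified representation at primes dividing $D_K$) together with those of $\phi$. The Heegner hypothesis encoded by the cyclicity of $\mathfrak{N}$, the specific infinity type of $\phi$, and the precise match $\mathfrak{f}_\phi=(c)\mathfrak{N}_{\varepsilon_K\varepsilon_\psi}$ established in the previous step are exactly the inputs needed to force each local sign to equal $+1$. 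I expect this last local analysis to be the main technical obstacle, although it parallels the analogous verification in \cite{BDP1} and \cite{BDP2}.
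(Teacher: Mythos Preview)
Your proposal is correct and follows essentially the same route as the paper: verify the infinity type, the central character, the conductor, and the local signs. Your treatment of the first three points matches the paper's, and your more detailed justification that the conductor of $\varepsilon_K\varepsilon_\psi$ is exactly $D_K\Norm(\mathfrak{f}_\psi)$ is a welcome elaboration of what the paper asserts without argument.

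The one place you diverge from the paper is in your expectation about the local sign step. You anticipate this to be ``the most delicate task'' requiring a case-by-case local $\epsilon$-factor analysis; the paper dispatches it in a single sentence. The point is that, once you have established $N_{\varepsilon_K\varepsilon_\psi}=D_K\Norm(\mathfrak{f}_\psi)=N$, the divisibility $D_K\mid N_{\varepsilon_K\varepsilon_\psi}$ forces the set $S(\theta_\psi)$ of \cite[p.~1094]{BDP1} to be empty, and by the criterion there this already implies that every local sign $\epsilon_q(\theta_\psi,\phi^{-1})$ equals $+1$. So no new local computation is needed: the work was already done in \cite{BDP1}, and the hypotheses you have checked are precisely what is required to invoke it.
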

\begin{proof} 
The infinity type of $\vartheta^{-1}\Norm_K$ is $(\ell+1,\ell+1)$, and $2\ell+2=k$; moreover from 
$\varepsilon_\chi=\varepsilon_\psi\varepsilon_\vartheta$ and $\varepsilon_\chi=\varepsilon_K$, we see that $\varepsilon_{\vartheta^{-1}\Norm_K}=\varepsilon_{\vartheta^{-1}}=\varepsilon_K\varepsilon_\psi$,
so $\vartheta^{-1}\Norm_K\in \Sigma_\mathrm{cc}(\psi)$. 
For the further conditions required to the subset $\Sigma_\mathrm{cc}(\psi,c)$ of $\Sigma_\mathrm{cc}(\psi)$, note that from $\mathfrak{f}_\chi=(c)\mathfrak{d}_K$ and $(\mathfrak{f}_\psi,(c)\mathfrak{d}_K)=1$, we have $\mathfrak{f}_\vartheta=(c)\mathfrak{d}_K\mathfrak{f}_\psi$; moreover, the conductor of the character 
$\varepsilon_K\varepsilon_\psi$ of $\theta_\psi$ is $D_K\Norm_K(\mathfrak{f}_\psi)$, and therefore the condition $\mathfrak{f}_{\vartheta^{-1}\Norm_K}=(c)\mathfrak{N}_{\varepsilon_K\varepsilon_\psi}$ is satisfied 
taking $\mathfrak{N}_{\varepsilon_K\varepsilon_\psi}=\mathfrak{d}_K\mathfrak{f}_\psi$. 
Finally, the local root numbers are all equal to $1$ because $D_K\mid N_{\varepsilon_K\varepsilon_\psi}=\Norm_K(\mathfrak{N}_{\varepsilon_K\varepsilon_\psi})$, so the set $S(\theta_f)$ in \cite[p. 1094]{BDP1} is empty.
\end{proof}

Let $S_\chi$ denote the set of good pairs for $\chi$. For each $(\psi,\vartheta)\in S_\chi$, denote by $E_{\psi,\vartheta}$ the composite field of their coefficient fields $E_\psi$ and $E_\vartheta$. Note that, thanks to the factorization $\chi=\psi\vartheta$, the composite field $E_{\chi,\psi}$ of $E_\chi$ and $E_\psi$ is equal to the composite field $E_{\psi,\vartheta}$; also, $E_\chi\subset E_{\psi,\vartheta}$; finally, if we write $\vartheta=\varphi(\chi_A\chi_A^*)^\ell=(\varphi\varphi_A^\ell)\Norm_K^{-\ell}$, then $E_\vartheta=E_{\varphi\varphi_A^\ell}$, and therefore $E_{\psi,\vartheta}=E_{\psi,\varphi\varphi_A^\ell}$ and $\varepsilon_\vartheta=\varepsilon_{\varphi\varphi_A^\ell}$.   
Recall also the field $E_{\psi,\vartheta,\varepsilon_K\varepsilon_\psi}$ introduced in \S\ref{BDPsec}. 
Define $F_{\psi,\vartheta}$ to be the composite field of $E_{\psi,\vartheta}$ and the field extensions $H_{\varphi\varphi_A^\ell}$ and $H_{(\varphi\varphi_A^\ell)^\ast}$ of $K$ cut out by the finite order characters $\varphi\varphi_A^\ell$ and $(\varphi\varphi_A^\ell)^*$, respectively. 
Since $\varepsilon_\vartheta=\varepsilon_K\varepsilon_\psi$, we see that 
$E_{\psi,\vartheta,\varepsilon_K\varepsilon_\psi}\subset F_{\psi,\vartheta}$. 
Note that $\Gal(H_{c,\mathfrak{N}}/H_c)$ acts faithfully on $A_c[\mathfrak{N}]$, and therefore can be canonically identified with a subgroup of $\Z/N\Z$. Let $H_{c,\psi}=H_{c,\mathfrak{N}}^{\ker(\varepsilon_K\varepsilon_\psi)}$ denote the subfield of
$H_{c,\mathfrak{N}}$ fixed by $\ker(\varepsilon_K\varepsilon_\psi)$; then $H_{c,\psi}$ is the composite field of $H_c$ and the extension $\Q(\varepsilon_K\varepsilon_\psi)$ of $\Q$ cut out by $\varepsilon_K\varepsilon_\psi$ (note that $H_{c,\psi}$ is a cyclotomic extension of $H_c$). 
Also, since $\varepsilon_\vartheta=\varepsilon_{\varphi\varphi_A^\ell}$ 
and $\varepsilon_\vartheta=\varepsilon_{K}\varepsilon_\psi$, $H_{\varphi\varphi_A^\ell}$ and $H_{(\varphi\varphi_A^\ell)^\ast}$ both contain $\Q(\varepsilon_K\varepsilon_\psi)$, and therefore by considering the conductors we see that both of them are contained in $H_{c,\psi}$, 
and therefore also in $H_{c,\mathfrak{N}}$.   

\subsection{Properties of good pairs}\label{secgoodpairs2}

Our next goal is to show the existence of good pairs and the equality $\bigcap_{(\psi,\vartheta)\in S_\chi}F_{\psi,\vartheta}=E_\chi$. 
To prove this result, we follow the strategy in \cite{BDP2} closely. However, in light of the more complicated setting we are working in, we decided to split the proof into several lemmas, using which one can finally mimic the arguments in \cite{BDP2} to prove Proposition \ref{Goodpairs}.
%
%
%

We first recall an analytic result due to Greenberg. Let $\ell$ be a prime number that splits in $K$ and write $K_\infty^{(\ell)}$ for the anticyclotomic $\Z_\ell$-extension of $K$.

\begin{lemma}[Greenberg] \label{greenberg}
Let $\lambda\in \Sigma_{\mathrm{sd}}(\mathfrak{f}_\psi)$ be a self-dual Hecke character of $K$. Assume that the sign $\omega_{\lambda}$ in the functional equation of $L(\lambda,s)$ is $+1$. Then there are infinity many finite order characters $\phi$ of $\Gal\bigl(K_\infty^{(\ell)}/K\bigr)$ such that $L(\lambda,\phi,k/2)\neq 0$. 
\end{lemma}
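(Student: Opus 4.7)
The plan is to proceed along the Iwasawa-theoretic path pioneered by Greenberg, using the $\ell$-adic anticyclotomic variant of Katz's $p$-adic $L$-function attached to $\lambda$ as the main tool. First, since $\ell$ splits in $K$, one can attach to the self-dual character $\lambda$ an anticyclotomic Katz $p$-adic $L$-function
\[ \mathscr{L}_\lambda^{(\ell)} \in \Z_\ell[[\Gal(K_\infty^{(\ell)}/K)]] \otimes_{\Z_\ell} \C_\ell, \]
characterized by an interpolation property that evaluates at a finite order character $\phi$ of $\Gal(K_\infty^{(\ell)}/K)$ to $L(\lambda,\phi,k/2)$ times an explicit non-zero factor (built from a complex period, an $\ell$-adic period, an Euler factor and a Gamma-factor). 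In particular, the non-vanishing statement we seek is equivalent to showing that $\mathscr{L}_\lambda^{(\ell)}$ does not vanish at infinitely many finite order characters.

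The second step, which is the heart of the argument, is to prove that $\mathscr{L}_\lambda^{(\ell)}$ is not identically zero as an element of the Iwasawa algebra. Here the hypothesis $\omega_\lambda = +1$ is crucial: it forces the sign of the functional equation of each twist $L(\lambda\phi,s)$ by an anticyclotomic finite order character $\phi$ to remain $+1$, so the central values are not forced to vanish for parity reasons. One can then invoke a single non-vanishing complex $L$-value (for instance via Rohrlich's classical non-vanishing result for anticyclotomic twists of Hecke $L$-functions) together with the interpolation formula to deduce that some specialization of $\mathscr{L}_\lambda^{(\ell)}$ is non-zero. Alternatively, one can argue directly via the Iwasawa-theoretic structure of the CM $\Z_\ell$-extension, using Gillard--Schneps-type vanishing of the $\mu$-invariant to preclude identical vanishing of the measure.

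Third, once we know $\mathscr{L}_\lambda^{(\ell)} \neq 0$, we apply the Weierstrass preparation theorem in $\Z_\ell[[\Gal(K_\infty^{(\ell)}/K)]]$: a non-zero element of this Iwasawa algebra has only finitely many zeros among the (continuous) characters of the Galois group, and in particular among the finite order characters. Since $\Gal(K_\infty^{(\ell)}/K) \cong \Z_\ell$ admits infinitely many finite order characters, it follows that $\mathscr{L}_\lambda^{(\ell)}(\phi) \neq 0$ for infinitely many such $\phi$, and by the interpolation formula $L(\lambda,\phi,k/2) \neq 0$ for infinitely many finite order $\phi$.

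The main obstacle is the second step: establishing the non-triviality of the $p$-adic $L$-function. This is precisely the content of Greenberg's theorem and is where the sign $+1$ assumption is indispensable, since in the opposite-sign case the measure could in principle vanish identically; our strategy sidesteps this by appealing to a known non-vanishing complex $L$-value via Rohrlich's density theorem, reducing the whole statement to a purely Iwasawa-theoretic finiteness of zeros argument.
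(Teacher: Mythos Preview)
Your proposal is essentially correct and follows the same three-step architecture as the paper: construct the Katz-type $\ell$-adic $L$-function along the anticyclotomic line, establish that it is not identically zero, and then apply Weierstrass preparation to conclude that only finitely many finite-order specializations vanish. The paper carries this out by citing Greenberg's results directly (namely \cite[Theorem 1]{Gr85} and the proof of \cite[Proposition 1]{Gr85}) for the non-triviality step, and then runs exactly the Weierstrass preparation argument you describe.

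One small caution on your step two. Your two proposed mechanisms for non-triviality are a bit loose: Rohrlich's classical non-vanishing theorems are not stated in the generality of arbitrary self-dual Hecke characters over anticyclotomic $\Z_\ell$-towers, so invoking them ``for instance'' would require some care; and vanishing of the $\mu$-invariant \`a la Gillard--Schneps presupposes that the measure is non-zero to begin with, so it cannot by itself supply non-triviality. The paper (following Greenberg) instead gets non-triviality by exploiting the full Katz $\ell$-adic $L$-function on $\widehat{\Sigma}_{\mathrm{crit}}(\mathfrak{f}_\lambda)$: characters in the interpolation range with infinity type far from the central line have classically non-vanishing $L$-values by absolute convergence, and these are $\ell$-adically dense, so the Katz function does not vanish on any open neighborhood of $\lambda$. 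One then restricts to the anticyclotomic direction (this is where the sign $+1$ hypothesis and Greenberg's functional-equation argument enter to rule out identical vanishing along that line). Since you yourself flag this step as ``precisely the content of Greenberg's theorem,'' your overall outline is sound; just be aware that the clean route is Greenberg's density-in-the-big-space argument rather than Rohrlich or $\mu=0$.
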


\begin{proof} By \cite[Theorem 1]{Gr85} and the proof of \cite[Proposition 1]{Gr85}, Katz's $\ell$-adic $L$-function $\mathscr{L}_{\ell,\mathfrak{f}_\lambda}$ does not vanish identically on any open $\ell$-adic neighborhood $U$ of $\lambda$ in $\widehat{\Sigma}_{\text{crit}}(\mathfrak{f}_\lambda)$. Fix an $\ell$-adic neighborhood $U$ of $\lambda$ containing a dense subset of points of the form $\lambda\phi$, where $\phi$ is a finite order character of $\Gal(K^{(\ell)}_\infty/K)$. The restriction of  $\mathscr{L}_{\ell,\mathfrak{f}_\lambda}$ to $U$ is described by a power series with $\ell$-adically bounded coefficients, so by the Weierstrass preparation theorem $\mathscr{L}_{\ell,\mathfrak{f}_\lambda}$ has only finitely many zeros in $U$. The result follows. \end{proof}
 
Let $\bar{S}_\chi$ be the set of pairs satisfying the first two conditions in the definition of a good pair, hence $S_\chi \subset \bar{S}_\chi$. 

\begin{lemma} \label{GPlemma1}  
The set $\bar{S}_\chi$ is non-empty.
\end{lemma}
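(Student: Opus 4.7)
By Lemma \ref{lemma3.3}, the problem reduces to producing a Hecke character $\psi$ of $K$ of infinity type $(k-1,0)$ whose conductor $\mathfrak{f}_\psi$ is a cyclic $\cO_K$-ideal coprime to $D_Kpc$. Indeed, for any such $\psi$ the Hecke character $\vartheta\defeq\chi\psi^{-1}$ has infinity type $(-\ell,-\ell)$, and Lemma \ref{lemma3.3} then implies $\vartheta^{-1}\Norm_K\in\Sigma^{(1)}_{\mathrm{cc}}(\psi,c)$, so that $(\psi,\vartheta)\in\bar{S}_\chi$.

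To construct such a $\psi$, I would fix any reference Hecke character $\psi_0$ of $K$ of infinity type $(k-1,0)$ (for instance $\psi_0\defeq\chi_A^{-(k-1)}$, where $\chi_A$ is the character introduced in \S\ref{remell}) and look for $\psi$ in the form $\psi=\psi_0\varphi$ for a suitable finite order Hecke character $\varphi$ of $K$; this is legitimate because every Hecke character of $K$ of infinity type $(k-1,0)$ differs from $\psi_0$ by a finite order twist. The problem then becomes purely finite-order: find $\varphi$ such that $\mathfrak{f}_{\psi_0\varphi}$ is cyclic and coprime to $D_Kpc$. By class field theory, finite order characters of $K$ correspond to continuous characters of the idele class group, and approximation allows one to prescribe the local components of such a character arbitrarily on any finite set of primes of $K$. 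I would exploit this flexibility to choose $\varphi$ so that (a) the local component of $\psi_0\varphi$ is unramified at every prime dividing $D_Kpc$, and (b) the residual finite ramification of $\psi_0\varphi$ is concentrated on degree-one primes of $K$ lying above pairwise distinct rational primes chosen outside $D_Kpc\Norm_K(\mathfrak{f}_{\psi_0})$, which forces $\mathfrak{f}_{\psi_0\varphi}$ to be cyclic. No archimedean obstruction arises, since finite-order twisting preserves the infinity type.

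The main obstacle is the combinatorial bookkeeping needed to simultaneously cancel the unwanted ramification of $\psi_0$ at primes dividing $D_Kpc$ and enforce cyclicity of the conductor of $\psi_0\varphi$. This is a routine but delicate class-field-theoretic verification, which can be carried out one prime at a time by invoking standard approximation for characters of the idele class group of $K$, together with the fact that degree-one split primes of $K$ have positive Dirichlet density and therefore offer ample room to accommodate the prescribed ramification.
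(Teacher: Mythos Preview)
Your proposal is correct and follows essentially the same approach as the paper: reduce via Lemma~\ref{lemma3.3} to the existence of a Hecke character $\psi$ of infinity type $(k-1,0)$ with cyclic conductor prime to $D_Kpc$, then set $\vartheta=\chi\psi^{-1}$. The paper simply declares this existence ``well known,'' whereas you supply a concrete construction by twisting a reference character $\chi_A^{-(k-1)}$ by a finite-order character chosen via class-field-theoretic approximation; your sketch is sound and in fact more detailed than the paper's proof.
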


\begin{proof} Simply observe that, for any Hecke character $\psi$ of infinity type $(k-1,0)$ and cyclic conductor prime to $\mathfrak{c}$, setting $\vartheta\defeq\chi\psi^{-1}$ produces a pair $(\psi,\vartheta)\in\bar{S}_\chi$ by Lemma \ref{lemma3.3}. On the other hand, the existence of Hecke characters $\psi$ as above is well known. \end{proof}

In the statement of the next lemma, we implicitly use Lemma \ref{GPlemma1}.

\begin{lemma} \label{GPlemma2} 
Given $(\psi,\vartheta)\in \bar{S}_\chi$, there exist $(\psi_1,\vartheta_1), (\psi_2,\vartheta_2) \in S_\chi$ such that $F_{\psi_1,\vartheta_1} \cap F_{\psi_2,\vartheta_2} \subset F_{\psi,\vartheta}$. 
\end{lemma}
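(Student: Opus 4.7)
The plan is to construct $(\psi_i,\vartheta_i)\in S_\chi$ for $i\in\{1,2\}$ as twists $(\psi\phi_i,\vartheta\phi_i^{-1})$ of the given factorization by finite-order Hecke characters $\phi_i$ of $K$ whose conductors are single prime powers $\mathfrak{l}_i^{n_i}$, with $\mathfrak{l}_i$ lying above a rational prime $\ell_i$ split in $K$. I would choose $\ell_1\neq\ell_2$ to be two distinct odd split primes, coprime to $cpD_K\Norm_K(\mathfrak{f}_\psi)$ and to the conductors of the finite-order characters $\varphi\varphi_A^\ell$ and $(\varphi\varphi_A^\ell)^\ast$ entering the definition of $F_{\psi,\vartheta}$ (having written $\vartheta=\varphi(\chi_A\chi_A^\ast)^\ell$). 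This choice guarantees that $\mathfrak{f}_{\psi\phi_i}=\mathfrak{f}_\psi\mathfrak{l}_i^{n_i}$ is cyclic (because $\mathfrak{f}_\psi$ is cyclic and the norms are coprime) and prime to $D_Kp$, yielding condition (1) of Definition \ref{goodfact}; condition (2) will then follow from Lemma \ref{lemma3.3}, since the cyclicity and coprimality hypotheses persist under the twist.

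For the analytic condition (3), set $\mu\defeq\psi^{\ast-1}\vartheta^{-1}\Norm_K$. A direct computation using $\chi\chi^\ast=\Norm_K$ from Assumption \ref{ass} gives $\mu\mu^\ast=\Norm_K$, so $\mu$ is self-dual, and the factorization \eqref{facteq} combined with the assumption that the sign of $L(\chi,s)$ is $-1$ forces the sign of the functional equation of $L(\mu,s)$ to be $+1$. For the twisted pair, a short calculation shows
\[ (\psi\phi_i)^\ast(\vartheta\phi_i^{-1})\Norm_K^{-1}=\mu^{-1}\cdot\phi_i^\ast\phi_i^{-1}, \]
and the character $\phi_i^\ast\phi_i^{-1}$ is anticyclotomic of conductor $\mathfrak{l}_i^{n_i}\bar{\mathfrak{l}}_i^{n_i}$. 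Applying Lemma \ref{greenberg} to $\mu$ produces infinitely many finite-order anticyclotomic characters $\eta$ of $\ell_i$-power conductor with $L(\mu\eta,k/2)\neq 0$, which, by the functional equation of the self-dual character $\mu\eta$, translates into the non-vanishing of $L(\mu^{-1}\eta^{-1},0)$ required in condition (3). I would then invoke the surjectivity of the map $\phi_i\mapsto\phi_i^\ast\phi_i^{-1}$ from characters of $\mathfrak{l}_i^{n_i}$-conductor onto anticyclotomic characters of $\mathfrak{l}_i^{n_i}\bar{\mathfrak{l}}_i^{n_i}$-conductor, which holds at the $\ell_i$-primary part of the relevant ray class groups for $\ell_i$ odd, to realize the appropriate $\eta^{\pm1}$ as $\phi_i^\ast\phi_i^{-1}$.

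The intersection claim will follow from ramification considerations. Since the conductor of $\phi_i$ is coprime to that of $\varphi\varphi_A^\ell$, we have $H_{\varphi\phi_i^{-1}\varphi_A^\ell}\subset H_{\varphi\varphi_A^\ell}\cdot H_{\phi_i}$ (and similarly for the conjugate), while $E_{\psi_i,\vartheta_i}$ is generated over $E_{\psi,\vartheta}$ by values of $\phi_i$, which lie in $\Q(\zeta_{\ell_i^\infty})$. Hence
\[ F_{\psi_i,\vartheta_i}\subset F_{\psi,\vartheta}\cdot H_{\phi_i}\cdot H_{\phi_i^\ast}\cdot\Q(\zeta_{\ell_i^\infty}), \]
and all three enlargements on the right are unramified outside primes above $\ell_i$. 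Because $\ell_1\neq\ell_2$ and both are chosen coprime to the ramification locus of $F_{\psi,\vartheta}$, the extensions obtained for $i=1$ and $i=2$ have coprime ramification over $F_{\psi,\vartheta}$, hence are linearly disjoint over it, and therefore $F_{\psi_1,\vartheta_1}\cap F_{\psi_2,\vartheta_2}\subset F_{\psi,\vartheta}$. The main obstacle I anticipate is the bookkeeping in the non-vanishing step: one must confirm the surjectivity statement on $\ell_i$-primary components of ray class groups and check that the finitely many Euler factors removed in $L_{\mathfrak{c}\Norm(\mathfrak{f}_{\psi\phi_i})}$ cannot accidentally introduce a zero, which is standard but tedious.
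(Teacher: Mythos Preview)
Your overall strategy coincides with the paper's: twist the given pair by finite-order characters supported at a single split prime, invoke Greenberg's non-vanishing result to force condition (3), and then control the fields $F_{\psi_i,\vartheta_i}$ by ramification. The construction of $(\psi_i,\vartheta_i)$ and the verification of conditions (1)--(3) are fine.

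The gap is in the last paragraph. The inference ``coprime ramification over $F_{\psi,\vartheta}$, hence linearly disjoint over it'' is false in general: two abelian extensions $L_1,L_2$ of $F_{\psi,\vartheta}$ ramified only above $\ell_1$ and $\ell_2$ respectively have $L_1\cap L_2$ \emph{unramified} over $F_{\psi,\vartheta}$, but there is no reason for $F_{\psi,\vartheta}$ to have class number $1$, so $L_1\cap L_2$ could be a non-trivial piece of its Hilbert class field. Your enlargements $H_{\phi_i}H_{\phi_i^\ast}\Q(\zeta_{\ell_i^\infty})$ are not totally ramified at any single prime (e.g.\ $H_{\phi_i^\ast}$ is unramified at $\mathfrak{l}_i$), so one cannot rescue the argument by a ``totally ramified versus unramified'' contradiction directly.

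The paper closes this gap by a degree argument. Taking $\phi_i$ to be a character of the $\Z_{\ell_i}$-extension $K_\infty^{(\lambda_i)}/K$ (hence of $\ell_i$-power order) and choosing $\ell_i$ prime to $h_K$, one gets $[F_{\psi,\vartheta}M_i:F_{\psi,\vartheta}]\mid d_i\ell_i^{M_i}$ with $d_i\mid(\ell_i-1)$. Imposing the extra condition $\ell_1\nmid(\ell_2-1)$ forces the degree of the intersection over $F_{\psi,\vartheta}$ to divide $\ell_1-1$, whence it lies inside $F_{\psi,\vartheta}(\zeta_{\ell_1})$. Since $\ell_1$ is unramified in $F_{\psi,\vartheta}$, the extension $F_{\psi,\vartheta}(\zeta_{\ell_1})/F_{\psi,\vartheta}$ is totally ramified above $\ell_1$; but the intersection also sits inside $F_{\psi,\vartheta}M_2$, which is unramified above $\ell_1$, so the intersection is trivial. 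Your argument becomes correct once you add the hypotheses $\ell_i\nmid h_K$, take $\phi_i$ of $\ell_i$-power order, impose $\ell_1\nmid(\ell_2-1)$, and replace the disjoint-ramification claim by this degree-plus-ramification step.
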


\begin{proof} Pick a prime number $\ell$ that splits in $K$ as $\ell\cO_K=\lambda\bar{\lambda}$, is relatively prime to the class number of $K$ and to the conductor $\mathfrak{f}_\star$ of $\star$ for $\star\in\{\psi,\vartheta\}$ and is unramified in $F_{\psi,\vartheta}$. Denote by $K_\infty^{(\lambda)} = \bigcup \limits_{n \geq 0} K_n^{(\lambda)}$ (respectively, $K_\infty^{(\bar\lambda)} = \bigcup \limits_{n \geq 0} K_n^{(\bar\lambda)}$)  the unique $\Z_\ell$-extensions of $K$ that is unramified outside $\lambda$ (respectively, $\bar\lambda$), where the subfields $K_n^{(\lambda)}$ (respectively, $K_n^{(\bar\lambda)}$) satisfy $[K_n^{(\lambda)} : K]=\ell^n$ (respectively, $[K_n^{(\bar\lambda)}:K]=\ell^n$). The condition that $\ell$ does not divide the class number of $K$ implies that the fields $K_n^{(\lambda)}$ and $K_n^{(\bar\lambda)}$ are totally ramified at $\lambda$ and $\bar{\lambda}$, respectively. If $\alpha$ is any character of $\Gal\bigl(K_\infty^{(\lambda)}/K\bigr)$, then the pair $(\psi_1,\vartheta_1) =(\psi\alpha,\vartheta \alpha^{-1})$ still belongs to $\bar{S}_\chi$, with $\mathfrak{f}_\psi$ replaced by $\mathfrak{f}_\psi\lambda^n$ for a suitable $n\geq 0$. Furthermore, there is an equality
\[ L\bigl(\psi_1^\ast\vartheta_1\Norm_K^{-1},0\bigr)=L\bigl(\psi^\ast\vartheta\Norm_K^{-1}(\alpha^\ast/\alpha),0\bigr). \]
The character $\alpha^\ast/\alpha$ is an anticyclotomic character of $K$ of $\ell$-power order and conductor, and all such characters arise in this way. Since $(\psi,\vartheta)$ satisfies the first two conditions in Definition \ref{goodfact}, we see as in \eqref{facteq} that the sign of the functional equation of the $L$-function of $\psi^*\vartheta$ is $+1$. By Lemma \ref{greenberg}, there exists a choice of $\alpha$ for which $L\bigl(\psi^\ast\vartheta\Norm_K^{-1} (\alpha^\ast/\alpha),0\bigr)\neq 0$. The corresponding pair $(\psi_1,\vartheta_1)$ belongs to $S_\chi$ and satisfies
\[ F_{\psi_1,\vartheta_1} \subset F_{\psi,\vartheta,\ell,n} \defeq F_{\psi,\vartheta}\cdot\Q(\zeta_{\ell^n})\cdot K_n^{(\lambda)}\cdot K_n^{(\bar\lambda)}\subset\bar\Q \]
for some integer $n\geq1$. Note that the extension $F_{\psi,\vartheta,\ell,n}/F_{\psi,\vartheta}$ has degree dividing $d\ell^M$ for an integer $M\geq0$ and a divisor $d$ of $\ell-1$. Repeating this construction with a different prime number $\ell^\prime$ in place of $\ell$ such that $\ell\nmid(\ell^\prime-1)$ yields a second pair $(\psi_2,\chi_2) \in S_\chi$ and a corresponding extension $F_{\psi,\vartheta,\ell^\prime,n^\prime}$ for an integer $n^\prime\geq1$, which satisfies $F_{\psi_2,\vartheta_2} \subset F_{\psi,\vartheta,\ell^\prime,n^\prime}$ and whose degree over $F_{\psi,\vartheta}$ is of the form $d^\prime(\ell^\prime)^{M^\prime}$ for $d^\prime\,|\,(\ell^\prime-1)$ and an integer $M^\prime\geq 0$. Set 
\[ F_{(\ell,\ell^\prime)}\defeq F_{\psi,\vartheta,\ell,n} \cap F_{\psi,\vartheta,\ell^\prime,n^\prime} \]
Considering the degrees of these field extensions, we see that the degree of $F_{(\ell,\ell^\prime)}/F_{\psi,\vartheta}$ divides $\ell-1$, and then $F_{(\ell,\ell^\prime)}\subset F_{\psi,\vartheta}\cdot\Q(\zeta_\ell)$. Since $\ell$ is unramified in $F_{\psi,\vartheta}$, the extension $F_{(\ell,\ell^\prime)}/F_{\psi,\vartheta}$ must be totally ramified at the primes above $\ell$. On the other hand, $F_{(\ell,\ell^\prime)}/F_{\psi,\vartheta}$ is a subextension of $F_{\psi,\vartheta,\ell^\prime, n^\prime} /F_{\psi,\vartheta}$, so $F_{(\ell,\ell^\prime)}/F_{\psi,\vartheta}$ is unramified at the primes above $\ell$ as well, and hence $F_{(\ell,\ell^\prime)}=F_{\psi,\vartheta}$. 
It follows that $F_{\psi_1,\vartheta_1} \cap F_{\psi_2,\vartheta_2} \subset F_{\psi,\vartheta}$.\end{proof} 

The next lemma shows that the field $F_{\psi,\vartheta}$ can be replaced by $E_{\psi,\vartheta}$.

\begin{lemma} \label{GPlemma3} 
For all $(\psi,\vartheta)\in \bar{S}_\chi$, there exists a finite-order character $\alpha$ of $G_K$ such that the pair $(\psi\alpha,\vartheta\alpha^{-1})$ belongs to $\bar{S}_\chi$ and $F_{\psi,\vartheta} \cap F_{\psi\alpha,\vartheta\alpha} \subset E_{\psi,\vartheta}.$
\end{lemma}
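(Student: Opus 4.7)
The plan, modelled on the analogous step in \cite{BDP2}, is to produce $\alpha$ by a Chebotarev-type construction designed to force the ramification of $F_{\psi\alpha,\vartheta\alpha^{-1}}$ to be disjoint from that of $F_{\psi,\vartheta}$ in a very precise way. First I would fix the Galois closure $\tilde F$ of $F_{\psi,\vartheta}/\Q$ and apply Chebotarev's density theorem to produce a rational prime $\ell$ that (i) splits in $K$ as $\ell\mathcal{O}_K=\mathfrak{q}\bar{\mathfrak{q}}$, (ii) is coprime to $\mathfrak{f}_\chi\cdot p\cdot c$, and (iii) splits completely in $\tilde F$; in particular $F_{\psi,\vartheta}$ is unramified at $\ell$ and $\mathfrak{q},\bar{\mathfrak{q}}$ split completely in $F_{\psi,\vartheta}$. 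I would then take $\alpha$ to be a non-trivial finite-order Hecke character of $K$ whose conductor is supported at $\mathfrak{q}$ (or $\mathfrak{q}\bar{\mathfrak{q}}$ if needed), with order chosen to divide $\ell-1$ (so that $E_\alpha\subseteq\Q(\mu_{\ell-1})$ is unramified outside the prime divisors of $\ell-1$; a further refinement lets us arrange the order of $\alpha$ so that $E_\alpha$ too has ramification concentrated at $\ell$, using Dirichlet's theorem to select auxiliary primes).

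To verify that $(\psi\alpha,\vartheta\alpha^{-1})\in\bar S_\chi$: the conductor of $\psi\alpha$ is the product $\mathfrak{f}_\psi\cdot\mathfrak{f}_\alpha$, which is cyclic (as $\mathfrak{f}_\psi$ and $\mathfrak{f}_\alpha$ are coprime cyclic ideals, the latter being a power of a prime of $K$) and coprime to $D_K p$; the central character $\varepsilon_\chi=\varepsilon_{\psi\alpha}\varepsilon_{\vartheta\alpha^{-1}}$ is preserved since $\varepsilon_\alpha\varepsilon_{\alpha^{-1}}=1$; and the central critical hypothesis $\vartheta\alpha^{-1}\Norm_K\in\Sigma^{(1)}_{\mathrm{cc}}(\psi\alpha,c)$ follows by applying Lemma \ref{lemma3.3} verbatim to the new pair.

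For the intersection condition, I would exploit the inclusion
\[ F_{\psi\alpha,\vartheta\alpha^{-1}}\subseteq F_{\psi,\vartheta}\cdot E_\alpha\cdot H_\alpha\cdot H_{\alpha^{\ast}}, \]
which follows because $H_{\varphi\alpha^{-1}\varphi_A^\ell}\subseteq H_{\varphi\varphi_A^\ell}\cdot H_\alpha$ and similarly for the conjugate character. Setting $M\defeq E_\alpha\cdot H_\alpha\cdot H_{\alpha^{\ast}}$, the extension $M/\Q$ is abelian and, with the above choice of the order of $\alpha$, ramified only at $\ell$ (and $\infty$). Because $\ell$ splits completely in $F_{\psi,\vartheta}$, a standard class field theoretic comparison (every character of $\Gal(F_{\psi,\vartheta}M/F_{\psi,\vartheta})$ ramified at the primes of $F_{\psi,\vartheta}$ above $\ell$ is trivial on the decomposition groups there, forcing it to be trivial) shows $M\cap F_{\psi,\vartheta}=\Q$, hence $(E_{\psi,\vartheta}\cdot M)\cap F_{\psi,\vartheta}=E_{\psi,\vartheta}$. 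Intersecting the displayed inclusion with $F_{\psi,\vartheta}$ then yields
\[ F_{\psi,\vartheta}\cap F_{\psi\alpha,\vartheta\alpha^{-1}}\subseteq E_{\psi,\vartheta}, \]
as required. The main obstacle will be this last class-field-theoretic step: one must simultaneously control the ramification of three different fields ($E_\alpha$, $H_\alpha$, $H_{\alpha^{\ast}}$) over $\Q$, which forces a careful joint choice of the prime $\ell$ and the order of $\alpha$; in particular it is essential that $\ell$ split completely (not merely be unramified) in $F_{\psi,\vartheta}$ so that the decomposition groups at primes above $\ell$ in $F_{\psi,\vartheta}M$ coincide with the local Galois groups of $M$, which is what makes the linear disjointness work.
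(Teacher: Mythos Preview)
Your argument has a genuine logical gap at the very last step. From the displayed inclusion
\[
F_{\psi\alpha,\vartheta\alpha^{-1}}\subseteq F_{\psi,\vartheta}\cdot M
\]
you only get $F_{\psi,\vartheta}\cap F_{\psi\alpha,\vartheta\alpha^{-1}}\subseteq F_{\psi,\vartheta}\cap(F_{\psi,\vartheta}\cdot M)=F_{\psi,\vartheta}$, which is vacuous. To conclude $F_{\psi,\vartheta}\cap F_{\psi\alpha,\vartheta\alpha^{-1}}\subseteq E_{\psi,\vartheta}$ from your other input $(E_{\psi,\vartheta}\cdot M)\cap F_{\psi,\vartheta}=E_{\psi,\vartheta}$ you would need $F_{\psi\alpha,\vartheta\alpha^{-1}}\subseteq E_{\psi,\vartheta}\cdot M$, and this is simply false: writing $\beta=\varphi\varphi_A^\ell$, the field $H_{\beta\alpha^{-1}}$ sits inside $H_\beta\cdot H_\alpha$, and $H_\beta$ has no reason to lie in $E_{\psi,\vartheta}\cdot M=E_{\psi,\vartheta}\cdot E_\alpha\cdot H_\alpha\cdot H_{\alpha^*}$. (Minor additional issues: $K\subset M\cap F_{\psi,\vartheta}$, so $M\cap F_{\psi,\vartheta}\neq\Q$; and if the order of $\alpha$ divides $\ell-1$ then $E_\alpha\subset\Q(\mu_{\ell-1})$ is \emph{unramified} at $\ell$, not ramified there.)

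The paper's proof avoids this by a different choice of $\alpha$ and a direct ramification comparison rather than an upper bound on $F_{\psi\alpha,\vartheta\alpha^{-1}}$. Let $n$ be the order of $\beta=\varphi\varphi_A^\ell$, and pick $\alpha$ of order exactly $n$, totally ramified at a single prime $\lambda$ of $K$ (above a rational prime $\ell$ unramified in $F_{\psi,\vartheta}$ and prime to the conductors of $\beta,\beta^*$). The choice $\mathrm{ord}(\alpha)=n$ forces $E_\alpha\subset E_\vartheta\subset E_{\psi,\vartheta}$, so $E_{\psi\alpha,\vartheta\alpha^{-1}}=E_{\psi,\vartheta}$ and hence $F_{\psi\alpha,\vartheta\alpha^{-1}}=E_{\psi,\vartheta}\cdot H_{\beta\alpha^{-1}}\cdot H_{(\beta\alpha^{-1})^*}$ exactly. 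Since $\beta$ is unramified at $\lambda$, the restriction of $\beta\alpha^{-1}$ to inertia at $\lambda$ has order $n$, so $H_{\beta\alpha^{-1}}/K$ is \emph{totally} ramified at $\lambda$; similarly $H_{(\beta\alpha^{-1})^*}/K$ is totally ramified at $\bar\lambda$. Thus every nontrivial subextension of $F_{\psi\alpha,\vartheta\alpha^{-1}}/E_{\psi,\vartheta}$ is ramified above $\ell$, while $F_{\psi,\vartheta}/E_{\psi,\vartheta}$ is unramified above $\ell$; hence $F_{\psi,\vartheta}\cap F_{\psi\alpha,\vartheta\alpha^{-1}}=E_{\psi,\vartheta}$. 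The crucial point you are missing is that one must analyse the ramification of $H_{\beta\alpha^{-1}}$ itself, not merely bound it inside $H_\beta\cdot H_\alpha$.
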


\begin{proof} As before, write $\vartheta=\varphi\varphi_A^\ell\Norm_K^{-\ell}$. For simplicity, set $\beta\defeq\varphi\varphi_A^\ell$. The image of $\beta$ is cyclic, isomorphic to $\Z/n\Z$ for some integer $n\geq 1$. Pick $\alpha$ satisfying the following conditions: 
\begin{enumerate}
    \item[(i)] $\alpha$ has order $n$ and is ramified only at the prime $\lambda$ of $K$;
    \item[(ii)] $\lambda$ is prime to the conductor of $\beta$ and $\beta^\ast$; 
    \item[(iii)] $\ell$ is unramified in $F_{\psi,\vartheta}$.
\end{enumerate}
Let $H_{\beta\alpha}$ be the field cut out by $\alpha$. Conditions (i) and (ii) above imply that
\begin{enumerate}
\item[(iv)] the extension $H_{\beta\alpha}/K$ is totally ramified at $\lambda$ and unramified at $\lambda^\ast$, while the extension $H_{\beta^\ast\alpha^\ast}/K$ is unramified at $\lambda$ and totally ramified at $\lambda^\ast$.
\end{enumerate}
To further simplify our notation, set $M_\beta\defeq H_\beta H_{\beta^\ast}$, $M_{\beta\alpha}\defeq H_{\beta\alpha}H_{\beta^\ast\alpha^\ast}$. It follows from (iii) and (iv) that
\begin{enumerate}
\item[(v)] $E_{\psi,\vartheta}M_\beta/E_{\psi,\vartheta}$ is unramified at all primes above $\ell$;
\item[(vi)] any subextension of $E_{\psi,\vartheta}M_{\beta\alpha}/E_{\psi,\vartheta}$ is ramified at some prime above $\lambda$ or $\lambda^\ast$. 
\end{enumerate}
Therefore, $E_{\psi,\vartheta}M_\beta \cap E_{\psi,\vartheta}M_{\beta\alpha}=E_{\psi,\vartheta}$. On the other hand, $E_{\psi,\vartheta}M_\beta=E_{\psi,\vartheta}H_{\beta}H_{\beta^\ast}=F_{\psi,\vartheta}$. Finally, since $\alpha$ has order $n$, there are equalities $E_{\psi\alpha,\vartheta\alpha^{-1}}=E_{\psi,\vartheta}$ and 
\[ E_{\psi,\vartheta}M_{\beta\alpha}=E_{\psi,\vartheta}H_{\beta\alpha}H_{\beta^\ast\alpha^\ast}=E_{\psi\alpha,\vartheta\alpha^{-1}}H_{\beta\alpha}H_{\beta^\ast\alpha^\ast}=F_{\psi\alpha,\vartheta\alpha^{-1}}. \] 
This concludes the proof of the lemma. \end{proof} 

The following proposition is the main result of this subsection.

\begin{proposition} \label{Goodpairs}
The set $S_\chi$ of good pairs for $\chi$ is non-empty and $\bigcap_{(\psi, \vartheta) \in S_\chi} F_{\psi,\vartheta}=E_\chi.$ 
\end{proposition}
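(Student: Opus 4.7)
The plan is to assemble the three preceding lemmas. First, non-emptiness of $S_\chi$: Lemma \ref{GPlemma1} produces some pair $(\psi,\vartheta)\in\bar{S}_\chi$, and Lemma \ref{GPlemma2} applied to this pair furnishes two elements of $S_\chi$, so in particular $S_\chi\neq\emptyset$.

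For the intersection equality, the inclusion $E_\chi\subseteq\bigcap_{(\psi,\vartheta)\in S_\chi}F_{\psi,\vartheta}$ is immediate from $E_\chi\subseteq E_{\psi,\vartheta}\subseteq F_{\psi,\vartheta}$. For the reverse inclusion, my first step is to show that the intersection is already contained in $E_{\psi_0,\vartheta_0}$ for every single good pair $(\psi_0,\vartheta_0)\in S_\chi$. Fix such a pair and apply Lemma \ref{GPlemma3} to obtain a finite-order character $\alpha$ such that $(\psi_0\alpha,\vartheta_0\alpha^{-1})\in\bar{S}_\chi$ and $F_{\psi_0,\vartheta_0}\cap F_{\psi_0\alpha,\vartheta_0\alpha^{-1}}\subseteq E_{\psi_0,\vartheta_0}$. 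Then Lemma \ref{GPlemma2} applied to $(\psi_0\alpha,\vartheta_0\alpha^{-1})$ produces two good pairs $(\psi_i,\vartheta_i)\in S_\chi$ ($i=1,2$) with $F_{\psi_1,\vartheta_1}\cap F_{\psi_2,\vartheta_2}\subseteq F_{\psi_0\alpha,\vartheta_0\alpha^{-1}}$. Chaining these inclusions yields
\[
\bigcap_{(\psi,\vartheta)\in S_\chi}\!\!F_{\psi,\vartheta}\;\subseteq\; F_{\psi_0,\vartheta_0}\cap F_{\psi_1,\vartheta_1}\cap F_{\psi_2,\vartheta_2}\;\subseteq\; F_{\psi_0,\vartheta_0}\cap F_{\psi_0\alpha,\vartheta_0\alpha^{-1}}\;\subseteq\; E_{\psi_0,\vartheta_0}.
\]

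Since $(\psi_0,\vartheta_0)\in S_\chi$ was arbitrary, the intersection is contained in $\bigcap_{(\psi_0,\vartheta_0)\in S_\chi}E_{\psi_0,\vartheta_0}$, so the remaining task is to verify that this latter intersection is $E_\chi$. For this final step I would argue that, given any element $x\notin E_\chi$, one can twist $\psi_0$ by a finite-order character $\beta$ (ramified at auxiliary primes and chosen via Lemma \ref{greenberg} so that the nonvanishing condition (3) of Definition \ref{goodfact} is preserved) so as to produce a good pair $(\psi_0\beta,\vartheta_0\beta^{-1})\in S_\chi$ whose coefficient field $E_{\psi_0\beta,\vartheta_0\beta^{-1}}$ does not contain $x$; running through enough such twists forces the intersection down to $E_\chi$. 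This mirrors the corresponding argument in \cite{BDP2}.

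The main obstacle is precisely this last Galois-theoretic separation: one must realize sufficiently many independent Hecke fields via good pairs so that their intersection collapses to $E_\chi$, while simultaneously preserving the nonvanishing hypothesis (3) in the definition of $S_\chi$ at each step. The overall template follows \cite{BDP2}, but the higher infinity type $(1+\ell,-\ell)$ considered here calls for careful bookkeeping of the allowable twisting characters and of the factorization $\vartheta=\varphi\varphi_A^\ell\Norm_K^{-\ell}$ entering the definition of $F_{\psi,\vartheta}$.
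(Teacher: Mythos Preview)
Your proposal is correct and follows the same approach as the paper, which simply says ``Using Lemmas~\ref{GPlemma2} and~\ref{GPlemma3}, proceed as in the proof of \cite[Proposition 3.28]{BDP2}.'' You have unpacked more of the mechanics of how the three lemmas combine than the paper does, and you correctly isolate the residual Galois-theoretic step (cutting the intersection of the $E_{\psi,\vartheta}$ down to $E_\chi$) as the part that is ultimately handled by the argument in \cite{BDP2}. One small refinement: in your second step you may as well allow $(\psi_0,\vartheta_0)\in\bar S_\chi$ rather than $S_\chi$, by also invoking Lemma~\ref{GPlemma2} on $(\psi_0,\vartheta_0)$ itself to land inside $F_{\psi_0,\vartheta_0}$ via good pairs; this gives $\bigcap_{S_\chi}F_{\psi,\vartheta}\subseteq\bigcap_{\bar S_\chi}E_{\psi,\vartheta}$ and makes the final descent to $E_\chi$ cleaner, since $\bar S_\chi$ manifestly contains pairs with $E_{\psi,\vartheta}=E_\chi\cdot E_\psi$ for arbitrary admissible $\psi$.
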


\begin{proof} Using Lemmas \ref{GPlemma2} and \ref{GPlemma3}, proceed as in the proof of \cite[Proposition 3.28]{BDP2}. \end{proof}

\begin{remark}
Results on good pairs in the vein of those described here and in \cite[\S 3E]{BDP2} can also be found in \cite[\S 5.1]{Cas-CM}, to which the reader is referred for details.
\end{remark}

\subsection{A reciprocity law for generalized Heegner cycles} \label{galoisdescent}
 
As customary, we shall denote by $\mathrm{rec}_K\colon \widehat{K}^\times/K^\times\rightarrow \Gal(K^\mathrm{ab}/K)$ the geometrically normalized reciprocity map of (global) class field theory. Set $G_{c,\mathfrak{N}}\defeq\Gal(H_{c,\mathfrak{N}}/K)$; in particular, $G_{c,\mathfrak{N}}$ is a quotient of $ \Gal(K^\mathrm{ab}/K)$. If $\mathfrak{a}$ is an $\cO_c$-ideal prime to $\mathfrak{N}_{\varepsilon_K\varepsilon_\psi}c$ and $a\in \widehat{K}^\times$ (a unit at all primes dividing $\mathfrak{N}_{\varepsilon_K\varepsilon_\psi}c$) satisfies $\mathfrak{a}=a\widehat{\mathcal{O}}_c\cap K$, then we define $\sigma_\mathfrak{a}\defeq{\mathrm{rec}_K(a^{-1})|}_{H_{c,\mathfrak{N}}}\in G_{c,\mathfrak{N}}$. 

For any field $F\supset H$, recall the \'etale Abel--Jacobi map
\[ {\Phi_{\text{\'et},X}^{(F)}}:\CH^{k-1}_0(X_r)(F)\longrightarrow H^1\bigl(F,V_X(k-1)\bigr)\longrightarrow H^1\bigl(F,V_W\otimes \boldsymbol{\Sym}^r_\text{\'et}(A)(r+1)\bigr) \]
 introduced in \eqref{PhiX} (here we use the symbol ${\Phi_{\text{\'et},X}^{(F)}}$ also for the composition of the two maps in the bottom line of \eqref{diagram}; we simplify, as before, our notation by writing, from now on, $\otimes$ in place of $\otimes_{\Q_p}$). With this notation, define $\xi_{X,\mathfrak{a}}\defeq{\Phi_{\text{\'et},X}^{(H_{c,\mathfrak{N}})}}(\Delta_{\varphi_{\mathfrak{a}}})$ and 
\begin{equation} \label{BDPX}
\xi_{X}\defeq\sum_{[\mathfrak{a}] \in\Pic(\cO_c)} \vartheta(\mathfrak{a})\xi_{X,\mathfrak{a}}\in 
H^1\Bigl(H_{c,\mathfrak{N}},V_{W}\otimes \boldsymbol{\Sym}_\text{\'et}^{2\ell}(A)(2\ell+1)\otimes  \mathcal{E}_\vartheta\Bigr).
\end{equation} 
To further lighten our notation, set 
\[ \boldsymbol{e}\defeq e_{\chi_A}e_{\chi_A^\ast},\quad\boldsymbol{e^\ast}\defeq\epsilon^\ast e_{\chi_A}e_{\chi_A^\ast}. \] 
Recall the notation for $\Gamma_{\varphi_\mathfrak{a}}$ and $\Delta_{\varphi_\mathfrak{a}}$ from \S\ref{secGHC}. There is a canonical embedding $A\subset B$ of varieties over $H$, and we may define  $\Xi_{\varphi_\mathfrak{a}}$ to be the image of $\Upsilon_{\varphi_\mathfrak{a}}$ via the maps 
\[ \Upsilon_{\varphi_\mathfrak{a}}\subset A_\mathfrak{a}^r\times A^r\subset A_\mathfrak{a}^r\times B^r\simeq A_\mathfrak{a}^r\times B^\ell\times (B^*)^\ell. \]
Here we use the identification $B^{2\ell}=B^\ell\times B^\ell\simeq B^\ell\times (B^*)^\ell$. Then put $\Lambda_{\varphi_\mathfrak{a}}\defeq\epsilon_Z\Upsilon_{\varphi_\mathfrak{a}}.$

Moreover, recall the \'etale Abel--Jacobi map  
\[ {\Phi_{\text{\'et},Z}^{(F)}}:\CH^{k-1}_0(Z_r)(F)\longrightarrow H^1(F,V_Z(k-1))\longrightarrow H^1\bigl(F,V_W\otimes\boldsymbol{\Sym}^\ell_\text{\'et}(B\otimes B^\ast)(2\ell+1)\bigr) \]
introduced in \eqref{AJZ1} and appearing in the upper horizontal line of \eqref{diagram}, where $F$ is any field containing $K$; still write ${\Phi_{\text{\'et},Z}^{(F)}}$ for the composition of the two maps above. As before, define $\xi_{Z,\mathfrak{a}}\defeq{\Phi_{\text{\'et},Z}^{(H_{c,\mathfrak{N}})}}(\Lambda_{\varphi_{\mathfrak{a}}})$ and
\[ \xi_{Z}\defeq\sum_{[\mathfrak{a}] \in\Pic(\cO_c)} \vartheta(\mathfrak{a})\xi_{Z,\mathfrak{a}}\in 
H^1\Bigl(H_{c,\mathfrak{N}},V_{W}\otimes \boldsymbol{\Sym}_\text{\'et}^\ell(B\otimes B^\ast)(2\ell+1)\otimes_{\Q_p}\mathcal{E}_\vartheta\Bigr). \]
\begin{lemma} \label{lemma6.10}
$\boldsymbol{e^\ast}(\xi_{Z,\mathfrak{a}})=\xi_{X,\mathfrak{a}}$.  
\end{lemma}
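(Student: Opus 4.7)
The plan is to establish the identity first at the level of Chow cycles, after base change to $H_{c,\mathfrak N}\supset H$ where the projectors $e_{\chi_A}$ and $e_{\chi_A^\ast}$ admit a clean geometric description, and then transport it through the \'etale Abel--Jacobi map. By the naturality of Abel--Jacobi maps with respect to correspondences -- encoded in the commutativity of the first square of diagram \eqref{diagram} -- it would suffice to verify the equality $\boldsymbol{e^\ast}(\Lambda_{\varphi_\mathfrak a})=\Delta_{\varphi_\mathfrak a}$ in the appropriate Chow group.

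To carry this out, I would unwind $\Lambda_{\varphi_\mathfrak a}=\epsilon_W\epsilon_B\epsilon_B^\ast\Xi_{\varphi_\mathfrak a}$ and apply $\boldsymbol{e^\ast}=\epsilon^\ast e_{\chi_A}^\ell e_{\chi_A^\ast}^\ell$. By \eqref{ResExt} and the discussion in the proof of Lemma \ref{lemma1}, over $H$ the idempotents $e_{\chi_A}$ and $e_{\chi_A^\ast}$ are the geometric projections of $B\times_K H\simeq\prod_\sigma A^\sigma$ and $B^\ast\times_K H\simeq\prod_\sigma (A^\ast)^\sigma$ onto the distinguished components $A$ and $A^\ast$. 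Since $\Xi_{\varphi_\mathfrak a}$ was built precisely as the image of $\Upsilon_{\varphi_\mathfrak a}\subset A_\mathfrak a^r\times A^r$ under the embedding $A^r\simeq A^\ell\times(A^\ast)^\ell\hookrightarrow B^\ell\times(B^\ast)^\ell$ induced by $A\hookrightarrow B$, applying $e_{\chi_A}^\ell e_{\chi_A^\ast}^\ell$ should recover $\Upsilon_{\varphi_\mathfrak a}$ viewed inside $X_r^\ast=W_r\times A^\ell\times(A^\ast)^\ell$. Over $H$ the projectors $\epsilon_B$ and $\epsilon_B^\ast$ further restrict respectively to $\epsilon_A^{(\ell)}$ and $\epsilon_{A^\ast}^{(\ell)}$ on these factors, again by the argument used in Lemma \ref{lemma1}. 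Combining this with the key factorization $\epsilon_A=\epsilon^\ast\epsilon_A^{(\ell)}\epsilon_{A^\ast}^{(\ell)}$ from \eqref{factproj} should then yield
\[
\boldsymbol{e^\ast}\Lambda_{\varphi_\mathfrak a}=\epsilon^\ast\epsilon_W\epsilon_A^{(\ell)}\epsilon_{A^\ast}^{(\ell)}\Upsilon_{\varphi_\mathfrak a}=\epsilon_W\epsilon_A\Upsilon_{\varphi_\mathfrak a}=\epsilon_X\Upsilon_{\varphi_\mathfrak a}=\Delta_{\varphi_\mathfrak a},
\]
and applying $\Phi_{\et,X}^{(H_{c,\mathfrak N})}$ to both sides then gives the claim.

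The main obstacle is the bookkeeping of identifications: $e_{\chi_A}$ and $e_{\chi_A^\ast}$ are defined a priori over $K$ as elements of the endomorphism algebras of $B$ and $B^\ast$, and one must recognize them, after base change to $H$, as the explicit geometric projections onto the distinguished factors of \eqref{ResExt}, and check that these projections interact correctly with the construction of $\Xi_{\varphi_\mathfrak a}$ from $\Upsilon_{\varphi_\mathfrak a}$. This pins down in an essential way the normalization of $\iota\colon\mathcal O_K\simeq\End_H(A)$ and of the embedding $\mathcal O_K\hookrightarrow R_\chi\simeq\End_K(B)$ fixed in \S\ref{abmotives}. A subsidiary point to keep track of is that the projector $\epsilon^\ast$ from \eqref{epsilonstar} is viewed inside $\Q[\Aut(\epsilon_A A^r)]$ via $\Sigma_\ell^2\hookrightarrow\Sigma_r$, so that the equality $\epsilon_A=\epsilon^\ast\epsilon_A^{(\ell)}\epsilon_{A^\ast}^{(\ell)}$ applied above is genuinely an identity in the relevant correspondence ring.
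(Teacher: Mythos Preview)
Your proposal is correct and is essentially an unpacking of the paper's one-line proof, which reads ``Immediate from diagram \eqref{diagram}.'' The commutativity of that diagram already encodes precisely the cycle-level identity $\boldsymbol{e^\ast}\Lambda_{\varphi_\mathfrak a}=\Delta_{\varphi_\mathfrak a}$ you verify, via Lemma \ref{lemma1} and the factorization \eqref{factproj}; your argument simply makes these implicit steps explicit.
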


\begin{proof} Immediate from diagram \eqref{diagram}. \end{proof}

In order to study the Hecke action on $\xi_Z$, we adapt the arguments in the proof of \cite[Proposition 4.5]{CH}. Let us fix a theta factorization $\chi=\psi\vartheta=(\varphi\varphi_A^\ell)\psi\Norm_K^{-\ell}$ of a Hecke character $\chi$ of infinity type $(1+\ell,-\ell)$ and recall the map \eqref{isogal2}, which we rewrite as 
\begin{equation} \label{isogal5} 
\boldsymbol{e}_{\varphi_A^\ell}:\boldsymbol{e}\cdot\mathbf{Sym}^\ell_\text{\'et}(B\otimes B^*)(\ell)\longrightarrow V_{\varphi_A^\ell}.
\end{equation}
In the next result, we identify finite order Hecke characters $\phi$ with their associated Galois characters, so that we write $\phi(\mathfrak{a})=\phi(\sigma_\mathfrak{a})$. 

\begin{proposition} \label{LemmaCH} 
The equality
\[ \bigl(\mathrm{id}\otimes\boldsymbol{e}_{\varphi_A^\ell}\bigr)\xi_{Z,\cO_c}^{\sigma_\mathfrak{a}}=(\varphi_A^\ell\chi_\cyc^{-\ell})(\sigma_\mathfrak{a})\bigl(\mathrm{id}\otimes \boldsymbol{e}_{\varphi_A^\ell}\bigr)\xi_{Z,\mathfrak{a}} \]
holds for all $\cO_c$-ideals $\mathfrak{a}$.  
\end{proposition}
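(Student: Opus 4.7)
The plan is to adapt the proof of \cite[Proposition 4.5]{CH} to the generalized Kuga--Sato variety $Z_r=W_r\times B^\ell\times(B^\ast)^\ell$. Since the \'etale Abel--Jacobi map is $G_K$-equivariant, one has $\xi_{Z,\cO_c}^{\sigma_\mathfrak{a}}={\Phi_{\et,Z}^{(H_{c,\mathfrak{N}})}}(\Lambda_{\varphi_{\cO_c}}^{\sigma_\mathfrak{a}})$, and the statement reduces to a geometric comparison of $\Lambda_{\varphi_{\cO_c}}^{\sigma_\mathfrak{a}}$ with $\Lambda_{\varphi_\mathfrak{a}}$ modulo the projection $\boldsymbol{e}_{\varphi_A^\ell}$. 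The analysis splits according to the two factors of $Z_r$: the modular factor $W_r$ and the CM factor $B^\ell\times(B^\ast)^\ell$.

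I would first address the $W_r$-factor via Shimura's reciprocity law. The CM point $[A_{\cO_c},t_{A_{\cO_c}}]\in X_{\Gamma_\psi}(H_{c,\mathfrak{N}})$ is transported by $\sigma_\mathfrak{a}$ to $[A_\mathfrak{a},\varphi_\mathfrak{a}(t_A)]$, with the transporting isomorphism induced by the isogeny $\varphi_{\mathfrak{a},c}\colon A_{\cO_c}\rightarrow A_\mathfrak{a}$. Consequently, the embedding $A_{\cO_c}^r\hookrightarrow W_r$ entering the construction of $\Xi_{\varphi_{\cO_c}}$ is conjugated by $\sigma_\mathfrak{a}$ to the embedding $A_\mathfrak{a}^r\hookrightarrow W_r$ entering $\Xi_{\varphi_\mathfrak{a}}$, and the graph of $\varphi_{\cO_c}$ is carried to the graph of $\varphi_\mathfrak{a}$ modulo the corresponding action on the remaining factors.

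Second, I would analyze the effect of $\sigma_\mathfrak{a}$ on the factor $B^\ell\times(B^\ast)^\ell$. Since $A$ is defined over $H$ and $B\times_K H\simeq\prod_{\sigma\in\Gal(H/K)}A^\sigma$ via \eqref{ResExt}, the $H$-rational embedding $A\hookrightarrow B$ used to build $\Xi_{\varphi_\mathfrak{a}}$ is Galois-conjugated by $\sigma_\mathfrak{a}$ to an embedding landing in a different direct summand, and similarly for $A\hookrightarrow B^\ast$. After composing with the idempotents $e_{\chi_A}$ and $e_{\chi_A^\ast}$ and taking $\ell$-fold tensor powers, this shift is governed, at the level of \'etale cohomology, by the $\lambda$-adic avatar of the Hecke character $(\chi_A\chi_A^\ast)^\ell$ evaluated at $\mathfrak{a}$, acting as a scalar on the $\boldsymbol{e}$-isotypic subrepresentation.

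Assembling these two steps and applying the projection $\boldsymbol{e}_{\varphi_A^\ell}$ of \eqref{isogal5}, whose target is the one-dimensional $G_K$-representation $E_{\varphi_A^\ell}(\varphi_A^\ell)(-\ell)$, one uses the Hecke-character identity $(\chi_A\chi_A^\ast)^\ell=\varphi_A^\ell\Norm_K^{-\ell}$ together with the Galois-theoretic identification of $\Norm_K$ with a Tate twist to obtain the required scalar $(\varphi_A^\ell\chi_\cyc^{-\ell})(\sigma_\mathfrak{a})$. The main obstacle will be to orchestrate the two reciprocity statements---Shimura's reciprocity for $A_{\cO_c}$ on the modular factor and the CM reciprocity for the embedding $A\hookrightarrow B$ on the CM factor---and to convert rigorously between Hecke character values on ideals (via the geometrically normalized reciprocity map $\mathrm{rec}_K$) and the Galois character $\varphi_A^\ell\chi_\cyc^{-\ell}$ appearing in the target of $\boldsymbol{e}_{\varphi_A^\ell}$, ensuring that all Tate twists cancel correctly to leave precisely the scalar asserted by the proposition.
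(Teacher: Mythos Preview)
Your proposal is correct and follows essentially the same approach as the paper: both adapt \cite[Proposition~4.5]{CH} by using Galois equivariance of the Abel--Jacobi map together with Shimura reciprocity on the $W_r$-factor and CM reciprocity on the $B^\ell\times(B^\ast)^\ell$-factor. The paper makes the CM step slightly more concrete by identifying the push-forward $(\mathrm{id}\otimes\lambda_{\mathfrak{a}\cO_K})_\ast$ (the Serre--Tate character $\nu_B$ acting as an endomorphism of $B$) as the map carrying $\Lambda_{\varphi_\mathfrak{a}}$ to $\Lambda_{\varphi_{\cO_c}}^{\sigma_\mathfrak{a}}$, and then observing that on \'etale cohomology this push-forward coincides with the $\sigma_\mathfrak{a}$-action on the CM factor, so $G_K$-equivariance of $\boldsymbol{e}_{\varphi_A^\ell}$ yields the scalar directly---this is exactly your ``shift of summand'' rephrased as a correspondence.
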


\begin{proof} To begin with, we gather some preliminaries, following \cite{CH}. By the theory of complex multiplication, $A_\mathfrak{a}=A_c^{\sigma_\mathfrak{a}}$ and the isogeny $\lambda_\mathfrak{a}:A_c\twoheadrightarrow A_\mathfrak{a}=A_c^{\sigma_\mathfrak{a}}$ is characterized by the equality 
\begin{equation} \label{CH0}
\lambda_\mathfrak{a}(x)=\sigma_\mathfrak{a}(x)
\end{equation} 
for all $x\in A[m]$ with $\bigl(m,\Norm_K(\mathfrak{a})\bigr)=1$ (\emph{cf.} \cite[Proposition 15, p. 42]{de87}; see also \cite[p. 591]{CH}). Therefore, for all such points there are equalities 
\[ \lambda_\mathfrak{a}\circ\varphi_c(x)=\sigma_\mathfrak{a}\bigl(\varphi_c(x)\bigr)=\varphi_c^{\sigma_\mathfrak{a}}\bigl(\sigma_\mathfrak{a}(x)\bigr)=\varphi_c^{\sigma_\mathfrak{a}}\bigl(\lambda_{\mathfrak{a}\cO_K}(x)\bigr). \] 
It follows that 
\begin{equation} \label{CH1/2}
\lambda_\mathfrak{a}\circ\varphi_c=\varphi_c^{\sigma_\mathfrak{a}}\circ\lambda_{\mathfrak{a}\cO_K}.
\end{equation} 
Furthermore, observe that the Serre--Tate character $\nu_B$ of $B$ is equal to 
\[ \bigl(\lambda_{\mathfrak{a}\cO_K}^\rho\bigr)_{\rho\in\Gal(H/K)}\in \bigoplus_{\rho\in\Gal(H/K)}\Hom(A^\rho,A^{\rho\sigma_\mathfrak{a}}). \] 
To check this, note that, by \cite[(1.5)]{Ru81}, for all $x\in A^\rho[m]$ and for all integers $m$ prime to $\Norm_K(\mathfrak{a})$ and the conductor of $\chi_A$ the map $A^\rho[m]\rightarrow A^{\rho\sigma_\mathfrak{a}}[m]$ such that $x\mapsto \sigma_\mathfrak{a}(x)$ equals the $\rho$-component of the Serre--Tate character $\nu_B(\mathfrak{a})$, which is just $\lambda_{\mathfrak{a}\mathcal{O}_K}^\rho:A^\rho[m]\rightarrow A^{\rho\sigma_\mathfrak{a}}[m]$, by equality \eqref{CH0}. 

Now we proceed with the proof. We use the symbol $\Gamma_{\varphi_\mathfrak{a}}$ for the image of $\Gamma_{\mathfrak{a}}$ in $W_r\times B$, while we write $\Gamma^\ast_{\varphi_\mathfrak{a}}$ to denote the image of $\Gamma_{\mathfrak a}$ in $W_r\times B^\ast$. As in the proof of \cite[Proposition 4.5]{CH}, we observe that (writing $\mathrm{id}$ for the identity map on $W_r$) there is an equality 
\[ (\mathrm{id}\otimes \lambda_{\mathfrak{a}\cO_K})_*\Gamma_{\varphi_\mathfrak{a}}=\biggl\{\Bigl(\lambda_\mathfrak{a}\bigl(\varphi_c(z)\bigr),{\lambda}_{\mathfrak{a}\cO_K}(z)\!\Bigr)\;\Big|\;z\in A\biggr\} \]
of subsets of $W_r\times B$; thus, thanks to equality \eqref{CH1/2}, we conclude that $(\mathrm{id}\otimes\lambda_{\mathfrak{a}\cO_K})_*\Gamma_{\varphi_\mathfrak{a}}=\Gamma^{\sigma_\mathfrak{a}}_{c}$, where we set $\Gamma_c\defeq\Gamma_{\varphi_{\mathcal{O}_c}}$. Analogously, there is an equality
\[ (\mathrm{id}\otimes \lambda_{\mathfrak{a}\cO_K})_*\Gamma_{\varphi_\mathfrak{a}}^*=\biggl\{\Bigl(\lambda_\mathfrak{a}\bigl(\varphi_c(z)\bigr),{\lambda}_{\mathfrak{a}\cO_K}(z)\!\Bigr)\;\Big|\;z\in A^*\biggr\} \]
of subsets of $W_r\times B^*$; thanks to equality \eqref{CH1/2}, we see that $(\mathrm{id}\otimes \lambda_{\mathfrak{a}\mathcal{O}_K})_*\Gamma_{\varphi_\mathfrak{a}}^*= (\Gamma^*_c)^{\sigma_\mathfrak{a}}$, where again we set $\Gamma^*_c\defeq\Gamma^*_{\varphi_{\cO_c}}$. Therefore, writing $\Lambda_c$ for $\Lambda_{\varphi_{\cO_c}}$, we obtain an equality  
\[ (\mathrm{id}\otimes \lambda_{\mathfrak{a}\mathcal{O}_K})_*\Lambda_{\varphi_\mathfrak{a}}=\Lambda_c^{\sigma_\mathfrak{a}}. \]
Since \'etale Abel--Jacobi maps commute with Galois actions, we have 
\[ {\Phi_{\text{\'et},Z}^{(H_{c,\mathfrak{N}})}}\bigl((\mathrm{id}\otimes \lambda_{\mathfrak{a}\cO_K})_\ast\Lambda_{\varphi_\mathfrak{a}}\bigr)={\Phi_{\text{\'et},Z}^{(H_{c,\mathfrak{N}})}}(\Lambda_c^{\sigma_\mathfrak{a}})={\Phi_{\text{\'et},Z}^{(H_{c,\mathfrak{N}})}}(\Lambda_c)^{\sigma_\mathfrak{a}}=\xi_{Z,\cO_c}^{\sigma_\mathfrak{a}}. \]
On the other hand, we also have 
\[ {\Phi_{\text{\'et},Z}^{(H_{c,\mathfrak{N}})}}\circ(\mathrm{id}\otimes \lambda_{\mathfrak{a}\cO_K}{)}_*=(\mathrm{id}\otimes \lambda_{\mathfrak{a}\cO_K}{)}_*\circ{\Phi_{\text{\'et},Z}^{(H_{c,\mathfrak{N}})}} \]
because of the functoriality of the \'etale Abel--Jacobi map, and therefore we see that 
\begin{equation}\label{CH2}
(\mathrm{id}\otimes \lambda_{\mathfrak{a}\cO_K}{)}_\ast(\xi_{X,\mathfrak{a}})=\xi_{Z,\cO_c}^{\sigma_\mathfrak{a}}.
\end{equation}
Since the map $\boldsymbol{e}_{\varphi_A^\ell}$ in \eqref{isogal5} is $G_K$-equivariant, for all $x\in\boldsymbol{e}\cdot\mathbf{Sym}^\ell_\text{\'et}(B\otimes B^\ast)(\ell)$ and all $\sigma\in G_K$ there is an equality 
\begin{equation} \label{inv}
\boldsymbol{e}_{\varphi_A^\ell}\Bigl(\bigl(\sigma_\mathfrak{a}\otimes\chi_\cyc^\ell(\sigma_\mathfrak{a})\bigr)(x)\Bigr)=\varphi_A^\ell(\sigma)\boldsymbol{e}_{\varphi_A^\ell}(x).
\end{equation}
Therefore, one has
\[ \begin{split}
(\mathrm{id}\otimes\boldsymbol{e}_{\varphi_A^\ell})(\xi_{Z,\mathcal{O}_c}^{\sigma_\mathfrak{a}})&=
\boldsymbol{e}_{\varphi_A^\ell}\bigl((\mathrm{id}\otimes \lambda_{\mathfrak{a}\mathcal{O}_K})_*(\xi_{X,\mathfrak{a}})\bigr) \text{ (by equality \eqref{CH2})} \\
&=\boldsymbol{e}_{\varphi_A^\ell}\bigl(\sigma_\mathfrak{a}(\xi_{Z,\mathfrak{a}})\bigr) \text{ (by equality \eqref{CH0})}\\
&=\chi_\cyc^{-\ell}(\sigma_\mathfrak{a})\boldsymbol{e}_{\varphi_A^\ell}\Bigl(\!(\sigma_\mathfrak{a}\otimes\chi_\cyc^\ell(\sigma_\mathfrak{a}))(\xi_{Z,\mathfrak{a}})\!\Bigr)\\
&=\chi_\cyc^{-\ell}(\sigma_\mathfrak{a})\varphi_A^\ell(\sigma_\mathfrak{a})\bigl(\mathrm{id}\otimes \boldsymbol{e}_{\varphi_A^\ell}\bigr)(\xi_{Z,\mathfrak{a}}) \text{ (by equality \eqref{inv})},
\end{split} \]
and the proof is complete. \end{proof}

Fix a theta factorization $\chi=\psi\vartheta=\varphi\varphi_A^\ell\psi\Norm_K^{-\ell}$. Define 
\begin{equation} \label{xipsivartheta}
\xi_{\psi,\vartheta}\in H^1_f\bigl(H_{c,\mathfrak{N}},V_{\chi^*}(1)\otimes_{\mathcal{E}_\chi}\mathcal{E}_{\psi,\vartheta}\bigr)
\end{equation} 
to be the image of $\boldsymbol{e}^*\xi_Z$ under the composition 
\[ V_W\otimes \boldsymbol{\Sym}_\text{\'et}^\ell(B\otimes B)(2\ell+1)\otimes\mathcal{E}_{\psi,\vartheta}\xrightarrow{\boldsymbol{e}_{\psi^*}\otimes\boldsymbol{e}_{\varphi_A^\ell}} V_{\psi^*\varphi_A^\ell}(1)\simeq V_{\chi^*}(1)\otimes_{\mathcal{E}_\chi}\mathcal{E}_{\psi,\vartheta}, \]
where $\boldsymbol{e}_{\psi^*}:V_W\twoheadrightarrow V_{\psi^*}$ is the projection map. 

\begin{proposition} \label{PropCH}
The equality $\xi_{\psi,\vartheta}^{\sigma_\mathfrak{a}}=\varphi^{-1}(\sigma_\mathfrak{a})\xi_{\psi,\vartheta}$ holds for all $\cO_c$-ideals $\mathfrak{a}$. 
\end{proposition}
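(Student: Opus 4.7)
The plan is to unwind the definition of $\xi_{\psi,\vartheta}$ as a sum $\xi_{\psi,\vartheta} = \sum_{[\mathfrak{b}]\in\Pic(\cO_c)}\vartheta(\mathfrak{b})\zeta_\mathfrak{b}$, where $\zeta_\mathfrak{b}\defeq(\mathrm{id}\otimes\boldsymbol{e}_{\psi^\ast}\otimes\boldsymbol{e}_{\varphi_A^\ell})\boldsymbol{e}^{\ast}\xi_{Z,\mathfrak{b}}$, then to apply $\sigma_\mathfrak{a}$ term-by-term, reindex, and identify the resulting scalar. The key analytic input is Proposition \ref{LemmaCH}, while the remaining steps amount to class field theory bookkeeping.

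The first step will be to upgrade Proposition \ref{LemmaCH}, which only treats $\mathfrak{b} = \cO_c$, to the general reciprocity
\[
\zeta_\mathfrak{b}^{\sigma_\mathfrak{a}} = (\varphi_A^\ell\chi_\cyc^{-\ell})(\sigma_\mathfrak{a})\,\zeta_{\mathfrak{a}\mathfrak{b}}.
\]
This will follow purely formally from Proposition \ref{LemmaCH} itself, without rewriting the CM-theoretic argument: since $\Gal(H_{c,\mathfrak{N}}/K)$ is abelian and $\sigma_\mathfrak{a}\sigma_\mathfrak{b}=\sigma_{\mathfrak{a}\mathfrak{b}}$ (multiplicativity of $\mathrm{rec}_K$), Proposition \ref{LemmaCH} applied with $\mathfrak{b}$ in place of $\mathfrak{a}$ gives $\zeta_\mathfrak{b} = (\varphi_A^\ell\chi_\cyc^{-\ell})(\sigma_\mathfrak{b})^{-1}\zeta_{\cO_c}^{\sigma_\mathfrak{b}}$; applying $\sigma_\mathfrak{a}$ and invoking Proposition \ref{LemmaCH} a second time with $\mathfrak{ab}$ in place of $\mathfrak{a}$ yields the claim.

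Next I will substitute this into the defining sum for $\xi_{\psi,\vartheta}^{\sigma_\mathfrak{a}}$ and perform the change of variables $\mathfrak{c}=\mathfrak{a}\mathfrak{b}$, using multiplicativity of $\vartheta$:
\[
\xi_{\psi,\vartheta}^{\sigma_\mathfrak{a}} = (\varphi_A^\ell\chi_\cyc^{-\ell})(\sigma_\mathfrak{a})\,\vartheta(\mathfrak{a})^{-1}\,\xi_{\psi,\vartheta}.
\]
The final task is to verify that the scalar on the right equals $\varphi^{-1}(\sigma_\mathfrak{a})$. Since $\vartheta=\varphi\varphi_A^\ell\Norm_K^{-\ell}$ as an ideal character, $\vartheta(\mathfrak{a})=\varphi(\mathfrak{a})\varphi_A^\ell(\mathfrak{a})\Norm_K(\mathfrak{a})^{-\ell}$. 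The finite-order characters $\varphi$ and $\varphi_A$ match their Galois avatars on $\sigma_\mathfrak{a}$, and the geometric normalization of $\mathrm{rec}_K$ gives $\Norm_K(\mathfrak{a})=\chi_\cyc(\sigma_\mathfrak{a})$ (by local-global compatibility at each prime factor of $\mathfrak{a}$). Substituting these identifications makes the coefficient telescope to $\varphi^{-1}(\sigma_\mathfrak{a})$.

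There is no serious obstacle: the whole argument is a formal reindexing, and the only point requiring care is the dictionary between the ideal-theoretic factorization $\vartheta=\varphi\varphi_A^\ell\Norm_K^{-\ell}$ and the Galois-theoretic twist $\varphi_A^\ell\chi_\cyc^{-\ell}$ coming out of Proposition \ref{LemmaCH}. This is precisely where the cyclotomic identification $\Norm_K(\mathfrak{a})=\chi_\cyc(\sigma_\mathfrak{a})$ is used; once that is in place, the whole computation collapses to the desired reciprocity law.
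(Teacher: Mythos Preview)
Your argument is correct and follows essentially the same route as the paper's own proof: expand $\xi_{\psi,\vartheta}$ as a sum indexed by $\Pic(\cO_c)$, apply the reciprocity law coming from Proposition~\ref{LemmaCH} term by term, reindex via $\mathfrak{b}\mapsto\mathfrak{ab}$, and simplify the resulting scalar using the factorization $\vartheta=\varphi\varphi_A^\ell\Norm_K^{-\ell}$ together with $\chi_\cyc(\sigma_\mathfrak{a})=\Norm_K(\mathfrak{a})$. The only minor difference is that you make explicit the bootstrap from the base case $\mathfrak{b}=\cO_c$ (the literal statement of Proposition~\ref{LemmaCH}) to general $\mathfrak{b}$ via the cocycle-type identity $\zeta_\mathfrak{b}^{\sigma_\mathfrak{a}}=(\varphi_A^\ell\chi_\cyc^{-\ell})(\sigma_\mathfrak{a})\zeta_{\mathfrak{ab}}$, whereas the paper invokes this step more tersely by citing Lemma~\ref{lemma6.10} and Proposition~\ref{LemmaCH} together.
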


\begin{proof} There are equalities 
\[ \begin{split}
\xi_{\psi,\vartheta}^{\sigma_\mathfrak{a}}&=\sum_{\mathfrak{b}\in\Pic(\cO_c)}\vartheta(\mathfrak{b})\xi_{\psi,\vartheta,\mathfrak{b}}^{\sigma_\mathfrak{a}}=\sum_{\mathfrak{b}\in\Pic(\cO_c)}\bigl(\varphi\varphi_A^\ell\bigr)(\sigma_\mathfrak{b})\Norm_K^{-\ell}(\mathfrak{b})\xi_{\psi,\vartheta,\mathfrak{b}}^{\sigma_\mathfrak{a}}\\
&=\sum_{\mathfrak{b}\in\Pic(\cO_c)}\bigl(\varphi\varphi_A^\ell\bigr)(\sigma_\mathfrak{b})
\Norm_K^{-\ell}(\mathfrak{b})\bigl(\varphi_A^\ell\chi_\cyc^{-\ell}\bigr)(\sigma_\mathfrak{a})\xi_{\psi,\vartheta,\mathfrak{ab}}\\
&=\sum_{\mathfrak{ab}\in\Pic(\cO_c)}\bigl(\varphi\varphi_A^\ell\bigr)(\sigma_\mathfrak{a}^{-1})\bigl(\varphi\varphi_A^\ell\bigr)(\sigma_\mathfrak{ab})\Norm_K^\ell(\mathfrak{a})\Norm_K^{-\ell}(\mathfrak{ab})\bigl(\varphi_A^\ell\chi_\cyc^{-\ell}\bigr)(\sigma_\mathfrak{a})\xi_{\psi,\vartheta,\mathfrak{ab}}\\
&=\varphi^{-1}(\sigma_\mathfrak{a})\xi_{\psi,\vartheta},
\end{split} \] 
the third following from a combination of  Lemma \ref{lemma6.10} and Proposition \ref{LemmaCH}. \end{proof}

\begin{corollary} \label{coroz} 
There exists a unique $z_{\psi,\vartheta}\in H^1_f\bigl(K,V_{\chi^*}(1)\otimes_{\mathcal{E}_\chi}\mathcal{E}_{\psi,\vartheta}\bigr)$ whose restriction to $H_{c,\mathfrak{N}}$ is $\xi_{\psi,\vartheta}$. 
\end{corollary}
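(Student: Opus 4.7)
The proof proceeds by Galois descent from $H_{c,\mathfrak{N}}$ to $K$. Let $V\defeq V_{\chi^*}(1)\otimes_{\mathcal{E}_\chi}\mathcal{E}_{\psi,\vartheta}$ and $G\defeq\Gal(H_{c,\mathfrak{N}}/K)$, which is finite. Since $V$ is a finite-dimensional vector space over the characteristic-zero field $\mathcal{E}_{\psi,\vartheta}$, the order of $G$ is invertible in $V^{G_{H_{c,\mathfrak{N}}}}$, so $H^i\bigl(G,V^{G_{H_{c,\mathfrak{N}}}}\bigr)=0$ for all $i\ge 1$. Hochschild--Serre then yields an isomorphism
\[
\mathrm{res}\colon H^1(K,V)\xrightarrow{\sim} H^1(H_{c,\mathfrak{N}},V)^{G},
\]
which induces an isomorphism on Bloch--Kato subgroups $H^1_f$ since the crystalline (at $p$) and unramified (away from $p$) local conditions are preserved under restriction to open subgroups, with the controlling defects governed by the same vanishing cohomology. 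The uniqueness of $z_{\psi,\vartheta}$ is then immediate from the injectivity of $\mathrm{res}$.

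For existence, I would verify that $\xi_{\psi,\vartheta}$ lies in the $G$-invariant subspace of $H^1_f(H_{c,\mathfrak{N}},V)$. Proposition \ref{PropCH} provides the relation $\xi_{\psi,\vartheta}^{\sigma_\mathfrak{a}}=\varphi^{-1}(\sigma_\mathfrak{a})\xi_{\psi,\vartheta}$ for every $\mathcal{O}_c$-ideal $\mathfrak{a}$ coprime to $\mathfrak{N}_{\varepsilon_K\varepsilon_\psi}c$. I would read this as genuine $G$-invariance by tracking the identification $V_{\psi^*\varphi_A^\ell}(1)\simeq V$ through which $\xi_{\psi,\vartheta}$ is constructed: this identification is $G_{H_{c,\mathfrak{N}}}$-equivariant but twists the natural $G_K$-actions on the two sides by the finite-order character $\varphi$ coming from the theta factorization $\chi=\psi\varphi\varphi_A^\ell\Norm_K^{-\ell}$. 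The factor $\varphi^{-1}(\sigma_\mathfrak{a})$ in Proposition \ref{PropCH} is precisely what is needed to cancel this twist, so that $\xi_{\psi,\vartheta}$, viewed in $V$ with its natural $G_K$-structure, is genuinely $\sigma_\mathfrak{a}$-invariant. Since by class field theory the $\sigma_\mathfrak{a}$ topologically generate $G$, one concludes $G$-invariance and obtains a unique preimage $z_{\psi,\vartheta}\in H^1_f(K,V)$ restricting to $\xi_{\psi,\vartheta}$.

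The main technical obstacle is the rigorous verification of this cancellation: matching the $\varphi^{-1}$-factor of Proposition \ref{PropCH} against the $\varphi$-twist implicit in the non-$G_K$-equivariant identification of Galois modules requires careful bookkeeping of coefficient extensions, Tate twists, and the interplay of the motivic decomposition of $V_{\chi^*}$ encoded in the theta factorization. Once this is in place, the descent from $H_{c,\mathfrak{N}}$ to $K$ is a formal consequence of the vanishing of $H^i(G,-)$ for $G$ finite acting on a vector space over a field of characteristic zero.
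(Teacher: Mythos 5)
There is a genuine gap, and it sits exactly at the step you yourself flag as ``the main technical obstacle'': the claim that the $\varphi^{-1}(\sigma_\mathfrak{a})$ factor in Proposition \ref{PropCH} cancels the twist hidden in the identification $V_{\psi^*\varphi_A^\ell}(1)\simeq V_{\chi^*}(1)\otimes_{\mathcal{E}_\chi}\mathcal{E}_{\psi,\vartheta}$, so that $\xi_{\psi,\vartheta}$ is honestly $\Gal(H_{c,\mathfrak{N}}/K)$-invariant for the natural $G_K$-structure on $V_{\chi^*}(1)\otimes_{\mathcal{E}_\chi}\mathcal{E}_{\psi,\vartheta}$. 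This cancellation does not occur in general. The theta factorization gives $\chi^*=\psi^*\varphi^*(\chi_A\chi_A^*)^\ell$, so the identification you invoke fails to be $G_K$-equivariant by the character $\varphi^*$, whereas the eigenvalue produced by Proposition \ref{PropCH} (computed through the $G_K$-equivariant projections $\boldsymbol{e}_{\psi^*}$ and $\boldsymbol{e}_{\varphi_A^\ell}$, i.e.\ with respect to the ``motivic'' structure coming from $Z_r$) is $\varphi^{-1}$. These cancel only when $\varphi^*=\varphi$, which is false for a general finite-order $\varphi$ (with the paper's convention $\varphi^*(x)=\overline{\varphi(x)}$ one has $\varphi^*=\varphi^{-1}$, so you would need $\varphi^2=1$; the anticyclotomic twists used to produce good pairs are typical counterexamples). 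Consequently $\xi_{\psi,\vartheta}$ is \emph{not} a $G$-invariant class, the image of the untwisted restriction map does not contain it, and your Hochschild--Serre descent cannot produce $z_{\psi,\vartheta}$.

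This is why the paper does not argue by plain invariance. It reads Proposition \ref{PropCH} as placing $\xi_{\psi,\vartheta}$ in a $\varphi^{-1}$-\emph{eigenspace} and then descends a \emph{twisted} representation: one applies Lemma \ref{replem} to $W_\mu=V_\mu\otimes_{\mathcal{E}_\mu}\mathcal{E}_{\psi,\vartheta,\varphi}$ with $\mu=(\chi\varphi)^{-1}$, obtaining the isomorphism $H^1\bigl(K,W_\mu(\varphi)\bigr)\simeq H^1(H_{c,\mathfrak{N}},W_\mu)^{\varphi^{-1}}$ (valid since $W_\mu^{G_{H_{c,\mathfrak{N}}}}=0$), which yields a class over $K$ but only after enlarging the coefficient field to $\mathcal{E}_{\psi,\vartheta,\varphi}$ so that the values of $\varphi$, and hence the eigenspace decomposition, make sense; one then descends the coefficients back to $\mathcal{E}_{\psi,\vartheta}$ by applying the averaging projector $e_\mathcal{G}$ for $\mathcal{G}=\Gal(\mathcal{E}_{\psi,\vartheta,\varphi}/\mathcal{E}_{\psi,\vartheta})$, using that $e_\mathcal{G}$ fixes $\xi_{\psi,\vartheta}$. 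Both ingredients --- the $\varphi$-twisted descent lemma and the coefficient enlargement-and-descent --- are absent from your proposal, and they are the actual content of the proof; the formal vanishing of $H^i(G,-)$ for finite $G$ in characteristic zero, which is the only tool you supply, is not sufficient here.
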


\begin{proof} The self-duality condition on $\chi$ ensures that $V_{\chi^\ast}(1)\simeq V_{\chi^{-1}}$. To simplify our notation, put $\mu\defeq(\chi\varphi)^{-1}$. Note that $\mathcal{E}_{\mu,\varphi}=\mathcal{E}_{\chi,\varphi}$. Applying Lemma \ref{replem} with $W_\mu=V_{\mu}\otimes_{\mathcal{E}_\mu}\mathcal{E}_{\psi,\vartheta,\varphi}$, we see that the restriction map $H^1\bigl(K,W_{\mu}(\varphi)\bigr)\rightarrow H^1(H_{c,\mathfrak{N}},W_{\mu})^{\varphi^{-1}}$ is an isomorphism. By Proposition \ref{PropCH}, the element $\xi_{\psi,\vartheta}$ in \eqref{xipsivartheta} belongs to 
\[ H^1_f\bigl(H_{c,\mathfrak{N}},V_{\chi^{-1}}\otimes_{\mathcal{E}_\chi}\mathcal{E}_{\psi,\vartheta}\bigr)^{\varphi^{-1}}\subset H^1_f\bigl(H_{c,\mathfrak{N}},V_{\chi^{-1}}\otimes_{\mathcal{E}_\chi}\mathcal{E}_{\psi,\vartheta,\varphi}\bigr)^{\varphi^{-1}}\simeq 
H^1_f(H_{c,\mathfrak{N}},W_\mu)^{\varphi^{-1}}, \]  
where the inclusion on the left is just extension of coefficients. Thus, there exists a unique $z_{\psi,\vartheta}\in H^1_f\bigl(K,W_{\mu}(\varphi)\bigr)$ whose restriction to $H_{c,\mathfrak{N}}$ is $\xi_{\psi,\vartheta}$. Set $\mathcal{G}\defeq\Gal(\mathcal{E}_{\psi,\vartheta,\varphi}/\mathcal{E}_{\psi,\vartheta})$. The projector $e_\mathcal{G}\defeq\frac{1}{|\mathcal{G}|}\cdot\sum_{g\in \mathcal{G}}g$ acting on coefficients fixes the image of $\xi_{\psi,\vartheta}$ in $H^1_f(H_{c,\mathfrak{N}},W_\mu)^{\varphi^{-1}}$, so $z_{\psi,\vartheta}$ is also fixed by $e_\mathcal{G}$ and hence belongs to $H^1_f\bigl(K,V_{\chi^{-1}}\otimes_{\mathcal{E}_\chi}\mathcal{E}_{\psi,\vartheta}\bigr)$, as claimed. \end{proof}

\subsection{A generalized Rubin formula.} \label{sec:GRFII}

Let us fix throughout this subsection a good pair $(\psi,\vartheta)\in S_\chi$ and write $\chi=\psi\vartheta=\psi\varphi\varphi_A^\ell\Norm_K^{-\ell}$. The next result is a direct generalization of a similar result for $k=2$ in \cite{BDP2}. Recall the element $\xi_X$ introduced in \eqref{BDPX}. 

\begin{proposition} \label{prop3.2} 
For any good factorization $\chi=\psi\vartheta$, there is a congruence 
\[ \mathscr{L}_{p,\mathfrak{c}}(\chi^*)\equiv\Omega_p(\chi^*)^{-1}\cdot\Bigl(\vartheta(\mathfrak{a})\log_{X,\mathcal{F}_{\psi,\vartheta}}(\xi_X)\bigl(\omega_{\theta_\psi}\wedge \omega_A^\ell\eta_A^{\ell}\bigr)\!\Bigr)^2\pmod{F_{\psi,\vartheta}^\times}. \] 
\end{proposition}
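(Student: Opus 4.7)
The strategy is to combine three ingredients already assembled earlier in the paper: the factorization formula \eqref{factorization-modulo-E}, the BDP $p$-adic Gross--Zagier formula (Theorem \ref{thm5.11}), and Katz's interpolation formula \eqref{Katz}. The free parameter that ties them together is the character $\phi\defeq\vartheta^{-1}\Norm_K$; by Lemma \ref{lemma3.3} and the good-pair hypothesis, $\phi$ has infinity type $(\ell+1,\ell+1)$ and belongs to $\Sigma_\mathrm{cc}^{(1)}(\psi,c)$, so all three formulas apply to it.

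First I would compute, using self-duality $\chi\chi^*=\Norm_K$, that $\psi^{-1}\phi=\psi^{-1}\vartheta^{-1}\Norm_K=\chi^{-1}\Norm_K=\chi^*$, while $\psi^{*-1}\phi=\psi^\dagger\vartheta^{-1}\Norm_K$ lies in $\Sigma^{(2)}_\mathrm{sd}\bigl(\mathfrak{c}\Norm(\mathfrak{f}_\psi)\bigr)$. Consequently \eqref{factorization-modulo-E} applied to $\phi$ yields
\[
L_p(\theta_\psi,\phi)\equiv\mathscr{L}_{p,\mathfrak{c}}(\chi^*)\cdot\mathscr{L}_{p,\mathfrak{c}\Norm(\mathfrak{f}_\psi)}\bigl(\psi^\dagger\vartheta^{-1}\Norm_K\bigr)\pmod{E_{\psi,\vartheta,\varepsilon_K\varepsilon_\psi}^\times},
\]
and condition (3) of a good pair together with the Katz interpolation formula \eqref{Katz} applied to $\psi^\dagger\vartheta^{-1}\Norm_K$ ensures (as in \eqref{GP3}) that the second factor is non-zero and satisfies
\[
\mathscr{L}_{p,\mathfrak{c}\Norm(\mathfrak{f}_\psi)}\bigl(\psi^\dagger\vartheta^{-1}\Norm_K\bigr)\equiv\Omega_p(A)^{2\ell+1}\cdot\frac{L_{\mathfrak{c}\Norm(\mathfrak{f}_\psi)}(\psi^*\vartheta\Norm_K^{-1},0)}{\Omega_\infty(A)^{2\ell+1}}\pmod{E_{\psi,\vartheta,\varepsilon_K\varepsilon_\psi}^\times},
\]
after absorbing the algebraic prefactors ($\sqrt{D_K}$, $j!$, and the two Euler factors) into the class modulo $E^\times$.

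Second I would invoke Theorem \ref{thm5.11} with $\phi=\vartheta^{-1}\Norm_K$, noting that the weight factor there specializes to $\phi^{-1}(\mathfrak{a})\Norm_K(\mathfrak{a})=\vartheta(\mathfrak{a})$ so that the inner sum literally becomes $\log_{X,\mathcal{F}_{\psi,\vartheta}}(\xi_X)\bigl(\omega_{\theta_\psi}\wedge\omega_A^\ell\eta_A^\ell\bigr)$ in view of the definition \eqref{BDPX} of $\xi_X$ and the $\mathcal{E}_\vartheta$-linearity of $\Phi_{\et,X}^{(H_{c,\mathfrak{N}})}$ and $\log_{X,\mathcal{F}_{\psi,\vartheta}}$. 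Since the prefactor $E_p(\theta_\psi)/(c^\ell\ell!)$ lies in $E_{\psi,\vartheta}^\times\subset F_{\psi,\vartheta}^\times$, this gives
\[
L_p(\theta_\psi,\vartheta^{-1}\Norm_K)\equiv\Bigl(\log_{X,\mathcal{F}_{\psi,\vartheta}}(\xi_X)\bigl(\omega_{\theta_\psi}\wedge\omega_A^\ell\eta_A^\ell\bigr)\!\Bigr)^2\pmod{F_{\psi,\vartheta}^\times}.
\]
Combining with the previous display and dividing by the non-vanishing Katz factor, one then obtains
\[
\mathscr{L}_{p,\mathfrak{c}}(\chi^*)\equiv\Omega_p(A)^{-(2\ell+1)}\cdot\Bigl(\log_{X,\mathcal{F}_{\psi,\vartheta}}(\xi_X)\bigl(\omega_{\theta_\psi}\wedge\omega_A^\ell\eta_A^\ell\bigr)\!\Bigr)^2\pmod{F_{\psi,\vartheta}^\times},
\]
and the proof is concluded by the period identification $\Omega_p(\chi^*)\equiv\Omega_p(A)^{2\ell+1}\pmod{F_{\psi,\vartheta}^\times}$ coming from \eqref{period4} applied to the standard factorization of $\chi^*$, since the associated Gauss sum $\mathfrak{g}(\varphi_{\chi^*})$ is a unit in $F_{\psi,\vartheta}$ by construction of the latter field (it lies in $H_{\varphi\varphi_A^\ell}^\times\cdot E_\chi^\times$).

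The main obstacle is a careful bookkeeping of coefficient fields and constants: one must verify that every algebraic factor appearing in \eqref{Katz}, \eqref{BDP}, and \eqref{eq**}---in particular the Euler-type factors $E_p(\theta_\psi)$, the complex constants $C(\psi,\phi,c)$, the $\omega(\psi,\phi)$ of \cite[(5.1.11)]{BDP1}, the Gauss sums, and the algebraic ratio $L(\psi^*\vartheta\Norm_K^{-1},0)/\Omega_\infty(A)^{2\ell+1}$---actually lies in $F_{\psi,\vartheta}^\times$. This reduces to checking the explicit formulas for these factors together with the observation that $F_{\psi,\vartheta}$ contains $E_{\psi,\vartheta,\varepsilon_K\varepsilon_\psi}$ as well as all the finite-order Gauss sums attached to $\varphi$, $\varphi\varphi_A^\ell$ and their conjugates; the $\vartheta(\mathfrak{a})\in E_\vartheta^\times\subset F_{\psi,\vartheta}^\times$ in the statement can then be absorbed without affecting the congruence.
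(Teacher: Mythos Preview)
Your strategy is exactly the paper's: apply the factorization \eqref{factorization-modulo-E} at $\phi=\vartheta^{-1}\Norm_K$, identify one Katz factor as $\mathscr{L}_{p,\mathfrak{c}}(\chi^*)$, control the other Katz factor via the interpolation property (it lies in $\Sigma^{(2)}_\mathrm{sd}$ and its $L$-value is nonzero by good-pair condition~(3)), and rewrite the BDP side via Theorem~\ref{thm5.11} as the square of the logarithm of $\xi_X$. Up to here you match the paper's \eqref{eqpf1} and \eqref{eqpf2} verbatim.

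The one place where your plan has a genuine slip is the final period identification. From \eqref{period4} applied to $\chi^*$, which has infinity type $(-\ell,\ell+1)$, one gets $\Omega_p(\chi^*)=\mathfrak{g}(\varphi^*)\,\Omega_p(A)^{-(2\ell+1)}$, not $\Omega_p(A)^{2\ell+1}$; with the correct sign your penultimate display does \emph{not} yield the statement, because you are off by $\Omega_p(A)^{2(2\ell+1)}$. The paper avoids this trap by not comparing to a bare power of $\Omega_p(A)$ at all. Instead it invokes \cite[Corollary~3.3]{BDP2} (essentially congruence~\eqref{katz-cong}) to replace the second Katz value by the $p$-adic period of the character itself modulo $E_\chi^\times$, then rewrites that character as $\chi^*\cdot(\vartheta\vartheta^\dagger)$ with $\vartheta\vartheta^\dagger$ of finite order. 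The substantive bookkeeping---which you rightly flag as the obstacle---is to show that $\mathfrak{g}(\varphi_\vartheta\varphi_\vartheta^\dagger)\in F_{\psi,\vartheta}^\times$, and the paper does this by tracing the field cut out by $\varepsilon_\vartheta=\varepsilon_K\varepsilon_\psi$ back into the definition of $F_{\psi,\vartheta}$. Your separate assertion that $\mathfrak{g}(\varphi_{\chi^*})\in F_{\psi,\vartheta}^\times$ concerns a different finite-order character (the one from the \emph{standard} factorization of $\chi^*$, not from the theta factorization) and does not by itself close the gap.
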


\begin{proof} In the case of the character $\vartheta^{-1}\Norm_K$, the factor $E_p(\theta_\psi)$ appearing in formula \eqref{BDPTHM} is equal to 
\[ \Bigl(1-\bigl(\vartheta\Norm_K^{-1}\bigr)(\bar{\mathfrak{p}}) a_p(f)+ \bigl(\vartheta\Norm_K^{-1}\bigr)^2(\bar{\mathfrak{p}})\varepsilon_\vartheta(p)p^{k-1}\Bigr)^2, \] 
which belongs to $E_{\vartheta}^\times=E_\psi^\times$. Then, again by \eqref{BDPTHM}, we have 
\begin{equation} \label{eqpf1}
\begin{split}
{L_p(\theta_\psi,\vartheta^{-1}\Norm_K)} &\equiv\left(\sum_{[\mathfrak{a}] \in \Pic(\mathcal{O}_c)} \vartheta(\mathfrak{a}) \log_{X,\mathcal{F}_{\psi,\vartheta} }(\Delta_{\varphi_\mathfrak{a}})\bigl(\omega_{\theta_\psi}\wedge \omega_A^\ell\eta_A^{\ell}\bigr) \right)^2 \\&\equiv\Bigl(\vartheta(\mathfrak{a})\log_{X,\mathcal{F}_{\psi,\vartheta} }(\xi_X )\bigl(\omega_{\theta_\psi}\wedge \omega_A^\ell\eta_A^{\ell}\bigr)\!\Bigr)^2\pmod{E_{\psi,\vartheta}^\times}.
\end{split}
\end{equation}
Now set $\psi^\dagger\defeq(\psi^*)^{-1}$. By congruence \eqref{factorization-modulo-E} and the equality $\chi^\ast=\psi^{-1}\vartheta^{-1}\Norm_K$, we have 
\begin{equation}\label{eqpf2}
\begin{split}
L_p(\theta_\psi,\vartheta^{-1}\Norm_K)&\equiv\mathscr{L}_{p,c \mathfrak{d}_K}(\psi^{-1}\vartheta^{-1}\Norm_K)\cdot\mathscr{L}_{p,c \mathfrak{d}_K \Norm(\mathfrak{f}_\psi)}(\psi^{\dagger}\vartheta^{-1}\Norm_K)\pmod{F_{\psi,\vartheta}^\times}\\&\equiv\mathscr{L}_{p,c \mathfrak{d}_K}(\chi^*)\cdot\mathscr{L}_{p,c \mathfrak{d}_K \Norm(\mathfrak{f}_\psi)}(\psi^{\dagger}\vartheta^{-1}\Norm_K)\pmod{F_{\psi,\vartheta}^\times}.
\end{split}  
\end{equation}
Since $\mathscr{L}_{p, \:c \mathfrak{d}_K \Norm(\mathfrak{f}_\psi)}(\psi^{\dagger}\vartheta^{-1}\Norm_K)\neq0$ by \eqref{GP3}, it follows from \cite[Corollary 3.3]{BDP2} that 
\[ {\mathscr{L}_{p, \:c \mathfrak{d}_K \Norm(\mathfrak{f}_\psi)}(\psi^{\dagger}\vartheta^{-1}\Norm_K})\equiv{\Omega_p(\psi^{-1}\vartheta^\dagger\Norm_K)}\pmod{E_\chi^\times}. \]
Since $\chi$ is self-dual, we have $\psi^{-1}\vartheta^\dagger\Norm_K=\chi^*(\vartheta\vartheta^\dagger)$, so we need to compute $\Omega_p\bigl(\chi^*(\vartheta\vartheta^\dagger)\bigr)$. For this, write $\vartheta=\varphi_\vartheta\Norm_K^{-\ell}$ for a finite order character $\varphi_\vartheta$; then $\vartheta^\dagger=\varphi_\vartheta^\dagger \Norm_K^\ell$ because $\Norm_K^*=\Norm_K$, so we see that  $\vartheta\vartheta^\dagger=\varphi_\vartheta\varphi_\vartheta^\dagger$. Write $\chi=\varphi\chi_A^{-(\ell+1)}(\chi_A^*)^\ell$ for a finite order character $\varphi$; then $\chi^*=\varphi^*(\chi_A^*)^{-(\ell+1)}\chi_A^\ell$, hence 
 $\chi^*(\vartheta\vartheta^\dagger)=\bigl(\varphi^*\varphi_\vartheta\varphi_\vartheta^\dagger\bigr)\chi_A^\ell(\chi_A^*)^{-(\ell+1)}$. 
Thus, from the definition of $p$-adic periods we obtain equalities
\[ \Omega_p(\chi^*)=\mathfrak{g}(\varphi^*)\cdot\Omega_p(A)^{-(2\ell+1)} \]
and
\[ \Omega_p\bigl(\chi^*(\vartheta\vartheta^\dagger)\bigr)=\mathfrak{g}\bigl(\varphi^*\varphi_\vartheta\varphi_\vartheta^\dagger\bigr)\cdot\Omega_p(A)^{-(2\ell+1)}. \] 
From the definition of Gauss sum, there is a congruence 
\[ \mathfrak{g}\bigl(\varphi^*\varphi_\vartheta\varphi_\vartheta^\dagger\bigr)\equiv\mathfrak{g}(\varphi^*)\cdot\mathfrak{g}\bigl(\varphi_\vartheta\varphi_\vartheta^\dagger\bigr)\pmod{(E_{\varphi}E_{\varphi_\vartheta})^\times} \]
(\emph{cf.} \cite[(2-12)]{BDP2}), where we observe that $E_{\varphi_\vartheta\varphi_\vartheta^\dagger}=E_{\varphi_\vartheta}$ because $E_{\varphi_\vartheta}=E_{\varphi_\vartheta^\dagger}$. Recall that the Gauss sum $\mathfrak{g}(\varphi_\vartheta\varphi_\vartheta^\dagger)$ is an element of the composite of the fields $E_{\varphi_\vartheta\varphi_\vartheta^\dagger}=E_{\varphi_\vartheta}$ and $H_{\varphi_\vartheta\varphi_\vartheta^\dagger}$. 
Thanks to the equality $\vartheta=\varphi_\vartheta\Norm_K^{-\ell}$, the extension $H_{\varphi_\vartheta\varphi_\vartheta^\dagger}$ of $K$ cut out by $\varphi_\vartheta\varphi_\vartheta^\dagger$ is the same as the extension $H_\vartheta$ of $K$ cut out by $\vartheta$. On the other hand, thanks to the equality $\varepsilon_\chi=\varepsilon_K$, the field $H_{\varepsilon_\vartheta}$ is equal to the extension $H_{\varepsilon_\psi}$ of $K$ cut out by $\varepsilon_\psi$, which by definition is contained in $F_{\psi,\chi}$. Moreover, the equality $\chi=\psi\varphi_\vartheta\Norm_K^{-\ell}$ ensures that the field $E_\varphi$ is contained in $E_{\psi,\chi}$, which by definition is a subfield of $F_{\psi,\chi}$. Therefore, $\mathfrak{g}\bigl(\varphi_\vartheta\varphi_\vartheta^\dagger\bigr)\in F_{\psi,\chi}^\times$. We conclude that 
\[ \mathfrak{g}\bigl(\varphi^*\varphi_\vartheta\varphi_\vartheta^\dagger\bigr)\equiv\mathfrak{g}(\varphi^*)\pmod{F_{\psi,\chi}^\times}, \]
from which it follows that $\Omega_p(\chi^*)$ and $\Omega_p\bigl(\chi^*\varphi_\vartheta\varphi_\vartheta^\dagger\bigr)$ are congruent modulo $F_{\psi,\chi}^\times$. Finally, combined with congruences \eqref{eqpf1} and \eqref{eqpf2}, this last congruence gives the desired result. \end{proof}

Recall the elements $\xi_{\psi,\vartheta}$ defined in \eqref{xipsivartheta} and  $\omega_{\psi,\vartheta}$ defined in \eqref{omegapsi}.

\begin{corollary} \label{lemRub} 
$\mathscr{L}_{p,\mathfrak{c}}(\chi^*)\equiv\Omega_p(\chi^*)^{-1}\cdot\log_{\chi}(\xi_{\psi,\vartheta})(\omega_{\psi,\vartheta})^2\pmod{F_{\psi,\vartheta}^\times}$.  
\end{corollary}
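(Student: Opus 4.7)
The plan is to deduce the corollary directly from Proposition \ref{prop3.2} by rewriting the pairing
\[ \log_{X,\mathcal{F}_{\psi,\vartheta}}(\xi_X)\bigl(\omega_{\theta_\psi}\wedge \omega_A^\ell\eta_A^{\ell}\bigr) \]
in terms of the classes $\xi_{\psi,\vartheta}$ and $\omega_{\psi,\vartheta}$, using the compatibility formula \eqref{accpair2} and the Gauss-sum comparison of Lemma \ref{gaussum}. Everything is performed modulo $F_{\psi,\vartheta}^\times$, so the numerical constant $\vartheta(\mathfrak{a})\in E_\vartheta^\times\subset F_{\psi,\vartheta}^\times$ is harmless from the outset.

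First I would apply \eqref{accpair2} individually to each generalized Heegner cycle $\Delta_{\varphi_\mathfrak{a}}$. By the commutativity of diagram \eqref{diagram}, the image of $\Delta_{\varphi_\mathfrak{a}}$ under the composition of $\Phi^{(H_{c,\mathfrak{N}})}_{\et,X}$ with the map \eqref{galmap} and the projector $e_K^\ast$ in diagram \eqref{D} coincides, via Lemma \ref{lemma6.10} and the definition \eqref{xipsivartheta} of $\xi_{\psi,\vartheta}$ as the projection $(\boldsymbol{e}_{\psi^*}\otimes\boldsymbol{e}_{\varphi_A^\ell})\boldsymbol{e}^{\boldsymbol{\ast}}\xi_Z$, with the $\mathfrak{a}$-th summand of $\xi_{\psi,\vartheta}$. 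Summing over $[\mathfrak{a}]\in\Pic(\mathcal{O}_c)$ weighted by $\vartheta(\mathfrak{a})$ then yields
\[ \log_{X,\mathcal{F}_{\psi,\vartheta}}(\xi_X)\bigl(\omega_{\theta_\psi}\otimes\omega_A^\ell\eta_A^{\ell}\bigr)=\log_{\chi}(\xi_{\psi,\vartheta})\Bigl(e_\chi\bigl(\omega_{\theta_\psi}\otimes\omega_A^\ell\eta_A^{\ell}\bigr)\!\Bigr), \]
after the tacit coefficient-extension identifications described in \S\ref{notation-subsec}.

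Next I would compare the de Rham class $e_\chi(\omega_{\theta_\psi}\otimes\omega_A^\ell\eta_A^{\ell})\in \D^1_{\dR,\mathcal{F}_{\psi,\vartheta}}(V_\chi)$ with the class $\omega_{\psi,\vartheta}=\pi_\chi^\dR(\omega_{\theta_\psi}\otimes\omega_B^\ell\eta_B^\ell)$ from \eqref{omegapsi}. Both are nonzero elements of the one-dimensional space $\D^1_{\dR,\mathcal{F}_{\psi,\vartheta}}(V_\chi)$, so they are proportional. Under the normalization $\omega_B\mapsto\omega_A$ and $\eta_B\mapsto\eta_A$ fixed in \S\ref{sec:dRHecke}, the isomorphism \eqref{ISODR} sends $\omega_{\theta_\psi}\otimes\omega_B^\ell\eta_B^\ell$ to $\omega_{\theta_\psi}\otimes\omega_A^\ell\eta_A^{\ell}$; the commutativity of diagram \eqref{isoDR2} then shows that both classes reduce to the image under $\rho_\dR$ of a common generator of $H^1_\dR(\chi)$, and Lemma \ref{gaussum} guarantees that any residual scalar lies in $\mathfrak{g}(\varphi)\cdot E_{\psi,\vartheta}^\times\subset F_{\psi,\vartheta}^\times$. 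Hence
\[ \log_{\chi}(\xi_{\psi,\vartheta})\Bigl(e_\chi\bigl(\omega_{\theta_\psi}\otimes\omega_A^\ell\eta_A^{\ell}\bigr)\!\Bigr)\equiv\log_{\chi}(\xi_{\psi,\vartheta})(\omega_{\psi,\vartheta})\pmod{F_{\psi,\vartheta}^\times}, \]
and squaring and combining with Proposition \ref{prop3.2} gives the desired congruence.

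The main obstacle I expect is the coefficient-field bookkeeping in the second step: one must verify that the ratio between the two natural generators of $\D^1_{\dR,\mathcal{F}_{\psi,\vartheta}}(V_\chi)$—the one produced from the embedding of $M_{E_\chi}(\chi)$ into $h^1(X_r)$ and the one produced from the theta factorization $\chi=\psi\vartheta$ via $h^1(Z_r)$—does indeed involve only the Gauss sum $\mathfrak{g}(\varphi)$ up to $E_{\psi,\vartheta}^\times$. This is precisely what Lemma \ref{gaussum} buys us, and it is the reason we enlarged the coefficient field from $E_{\psi,\vartheta}$ to $F_{\psi,\vartheta}=E_{\psi,\vartheta}\cdot H_{\varphi\varphi_A^\ell}\cdot H_{(\varphi\varphi_A^\ell)^{\ast}}$ in the definition of good pairs.
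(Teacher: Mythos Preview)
Your proposal is correct and follows essentially the same approach as the paper, whose proof is the one-liner ``Combine equality \eqref{accpair2} and Proposition \ref{prop3.2}.'' One small over-complication: your second step, invoking Lemma \ref{gaussum} to compare $e_\chi(\omega_{\theta_\psi}\otimes\omega_A^\ell\eta_A^{\ell})$ with $\omega_{\psi,\vartheta}$, is not needed here---by the normalizations fixed at the end of \S\ref{sec:dRHecke} (so that isomorphism \eqref{ISODR} sends $\omega_{\theta_\psi}\otimes\omega_B^\ell\eta_B^\ell$ to $\omega_{\theta_\psi}\otimes\omega_A^\ell\eta_A^{\ell}$) together with the construction of $\pi_\chi^\dR$ and $e_\chi$, these two de Rham classes already agree on the nose; the Gauss-sum comparison of Lemma \ref{gaussum} is reserved for the next step, Proposition \ref{propGRF}, where $\omega_{\psi,\vartheta}$ must be compared with the intrinsic class $\omega_\chi=\rho_\dR(\bomega_\chi)$.
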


\begin{proof} Combine equality \eqref{accpair2} and Proposition \ref{prop3.2}. \end{proof}

Now we descend the field of definition to $K$. Recall the differential form $\omega_{\chi}$ introduced in \eqref{omegachi} and the cohomology class $z_{\psi,\vartheta}$ defined in Corollary \eqref{coroz}. Denote by $\bomega_\chi$ also the image of the fixed generator 
$\bomega_\chi$ of $H^1_\dR(\chi)$ in $\D_{\dR}(V_\chi)$; 
with this notation, $\omega_\chi=\alpha_\dR(\bomega_\chi)$. 

\begin{proposition}\label{propGRF}
$\mathscr{L}_{p,\mathfrak{c}}(\chi^*)\equiv\Omega_p(\chi^*)^{-1}\cdot\log_{\chi}(z_{\psi,\vartheta})(\bomega_{\chi})^2\pmod{F_{\psi,\vartheta}^\times}$.
\end{proposition}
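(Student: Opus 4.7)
The proof deduces the proposition from Corollary \ref{lemRub} by a two-step descent that replaces the pair $(\xi_{\psi,\vartheta},\omega_{\psi,\vartheta})$ on the right-hand side of the congruence in that corollary by the pair $(z_{\psi,\vartheta},\bomega_\chi)$. The first step uses compatibility of the Bloch--Kato logarithm with restriction along $H_{c,\mathfrak{N}}/K$; the second uses Lemma \ref{gaussum} together with the containment $\mathfrak{g}(\varphi)^2\in F_{\psi,\vartheta}^\times$.

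First I would invoke Corollary \ref{lemRub}, which already gives
\[ \mathscr{L}_{p,\mathfrak{c}}(\chi^*)\equiv\Omega_p(\chi^*)^{-1}\cdot\log_{\chi}(\xi_{\psi,\vartheta})(\omega_{\psi,\vartheta})^2\pmod{F_{\psi,\vartheta}^\times}. \]
By Corollary \ref{coroz}, the class $\xi_{\psi,\vartheta}$ is the restriction of the global class $z_{\psi,\vartheta}\in H^1_f\bigl(K,V_{\chi^*}(1)\otimes_{\mathcal{E}_\chi}\mathcal{E}_{\psi,\vartheta}\bigr)$ to $H_{c,\mathfrak{N}}$. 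Since the Bloch--Kato logarithm is compatible with restriction along a finite extension, the square
\[ \xymatrix@C=28pt{
H^1_f\bigl(K,V_{\chi^*}(1)\otimes_{\mathcal{E}_\chi}\mathcal{E}_{\psi,\vartheta}\bigr)\ar[r]^-{\log_\chi}\ar[d]_-{\res}
& \D_{\dR}(V_\chi)^\vee\otimes_{\mathcal{E}_\chi}\mathcal{E}_{\psi,\vartheta}\ar[d] \\
H^1_f\bigl(H_{c,\mathfrak{N}},V_{\chi^*}(1)\otimes_{\mathcal{E}_\chi}\mathcal{E}_{\psi,\vartheta}\bigr)\ar[r]^-{\log_\chi}
& \D_{\dR,\mathcal{F}_{\psi,\vartheta}}(V_\chi)^\vee\otimes_{\mathcal{E}_\chi}\mathcal{E}_{\psi,\vartheta}
} \]
commutes, with the right vertical arrow being the natural extension of scalars. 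Consequently, for any class $\omega\in\D_{\dR,\mathcal{F}_{\psi,\vartheta}}(V_\chi)\otimes_{\mathcal{E}_\chi}\mathcal{E}_{\psi,\vartheta}$, the equality $\log_\chi(\xi_{\psi,\vartheta})(\omega)=\log_\chi(z_{\psi,\vartheta})(\omega)$ holds, where $\log_\chi(z_{\psi,\vartheta})$ on the right-hand side is understood via the said scalar extension.

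Next I would use Lemma \ref{gaussum} together with the identification $\omega_\chi=\rho_\dR(\bomega_\chi)$ from \eqref{omegachi}. Since $\bomega_\chi$ and $\omega_\chi$ produce the same value when paired with $\log_\chi(z_{\psi,\vartheta})$, the congruence $\omega_\chi\equiv\mathfrak{g}(\varphi)\,\omega_{\psi,\vartheta}\pmod{E_{\psi,\vartheta}^\times}$ yields, after squaring,
\[ \log_{\chi}(\xi_{\psi,\vartheta})(\omega_{\psi,\vartheta})^2\equiv\mathfrak{g}(\varphi)^{-2}\cdot\log_{\chi}(z_{\psi,\vartheta})(\bomega_\chi)^2\pmod{E_{\psi,\vartheta}^\times}. \]
Combining this congruence with Corollary \ref{lemRub}, and using the containments $E_{\psi,\vartheta}\subset F_{\psi,\vartheta}$ and $\mathfrak{g}(\varphi)^{2}\in F_{\psi,\vartheta}^\times$, produces the desired congruence modulo $F_{\psi,\vartheta}^\times$.

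The main obstacle will be verifying that $\mathfrak{g}(\varphi)^2\in F_{\psi,\vartheta}^\times$, since $\mathfrak{g}(\varphi)\in E_\varphi\cdot H_\varphi$ whereas $F_{\psi,\vartheta}$ is defined using the fields $H_{\varphi\varphi_A^\ell}$ and $H_{(\varphi\varphi_A^\ell)^*}$ rather than $H_\varphi$ directly. One would resolve this by writing $\varphi=(\varphi\varphi_A^\ell)\cdot(\varphi_A^\ell)^{-1}$, applying the multiplicative property of generalized Gauss sums up to the relevant coefficient fields, and exploiting that $\varphi_A^\ell$ is a finite-order character with values controlled by the CM elliptic curve $A$; this is entirely analogous to the Gauss-sum calculation already carried out in the proof of Proposition \ref{prop3.2}, where the identity $\mathfrak{g}(\varphi^*\varphi_\vartheta\varphi_\vartheta^\dagger)\equiv\mathfrak{g}(\varphi^*)\mathfrak{g}(\varphi_\vartheta\varphi_\vartheta^\dagger)$ modulo a suitable field of coefficients was used to absorb the Gauss-sum contribution into $F_{\psi,\vartheta}^\times$.
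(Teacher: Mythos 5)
Your route is essentially the paper's: start from Corollary \ref{lemRub}, identify $\log_\chi(\xi_{\psi,\vartheta})$ with the scalar extension of $\log_\chi(z_{\psi,\vartheta})$ using Corollary \ref{coroz} (this is \eqref{xizeta} in the paper, where the extension map is denoted $\gamma_\dR$ and its injectivity rests on Lemma \ref{replem}), then trade $\omega_{\psi,\vartheta}$ for $\bomega_\chi$ via Lemma \ref{gaussum} and conclude by field bookkeeping modulo $F_{\psi,\vartheta}^\times$. So the strategy is the same and correct in outline; the two points below are where your write-up and the paper's proof diverge.

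First, the sentence ``$\bomega_\chi$ and $\omega_\chi$ produce the same value when paired with $\log_\chi(z_{\psi,\vartheta})$'' is exactly the nontrivial compatibility that the paper's proof establishes rather than assumes: it sets up the duality diagram showing that $\gamma_\dR$ is a section of the dual $\alpha_\dR^\vee$ of the base-change map $\alpha_\dR$ on de Rham modules, and that $\omega_\chi=\rho_\dR(\bomega_\chi)$ is identified with $\alpha_\dR(\bomega_\chi)$ through \eqref{classcoh}, whence $\gamma_\dR\bigl(\log_\chi(z_{\psi,\vartheta})\bigr)(\omega_\chi)=\bigl(\alpha_\dR^\vee\circ\gamma_\dR\bigr)\bigl(\log_\chi(z_{\psi,\vartheta})\bigr)(\bomega_\chi)=\log_\chi(z_{\psi,\vartheta})(\bomega_\chi)$. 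In a complete argument you should supply this diagram chase instead of asserting its conclusion. Second, on the Gauss sum: you carry the factor $\mathfrak{g}(\varphi)^{-2}$ explicitly and reduce to $\mathfrak{g}(\varphi)^2\in F_{\psi,\vartheta}^\times$, whereas the paper applies Lemma \ref{gaussum} inside the congruence chain and absorbs the discrepancy when combining with Corollary \ref{lemRub}; by the proof of that lemma the scalar in question lies in $H_\varphi E_{\psi,\vartheta}$, so what is needed is control of $H_\varphi$ inside $F_{\psi,\vartheta}$ rather than a fresh multiplicativity computation. Your reduction via $\varphi=(\varphi\varphi_A^\ell)\varphi_A^{-\ell}$ is sensible (note that $\varphi_A$ is trivial on norms from $H$, so $\varphi_A^\ell$ cuts out a subfield of $H$), but it still leaves that containment to be checked --- the same delicacy that the paper's congruence at the Lemma \ref{gaussum} step leaves implicit. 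With these two points written out, your proof coincides with the paper's.
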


\begin{proof} There is a commutative square
\[ \xymatrix@C=35pt@R=35pt{H^1_f\bigl(\Q_p,V_{\chi^*}(1)\otimes_{\mathcal{E}_\chi}\mathcal{E}_{\psi,\vartheta}\bigr)\ar[d]^-\simeq \ar[r] &H^1_f\bigl(\mathcal{F}_{\psi,\vartheta},V_{\chi^*}(1)\otimes_{\mathcal{E}_\chi}\mathcal{E}_{\psi,\vartheta}\bigr)\ar[d]^-\simeq\\
\D_{\dR}(V_{\chi}\otimes_{\mathcal{E}_\chi}\mathcal{E}_{\psi,\vartheta})^\vee\ar[r]^-{\gamma_\dR}& 
\D_{\dR,\mathcal{F}_{\psi,\vartheta}}(V_\chi\otimes_{\mathcal{E}_\chi}\mathcal{E}_{\psi,\vartheta})^\vee} \] 
in which the top horizontal map is restriction and the bottom horizontal map comes from the vertical duality isomorphisms; in particular, we have an equality
\begin{equation} \label{xizeta}
\gamma_\dR\bigl(\log_\chi(z_{\psi,\vartheta})\bigr)=\log_\chi(\xi_{\psi,\vartheta}).
\end{equation}
Observe that the map $\gamma_\dR$ is injective, as it is the restriction of the injection
\[ H^1\bigl(\Q_p,{V}_{\chi^{-1}}\otimes_{\mathcal{E}_\chi}\mathcal{E}_{\psi,\vartheta}\bigr)\longmono H^1\bigl(\Q_p,W_\mu(\varphi)\bigr)\xrightarrow[\simeq]{\text{Lemma \ref{replem}}}H^1(\mathcal{F}_{\psi,\vartheta},W_\mu)^{\varphi^{-1}} \]
(here we are using the same notation as in the proof of Corollary \ref{coroz}). Thus, there is a commutative diagram 
\[ \xymatrix@C=-2pt{
H^1_\dR(\chi)\otimes_K\Q_p\ar[d]^-\simeq\ar@/_6pc/[ddd]^-{\eqref{classcoh}}_-{\rho_\dR}&&H^1_f\bigl(\Q_p,V_{\chi^{-1}}\otimes_{\mathcal{E}_\chi}\mathcal{E}_{\psi,\vartheta}\bigr)\ar[d]^-\simeq&&&&&&\\
\D_\dR(V_\chi)\ar[d]^-\simeq&\times&\D_{\dR}(V_\chi\otimes_{\mathcal{E}_\chi}\mathcal{E}_{\psi,\vartheta})^\vee\ar@{=}[d]&\ar[rrrrrr]&&&&&&\;\;\mathcal{E}_\chi\ar@{=}[d]\\
\D_\dR(V_\chi\otimes_{\mathcal{E}_\chi}\mathcal{E}_{\psi,\vartheta} )^{\mathcal{E}_{\psi,\vartheta}}\ar@{^(->}[d]^-{\alpha_\dR}&\times& \D_{\dR}(V_{\chi}\otimes_{\mathcal{E}_\chi}\mathcal{E}_{\psi,\vartheta})^\vee\ar[d]^-{\gamma_\dR}&\ar[rrrrrr] &&&&&&\;\;\mathcal{E}_\chi\ar[d]
\\
\D_{\dR,\mathcal{F}_{\psi,\vartheta}}(V_\chi\otimes_{\mathcal{E}_\chi}\mathcal{E}_{\psi,\vartheta} )^{\mathcal{E}_{\psi,\vartheta}}&\times&\D_{\dR,\mathcal{F}_{\psi,\vartheta}}(V_{\chi}\otimes_{\mathcal{E}_\chi}{\mathcal{E}_{\psi,\vartheta}})^\vee&\ar[rrrrrr] &&&&&&\;\;\mathcal{F}_{\psi,\vartheta}.} \] 
It follows from the duality identifications that the map $\gamma_\dR$ is a section of the dual $\alpha_\dR^\vee$ of the canonical map $\alpha_\dR:\D_\dR(V_\chi\otimes_{\mathcal{E}_\chi}\mathcal{E}_{\psi,\vartheta} )\rightarrow \D_{\dR,\mathcal{F}_{\psi,\vartheta}}(V_\chi\otimes_{\mathcal{E}_\chi}\mathcal{E}_{\psi,\vartheta})$. Thus, we have 
\[ \begin{split}
\log_{\chi}(\xi_{\psi,\vartheta})(\omega_{\psi,\vartheta})&\overset{\eqref{xizeta}}=\gamma_\dR\bigl(\log_\chi(z_{\psi,\vartheta})\bigr)(\omega_{\psi,\vartheta})\overset{\text{Lemma \ref{gaussum}}}\equiv\gamma_\dR\bigl(\log_\chi(z_{\psi,\vartheta})\bigr)(\omega_{\chi})\pmod{{E}_{\psi,\vartheta}^\times}\\
&\;\;\equiv\gamma_\dR\bigl(\log_\chi(z_{\psi,\vartheta})\bigr)\bigl(\alpha_\dR(\bomega_{\chi})\bigr)\pmod{{E}_{\psi,\vartheta}^\times}\\
&\;\;\equiv(\alpha_\dR^\vee\circ\gamma_\dR)\bigl(\log_\chi(z_{\psi,\vartheta})\bigr)(\bomega_{\chi})\pmod{{E}_{\psi,\vartheta}^\times}\\
&\;\;\equiv\log_\chi(z_{\psi,\vartheta})(\bomega_{\chi})\pmod{{E}_{\psi,\vartheta}^\times},
\end{split} \] 
where for the last congruence we used the injectivity of $\gamma_\dR$. Now the desired result follows from Corollary \ref{lemRub}.
\end{proof}

As the notation suggests, the elements $z_{\psi,\vartheta}$ introduced above depend on the choice of the good pair $(\psi,\vartheta)$. Notice also that $z_{\psi,\vartheta}$ might \emph{a priori} be zero, but if $\mathscr{L}_{p,\mathfrak{c}}(\chi^\ast)\neq 0$, then $z_{\psi,\vartheta}\neq 0$ for \emph{all} good pairs $(\psi,\vartheta)$. Our final goal is to remove this dependence and upgrade the congruence modulo $F_{\psi,\vartheta} ^\times$ to a congruence modulo $\mathcal{E}_\chi^\times$.   

\begin{theorem} \label{mainthm}
Assume $\mathscr{L}_{p,\mathfrak{c}}(\chi^*)\neq 0$. The $\mathcal{E}_\chi$-vector space $H^1_f\bigl(K,V_{\chi^*}(1)\bigr)$ has dimension $1$. Moreover, for any basis element $\bz_\chi$ of $H^1_f\bigl(K,V_{\chi^{-1}}\bigr)$, there is a congruence
\[ \mathscr{L}_{p,\mathfrak{c}}(\chi^*)\equiv {\Omega_p(\chi^*)} ^{-1}\cdot\log_{\chi}(z_\chi )(\omega_{\chi})^2\pmod{\mathcal{E}_{\chi}^\times}. \]
\end{theorem}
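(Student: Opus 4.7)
The plan is to combine Proposition~\ref{propGRF} and Proposition~\ref{Goodpairs} with the upper bound on the Bloch--Kato Selmer group from \cite[Theorem B]{CH}, and to carry out a descent argument modelled on \cite[Corollary~3.29]{BDP2}. First I would establish that $\dim_{\mathcal{E}_\chi}H^1_f\bigl(K,V_{\chi^*}(1)\bigr)=1$. The hypothesis $\mathscr{L}_{p,\mathfrak{c}}(\chi^*)\neq 0$, combined with Proposition~\ref{propGRF}, forces the class $z_{\psi,\vartheta}$ from Corollary~\ref{coroz} to be non-zero for every good pair $(\psi,\vartheta)\in S_\chi$: otherwise the right-hand side of that congruence would vanish. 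Since $z_{\psi,\vartheta}$ lies in $H^1_f\bigl(K,V_{\chi^*}(1)\bigr)\otimes_{\mathcal{E}_\chi}\mathcal{E}_{\psi,\vartheta}$ by flatness, this yields $\dim\geq 1$; the matching upper bound then follows from \cite[Theorem~B]{CH}, taking as input one of these non-trivial classes together with the self-duality and local conditions imposed in Assumption~\ref{ass}.

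Next, fix a basis $\bz_\chi$ of the one-dimensional space $H^1_f\bigl(K,V_{\chi^*}(1)\bigr)$. For each good pair, write $z_{\psi,\vartheta}=\lambda_{\psi,\vartheta}\cdot(\bz_\chi\otimes 1)$ for a unique $\lambda_{\psi,\vartheta}\in\mathcal{E}_{\psi,\vartheta}^\times$; substituting into Proposition~\ref{propGRF} and rearranging produces
\[
R\defeq \mathscr{L}_{p,\mathfrak{c}}(\chi^*)\cdot\Omega_p(\chi^*)\cdot \log_\chi(\bz_\chi)(\bomega_\chi)^{-2}\equiv \lambda_{\psi,\vartheta}^2 \pmod{F_{\psi,\vartheta}^\times}.
\]
The element $R\in\bar\Q_p^\times$ is independent of $(\psi,\vartheta)$ (it depends only on the choice of $\bz_\chi$ and $\bomega_\chi$), while $\lambda_{\psi,\vartheta}^2$ records how the Heegner-type class depends on the chosen factorization.

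The main obstacle is to pass from this family of congruences to a single congruence modulo $\mathcal{E}_\chi^\times$. My strategy is to exploit the geometric origin of $z_{\psi,\vartheta}$ as the descent, via the projector $e_\mathcal{G}$ appearing in the proof of Corollary~\ref{coroz}, of a class $\xi_{\psi,\vartheta}$ obtained from $\xi_Z$ by applying projectors defined over $F_{\psi,\vartheta}$ (here one must carefully track the action of $\Gal(\mathcal{E}_{\psi,\vartheta}/\mathcal{E}_\chi)$ on the coefficient field). A careful bookkeeping of these coefficient fields should show that $\lambda_{\psi,\vartheta}^2$ lies in $F_{\psi,\vartheta}\cdot\mathcal{E}_\chi$ inside $\mathcal{F}_{\psi,\vartheta}$, so that $R\in F_{\psi,\vartheta}^\times\cdot\mathcal{E}_\chi^\times$ for every good pair. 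Intersecting over $S_\chi$ and invoking Proposition~\ref{Goodpairs}, which gives $\bigcap_{(\psi,\vartheta)\in S_\chi}F_{\psi,\vartheta}=E_\chi$, yields $R\in E_\chi^\times\cdot\mathcal{E}_\chi^\times=\mathcal{E}_\chi^\times$, which is the desired congruence. Verifying the $F_{\psi,\vartheta}\cdot\mathcal{E}_\chi$-rationality of $\lambda_{\psi,\vartheta}^2$---essentially a Galois-equivariance check woven into the construction of $z_{\psi,\vartheta}$---is the technical heart of the argument.
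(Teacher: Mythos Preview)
Your overall architecture is correct and matches the paper's: use Proposition~\ref{propGRF} to get $z_{\psi,\vartheta}\neq 0$, invoke \cite[Theorem~B]{CH} to force $\dim_{\mathcal{E}_\chi}H^1_f\bigl(K,V_{\chi^*}(1)\bigr)=1$, write $z_{\psi,\vartheta}=\lambda_{\psi,\vartheta}(\bz_\chi\otimes 1)$, and intersect over good pairs via Proposition~\ref{Goodpairs}. The divergence is at what you call the ``technical heart''.

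The rationality statement you propose, namely that $\lambda_{\psi,\vartheta}^2\in F_{\psi,\vartheta}\cdot\mathcal{E}_\chi$, is both unnecessary and dubious. The scalar $\lambda_{\psi,\vartheta}$ is an arbitrary element of the $p$-adic field $\mathcal{E}_{\psi,\vartheta}^\times$; there is no mechanism that forces it to factor as (number field element)$\cdot$(element of $\mathcal{E}_\chi$), and the Galois-equivariance of the construction only pins it down up to $\mathcal{E}_{\psi,\vartheta}^\times$. Moreover, even granting this, your intersection step is not clean: from $R\in F_{\psi,\vartheta}^\times\cdot\mathcal{E}_\chi^\times$ for all pairs one cannot directly conclude $R\in\bigl(\bigcap F_{\psi,\vartheta}\bigr)^\times\cdot\mathcal{E}_\chi^\times$, since the decomposition $R=a\cdot e$ varies with the pair.

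The paper sidesteps all of this with a one-line observation you missed: since $E_{\psi,\vartheta}\subset F_{\psi,\vartheta}$ by the very definition of $F_{\psi,\vartheta}$, one has $\lambda_{\psi,\vartheta}^2\in\mathcal{E}_{\psi,\vartheta}^\times\subset\mathcal{F}_{\psi,\vartheta}^\times$. Absorbing this into the congruence of Proposition~\ref{propGRF} gives directly
\[
\mathscr{L}_{p,\mathfrak{c}}(\chi^*)\equiv\Omega_p(\chi^*)^{-1}\cdot\log_\chi(\bz_\chi)(\bomega_\chi)^2\pmod{\mathcal{F}_{\psi,\vartheta}^\times}
\]
for every good pair. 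Now the intersection is of the \emph{fields} $\mathcal{F}_{\psi,\vartheta}$ themselves, not of products of subgroups, and Proposition~\ref{Goodpairs} (passed to completions) yields $R\in\mathcal{E}_\chi^\times$. Replace your proposed ``technical heart'' with this trivial inclusion and the proof is complete.
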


\begin{proof} Let $(\psi,\vartheta)$ be a good pair. Since $\mathscr{L}_{p,\mathfrak{c}}(\chi^\ast)\neq 0$, it follows from Proposition \ref{propGRF} that $z_{\psi,\vartheta}\neq0$. Take $f=\theta_\psi$ and $\chi =\varphi^\ast\varphi_A^\ell$ in \cite[Theorem B]{CH}; by property (2) in Definition \ref{goodfact}, the sign of the functional equation for $L(\theta_\psi, \chi,s)$ is $-1$. Since $z_{\psi,\vartheta}$ is the projection of the element $z_{f,\chi}$ in \cite[Theorem B]{CH}, the non-vanishing $z_{\psi,\vartheta}\neq 0$ ensures that $z_{f,\chi}$ is non-zero too. Therefore, we are in position to apply \cite[Theorem B]{CH}, which shows that if $V_{\theta_\psi,\varphi^\ast\varphi_A^\ell}$ is the representation $V_{f,\chi}$ in \cite{CH} (equal to $V_{\theta_\psi}\otimes\bigl(\varphi^*\varphi_A^\ell\bigr)(k/2)$ in our notation), then the $\mathcal{E}_{\psi,\vartheta}$-vector space $H^1_f\bigl(K,V_{\theta_\psi,\varphi^\ast\varphi_A^\ell}\bigr)$ is $1$-dimensional. On the other hand, there is an isomorphism
\[ H^1_f\bigl(K,V_{\theta_\psi,\varphi\varphi_A^\ell}\bigr)\simeq H^1_f\Bigl(K,V_\psi\bigl(\varphi^*\varphi_A^\ell\bigr)\!\Bigr)\bigoplus H^1_f\Bigl(K,V_{\psi^*}\bigl(\varphi^*\varphi_A^\ell\bigr)\!\Bigr), \] 
and, since $z_{\psi,\vartheta}\neq 0$, it follows that the $\mathcal{E}_{\chi}$-vector space $H^1_f\bigl(K,V_{\chi^*}(1)\bigr)$
is $1$-dimensional as well. Thus, we can fix a basis element $\bz_\chi$ of it, which is well defined up to elements in $\mathcal{E}_\chi^\times$. We get a congruence 
\[ \log_{\psi,\vartheta}(z_{\psi,\vartheta})(\bomega_{\chi})\equiv\log_{\chi}(\bz_\chi )(\bomega_{\chi} )\pmod{\mathcal{E}_{\psi,\vartheta}^\times}, \] 
so, by Proposition \ref{propGRF}, for all $(\psi,\vartheta)\in S_\chi$ there is a congruence  
\[ \mathscr{L}_{p,\mathfrak{c}}(\chi^*)\equiv {\Omega_p(\chi^*)}^{-1}\cdot\log_{\chi}(\bz_\chi)(\bomega_{\chi})^2\pmod{\mathcal{F}_{\psi,\vartheta}^\times}. \] 
By Proposition \ref{Goodpairs}, we know that $\bigcap_{(\psi,\vartheta)\in S_\chi}F_{\psi,\vartheta} =E_\chi$, and the same must be true for the corresponding completions. Finally, it follows that 
\[ \mathscr{L}_{p,\mathfrak{c}}(\chi^\ast)\equiv{\Omega_p(\chi^\ast)}^{-1}\cdot\log_{\chi}(\bz_\chi)(\bomega_{\chi})^2\pmod{\mathcal{E}_{\chi}^\times}, \] 
as was to be shown. \end{proof}

The previous result is Theorem A in the introduction.

\begin{remark} \label{finalrem}
The proof of Theorem \ref{mainthm} uses the implication $\mathscr{L}_{p,\mathfrak{c}}(\chi^\ast)\neq 0\Rightarrow 
 z_\chi\neq 0$ of Proposition \ref{propGRF} and the implication $   z_\chi\neq 0\Rightarrow \dim_{\mathcal{E}_\chi}(H^1_f(K,V^\dagger_\chi))=1$ of \cite[Theorem B]{CH}. The Bloch--Kato conjecture predicts that if $L^\prime(\chi,1)\neq0$, then $\dim_{\mathcal{E}_\chi}H^1_f\bigl(K,V^\dagger_\chi\bigr)=1$; one expects to prove that if $L^\prime(\chi,1)\neq0$, then $z_\chi\neq 0$ via an extension of Zhang's higher weight Gross--Zagier formula (\cite{Zhang}) combined with the non-triviality of the $p$-adic Abel--Jacobi map. Summing up, our results are connected to the general expectations and conjectures through the logical diagram 
\[ \xymatrix@R=45pt@C=30pt{\mathscr{L}_{p,\mathfrak{c}}(\chi^*)\neq 0\ar@{=>}[rr]^-{\text{Proposition \ref{propGRF}}}&&
 z_\chi\neq 0
\ar@{=>}[rr]^-{\text{\cite[Theorem B]{CH}}}&&\dim_{\mathcal{E}_\chi}H^1_f\bigl(K,V^\dagger_\chi\bigr)=1\\
&& L^\prime(\chi,1)\neq 0 \ar@{==>}[u]^-{\text{generalization of \cite{Zhang} + AJ non-trivial}}\ar@{==>}[rru]_-{\text{{\qquad\qquad\; Bloch--Kato conjecture}}} &&} \]
in which the dotted implications are (as of today) only conjectural. 
\end{remark}

\appendix 

\section{Some auxiliary results} \label{app}

In this appendix, we collect some auxiliary results on $p$-adic Galois representations that are used in the main body of the article.

\subsection{A result on Galois representations} 

Let $V$ be a finite-dimensional $\Q_p$-vector space equipped with a continuous right action of $G_F\defeq\Gal(\bar{F}/F)$ for a field $F$ of characteristic $0$. Let $L/F$ be a finite abelian extension, set $G\defeq\Gal(L/F)$ and let $\varphi:G\rightarrow E_\varphi^\times$ be a character. The group $G$ acts on the right on $H^1(L,V)$ by the formula $(c^g)(\sigma)\defeq\bigl(c(g\sigma g^{-1})\bigr)^{g^{-1}}$. Assume that $V$ has an $E_\varphi$-vector space structure that is compatible with the $\Q_p$-linear structure on $V$; in this case, denote by $H^1(L,V)^{\varphi^{-1}}$ the $G_F$-submodule on which $G$ acts as multiplication by $\varphi^{-1}$. We write $V(\varphi)$ for the $G_F$-module $V$ on which the Galois action is twisted by $\varphi$; more precisely, writing $\ast_\varphi$ for the right action of $G_F$ on $V(\varphi)$, we set $v{\ast_\varphi\sigma}\defeq\varphi(\bar\sigma)v^\sigma$ for all $v\in V(\varphi)$ and $\sigma\in G_F$, where $\sigma\mapsto\bar\sigma$ is the quotient map $G_F\rightarrow G$.   
In particular, the $G_F$-invariant subspace $V(\varphi)^{G_F}$ of $V(\varphi)$ coincides with 
\[ V^{\varphi^{-1}}\defeq\bigl\{v\in V\mid \text{$v^\sigma=\varphi^{-1}(\bar\sigma)v$ for all $\sigma\in G_F$}\bigr\}. \]
We single out the following standard result. 

\begin{lemma} \label{replem} 
Assume that $V^{G_L}=0$. The restriction map $H^1\bigl(F,V(\varphi)\bigr)\rightarrow H^1(L,V)^{\varphi^{-1}}$ is an isomorphism.
\end{lemma}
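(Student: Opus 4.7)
My plan is to deduce the lemma from the five-term exact inflation-restriction sequence associated to $1\to G_L\to G_F\to G\to 1$ applied to the twisted $G_F$-module $V(\varphi)$:
\[ 0\longrightarrow H^1\bigl(G,V(\varphi)^{G_L}\bigr)\longrightarrow H^1\bigl(F,V(\varphi)\bigr)\xrightarrow{\operatorname{res}}H^1\bigl(L,V(\varphi)\bigr)^G\longrightarrow H^2\bigl(G,V(\varphi)^{G_L}\bigr). \]
The key observation is that since $\varphi$ factors through $G=\Gal(L/F)$, its restriction to $G_L$ is trivial, so $V(\varphi)$ and $V$ agree as $G_L$-modules; in particular $V(\varphi)^{G_L}=V^{G_L}=0$ by hypothesis. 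This would cause both outer terms of the sequence to vanish and give a canonical isomorphism $H^1(F,V(\varphi))\xrightarrow{\simeq}H^1(L,V(\varphi))^G$.

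The remaining task is to identify $H^1(L,V(\varphi))^G$ with $H^1(L,V)^{\varphi^{-1}}$. Since $V|_{G_L}=V(\varphi)|_{G_L}$ as $G_L$-modules, the identity on cocycles yields a tautological equality $H^1(L,V(\varphi))=H^1(L,V)$ of abelian groups. The point is that the two natural right $G$-actions on this common underlying group differ by a twist: unwinding the Hochschild--Serre formula $(c^g)(\sigma)=c(\tilde g\sigma\tilde g^{-1})\ast_\varphi\tilde g^{-1}$ (for any lift $\tilde g\in G_F$ of $g\in G$) together with the definition $v\ast_\varphi\tilde g^{-1}=\varphi(g^{-1})v^{\tilde g^{-1}}$, I would verify that the $G$-action inherited from the $V(\varphi)$-structure equals $\varphi(g^{-1})$ times the one inherited from the $V$-structure. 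Consequently, $G$-invariance with respect to the twisted coefficient action is equivalent to the eigenspace condition cutting out $H^1(L,V)^{\varphi^{-1}}$, and composing the two identifications yields the desired isomorphism.

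No serious obstacle is anticipated: the argument is essentially formal, the only substantive input being the hypothesis $V^{G_L}=0$, which ensures the simultaneous vanishing of the two outer terms in the inflation-restriction sequence; the rest is just bookkeeping of how the $G$-action changes under the coefficient twist.
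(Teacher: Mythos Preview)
Your proposal is correct and follows essentially the same approach as the paper: both apply the inflation--restriction sequence to $V(\varphi)$, use $V(\varphi)^{G_L}=V^{G_L}=0$ to kill the edge terms, and then identify $H^1\bigl(L,V(\varphi)\bigr)^G$ with the appropriate eigenspace of $H^1(L,V)$ by unwinding the coefficient twist. Your write-up is in fact a bit more explicit about this last identification than the paper's own proof.
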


\begin{proof} There is an isomorphism $V(\varphi)\simeq V$ of $G_L$-modules; moreover, the conjugation map $\sigma\mapsto g\sigma g^{-1}$ induces an automorphism of $G_L$. Thus, the action of $G$ on $H^1(L,V)$ is via the character $\varphi$ and there is an identification 
\[ H^1\bigl(L,V(\varphi)\big)^G=H^1(L,V)^{\varphi^{-1}}. \] 
The result then follows from the inflation-restriction exact sequence because, by assumption, $V(\varphi) ^{G_L}= V^{G_L}=0$. \end{proof}

\subsection{Some results on de Rham modules of $p$-adic representations} \label{appendixA2} 

Let $K\subset E\subset F$ be inclusions of fields, with $F$ finite over $K$, and let $M$ be a free $F\otimes_{K}E$-module of rank $1$. There is a canonical map $\mu:F\otimes_{ K}E\rightarrow F$ given on pure tensors by $x\otimes y\mapsto xy$, which is clearly not injective; the kernel of $\mu$ is generated by $a\otimes 1-1\otimes a$ for $a\in E$. For $x\in F$ and $m\in M$, let $xm$ be the action of $x$ on $m$ via the map $F\hookrightarrow F\otimes_{ K}E$ sending $x\in F$ to $x\otimes 1\in F\otimes _{ K}E$; similarly, for $x\in E$ and $m\in M$ denote by $[x]m$ the action of $x$ on $m$ via the map $E\hookrightarrow F\otimes _{ K}E$ sending $y\in E$ to $1\otimes y\in F\otimes _{ K}E$. Put 
\[ M^E\defeq\bigl\{m\in M\mid \text{$xm=[x]m$ for all $x\in E$}\bigr\}. \]
Since it is an $F\otimes_{K}E$-module, $M$ is, in particular, an $F$-vector space via the map $x\mapsto x\otimes 1$ considered before; since $M$ is finitely generated over $F\otimes_{ K}E$ and $E$ is finite-dimensional over $ K$, it follows that $M$ is a finite-dimensional $F$-vector space, say of dimension $d$ (since $M$ is free of rank $1$ over $F\otimes_{ K}E$, we have $d=[E: K]$). Now we show that $M^E$ is a subspace of the $F$-vector space $M$; to do this, we need to check that for $x\in F$ and $m\in M^E$ we have $xm\in M^E$. Observe that $xm\in M^E$ if and only if $y(xm)=[y](xm)$ for all $y\in E$; this last condition is easy to check, as there are equalities
\[ (y\otimes1)(x\otimes1)m=(x\otimes1)(y\otimes1)m=(x\otimes1)(1\otimes y)m=(1\otimes y)(x\otimes 1)m. \]
Therefore, $M^E$ is a subspace of the $F$-vector space $M$ and, since $M$ is finite-dimensional over $F$, we conclude that $M^E$ is finite-dimensional over $F$ as well. 

\begin{lemma} \label{lemmaA2}
The $F$-vector space $M^E$ is $1$-dimensional. 
\end{lemma}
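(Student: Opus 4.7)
The plan is to reduce to the case $M = F \otimes_K E$ itself by choosing a rank-one generator $m_0$, and then identify $M^E$ with the annihilator in $R \defeq F \otimes_K E$ of a certain ideal $I \subset R$. Concretely, writing $m = \xi m_0$, the defining condition $xm = [x]m$ for all $x \in E$ translates to $(x \otimes 1 - 1 \otimes x)\xi = 0$ for all $x \in E$, so $M^E$ corresponds $F$-linearly to $\mathrm{Ann}_R(I)$, where $I$ is the ideal generated by the elements $x \otimes 1 - 1 \otimes x$. Equivalently, $I = \ker \mu$ for the multiplication map $\mu \colon R \to F$ defined by $x \otimes y \mapsto xy$ (the target uses the inclusion $E \hookrightarrow F$).

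Next, I would exploit the fact that $E/K$ is separable (automatic in the characteristic-zero applications of interest) to decompose $R$ as a product of fields. Picking a primitive element $\alpha$ for $E/K$ with minimal polynomial $f(X) \in K[X]$ of degree $d = [E:K]$ yields an isomorphism $R \simeq F[X]/(f)$. Since $\alpha \in E \subset F$, I would factor $f(X) = (X - \alpha) g(X)$ in $F[X]$; separability forces $g(\alpha) = f'(\alpha) \neq 0$, so $(X-\alpha)$ and $g(X)$ are coprime in $F[X]$, and the Chinese remainder theorem provides an isomorphism of $F$-algebras
\[ R \simeq F \times F[X]/(g(X)), \]
under which $\mu$ becomes projection onto the first factor. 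Hence $I$ corresponds to $0 \times F[X]/(g)$.

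It then suffices to observe that, for a product of unital rings $R \simeq A \times B$, the annihilator of the ideal $0 \times B$ is exactly $A \times 0$ (take $b' = 1$ in the condition $bb' = 0$). Applying this with $A = F$ gives $\mathrm{Ann}_R(I) \simeq F$, which is $1$-dimensional over $F$, and hence so is $M^E$.

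The only technical point requiring care is the reduction of the infinite family of generating relations $\{x \otimes 1 - 1 \otimes x : x \in E\}$ to something usable; this is immediate once one notes that for $x = p(\alpha) \in E$ one has $1 \otimes x - x \otimes 1 = p(X) - p(\alpha) = (X - \alpha)q(X)$ in $F[X]/(f)$, so $I$ is in fact the principal ideal generated by $\alpha \otimes 1 - 1 \otimes \alpha$. I do not anticipate any serious obstacle beyond this bookkeeping; the statement is essentially a standard consequence of the \'etale structure of $F \otimes_K E$ over $F$.
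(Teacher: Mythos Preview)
Your proof is correct. You reduce to $M = R \defeq F \otimes_K E$ via a free generator, identify $M^E$ with the annihilator of the ideal $I$ generated by the elements $x\otimes 1 - 1\otimes x$, and then use separability of $E/K$ (valid in the paper's characteristic-zero setting) together with a primitive element and the Chinese Remainder Theorem to split $R \simeq F \times F[X]/(g)$, so that $\mathrm{Ann}_R(I) \simeq F$.

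This is a genuinely different route from the paper. The paper argues by induction: it picks $y\in E\smallsetminus K$, sets $\lambda = y\otimes 1 - 1\otimes y$, shows that $M^E \subset M_\lambda \defeq \ker(\lambda)$ with $M_\lambda \subsetneq M$, and then applies the inductive hypothesis to the smaller $F\otimes_K E$-module $M_\lambda$. Your argument is more structural and arguably more transparent: it exhibits $M^E$ explicitly as the factor of $R$ corresponding to the distinguished embedding $E\hookrightarrow F$, and it avoids any subtlety about what exactly the inductive hypothesis asserts for the submodule $M_\lambda$ (which is no longer free of rank one over $F\otimes_K E$). The trade-off is that your argument invokes separability, whereas the paper's inductive approach makes no such appeal; in the paper's $p$-adic context this is harmless, and you flagged it appropriately.
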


\begin{proof} The proof is by induction on $d\defeq\dim_K M$. Fix a generator $m_0$ of $M$ as an $F\otimes_KE$-module. 
If $d=1$, then $E=K$; thus, $F\otimes_KE=F$ and $M^E=M$ by definition, so there is nothing to prove. Now assume $d\geq2$ and that the result is true for all $M^\prime$ as above with $\dim_K M^\prime\leq d-1$.
Pick $y\in E\smallsetminus K$; then $\lambda\defeq y\otimes 1-1\otimes y\in(F\otimes_KE)\smallsetminus\{0\}$ and, since $M$ is $F\otimes _KE$-free and $m_0$ is a generator of $M$ over $F\otimes _KE$, we have $\lambda m_0\neq 0$. Let $K_\lambda\defeq K\cdot(\lambda m_0)$ be the 1-dimensional $K$-vector space generated by $\lambda m_0$. With an abuse of notation, let $\lambda:M\rightarrow M$ denote the multiplication-by-$\lambda$ map, which is $K$-linear, and set $M_\lambda\defeq\ker(\lambda)$. Since the image of $\lambda$ contains the non-zero $K$-vector space $K_\lambda$, we see that $M_\lambda\neq M$, so $\dim_KM_\lambda\leq d-1$. Now notice that $M_\lambda$ is an $F\otimes _KE$-module, as for every $x=\sum_ix_i\otimes y_i\in F\otimes _KE$ we have 
\[ \begin{split}
   \lambda x=\lambda\cdot\biggl(\sum_ix_i\otimes y_i\biggr)&=\sum_i(y\otimes 1-1\otimes y)\cdot(x_i\otimes y_i)\\&=\sum_i(x_i\otimes y_i)\cdot(y\otimes 1-1\otimes y)=x\lambda,
   \end{split} \] 
%
which implies that if $\lambda m=0$ (\emph{i.e.}, $m\in M_\lambda$), then $\lambda (xm)=x(\lambda m)=0$, \emph{i.e.}, $xm\in M_\lambda$. We may therefore apply the inductive hypothesis to $M_\lambda$ and conclude that the $F$-vector space $M_\lambda^E$ is $1$-dimensional. Finally, we check that $M_\lambda^E=M^E$. Since $M_\lambda\subset M$, it is obvious that $M_\lambda^E\subset M^E$; on the other hand, if $m\in M^E$, then $\lambda m=0$, so $m\in M_\lambda$ and, finally, $m\in M_\lambda^E$ because $m\in M^E$. \end{proof}

We apply Lemma \ref{lemmaA2} to the following situation. Let $K$ and $E$ be finite field extensions of $\Q_p$ and let $V$ be a $1$-dimensional $E$-vector space equipped with a continuous action of $G_K$. Assume that $V$ is a de Rham representation, \emph{i.e}, the filtered $K$-vector space 
\[ \mathbf{D}_{\dR,K}(V)\defeq (V\otimes_{\Q_p}\mathbf{B}_\dR)^{G_K} \] 
has dimension $d=\dim_{\Q_p}(V)=[E:\Q_p]$, where $\mathbf{B}_\dR$ is Fontaine's de Rham period ring. As $\Q_p$-vector spaces, there are isomorphisms $\mathbf{D}_{\dR,K}(V)\simeq\Q_p^d\simeq E$. Now assume that $F$ is a finite extension of $K$ containing $E$. Then there is an isomorphism of filtered $F$-vector spaces 
\[ \mathbf{D}_{\dR,F}(V)\simeq\mathbf{D}_{\dR,K}(V)\otimes_KF \] 
and, as $\Q_p$ vector spaces, we have $\mathbf{D}_{\dR,K}(V)\otimes_KF\simeq E\otimes_KF$. Since $E\subset F$, $\mathbf{D}_{\dR,F}(V)$ is equipped with an $E$-vector space structure that agrees, when restricted to $\Q_p$, with the $\Q_p$-linear structure on $\mathbf{D}_{\dR,F}(V)$ coming from the inclusion $\Q_p\subset F$. Therefore, $\mathbf{D}_{\dR,F}(V)$ is equipped with an $E\otimes_{\Q_p}F$-module structure. From now on, take $K=\Q_p$ and for two $\Q_p$-vector spaces $W_1$ and $W_2$ set $W_1\otimes W_2\defeq W_1\otimes_{\Q_p}W_2$. 

\begin{lemma} \label{lemmaA3}
The $F\otimes E$-module $\mathbf{D}_{\dR,F}(V)$ is free of rank $1$. 
\end{lemma}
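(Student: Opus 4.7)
The plan is to decompose $E\otimes_{\Q_p}F$ as a product of fields, transport this decomposition across the de Rham comparison isomorphism, and then read off freeness of rank $1$ via a clean $F$-dimension count in each factor.

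First, since $E/\Q_p$ is finite and separable, $E\otimes_{\Q_p}F$ is finite \'etale over $F$, hence decomposes as $E\otimes_{\Q_p}F\simeq\prod_{i=1}^{r}F_i$ for finite extensions $F_i/F$ satisfying $\sum_i[F_i:F]=[E:\Q_p]$. Extending along $F\hookrightarrow\mathbf{B}_\dR$ yields the compatible decomposition $E\otimes_{\Q_p}\mathbf{B}_\dR\simeq\prod_i(F_i\otimes_F\mathbf{B}_\dR)$, and the corresponding idempotents split $\mathbf{D}_{\dR,F}(V)$ as $\bigoplus_{i=1}^{r}M_i$ with each $M_i$ an $F_i$-vector space. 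Since $V$ is de Rham as a $G_{\Q_p}$-representation, it remains de Rham when restricted to $G_F$, so Fontaine's comparison isomorphism
\[
\alpha\colon\mathbf{D}_{\dR,F}(V)\otimes_F\mathbf{B}_\dR\xrightarrow{\;\sim\;}V\otimes_{\Q_p}\mathbf{B}_\dR
\]
is available; moreover, the $E$-action on $V$ induces compatible $E$-actions on both sides of $\alpha$, so $\alpha$ is in fact an isomorphism of $E\otimes_{\Q_p}\mathbf{B}_\dR$-modules.

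Next I would exploit that $V$ is $1$-dimensional over $E$: the right-hand side of $\alpha$ is then free of rank $1$ over $E\otimes_{\Q_p}\mathbf{B}_\dR$. Chasing this through the product decomposition, each summand $M_i\otimes_F\mathbf{B}_\dR$ must be free of rank $1$ over $F_i\otimes_F\mathbf{B}_\dR$, hence has $\mathbf{B}_\dR$-dimension $[F_i:F]$. Because $\mathbf{B}_\dR$ is a field extension of $F$, extension of scalars from $F$ to $\mathbf{B}_\dR$ preserves dimension, so $\dim_F M_i=[F_i:F]$ and therefore $\dim_{F_i}M_i=1$. Reassembling, $\mathbf{D}_{\dR,F}(V)=\bigoplus_i M_i\simeq\prod_i F_i=E\otimes_{\Q_p}F$ as an $E\otimes_{\Q_p}F$-module, which is the desired freeness of rank $1$.

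The only mildly delicate bookkeeping will be verifying that the comparison isomorphism $\alpha$ genuinely intertwines the product decompositions on either side; this reduces to checking the $E$-linearity of $\alpha$, which is automatic from the $E$-equivariance of the maps defining it. Beyond that, everything is a formal manipulation of finitely generated modules over a product of fields.
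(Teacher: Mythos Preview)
Your argument is correct, but it is considerably more elaborate than the route the paper actually takes. The paper's proof is a two-line dimension count already set up in the paragraph preceding the lemma: since $V$ is de Rham over $K=\Q_p$, one has $\dim_{\Q_p}\mathbf{D}_{\dR,\Q_p}(V)=[E:\Q_p]$, and because $\mathbf{D}_{\dR,\Q_p}(V)$ is an $E$-vector space (via the $E$-action on $V$), it must be one-dimensional over $E$. Then the standard base-change isomorphism $\mathbf{D}_{\dR,F}(V)\simeq\mathbf{D}_{\dR,\Q_p}(V)\otimes_{\Q_p}F$, which is $E\otimes_{\Q_p}F$-linear, immediately gives freeness of rank $1$.

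Your approach instead works directly over $F$: you decompose $E\otimes_{\Q_p}F\simeq\prod_iF_i$, push this through the Fontaine comparison isomorphism $\mathbf{D}_{\dR,F}(V)\otimes_F\mathbf{B}_\dR\simeq V\otimes_{\Q_p}\mathbf{B}_\dR$, and argue factor-by-factor via $\mathbf{B}_\dR$-dimension. This is sound and self-contained---it does not appeal to the base-change isomorphism from $F$ down to $\Q_p$---but it is overkill here. The paper's shortcut is available precisely because the ground field is $\Q_p$ itself, so that $\mathbf{D}_{\dR,\Q_p}(V)$ is already an $E$-module of the right $\Q_p$-dimension to force $E$-dimension $1$; once you know that, tensoring up to $F$ is trivial. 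Your method would be the natural one if the base field were a proper extension of $\Q_p$ over which $E$ did not sit, but in the present situation the paper's argument is both shorter and more transparent.
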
 

\begin{proof} We saw above that there are isomorphisms $\mathbf{D}_{\dR,F}(V)\simeq \mathbf{D}_{\dR,K}(V)\otimes F\simeq E\otimes F$ of $\Q_p$-vector spaces and that $\mathbf{D}_{\dR,F}(V)$ has a canonical $F\otimes E$-module structure, so the result follows. \end{proof}

\begin{lemma} \label{lemmaA4}
The $F$-vector space $\mathbf{D}_{\dR,F}(V)^{E}$ is $1$-dimensional. 
\end{lemma}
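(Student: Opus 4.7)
The plan is to deduce this directly from Lemmas \ref{lemmaA2} and \ref{lemmaA3}; essentially no new work is required, only a verification that the notation matches. I would set $K = \Q_p$ in the general setup preceding Lemma \ref{lemmaA2}, take $E$ and $F$ as in the present statement (recall we are assuming $\Q_p \subset E \subset F$), and let $M \defeq \mathbf{D}_{\dR,F}(V)$.

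By Lemma \ref{lemmaA3}, $M$ is a free $F \otimes_{\Q_p} E$-module of rank $1$. The two scalar actions referred to in the definition of $M^E$ admit concrete interpretations here: the "plain" $F$-action $xm$ is the one coming from the natural inclusion $F \hookrightarrow F \otimes_{\Q_p} E$, $x \mapsto x \otimes 1$, which is simply the structural $F$-module structure on $\mathbf{D}_{\dR,F}(V)$ as a de Rham module over $F$; the "bracketed" $E$-action $[x]m$ is the one coming from $E \hookrightarrow F \otimes_{\Q_p} E$, $y \mapsto 1 \otimes y$, which is the $E$-action inherited from the $E$-vector space structure on $V$. With these identifications, the subspace $\mathbf{D}_{\dR,F}(V)^E$ of the statement coincides verbatim with the subspace $M^E$ to which Lemma \ref{lemmaA2} applies.

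I would then simply invoke Lemma \ref{lemmaA2}, whose hypotheses ($F/\Q_p$ finite, $M$ free of rank $1$ over $F \otimes_{\Q_p} E$) are all in force, to conclude that $M^E = \mathbf{D}_{\dR,F}(V)^E$ is a $1$-dimensional $F$-vector space. There is no real obstacle: the substantive content was already carried out by the inductive argument in Lemma \ref{lemmaA2}, and the only thing to check is that the two scalar actions of $E$ on $\mathbf{D}_{\dR,F}(V)$ — the one through $F \supset E$ and the one intrinsic to $V$ — are indeed the ones that Lemma \ref{lemmaA2} distinguishes, which is immediate from how the $F \otimes_{\Q_p} E$-module structure on $\mathbf{D}_{\dR,F}(V) \simeq \mathbf{D}_{\dR,\Q_p}(V) \otimes_{\Q_p} F$ was set up just before Lemma \ref{lemmaA3}.
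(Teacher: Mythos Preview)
Your proposal is correct and follows essentially the same approach as the paper's proof: both identify the two $E$-actions on $\mathbf{D}_{\dR,F}(V)$ coming from the $F\otimes_{\Q_p}E$-module structure, invoke Lemma~\ref{lemmaA3} to get freeness of rank~$1$, and then apply Lemma~\ref{lemmaA2}. Your write-up is slightly more explicit about matching the notation, but there is no substantive difference.
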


\begin{proof} On the $F\otimes E$-module $\mathbf{D}_{\dR,F}(V)$ there are two different actions of $E$, namely, the one arising from the $F$-linear structure on $\mathbf{D}_{\dR,F}(V)$ given by the inclusion $F\hookrightarrow F\otimes E$ such that $x\mapsto x\otimes 1$ and the action of $E$ induced by the inclusion $E\hookrightarrow F\otimes E$ defined by $y\mapsto 1\otimes y$. Therefore, we are in a position to apply Lemma \ref{lemmaA2}, which proves the result. \end{proof}

Thanks to the lemma above, we can fix non-canonical isomorphisms of $1$-dimensional $F$-vector spaces 
\[ \mathbf{D}_{\dR}(V\otimes_EF)\simeq \mathbf{D}_{\dR}(V)\otimes_EF\simeq \mathbf{D}_{\dR,F}(V)^{E}, \]
where the first isomorphism follows because the Rham module of the trivial representation $G_{\Q_p}\rightarrow F^\times$ is just $F$. 

Now let $E^\prime/E$ be a finite field extension with $F\supset E^\prime$. There is a canonical injective map 
\[ \mathbf{D}_{\dR,F}(V)\longmono \mathbf{D}_{\dR,F}(V\otimes_EE^\prime) \] 
of $F$-vector spaces that is induced by the canonical map $V\rightarrow V\otimes_EE^\prime$. Since, by Lemma \ref{lemmaA4}, all spaces are $1$-dimensional over $F$, we obtain an isomorphism 
\[ \mathbf{D}_{\dR,F}(V)^E\overset\simeq\longrightarrow \mathbf{D}_{\dR,F}(V\otimes_EE^\prime)^{E^\prime} \]
of $1$-dimensional $F$-vector spaces.

\bibliographystyle{amsplain}
\bibliography{references}

\providecommand{\bysame}{\leavevmode\hbox to3em{\hrulefill}\thinspace}
\providecommand{\MR}{\relax\ifhmode\unskip\space\fi MR }
\providecommand{\MRhref}[2]{%
  \href{http://www.ams.org/mathscinet-getitem?mr=#1}{#2}
}
\providecommand{\href}[2]{#2}
\begin{thebibliography}{10}

\bibitem{Agboola}
A.~Agboola, \emph{On values of the {K}atz two-variable {$p$}-adic
  {$L$}-function}, preprint available at
  https://web.math.ucsb.edu/$\sim$agboola/papers/papers.html.

\bibitem{AI24}
F.~Andreatta and A.~Iovita, \emph{{K}atz type $p$-adic {$L$}-functions for
  primes $p$ non-split in the {CM} field}, Comment. Math. Helv. \textbf{99}
  (2024), no.~4, 641--716.

\bibitem{BDP5}
M.~Bertolini, H.~Darmon, D.~Lilienfeldt, and K.~Prasanna, \emph{Generalised
  {H}eegner cycles and the complex {A}bel--{J}acobi map}, Math. Z. \textbf{298}
  (2021), no.~1-2, 385--418.

\bibitem{BDP2}
M.~Bertolini, H.~Darmon, and K.~Prasanna, \emph{{$p$}-adic {R}ankin
  {$L$}-series and rational points on {CM} elliptic curves}, Pacific J. Math.
  \textbf{260} (2012), no.~2, 261--303.

\bibitem{BDP1}
\bysame, \emph{{G}eneralized {H}eegner cycles and {$p$}-adic {R}ankin
  {$L$}-series}, Duke Math. J. \textbf{162} (2013), no.~6, 1033--1148.

\bibitem{BDP3}
\bysame, \emph{Chow--{H}eegner points on {CM} elliptic curves and values of
  {$p$}-adic {$L$}-functions}, Int. Math. Res. Not. IMRN (2014), no.~3,
  745--793.

\bibitem{BDP4}
\bysame, \emph{{$p$}-adic {$L$}-functions and the coniveau filtration on {C}how
  groups}, J. Reine Angew. Math. \textbf{731} (2017), 21--86.

\bibitem{Cas-CM}
F.~Castella, \emph{Tamagawa number conjecture for {C}{M} modular forms and
  {R}ankin--{S}elberg convolutions}, arXiv:2407.11891v2.

\bibitem{CH}
F.~Castella and M.-L. Hsieh, \emph{Heegner cycles and {$p$}-adic
  {$L$}-functions}, Math. Ann. \textbf{370} (2018), no.~1-2, 567--628.

\bibitem{de87}
E.~de~Shalit, \emph{Iwasawa theory of elliptic curves with complex
  multiplication}, Perspectives in Mathematics, vol.~3, Academic Press, Inc.,
  Boston, MA, 1987.

\bibitem{Del-Bourbaki}
P.~Deligne, \emph{Formes modulaires et repr\'esentations {$\ell$}-adiques},
  S\'eminaire {B}ourbaki. {V}ol. 1968/69: {E}xpos\'es 347--363, Lecture Notes
  in Math., vol. 175, Springer, Berlin, 1971, pp.~139--172.

\bibitem{DMOS82}
P.~Deligne, J.~S. Milne, A.~Ogus, and K.~Shih, \emph{Hodge cycles, motives, and
  {S}himura varieties}, Lecture Notes in Mathematics, vol. 900,
  Springer-Verlag, Berlin-New York, 1982.

\bibitem{Den}
C.~Deninger, \emph{Higher regulators and {H}ecke {$L$}-series of imaginary
  quadratic fields. {I}}, Invent. Math. \textbf{96} (1989), no.~1, 1--69.

\bibitem{FL}
L.~Flapan and J.~Lang, \emph{Chow motives associated to certain algebraic
  {H}ecke characters}, Trans. Amer. Math. Soc. Ser. B \textbf{5} (2018),
  102--124.

\bibitem{GS81}
C.~Goldstein and N.~Schappacher, \emph{{S}\'eries d'{E}isenstein et fonctions
  {$L$} de courbes elliptiques {{\`a}} multiplication complexe}, J. Reine
  Angew. Math \textbf{327} (1981), 184--218.

\bibitem{Gr85}
R.~Greenberg, \emph{On the critical values of {H}ecke {$L$}-functions for
  imaginary quadratic fields}, Invent. Math. \textbf{79} (1985), no.~1, 79--94.

\bibitem{Gross}
B.~H. Gross, \emph{Arithmetic on elliptic curves with complex multiplication},
  Lecture Notes in Mathematics, vol. 776, Springer, Berlin, 1980.

\bibitem{Katz}
N.~M. Katz, \emph{{$p$}-adic interpolation of real analytic {E}isenstein
  series}, Ann. of Math. (2) \textbf{104} (1976), no.~3, 459--571.

\bibitem{KL}
C.-H. Kim and M.~Longo, \emph{An explicit comparison of anticyclotomic
  {$p$}-adic {$L$}-functions for {H}ida families}, Int. J. Number Theory
  \textbf{19} (2023), no.~9, 2021--2050.

\bibitem{LV17}
M.~Longo and S.~Vigni, \emph{A refined {B}eilinson-{B}loch conjecture for
  motives of modular forms}, Trans. Amer. Math. Soc. \textbf{369} (2017),
  no.~10, 7301--7342.

\bibitem{Ne92}
J.~Nekov\'{a}\v{r}, \emph{Kolyvagin's method for {C}how groups of
  {K}uga--{S}ato varieties}, Invent. Math. \textbf{107} (1992), no.~1, 99--125.

\bibitem{Raghuram}
A.~Raghuram, \emph{Notes on the arithmetic of {H}ecke {$L$}-functions}, Proc.
  Indian Acad. Sci. Math. Sci. \textbf{132} (2022), no.~2, Paper No. 71, 37.

\bibitem{Ru81}
K.~Rubin, \emph{Elliptic curves with complex multiplication and the conjecture
  of {B}irch and {S}winnerton-{D}yer}, Invent. Math. \textbf{64} (1981), no.~3,
  455--470.

\bibitem{Ru92}
\bysame, \emph{{$p$}-adic {$L$}-functions and rational points on elliptic
  curves with complex multiplication}, Invent. Math. \textbf{107} (1992),
  no.~2, 323--350.

\bibitem{S86}
N.~Schappacher, \emph{Periods of {H}ecke characters}, Lecture Notes in
  Mathematics, vol. 1301, Springer-Verlag, Berlin, 1988.

\bibitem{Sch90}
A.~J. Scholl, \emph{{M}otives for modular forms}, Invent. Math. \textbf{100}
  (1990), no.~2, 419--430.

\bibitem{ST}
J.-P. Serre and J.~Tate, \emph{Good reduction of abelian varieties}, Ann. of
  Math. (2) \textbf{88} (1968), no.~3, 492--517.

\bibitem{Sh71}
G.~Shimura, \emph{{O}n the zeta-function of an abelian variety with complex
  multiplication}, Ann. of Math. (2) \textbf{94} (1971), no.~3, 504--533.

\bibitem{Zhang}
S.~Zhang, \emph{Heights of {H}eegner cycles and derivatives of {$L$}-series},
  Invent. Math. \textbf{130} (1997), no.~1, 99--152.

\end{thebibliography}

\end{document}